\newtheorem{thm}{Theorem}[section]
\newtheorem*{thm*}{Theorem}
\newtheorem{fact}[thm]{Fact}
\newtheorem{corollary}[thm]{Corollary}
\newtheorem{lem}[thm]{Lemma}
\newtheorem{lemma}[thm]{Lemma}
\newtheorem{prop}[thm]{Proposition}
\newtheorem*{prop*}{Proposition}
\newtheorem{proposition}[thm]{Proposition}
\newtheorem*{conj*}{Conjecture}
\newtheorem{definition}[thm]{Definition}
\newtheorem*{dfn*}{Definition}
\theoremstyle{definition}
\newtheorem{remark}[thm]{\textbf{Remark}}
\newtheorem*{remark*}{Remark}
\newtheorem*{fact*}{Fact}
\theoremstyle{proof}
\newcommand{\mrm}{\mathrm}
\newcommand{\vol}{\mrm{vol}}
\newcommand{\Vol}{\mrm{Vol}}
\newcommand{\Area}{\mrm{Per}}
\newcommand{\Ric}{\mrm{Ric}}
\newcommand{\hRic}{\mrm{Ric}} 
\newcommand{\tr}{\mrm{tr}}
\DeclareMathOperator{\argmax}{arg\,max}
\DeclareMathOperator*{\argmin}{arg\,min} 
\newcommand{\n}{\mathbf{n}}
\renewcommand{\c}{\mathbf{c}}
\newcommand{\G}{\mathbb{G}}
\newcommand{\HH}{\mathbb{H}}
\newcommand{\cyclic}{\mathcal{C}}
\newcommand{\Y}{\mathbf{Y}}
\newcommand{\T}{\mathbf{T}}
\renewcommand{\n}{\mathfrak{n}}
\renewcommand{\k}{\mathbf{k}}
\newcommand{\norm}[1]{\left\Vert#1\right\Vert}
\newcommand{\snorm}[1]{\Vert#1\Vert}
\newcommand{\abs}[1]{\left\vert#1\right\vert}
\DeclarePairedDelimiter{\sabs}{\lvert}{\rvert}
\newcommand{\set}[1]{\left\{#1\right\}}
\newcommand{\brac}[1]{\left(#1\right)}
\newcommand{\scalar}[1]{\left \langle #1 \right \rangle}
\newcommand{\sscalar}[1]{\langle #1 \rangle}
\newcommand{\R}{\mathbb{R}}
\renewcommand{\S}{\mathbb{S}}
\newcommand{\II}{\mathrm{II}}
\newcommand{\M}{\mathbb{M}}
\newcommand{\D}{\mathcal{D}}
\newcommand{\eps}{\epsilon}
\renewcommand{\H}{\mathcal{H}}
\newcommand{\cH}{\mathcal{H}}
\renewcommand{\div}{\text{\rm div}}
\newcommand{\tang}{\mathbf{t}}
\DeclareMathOperator{\interior}{int}
\numberwithin{equation}{section}
\newenvironment{displayme}
  {\[ \begin{tabular}{m{0.95\textwidth} } \em
  \leftskip=0.5cm plus 0.5fil \rightskip=0.5cm plus -0.5fil \parfillskip=0cm plus 0.5\textwidth}
{\end{tabular}\]}
\begin{document}

\renewcommand*{\thefootnote}{\fnsymbol{footnote}}

\author{Emanuel Milman\textsuperscript{$*$,$\dagger$} \and Botong Xu\textsuperscript{$*$,$\ddagger$}}

\footnotetext{$^*$ Department of Mathematics, Technion-Israel Institute of Technology, Haifa 32000, Israel.}
\footnotetext{$^\dagger$ Email: emilman@tx.technion.ac.il.}
\footnotetext{$^\ddagger$ Email: botongxu@campus.technion.ac.il.}

\begingroup    \renewcommand{\thefootnote}{}    \footnotetext{2020 Mathematics Subject Classification: 49Q20, 49Q10, 53A10, 51B10.}
    \footnotetext{Keywords: Multi-bubble isoperimetric problem, stability, Brascamp-Lieb inequality, M\"obius geometry, conformal flatness, clusters, partitions.}
    \footnotetext{The research leading to these results is part of a project that has received funding from the European Research Council (ERC) under the European Union's Horizon 2020 research and innovation programme (grant agreement No 101001677).}
\endgroup

\title{Standard bubbles (and other M\"obius-flat partitions)\\ on model spaces are stable}
\date{}

\maketitle

\begin{abstract}
We verify that for all $n \geq 3$ and $2 \leq k \leq n+1$, the standard $k$-bubble clusters, conjectured to be minimizing total perimeter in $\R^n$, $\S^n$ and $\HH^n$, are stable -- an infinitesimal regular perturbation preserving volume to first order yields a non-negative second variation of area modulo the volume constraint. In fact, stability holds for all standard \emph{partitions}, in which several cells are allowed to have infinite volume. In the Gaussian setting, any partition  in $\G^n$ ($n\geq 2$) obeying Plateau's laws and whose interfaces are all \emph{flat}, is stable. 
 Our results apply to non-standard partitions as well -- starting with any (regular) flat Voronoi partition in $\S^n$ and applying M\"obius transformations and stereographic projections, the resulting partitions in $\R^n$, $\S^n$ and $\HH^n$ are stable. Our proof relies on a new conjugated Brascamp-Lieb inequality on partitions with conformally flat umbilical boundary, and the construction of a good conformally flattening boundary potential. 
\end{abstract}

\section{Introduction}

A weighted Riemannian manifold $(M^n,g,\mu)$ consists of a smooth complete $n$-dimensional Riemannian manifold $(M^n,g)$ endowed with a measure $\mu$ with $C^\infty$ smooth positive density $\exp(-W)$ with respect to the Riemannian volume measure $\vol_g$. Denote by $\cH^k$ the corresponding $k$-dimensional Hausdorff measure. Let $\mu^k=e^{-W} \cH^k$ and set the $\mu$-weighted volume to be $V_{\mu}:= \mu$. The $\mu$-weighted perimeter of a Borel subset $U \subset M$ of locally finite perimeter is defined as $A_\mu (U) : = \mu^{n-1} (\partial^* U)$, where $\partial^* U$ is the reduced boundary of $U$. 

The Euclidean, spherical and hyperbolic model spaces $(M^n,g)$ are denoted by $\R^n$, $\S^n$ and $\HH^n$, respectively. They are endowed with their standard Riemannian volume measure $\mu = \vol_g$, and we will simply write $V$ and $A$ for volume and perimeter. Another important model is the Gaussian space $\G^n$, obtained by endowing Euclidean space $\R^n$ with the standard Gaussian density $\mu = \gamma^n := (2 \pi)^{-n/2} \exp(-|x|^2/2) dx$. 
 
A $q$-partition $\Omega = (\Omega_1, \ldots, \Omega_q)$ on $(M,g,\mu)$ is a $q$-tuple of Borel subsets $\Omega_i \subset M$ having locally finite perimeter, such that $\{\Omega_i\}$ are pairwise disjoint and $V_\mu (M \setminus \cup_{i=1}^q \Omega_i) = 0$. Note that the sets $\Omega_i$, called cells, are not required to be connected. In this work, we allow using $q=\infty$, as long as the partition is locally finite, namely that every compact set $K \subset M$ intersects only a finite number of cells. A $k$-tuple ($k < \infty$) of pairwise disjoint cells $(\Omega_1,\ldots,\Omega_{k})$ so that $V_{\mu}(\Omega_i), A_{\mu}(\Omega_i) < \infty$ for all $i=1,\ldots,k$ is called a $k$-cluster, and its cells are called bubbles. Every $k$-cluster induces a partition by simply adding the ``exterior cell" $\Omega_{k+1} := M \setminus \cup_{i=1}^{k} \Omega_i$; by abuse of notation, we will call the resulting $(k+1)$-partition  $\Omega = (\Omega_1,\ldots,\Omega_{k+1})$ a $k$-cluster as well. 
However, when $V_\mu(M) = \infty$, there are partitions with multiple cells having infinite volume, and so these partitions are not induced by clusters. 
Define the $\mu$-weighted volume $V_\mu(\Omega)$ and total perimeter $A_\mu(\Omega)$ of a $q$-partition $\Omega$ to be
\begin{align*}
	V_{\mu} (\Omega) & := ( V_{\mu} (\Omega_1), \ldots, V_{\mu} (\Omega_q) )  \in \Delta_{V_\mu(M)}^{(q-1)} , \\
	A_{\mu} (\Omega) & := \frac{1}{2} \sum_{i=1}^q A_{\mu} (\Omega_i) = \sum_{1\leq i <j \leq  q} \mu^{n-1} (\Sigma_{ij}),
\end{align*}
where $\Sigma_{ij}:= \partial^* \Omega_i \cap \partial^* \Omega_j$ denotes the $(n-1)$-dimensional  interface between cells $\Omega_i$ and $\Omega_j$, and  $\Delta_{T}^{(q-1)}:= \{ v \in [0,\infty]^q  :  \sum_{i=1}^q v_i =T \}$. 
Note that when $\Omega$ is not induced by a cluster, $A_\mu(\Omega)$ will be infinite.  
	
The isoperimetric problem for $k$-clusters consists of identifying those clusters $\Omega$ of prescribed volume $V_\mu (\Omega) = v \in \interior \Delta^{(k)}_{V_\mu(M)}$ that minimize the total perimeter $A_{\mu}(\Omega)$. By modifying an isoperimetric minimizing cluster on a null set, we may and will assume that its cells $\Omega_i$ are open and satisfy $\overline{\partial^* \Omega_i} = \partial \Omega_i$. The classical isoperimetric problem corresponds to the single-bubble case $k=1$, and it is well-known that geodesic balls $\Omega_1$ of prescribed volume uniquely minimize perimeter on all model spaces $\R^n$, $\S^n$ and $\HH^n$ \cite{BuragoZalgallerBook}. It is also classical that halfplanes $\Omega_1$ of prescribed Gaussian volume uniquely minimize Gaussian perimeter on $\G^n$ \cite{SudakovTsirelson,Borell-GaussianIsoperimetry,CarlenKerceEqualityInGaussianIsop}; their flat boundaries can be thought of as degenerate flat spheres. Consequently, we will collectively refer to complete constant curvature hypersurfaces on $M^n \in \{\R^n, \S^n, \HH^n\}$ as ``generalized spheres" -- on $\R^n$ these are spheres and hyperplanes, and on $\HH^n$ these are geodesic spheres, horospheres, and equidistant hypersurfaces.  

The multi-bubble isoperimetric problem for $k$-clusters (when $k \geq 2$) already poses a much greater challenge. It was conjectured by J.~Sullivan (on $\R^n$, but his construction naturally extends to $\S^n$ and $\HH^n$) that when $k \leq n+1$, standard $k$-bubbles uniquely minimize total perimeter among all $k$-clusters of prescribed volume. A precise description of standard bubbles (and more generally, partitions) is deferred to Section \ref{sec:prelim}, but for now let us just remark that a standard partition of $\R^n$ is obtained as a stereographic projection of a Voronoi equipartition $\Omega^\S$ of $\S^n$ into $q = k+1$ cells (this is always possible whenever $k \leq n+1$ by taking the Voronoi cells of $q$-equidistant points in $\S^n$), resulting in (generalized) spherical interfaces between every pair of cells -- see Figures \ref{fig:stereographic}, \ref{fig:standard-bubbles} and \ref{fig:standard-partitions}. When the North pole in $\S^n$ lies in the interior of $\Omega^\S_q$, only the $q$-th cell in the stereographic projection onto $\R^n$ will have infinite volume, and so the resulting standard partition of $\R^n$ is called a standard $k$-cluster of bubbles.
The natural analogue of this in $\G^n$ when $k \leq n$ is to take the Voronoi partition of $q=k+1$ equidistant points in $\R^n$ (appropriately translated), resulting in flat interfaces between every pair of cells. 

\begin{figure}[htbp]
\centering
\includegraphics[scale=0.52]{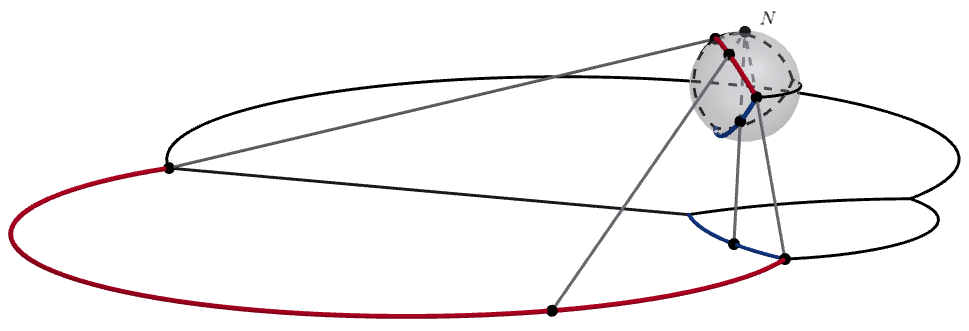}
\caption{ \label{fig:stereographic}
Stereographic projection of an equipartition of $\S^2$ into 4 Voronoi cells, yielding a standard triple-bubble in $\R^2$. 
}
\end{figure}

\begin{figure}
    \begin{center}
    \includegraphics[scale=0.35]{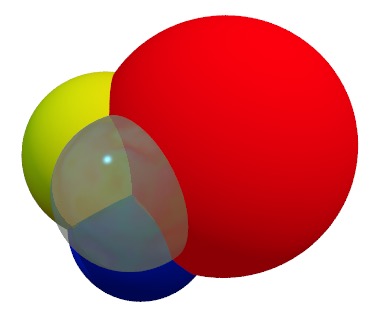}
    \includegraphics[scale=0.32]{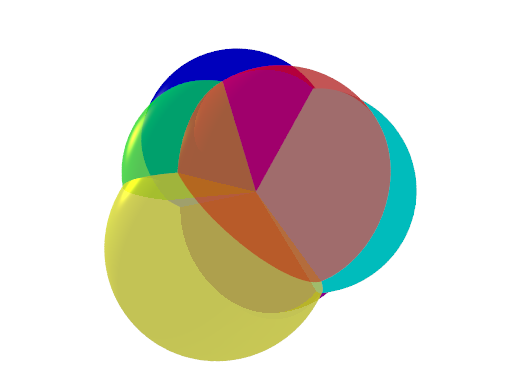}
     \end{center}
         \vspace{-10pt}
     \caption{
         \label{fig:standard-bubbles}
         Left: a standard quadruple-bubble in $\R^3$. Right: a $3$D cross-section of a sextuple-bubble in $\R^5$. Both are conjectured to be minimizing total perimeter under volume constraint.      }
\end{figure}

\begin{figure}
    \begin{center}
  \includegraphics[scale=0.28]{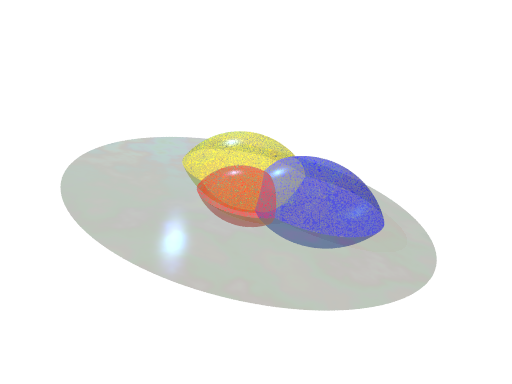}
     \includegraphics[scale=0.28]{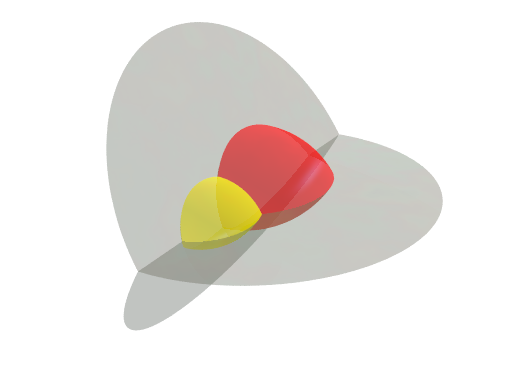}
     \includegraphics[scale=0.28]{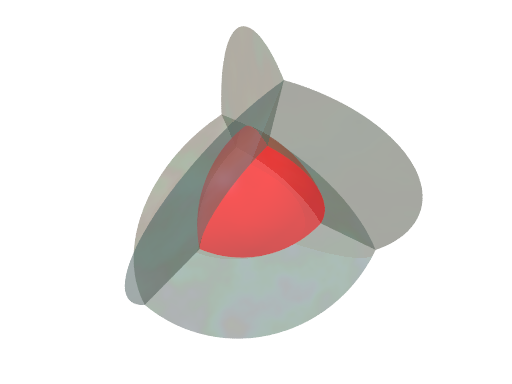}
     \end{center}
     \vspace{-10pt}
     \caption{
         \label{fig:standard-partitions}
         Standard $5$-partitions in $\R^3$ with $2$ (left), $3$ (middle) and $4$ (right) unbounded cells. All three are conjectured to be locally minimizing perimeter under volume constraint. 
     }
\end{figure}

When $n=2$, the double-bubble conjectures (case $k=2$) on these model spaces are well-understood and fully resolved. On $\R^2$, $\S^2$ and $\HH^2$ this was established in \cite{SMALL93}, \cite{Masters-DoubleBubbleInS2} and \cite{CottonFreeman-DoubleBubbleInSandH}, respectively (see below for the case of $\G^2$). The triple-bubble conjectures (case $k=3$) on $\R^2$ and $\S^2$ were established by Wichiramala \cite{Wichiramala-TripleBubbleInR2} and Lawlor \cite{Lawlor-TripleBubbleInR2AndS2}, respectively, but to the best of our knowledge remains open on $\HH^2$. 

In a landmark work, the double-bubble conjecture ($k=2$) on $\R^3$ was confirmed by Hutchings--Morgan--Ritor\'e--Ros \cite{DoubleBubbleInR3-Announcement,DoubleBubbleInR3} following prior contributions in \cite{HHS95,HassSchlafly-EqualDoubleBubbles,Hutchings-StructureOfDoubleBubbles}, and later extended to all $\R^n$ \cite{SMALL03,Reichardt-DoubleBubbleInRn,Lawlor-DoubleBubbleInRn}. Recently, in \cite{EMilmanNeeman-TripleAndQuadruple,EMilmanNeeman-QuintupleBubble}, the double-, triple-, quadruple- and quituple-bubble conjectures (cases $k=2,3,4,5$) were established by Milman and Neeman on $M^n$ for $M^n \in \{\R^n,\S^n\}$ and $n \geq k$ (without uniqueness on $\R^n$ in the quintuple case). Before that, Milman and Neeman fully resolved in \cite{EMilmanNeeman-GaussianMultiBubble} the multi-bubble isoperimetric conjecture on $\G^n$ for all $1 \leq k \leq n$. The remaining cases on $\R^n$, $\S^n$ and $\HH^n$ are still open.

When $V_{\mu}(M^n)=\infty$, such as for $M^n \in \{\R^n,\HH^n \}$, one can also consider the \emph{local} isoperimetric problem for $q$-partitions $\Omega$ with prescribed volume $V_\mu(\Omega) = v$ when at least two of the prescribed volumes are infinite ($\exists i \neq j$ with $v_i=v_j = \infty$). In that case, $A_\mu(\Omega) = \infty$ and so the global minimization problem does not make sense. Instead, one considers \emph{locally} minimizing partitions, which minimize the total relative perimeter in any bounded open $K \subset M^n$ among all competing $q$-partitions $\Omega'$ with $V_\mu(\Omega') = V_\mu(\Omega)$ so that $\Omega'_i \Delta \Omega_i \Subset K$ for all $i$. This line of investigation was pioneered by Alama, Bronsard and Vriend \cite{ABV-LensClusters} on $\R^2$ for the case $v = (1,\infty,\infty)$, and systematically studied by Novaga, Paolini and Tortorelli in \cite{NPT-LocallyIsoperimetricPartitions}. Using a limit argument and a closure theorem, it was shown in \cite[Section 3]{NPT-LocallyIsoperimetricPartitions} that the results 
from the preceding paragraphs
imply that standard $q$-partitions of $\R^n$ for $q=2,3,4$ ($n \geq 2$), $q=5$ ($n \geq 4$) and $q=6$ ($n \geq 5$) are locally minimizing (but does not exclude the existence of other non-standard partitions which are also locally minimizing, except in the planar case of $\R^2$ for which uniqueness was established in \cite[Section 4]{NPT-LocallyIsoperimetricPartitions}).

Some additional isoperimetric results for clusters and partitions are obtained in 
\cite{PaoliniTamagnini-PlanarQuadraupleBubbleEqualAreas,PaoliniTortorelli-PlanarQuadrupleEqualAreas,NPST-ClustersViaConcentrationCompactness, NPST-ClustersWithInfinitelyManyCells,DeRosaTione-ConvexStationaryBubbles,BronsardNovak-DifferentTensions,PeriodicDoubleTilings}.

\subsection{Informal presentation of main results} \label{subsec:intro-informal}

Showing the global (or local) minimality of the conjectured clusters (or partitions) is widely acknowledged to be extremely difficult, and so confirmation of their local minimality, even in an infinitesimal sense, is already highly challenging, and would  provide valuable evidence for the validity of the conjectures. 
A standard way in the calculus of variations to probe the local (infinitesimal) minimality of a given configuration is to test the non-negativity of the second variation (modulo the volume constraint, which should be preserved to first order), a property called ``stability". 

\medskip

Modulo technicalities (see Remark \ref{rem:intro-stability}), our first main result in this work is a confirmation that:
\begin{displayme}
For all $1 \leq k \leq n+1$ and $n \geq 3$, standard $k$-bubbles in $\R^n$, $\S^n$ and $\HH^n$ are stable.
\end{displayme}
Moreover, this actually holds for all standard partitions, such as the ones depicted in Figures \ref{fig:standard-bubbles} and \ref{fig:standard-partitions}:
\begin{displayme}
For all $2 \leq q \leq n+2$ and $n \geq 3$, standard $q$-partitions in $\R^n$, $\S^n$ and $\HH^n$ are stable.
\end{displayme}
A precise formulation is deferred to Subsection \ref{subsec:intro-results}. An interesting phenomenon occurs on $\HH^n$: there exist standard partitions (not induced by a cluster) for which stability holds without requiring that the volume is preserved to first order -- see Theorem \ref{thm:intro-V} and the subsequent comments.

\medskip

\begin{figure}
    \begin{center}
    \includegraphics[scale=0.3]{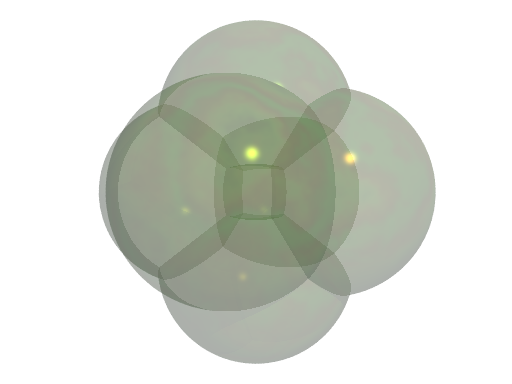}
     \includegraphics[scale=0.3]{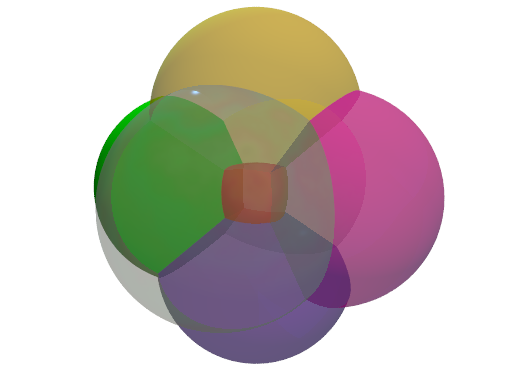}
      \includegraphics[scale=0.3]{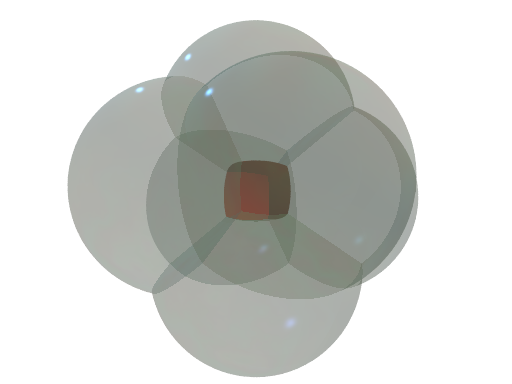}
      \includegraphics[scale=0.3]{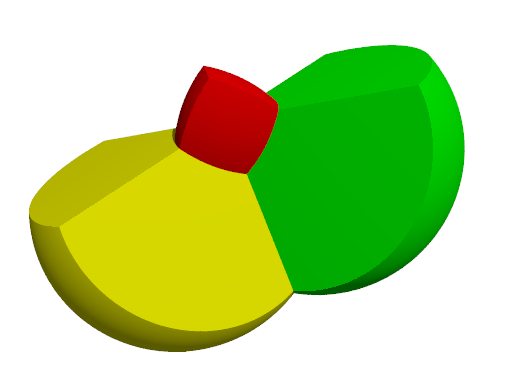}
     \end{center}
     \vspace{-10pt}
     \caption{
         \label{fig:nonstandard-partition1}
         A non-standard 7-bubble in $\R^3$ with a cubical inner cell, often created by soap-bubble magicians, is stable. 
        }
\end{figure}

\begin{figure}
    \begin{center}
     \includegraphics[scale=0.3]{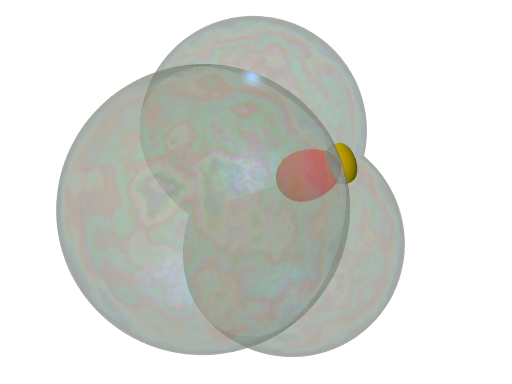}
     \includegraphics[scale=0.3]{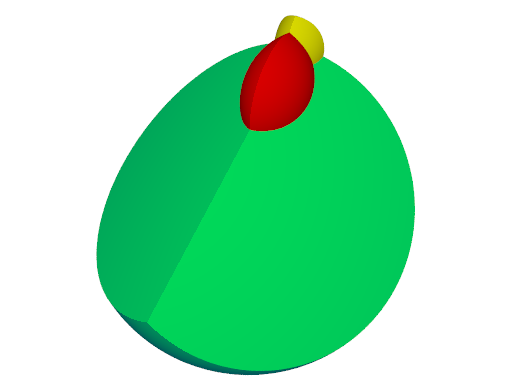}
     \includegraphics[scale=0.3]{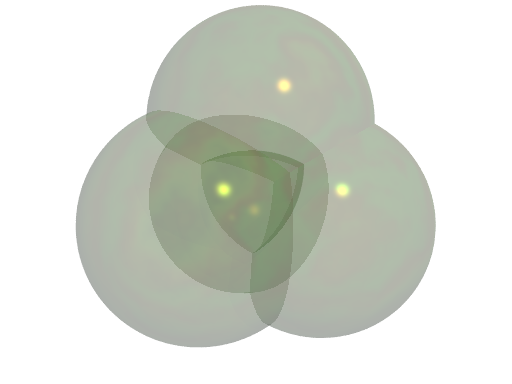}
     \includegraphics[scale=0.3]{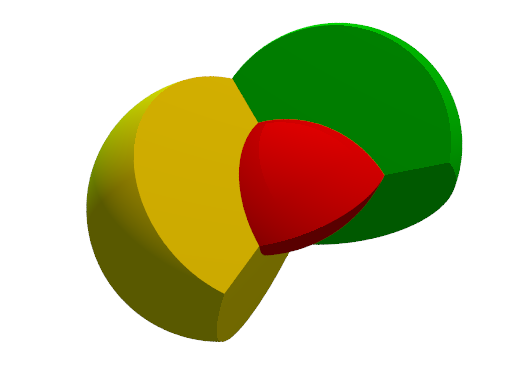}
     \end{center}
     \caption{
         \label{fig:nonstandard-partition2}
         Non-standard 5-bubbles in $\R^3$ with two smaller cells highlighted (top) and with equal-volume cells enclosing a tetrahedral inner cell (bottom), are stable. 
        }
\end{figure}

Our results are not restricted to standard partitions. We actually show that:
\begin{displayme}
For all $2 \leq q \leq \infty$ and $n \geq 3$, regular, M\"obius-flat, spherical Voronoi, $q$-partitions
 in $\R^n$, $\S^n$ and $\HH^n$ are stable.
\end{displayme}
We defer the precise definitions to Section \ref{sec:prelim}, but for now only mention that:
\begin{itemize}
\item A \emph{regular} partition obeys Plateau's laws -- three cells meet like the cone over the vertices of a triangle, and four cells meet like the cone over the edges of a tetrahedron. These are known necessary conditions for area-minimizing soap-bubbles, as confirmed in $\R^3$ by Taylor \cite{Taylor-SoapBubbleRegularityInR3} and in $\R^n$ (and hence in any weighted Riemannian manifold) by Colombo--Edelen--Spolaor \cite{CES-RegularityOfMinimalSurfacesNearCones}. 
\item All (non-empty) interfaces $\Sigma_{ij}$ of a \emph{spherical Voronoi partition} are subsets of (generalized) spheres, whose curvatures $\k_{ij}$ and (quasi-)centers $\c_{ij}$ enjoy a separable form $\k_{ij} = \k_i - \k_j$ and $\c_{ij} = \c_i - \c_j$ for some $\{\k_i\}$ and $\{\c_i\}$. In particular, a regular spherical Voronoi partition is automatically \emph{stationary}, meaning that its interfaces meet in threes at $120^{\circ}$-degree angles, and its curvatures cyclically sum to $0$ at these triple points. 
\item A partition is called flat if all of its interfaces $\Sigma_{ij}$ are flat (i.e.~totally geodesic). A partition $\Omega$ in $\S^n$ is called \emph{M\"obius-flat} if  there exists a M\"obius automorphism $T : \S^n \rightarrow \S^n$ so that $T\Omega$ is flat. 
The same terminology is used on $\R^n$ and $\HH^n$ if this holds after a stereographic projection to $\S^n$ (or a hemisphere). 
\end{itemize}
By definition, standard bubbles and partitions satisfy all the above properties, but there are natural additional examples. 
One just needs to start with a \emph{non-standard} flat regular spherical Voronoi partition in $\S^n$ ($n \geq 3$), and apply a M\"obius transformation to obtain a non-flat partition in $\S^n$, followed by a stereographic projection onto $\R^n$ or $\HH^n$. For example, an initial non-standard (regular) flat spherical Voronoi $(2n+2)$-partition in $\S^n$ is the following one,
generating the cone over the facets of the hypercube in $\R^{n+1}$, namely
\[
 \Omega_i = \{ p \in \S^n : \argmax_{j=1 \ldots, 2n+2} \scalar{p,v_j} = \{i\} \} ,
\]
for $v_k = e_k$ and $v_{k+n+1} = -e_k$, $k=1,\ldots,n+1$. See Figures \ref{fig:nonstandard-partition1}, \ref{fig:nonstandard-partition2}, \ref{fig:nonstandard-partition3} and \ref{fig:nonstandard-partition4} for a depiction of various non-standard bubbles and partitions which satisfy our assumptions and are thus proved to be stable.

\begin{figure}[htbp]
\vspace{-50pt}
    \centering
        \includegraphics[scale=0.35]{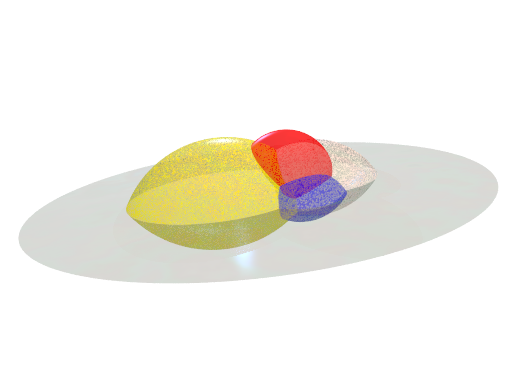}
          \includegraphics[scale=0.35]{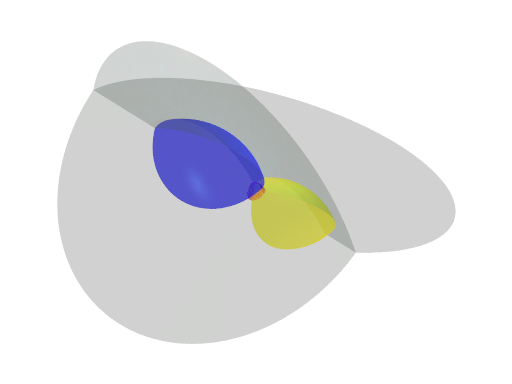}
     \includegraphics[scale=0.4]{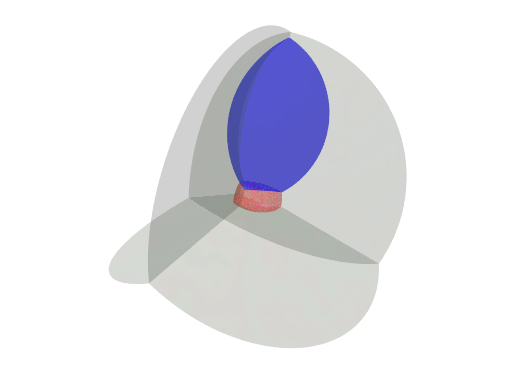}
          \caption{
         \label{fig:nonstandard-partition3} 	Non-standard 6-partitions in $\R^3$ with 2 (left), 3 (right) and 4 (bottom) unbounded cells, are all stable. 
        }
      \includegraphics[scale=0.35]{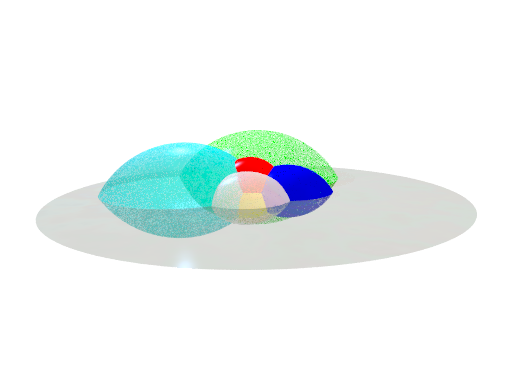}
     \includegraphics[scale=0.35]{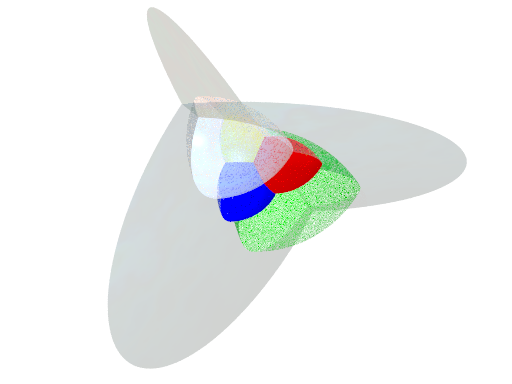}
     \includegraphics[scale=0.4]{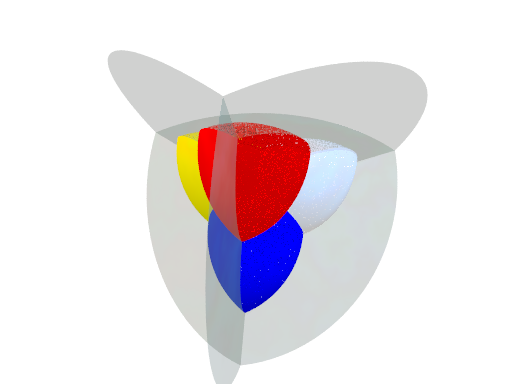}
     \caption{
         \label{fig:nonstandard-partition4} 	Non-standard 8-partitions in $\R^3$ with 2 (left), 3 (right) and 4 (bottom) unbounded cells, are all stable. 
        }
\end{figure}

\medskip
These results provide a partial answer to a question of Kusner \cite[Problem 3]{OpenProblemsInSoapBubbles96}, who asked whether clusters in $\R^3$ with spherical interfaces which meet according to Plateau's laws are necessarily stable. Moreover, our formulation provides a feasible extension of Kusner's question to all model spaces in any dimension $n \geq 3$. A much stronger conjecture of Morgan \cite[Conjecture 2.4]{Morgan-StrictCalibrations} predicts that any stationary regular partition in $\R^n$, all of whose tangent cones are strictly area-minimizing, is locally area-minimizing under a volume constraint (see \cite[Theorem 2.1]{Morgan-StrictCalibrations} for a proof of this for $C^1$ deformations in a small ball where 3 cells meet). 

\medskip

On $\G^n$, we provide a complete answer to the analogous question to Kusner's:
\begin{displayme}
For all $2 \leq q \leq \infty$ and $n \geq 2$, stationary, regular, flat $q$-partitions in $\G^n$ are stable.
\end{displayme}
For example, any collection of line segments (or half-lines) meeting at $120^{\circ}$-degree angles in the plane delineates a stable partition in $\G^2$ -- see Figure \ref{fig:Gaussian}.

\begin{figure}[htbp]
	\centering
	\begin{tikzpicture}[scale=0.53]				\coordinate (UL) at (-2.8,2.8);
		\coordinate (LL) at (-2.8,-3.8);
		\coordinate (UR) at (3.8,2.8);
		\coordinate (LR) at (3.8,-3.8);
				\coordinate (A) at (0,0);
		\coordinate (M) at (-0.573,0.992);
		\coordinate (N) at (-1.101,0.992);
		\coordinate (P) at (-1.620,0.093);
		\coordinate (I) at (-1.266,-0.520);
		\coordinate (D) at (-0.3,-0.520);
		\draw[thick] (A)--(M)--(N)--(P)--(I)--(D)--(A);
				\coordinate (J) at (-1.447,-0.833);
		\coordinate (K) at (-0.437,-2.582);
		\coordinate (L) at (0.505,-2.582);
		\coordinate (E) at (0.698,-2.247);
		\draw[thick] (I)--(J)--(K)--(L)--(E)--(D)--(I);
				\coordinate (F) at (1.859,-2.247);
		\coordinate (G) at (2.730,-0.739);
		\coordinate (H) at (2.304,0);
		\draw[thick] (E)--(F)--(G)--(H)--(A)--(D)--(E);
				\coordinate (R) at (0.471,2.8);
		\coordinate (Q) at (-2.145,2.8);
		\draw[thick] (R)--(M)--(N)--(Q);
		\coordinate (S) at (-2.8,0.093);
		\draw[thick] (Q)--(N)--(P)--(S);
		\coordinate (U) at (-2.8,-0.833);
		\draw[thick] (S)--(P)--(I)--(J)--(U);
		\coordinate (V) at (-1.140,-3.8);
		\draw[thick] (U)--(J)--(K)--(V);
		\coordinate (W) at (1.208,-3.8);
		\draw[thick] (V)--(K)--(L)--(W);
		\coordinate (Z) at (2.755,-3.8);
		\draw[thick] (W)--(L)--(E)--(F)--(Z);
		\coordinate (A1) at (3.8,-0.739);
		\draw[thick] (Z)--(F)--(G)--(A1);
		\coordinate (T) at (3.8,2.592);
		\draw[thick] (A1)--(G)--(H)--(T);
		\draw[thick] (T)--(H)--(A)--(M)--(R);
	\end{tikzpicture}
	\qquad
	\begin{tikzpicture}[scale=0.53]				\coordinate (UL) at (-2.8,2.8);
		\coordinate (LL) at (-2.8,-3.8);
		\coordinate (UR) at (3.8,2.8);
		\coordinate (LR) at (3.8,-3.8);
				\coordinate (A) at (0,0);
		\coordinate (M) at (-0.573,0.992);
		\coordinate (N) at (-1.101,0.992);
		\coordinate (P) at (-1.620,0.093);
		\coordinate (I) at (-1.266,-0.520);
		\coordinate (D) at (-0.3,-0.520);
		\draw[thick] (A)--(M)--(N)--(P)--(I)--(D)--(A);
				\coordinate (J) at (-1.447,-0.833);
		\coordinate (K) at (-0.437,-2.582);
		\coordinate (L) at (0.505,-2.582);
		\coordinate (E) at (0.698,-2.247);
		\draw[thick] (I)--(J)--(K)--(L)--(E)--(D)--(I);
				\coordinate (F) at (1.859,-2.247);
		\coordinate (G) at (2.730,-0.739);
		\coordinate (H) at (2.304,0);
		\draw[thick] (E)--(F)--(G)--(H)--(A)--(D)--(E);
				\coordinate (R) at (0.471,2.8);
		\coordinate (Q) at (-2.145,2.8);
		\draw[thick] (R)--(M)--(N)--(Q);
		\coordinate (S) at (-2.8,0.093);
		\draw[thick] (Q)--(N)--(P)--(S);
		\coordinate (U) at (-2.8,-0.833);
		\draw[thick] (S)--(P)--(I)--(J)--(U);
		\coordinate (V) at (-1.140,-3.8);
		\draw[thick] (U)--(J)--(K)--(V);
		\coordinate (W) at (1.208,-3.8);
		\draw[thick] (V)--(K)--(L)--(W);
		\coordinate (Z) at (2.755,-3.8);
		\draw[thick] (W)--(L)--(E)--(F)--(Z);
		\coordinate (A1) at (3.8,-0.739);
		\draw[thick] (Z)--(F)--(G)--(A1);
		\coordinate (T) at (3.8,2.592);
		\draw[thick] (A1)--(G)--(H)--(T);
		\draw[thick] (T)--(H)--(A)--(M)--(R);
																														\filldraw[red] (A)--(M)--(N)--(P)--(I)--(D)--(A);
		\filldraw[yellow] (I)--(J)--(K)--(L)--(E)--(D)--(I);
		\filldraw[blue] (E)--(F)--(G)--(H)--(A)--(D)--(E);
		\filldraw [green] (R)--(M)--(N)--(Q);
		\filldraw[pink] (Q)--(N)--(P)--(S)--(UL)--(Q);
		\filldraw[gray] (S)--(P)--(I)--(J)--(U)--(S);
		\filldraw[brown] (U)--(J)--(K)--(V)--(LL)--(U);
		\filldraw[magenta] (V)--(K)--(L)--(W)--(V);
		\filldraw[cyan] (W)--(L)--(E)--(F)--(Z);
		\filldraw[orange] (Z)--(F)--(G)--(A1)--(LR)--(Z);
		\filldraw[purple] (A1)--(G)--(H)--(T)--(A1);
		\filldraw[teal] (T)--(H)--(A)--(M)--(R)--(UR);
	\end{tikzpicture}

          \hspace{20pt} \includegraphics[scale=0.4]{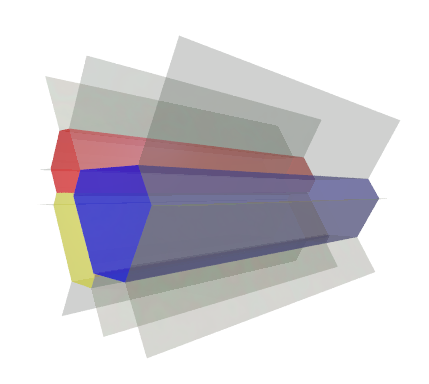}
     \hspace{-30pt} \includegraphics[scale=0.4]{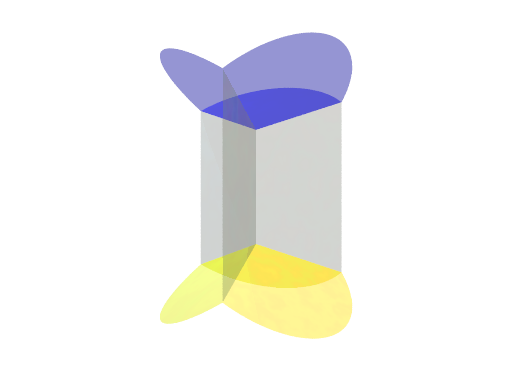}
     \caption{
         \label{fig:Gaussian}
         Stationary, regular, flat $12$-partitions in $\G^2$ (top) and $\G^3$ (bottom left) and $5$-partition in $\G^3$ (bottom right), are all stable. 
        }
\end{figure}

\smallskip

We now turn to a more precise description of our results and method of proof. To the best of our knowledge, there were no prior methods for establishing the stability of a partition on spaces with positive (weighted) Ricci curvature such as $\S^n$ and $\G^n$, or non-flat partitions on spaces with non-positive Ricci curvature such as $\R^n$ and $\HH^n$, besides restricting to a small ball as in \cite{Morgan-StrictCalibrations}, or outright resolving the (local) isoperimetric problem (which of course implies stability). 
We will require  two new ingredients:
\begin{enumerate}
\item A conjugated Brascamp-Lieb inequality on partitions with conformally flat umbilical boundary.
\item Construction of a good conformally flattening boundary potential on M\"obius-flat spherical Voronoi partitions. 
\end{enumerate}
Before describing those, let us recall the analytic description of stability. 

\subsection{Stability and the Jacobi operator}

Let $\Omega$ be a regular partition in $(M,g,\mu = \exp(-W) d\vol_g)$. Given a vector-field $X \in C_c^\infty(M)$ supported in the interior of a compact $K \subset M$, the flow along $X$ for time $t$ is denoted by $F_t$, the perturbed partition is denoted by $F_t(\Omega)$, and the $k$-th variation in volume $V_\mu(F_t(\Omega),K)$ and perimeter $A_\mu(F_t(\Omega),K)$ (relative to $K$) are denoted by $\delta^k_X \Vol$ and $\delta^k_X \Area$, respectively. Clearly, $\delta^1_X \Vol$ only depends on $f = (f_{ij})$, where $f_{ij} = X^{\n_{ij}}$ is the normal component of $X$ on $\Sigma_{ij}$, which is co-oriented by the normal $\n_{ij}$ pointing from $\Omega_i$ to $\Omega_j$, and
\[
\delta^1_X \Vol = \delta^1_f \Vol := \brac{\int_{\partial \Omega_i} f_{ij} d\mu^{n-1}}_i = \brac{\sum_{j \neq i} \int_{\Sigma_{ij}} f_{ij} d\mu^{n-1}}_i .
\]
Note that $f_{ji} = -f_{ij}$ is ``oriented". 

A partition is called stationary if $\delta^1_X \Area = \scalar{\lambda,\delta^1_X \Vol}$ for some vector $\lambda \in \R^q$ of Lagrange multipliers for all $X$ as above.
On a stationary regular partition, whenever three interfaces $\Sigma_{ij}$, $\Sigma_{jk}$ and $\Sigma_{ki}$ meet, they do so at $120^{\circ}$-degree angles. Denoting their common $(n-2)$-dimensional boundary by $\Sigma_{ijk}$, this means that $\n_{ij} + \n_{jk} + \n_{ki} = 0$ and hence $f_{ij} + f_{jk} + f_{ki} = 0$ on $\Sigma_{ijk}$ (referred to as ``Dirichlet-Kirchoff conditions"). 

A stationary regular partition is called stable if
\[
\delta^1_X \Vol = 0 \;\; \Rightarrow \;\; Q(X) := \delta^2_X \Area - \scalar{\lambda, \delta^2_X \Vol} \geq 0 . 
\]
It turns out that like $\delta^1_X \Vol$, $Q(X)$ only depends on $f = (f_{ij}) = (X^{\n_{ij}})$. We write $Q(X) = Q^0(f)$, and refer to $f = (f_{ij})$ as a physical scalar-field. More generally, a collection $f = (f_{ij})$ of (mildly regular) oriented functions $f_{ij}$ on $\Sigma_{ij}$ satisfying the Dirichlet-Kirchoff conditions is simply called a scalar-field. 
 We set: \[
\Sigma^1 := \cup_{i<j} \Sigma_{ij} ~,~ \Sigma^2 := \cup_{i<j<k} \Sigma_{ijk} . 
\]

It is known that a (locally) isoperimetric minimizing partition is necessarily regular, stationary and stable. Let $\Omega$ be a stationary regular partition with locally bounded curvature in $(M,g,\mu)$, meaning that the second fundamental form $\II^{ij}$ of $\Sigma_{ij}$ is bounded for all $i<j$ on every compact subset $K \subset M$. 
In that case, it was shown in \cite{EMilmanNeeman-GaussianMultiBubble,EMilmanNeeman-TripleAndQuadruple} that for any physical scalar-field $f$:
\begin{equation} \label{eq:intro-Q0}
Q^0(f)  = \sum_{i<j} \brac{ -\int_{\Sigma_{ij}} f_{ij} L_{Jac} f_{ij} d\mu^{n-1} + \int_{\partial \Sigma_{ij}} (\nabla_{\n_{\partial ij}} f_{ij} - \bar \II^{\partial ij} f_{ij}) f_{ij} d\mu^{n-2}} ,
\end{equation}
where $\partial \Sigma_{ij}$ denotes the $(n-2)$-dimensional part of the boundary of $\Sigma_{ij}$, $\n_{\partial ij}$ denotes the outer unit-normal to $\partial \Sigma_{ij}$ in $\overline{\Sigma_{ij}}$, and $\bar \II^{\partial ij}$ is defined on $\Sigma_{ijk}$ as
\begin{equation} \label{eq:intro-barII}
 \bar \II^{\partial ij} =  \bar \II^{\partial ij}_{ijk} := \frac{\II^{ik}(\n_{\partial ik},\n_{\partial ik}) + \II^{jk}(\n_{\partial jk},\n_{\partial jk})}{\sqrt{3}}.
 \end{equation}
The Jacobi operator $L_{Jac}$ is defined as
\begin{equation} \label{eq:intro-LJac}
L_{Jac} f_{ij} := \Delta_{\Sigma^1,\mu} f_{ij} + (\Ric_{M,\mu}(\n_{ij},\n_{ij}) + \snorm{\II^{ij}}^2) f_{ij} ,
\end{equation}
where
\[
\Delta_{\Sigma^1,\mu} f_{ij} := \Delta_{\Sigma^1} f_{ij} - \scalar{\nabla_{\Sigma^1} f_{ij} , \nabla_{\Sigma^1} W} 
\]
is the $\mu$-weighted Laplacian on $(\Sigma^1,\mu^{n-1})$, 
\[
\Ric_{M,\mu} := \Ric_M + \nabla^2  W 
\]
is the $\mu$-weighted Ricci curvature on $(M,g,\mu)$, and $\norm{\cdot}$ denotes the Hilbert-Schmidt norm.

\subsection{Conjugated Brascamp-Lieb inequality on partitions with conformally flat umbilical boundary}

The classical Brascamp-Lieb inequality \cite{BrascampLiebPLandLambda1} was originally established on weighted Euclidean space, but has since been extended to various more general settings. One relevant formulation established by Kolesnikov--Milman \cite{KolesnikovEMilmanReillyPart1} states that if $(\Sigma^{n-1},g,\mu = \exp(-W) d\vol_g)$ is a compact weighted Riemannian manifold with boundary $\partial \Sigma$, so that $\II_{\partial \Sigma} \geq 0$ is locally convex, then for any $f \in C^1(\Sigma)$
\begin{equation} \label{eq:intro-simple-BL}
\int_\Sigma f d\mu = 0 \;\; \Rightarrow \;\; \frac{N}{N-1} \int_\Sigma f^2 d\mu \leq \int_\Sigma \Ric_{\Sigma,\mu,N}^{-1}(\nabla f, \nabla f) d\mu ,
\end{equation}
assuming $N \in [n-1,\infty] \cup (-\infty,0)$, $N \neq1$, and $\Ric_{\Sigma,\mu,N} > 0$, where the $N$-dimensional $\mu$-weighted Ricci tensor on $\Sigma$ is defined as:
\[
\Ric_{\Sigma,\mu,N} := \Ric_{\Sigma} + \nabla^2_{\Sigma} W - \frac{1}{N-n+1} \nabla_\Sigma W \otimes \nabla_\Sigma W . 
\]
When $n=2$, we simply set $\Ric_{\Sigma} \equiv 0$. 

An obvious problem arises when attempting to apply this to the interfaces of a stationary regular partition: since $\n_{ij} + \n_{jk} + \n_{ki} = 0$ on $\Sigma_{ijk}$, and hence
\[
\II_{\Sigma_{ijk} \subset \partial \Sigma_{ij}} + \II_{\Sigma_{ijk} \subset \partial \Sigma_{jk}} + \II_{\Sigma_{ijk} \subset \partial \Sigma_{ki}} = 0,
\]
it is impossible for $\Sigma_{ijk}$ to be locally convex in $\overline{\Sigma_{ij}}, \overline{\Sigma_{jk}}, \overline{\Sigma_{ki}}$ simultaneously, unless it is flat in all three interfaces. While this will not be a problem on $\G^n$ when we treat flat stationary partitions, it is definitely a serious obstacle for partitions with spherical (and more general) interfaces. 

A useful idea is that a conformal change of metric $\tilde g = \frac{1}{V^2} g$ can make an umbilical $\Sigma_{ijk}$ flat. However, in our context, the resulting tensor $\Ric_{\Sigma_{ij},\tilde g,\mu,N}$ will not yield a useful expression which we can control. Instead, we turn to another tensor from the General Relativity literature, which appears in the context of \emph{static} space-times \cite{LiXia-GenReillyFormulaSubStatic}. Given a positive smooth function $V>0$ on $(\Sigma,g,\mu)$, define, following \cite{LiXia-GenReillyFormulaSubStatic,HuangZhu-ConjugatedBrascampLieb},
\begin{equation} \label{eq:intro-RicV}
\hRic^V_{\Sigma,\mu,N} := \Ric_{\Sigma,\mu,N} - \frac{\Delta_{\Sigma,\mu} V}{V} g + \frac{\nabla^2_{\Sigma} V}{V} .
\end{equation}
In the unweighted case $\mu = \vol_g$, we simply write $\hRic^V_{\Sigma}$. 
An unweighted space-time $(\Sigma^{n-1},g)$ is called static if $\hRic^V_{\Sigma} \equiv 0$ for some $V>0$, which is called a static potential. The zero set of a static potential is always a totally geodesic hypersurface, explaining the ``static" nomenclature. It is known \cite{Brendle-ScalarCurvatureSurvey} that $\hRic^V_{\Sigma}$ arises as the formal $L^2$-adjoint of the linearization of the scalar curvature. 
We remark that a closely related tensor appears in the ``intertwining" approach of Bonnefont--Joulin \cite{BonnefontJoulin-ConjugatedBLWithBoundary} for Brascamp-Lieb inequalities, but it does not exactly coincide with $\hRic^V_{\Sigma,\mu,\infty}$ when $n \geq 3$. 
Given a smooth positive potential $V > 0$ on a Riemannian manifold $(\Sigma,g)$ with boundary $\partial \Sigma$ co-oriented by the unit-normal $\n_{\partial}$, define:
\[
\II_{\partial \Sigma}^V := \II_{\partial \Sigma} - \nabla_{\n_{\partial}} \log V \; g_{\partial \Sigma} . 
\]
Note that $\frac{1}{V} \II_{\partial \Sigma}^V$ is the second fundamental form of $\partial \Sigma$ relative to $(\Sigma,\frac{1}{V^2} g)$. However, we emphasize again that $\hRic^V_{\Sigma,\mu,N}$ is not directly related to $\Ric_{\Sigma,\frac{1}{V^2} g, \mu , N}$. When $(\Sigma^{n-1},g)$ is a compact Riemannian manifold with boundary satisfying $\II_{\partial \Sigma}^V \geq 0$ and $\hRic^V_{\Sigma,\mu,N} > 0$ for some $N \in [n-1,\infty] \cup (-\infty,0)$, $N \neq1$, Huang--Zhu \cite{HuangZhu-ConjugatedBrascampLieb} (see also \cite{HuangMaZhu-ReillyFormula}) established that, for any $h \in C^1(\Sigma)$,
\[ \int_\Sigma h V d\mu = 0 \;\; \Rightarrow \;\; \frac{N}{N-1} \int_\Sigma h^2 V d\mu \leq \int_\Sigma (\hRic^V_{\Sigma,\mu,N})^{-1}(\nabla h, \nabla h) V d\mu ,
\] generalizing (\ref{eq:intro-simple-BL}) from the case that $V \equiv 1$. Equivalently, setting $f = h V$, we have for any $f \in C^1(\Sigma)$,
\begin{equation} \label{eq:intro-HZ-BL}
\int_\Sigma f d\mu = 0 \;\; \Rightarrow \;\; \frac{N}{N-1} \int_\Sigma \frac{f^2}{V} d\mu \leq \int_\Sigma  (V \hRic^V_{\Sigma,\mu,N})^{-1}\brac{\nabla f - \frac{\nabla V}{V} f}  d\mu ,
\end{equation}
where we use $T(f)$ to denote $T(f,f)$ for a quadratic form $T$.
We will refer to this as a ``conjugated Brascamp-Lieb inequality", as $V$ serves some type of conjugation role. 

\medskip

Our first new ingredient extends this to partitions with umbilical boundary which is conformally flat. \begin{definition}[Conformally flat boundary]
A regular partition $\Omega$ of $(M^n,g,\mu)$ is said to have conformally flat boundary if there exists a smooth positive function $V>0$ on $(M^n,g)$, called the conformally flattening boundary potential, so that for all non-empty $\Sigma_{ijk}$, 
\[
\II_{\Sigma_{ijk} \subset \partial \Sigma_{ij}}^V  = \II_{\Sigma_{ijk} \subset \partial \Sigma_{ij}} - \nabla_{\n_{\partial ij}} \log V \, g_{\Sigma_{ijk}} \equiv 0.
\] 
\end{definition}
We defer the somewhat more technical definition of umbilical boundary to Section \ref{sec:Bochner}. For now, let us just emphasize that a stationary regular partition with interfaces $\Sigma_{ij}$ of constant curvature (namely $\II^{ij} = \k_{ij} g_{\Sigma_{ij}}$) has umbilical boundary (see Lemma \ref{lem:constant-curvature}). 
On the model spaces $\R^n$, $\S^n$, $\HH^n$, this means having (generalized) spherical interfaces. A partition with umbilical boundary satisfies \[
\II_{\Sigma_{ijk} \subset \partial \Sigma_{ij}} = \bar \II^{\partial ij}_{ijk} \, g_{\Sigma_{ijk}} ~,~ \]
and hence a conformally flattening potential $V>0$ is equivalently characterized by:
\begin{equation} \label{eq:intro-non-oriented-BCs}
\nabla_{\n_{\partial ij}} V - \bar \II^{\partial ij}_{ijk} \, V \equiv 0 \;\; \text{on $\Sigma_{ijk} \neq \emptyset$} .
\end{equation}
To abbreviate the ensuing expressions, we use $f_{ij}$ when integrating $f$ over $\Sigma_{ij} \subset \Sigma^1$ (using an arbitrary orientation, as these expressions are quadratic forms).

\begin{thm}[Conjugated Brascamp-Lieb inequality on partitions with conformally flat umbilical boundary] \label{thm:intro-BL}
Let $\Omega$ be a stationary regular partition of $(M^n, g, \mu)$, $n\geq 2$, with locally-bounded curvature and umbilical boundary. 
Assume that $\Omega$ has conformally flat boundary, and let $V>0$ be a conformally flattening boundary potential. 
Let $f = (f_{ij})$ be a scalar-field of the form $f = L_V u$, where $u = (u_{ij})$ is an \emph{admissible} scalar-field and
\[
f_{ij} =  L_V u_{ij} := V \Delta_{\Sigma^1,\mu} u_{ij} - u_{ij} \Delta_{\Sigma^1,\mu} V .
\]
Then:
\begin{enumerate}
\item $\delta^1_f \Vol  =0$. 
\item Recalling the definition (\ref{eq:intro-RicV}), assume that $\hRic^V_{\Sigma^1, \mu, N} > 0$ for some $N \in [n-1,\infty] \cup (-\infty,0)$, $N \neq1$. Then the following conjugated Brascamp-Lieb inequality holds:
 \begin{equation}\label{eq:intro-BL}
        \frac{N}{N-1} \int_{\Sigma^1} \frac{f^2_{ij}}{V} d\mu^{n-1} \leq \int_{\Sigma^1}  (V \hRic^V_{\Sigma^1, \mu, N})^{-1} \brac{\nabla_{\Sigma^1} f_{ij} - \frac{\nabla_{\Sigma^1} V}{V} f_{ij}} d\mu^{n-1}.
\end{equation}
\end{enumerate}
\end{thm}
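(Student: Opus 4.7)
The plan is to run a $V$-conjugated Bochner/Reilly identity on each interface $\Sigma_{ij}$, sum over $i<j$, cancel the boundary contributions at triple junctions using the three structural hypotheses (admissibility of $u$, umbilicality of the junctions, and the conformally-flat equation \eqref{eq:intro-non-oriented-BCs}), and conclude via a Cauchy--Schwarz/duality step mirroring the single-manifold proof of \eqref{eq:intro-HZ-BL} due to Huang--Zhu.

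For Part~(1), I would exploit the pointwise identity $L_V u_{ij} = \mathrm{div}_{\Sigma^1,\mu}(V \nabla u_{ij} - u_{ij} \nabla V)$, so that the $\mu$-weighted Stokes theorem on $\Sigma_{ij}$ yields
\[
\int_{\Sigma_{ij}} L_V u_{ij}\, d\mu^{n-1} \;=\; \int_{\partial \Sigma_{ij}} \bigl(V\, \nabla_{\n_{\partial ij}} u_{ij} - u_{ij}\, \nabla_{\n_{\partial ij}} V\bigr) d\mu^{n-2}.
\]
Decomposing $\partial\Sigma_{ij} = \bigsqcup_{k \neq i,j} \Sigma_{ijk}$ and summing over $j \neq i$ groups contributions by triple junctions adjacent to $\Omega_i$. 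Substituting the conformally-flat equation $\nabla_{\n_{\partial ij}} V = \bar\II^{\partial ij}_{ijk} V$, the integrand on $\Sigma_{ijk}$ reduces to $V\bigl[(\nabla_{\n_{\partial ij}} u_{ij} - \bar\II^{\partial ij}_{ijk} u_{ij}) + (\nabla_{\n_{\partial ik}} u_{ik} - \bar\II^{\partial ik}_{ijk} u_{ik})\bigr]$. I expect the notion of \emph{admissibility} (yet to be introduced) to be engineered precisely so that this combination, together with the 120° identity $\n_{\partial ij} + \n_{\partial jk} + \n_{\partial ki} = 0$ and the cyclic Dirichlet--Kirchoff relation $u_{ij} + u_{jk} + u_{ki} = 0$, vanishes on every $\Sigma_{ijk}$.

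For Part~(2), I would apply a $V$-conjugated Reilly identity (as developed in the single-domain setting by Li--Xia and Huang--Zhu) to $u_{ij}$ on each $\Sigma_{ij}$, schematically of the form
\[
\int_{\Sigma_{ij}} \frac{(L_V u_{ij})^2}{V}\, d\mu^{n-1} \;=\; \int_{\Sigma_{ij}} V\bigl(\|\nabla^2 u_{ij}\|^2_{\mu,N} + \hRic^V_{\Sigma^1,\mu,N}(\nabla u_{ij})\bigr)\, d\mu^{n-1} \;+\; \mathcal{B}_{ij},
\]
where $\mathcal{B}_{ij}$ is an $(n-2)$-dimensional boundary term on $\partial\Sigma_{ij}$ whose principal piece is $\int V\, \II^V_{\Sigma_{ijk}\subset\partial\Sigma_{ij}}(\nabla u_{ij}, \nabla u_{ij})\,d\mu^{n-2}$, with remaining lower-order corner contributions bilinear in $(u_{ij}, \nabla_{\n_{\partial ij}} u_{ij})$ and $(V, \nabla_{\n_{\partial ij}} V)$. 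Summing over $i<j$, the principal piece is annihilated by the conformally-flat + umbilical hypothesis ($\II^V \equiv 0$), while the lower-order corner pieces cancel at each $\Sigma_{ijk}$ by exactly the same admissibility + 120° mechanism used in Part~(1). A trace inequality $\|\nabla^2 u\|^2_{\mu,N} \geq (\Delta_{\mu,N} u)^2/N$ (providing the sharp Lichnerowicz constant), combined with a Cauchy--Schwarz step in the dual form naturally pairing $\nabla u_{ij}$ with $\nabla f_{ij} - (\nabla V/V) f_{ij}$ for $f = L_V u$, then delivers \eqref{eq:intro-BL}.

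The principal obstacle is the boundary analysis on the stratified space $\Sigma^1$: one must verify that the $V$-conjugated Reilly boundary term $\mathcal{B}_{ij}$, once regrouped over triple junctions, splits cleanly into an $\II^V$-piece (killed by the conformally-flat + umbilical hypothesis) and an admissibility-compatible piece (killed by Dirichlet--Kirchoff + the 120° relation). Umbilicality enters essentially here, since it collapses the tensor-valued condition $\II^V \equiv 0$ to the single scalar equation \eqref{eq:intro-non-oriented-BCs} and consequently makes $\II^V(\nabla u_{ij}, \nabla u_{ij})$ vanish regardless of the tangential behaviour of $u_{ij}$. Beyond this bookkeeping---keeping track of signs, orientation conventions for $u_{ij}$ versus $u_{ji}$, and the $\mu$-weight at each corner---the trace inequality and the duality/Cauchy--Schwarz step proceed as in the single-manifold template.
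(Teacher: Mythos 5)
Your top-level strategy matches the paper's: a $V$-conjugated Bochner--Reilly identity on each $\Sigma_{ij}$, summed over $i<j$, boundary cancellation, and a Cauchy--Schwarz/duality step. Your Part~(1) argument --- Stokes on each $\partial\Sigma_{ij}$, grouping by triple junctions, and cancellation of the $\pm c_p$ contributions from the conformal BCs and the orientation conventions --- is correct in outline and is essentially the content of Corollary~\ref{cor:delta1V-zero}, modulo the truncation arguments near $\Sigma^{\geq 3}$ that you flag. The gap is in Part~(2). After invoking the conformal BCs, umbilicality, Dirichlet--Kirchoff and the $120^\circ$ relations to dispose of the expected boundary terms on $\Sigma_{ijk}$, what survives is \emph{not} zero: it is the divergence $\div_{\Sigma^2,\mu} Z_{uvw}$ of a generically nonzero tangential field on the triple-junction stratum, with $Z_{uvw}$ as in (\ref{eq:Zuvw}) (see Lemma~\ref{lem:Sigma1Boundary}). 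Your claim that the remaining corner contributions ``cancel at each $\Sigma_{ijk}$ by exactly the same admissibility + $120^\circ$ mechanism used in Part~(1)'' is therefore false, and the integrated Bochner identity on which your Cauchy--Schwarz step rests is not established at this stage.

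Closing this requires a second integration by parts, now on the $(n-2)$-dimensional stratum $\Sigma^2$, which produces boundary contributions on the quadruple-point set $\Sigma^3=\cup_{a<b<c<d}\Sigma_{abcd}$. There the paper carries out a cancellation of twelve terms (Lemma~\ref{lem:Zuvw-cancelation}) hinging on three structural ingredients your proposal does not anticipate: the stationarity three-tensor identity (\ref{eq:three-tensor}); the additional umbilicality hypothesis (\ref{eq:A2}), namely $\II^{ij}_{\n^{ijk}_{\partial ij}} = \II^{ij}_{\n^{ij\ell}_{\partial ij}}$ on $\Sigma_{ijk\ell}$, which is logically independent of umbilicality of $\Sigma^2$ in $\Sigma^1$; and the antisymmetry $c_{ijk}=-c_{jik}$ of the corner constants furnished by the conformal BCs. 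One also needs the integrability hypothesis (\ref{eq:A1B}) to justify the divergence theorem on $\Sigma^2$. Without recognizing and disposing of this residual $\Sigma^3$-level boundary term, the proof does not close.
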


The precise definition of admissible scalar-field $u = (u_{ij})$ is rather technical, and deferred to Section \ref{sec:scalar-fields}. Besides the scalar-field conditions $u_{ji} = -u_{ij}$ on $\Sigma_{ij}$ and $u_{ij} + u_{jk} + u_{ki} = 0$ on $\Sigma_{ijk}$, and various requirements of sufficient regularity on $\Sigma^1$ and $\Sigma^2$, a crucial requirement is that $u$ satisfies \emph{conformal boundary-conditions} (BCs), meaning that:
\[
\forall p \in \Sigma_{ijk} \;\;\;  \forall (a,b) \in \cyclic(i,j,k) \;\;\; \nabla_{\n_{\partial ab}} u_{ab} - \bar \II^{\partial ab}_{ijk} \, u_{ab} = c_p ,
\]
where $c_p$ depends only on $p$ and not on the cyclically ordered pair $(a,b) \in \cyclic(i,j,k) := \{ (i,j) , (j,k) , (k,i) \}$. 
This nomenclature stems from the fact that when $X$ is a conformal Killing vector-field, meaning that it induces a conformal (angle-preserving) flow $F_t$, then $u = (X^{\n_{ij}})$ satisfies conformal BCs on $\Sigma^2$ \cite{EMilmanNeeman-TripleAndQuadruple}.

\begin{remark}
In the setting of a single compact weighted manifold with boundary $(\Sigma,g,\mu)$, the operator $L_V$ was already considered by Huang--Zhu \cite{HuangZhu-ConjugatedBrascampLieb} (see also \cite{HuangMaZhu-ReillyFormula}). As in \cite{QiuXia-GenReillyFormula,LiXia-GenReillyFormulaSubStatic,HuangZhu-ConjugatedBrascampLieb,HuangMaZhu-ReillyFormula}, the proof of Theorem \ref{thm:intro-BL} relies on integration-by-parts of a generalized Bochner--Reilly identity involving the potential $V$. However, in our partition setting, the boundary $\partial \Sigma_{ij} = \cup_k \Sigma_{ijk}$ is not a closed manifold, and so divergences do not necessarily integrate to zero as in \cite{QiuXia-GenReillyFormula,LiXia-GenReillyFormulaSubStatic,HuangZhu-ConjugatedBrascampLieb,HuangMaZhu-ReillyFormula}. To make things worse, our test functions are not compactly supported in $\Sigma_{ij} \cup \partial \Sigma_{ij}$. Consequently, we employ delicate truncation arguments to verify that all of the integrations-by-parts on $\Sigma_{ij}$ and $\Sigma_{ijk}$ required for the proof are justified, and that the boundary terms ultimately cancel out under our assumptions. To this end, the conformal BCs play a crucial role to get an appropriate cancellation, and moreover, we verify a delicate cancellation of 12 terms on $\Sigma_{ijk\ell}$, the $(n-3)$-dimensional boundary where four cells meet. 
\end{remark}

\begin{remark}\label{rem:intro-stability}
 In complete analogy to the single bubble setting of \cite{HuangZhu-ConjugatedBrascampLieb,HuangMaZhu-ReillyFormula}, we believe that in our partition setting, for an appropriate dense subset of physical scalar-fields $f$ with $\delta^1_f \Vol = 0$,
there exists an admissible scalar-field $u$ so that $L_V u  = f$. Indeed, it was shown in \cite{EMilmanNeeman-QuintupleBubble}, at least for bounded clusters with nice Lipschitz boundaries, that $(L_{Jac},\D_{con})$ is self-adjoint on $L^2(\Sigma^1,\mu^{n-1})$ with compact resolvent, where the domain $\D_{con}$ denotes the class of Sobolev scalar-fields $f \in  H^1(\Sigma^1,\mu^{n-1})$ satisfying conformal BCs and
$\Delta_{\Sigma^1,\mu^{n-1}} f \in L^2(\Sigma^1,\mu^{n-1})$. Equivalently, since $L_V = V L_{Jac} - L_{Jac} V$, this means that $(L_V,\D_{con})$ is self-adjoint on $L^2(\Sigma^1,\frac{1}{V} \mu^{n-1})$, and it follows that $\text{Im}(L_V,\D_{con}) = \ker (L_V , \D_{con})^{\perp}$ in $L^2(\Sigma^1,\frac{1}{V} \mu^{n-1})$ by the Fredholm alternative. 
It is easy to check (see e.g.~Lemma \ref{lem:byparts}) 
that $(-L_V,\D_{con}) \geq 0$ is positive semi-definite in $L^2(\Sigma^1,\frac{1}{V} \mu^{n-1})$ and that $\ker (L_V , \D_{con})$ consists of $f \in \D_{con}$ with $\nabla_{\Sigma^1} (f/V)  \equiv 0$. 
If each $\Sigma_{ij}$ is connected, this means that $\ker (L_V , \D_{con}) = \{ (a_{ij} V) \}$ with $a_{ij} \in \R$ constant on $\Sigma_{ij}$, and using a cohomological argument such as the one in \cite[Section 8]{EMilmanNeeman-TripleAndQuadruple}, one can deduce that $a_{ij} = a_i - a_j$ for some $a \in \R^q$. In that case, $\ker (L_V , \D_{con})^{\perp}$ coincides with $f  \in L^2(\Sigma^1,\frac{1}{V} \mu^{n-1})$ so that $\int_{\Sigma^1} f_{ij} a_{ij} V \frac{d\mu^{n-1}}{V} = 0$ for all $a \in \R^q$, which is equivalent to the property $\delta^1_f \Vol = 0$. It follows that whenever $\delta^1_f \Vol = 0$, there exists $u \in \D_{con}$ so that $L_V u = f$. However, even for very smooth and compactly supported physical scalar-fields $f$, 
establishing the regularity of $u$ \textbf{up to the boundaries $\Sigma_{ijk}$ and $\Sigma_{ijk\ell}$}  (which we require in Theorem \ref{thm:intro-BL}) would constitute an entire long and difficult project in itself. At first glance this does not seem difficult since $L_V$ is elliptic, and so the interior regularity of $u_{ij}$ in $\Sigma_{ij}$ is immediate. It is also possible to obtain the boundary regularity of $u$ around $\Sigma_{ijk}$ where 3 interfaces meet, which creates an interaction of $u_{ij}$, $u_{jk}$ and $u_{ki}$ via their conformal BCs, by invoking the Agmon--Douglis--Nirenberg theory \cite{ADN2} of elliptic \textbf{systems} of PDEs around flat (and hence smooth) boundaries. Unfortunately, we also require the regularity of $u$ up to $\Sigma_{ijk\ell}$, where $6$ interfaces meet as sectors of angle $\cos^{-1}(1/3) \simeq 109^{\circ}$, and so the ADN theory would need to be \textbf{extended to convex sectors} (Lipschitz boundary would not be enough, as complications arise even for the simplest elliptic PDEs in the plane \cite[Chapter 1.4]{Grisvard-Book}, \cite{Savare-RegularityInLipDomains}). 
We have decided not pursue these technicalities here, as these would only obscure the new geometric ideas in this work, and so our stability results are not formulated for \emph{any} physical scalar-field $f$ with $\delta^1_f \Vol = 0$, but just for the ones of the form $f = L_V u$ with $u$ admissible. 
\end{remark}

\subsection{Construction of a good conformally flattening potential} \label{subsec:intro-results}

Let us now compare (\ref{eq:intro-BL}) with the stability property $Q^0(f) \geq 0$ where $f$ is a physical scalar-field and $Q^0$ is given by (\ref{eq:intro-Q0}). 
Write:
\[
- f_{ij} L_{Jac} f_{ij} = -f_{ij} \brac{L_{Jac} f_{ij} - \frac{L_{Jac} V}{V} f_{ij}} - \frac{L_{Jac} V}{V} f_{ij}^2 = - f_{ij} \brac{\Delta_{\Sigma^1,\mu}  f_{ij}- \frac{\Delta_{\Sigma^1,\mu} V}{V} f_{ij}} - \frac{L_{Jac} V}{V} f_{ij}^2 ,
\]
and so integrating (\ref{eq:intro-Q0}) by parts (see Lemma \ref{lem:Q0V} for a justification), we obtain: 
\begin{equation} \label{eq:intro-Q0V}
Q^0(f) = \sum_{i<j}  \int_{\Sigma_{ij}} \brac{\abs{\nabla_{\Sigma^1} f_{ij} - \frac{\nabla_{\Sigma^1} V}{V} f_{ij}}^2 - \frac{L_{Jac} V}{V} f_{ij}^2} d\mu^{n-1} + 
\int_{\partial \Sigma_{ij}} \brac{\frac{\nabla_{\n_{\partial ij}} V}{V} - \bar \II^{\partial ij}} f_{ij}^2 d\mu^{n-2} .
\end{equation}
When $V>0$ satisfies (\ref{eq:intro-non-oriented-BCs}), the boundary term vanishes, 
and we see that stability amounts to showing that for any physical scalar-field $f$, 
\begin{equation} \label{eq:intro-Q0VB}
\delta^1_f \Vol = 0 \;\; \Rightarrow \;\; \int_{\Sigma^1} \abs{\nabla_{\Sigma^1} f_{ij} - \frac{\nabla_{\Sigma^1} V}{V} f_{ij}}^2 d\mu^{n-1} \geq \int_{\Sigma^1} \frac{L_{Jac} V}{V} f_{ij}^2 d\mu^{n-1} . 
\end{equation}

One simple scenario, when stability can be established for all physical scalar-fields $f$, without even requiring that $\delta^1_f \Vol = 0$, is an immediate consequence of (\ref{eq:intro-Q0V}):
\begin{prop} \label{prop:intro-LJacNegative}
Let $\Omega$ be a stationary regular $q$-partition in $(M^n,g,\mu)$, $n \geq 2$ and $2 \leq q \leq \infty$. Assume the existence of a positive smooth $V>0$ satisfying (\ref{eq:intro-non-oriented-BCs}) and $L_{Jac} V \leq 0$. Then for all physical scalar-fields $f$, $Q^0(f) \geq 0$; in particular, $\Omega$ is stable.
\end{prop}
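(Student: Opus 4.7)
The plan is to read off the conclusion directly from identity (\ref{eq:intro-Q0V}), which already decomposes $Q^0(f)$ into an interior integral over $\Sigma^1$ plus a boundary integral over $\cup_{i<j} \partial \Sigma_{ij} \subset \Sigma^2$. Under the two hypotheses, every term in that decomposition will have a manifest pointwise sign, so the proof reduces to a short bookkeeping argument with no test-field construction, no orthogonal projection onto $\{\delta^1_f \Vol = 0\}$, and in fact no use of the volume-preserving assumption at all.

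First I would dispose of the boundary integral. The hypothesis that $V$ satisfies (\ref{eq:intro-non-oriented-BCs}) reads $\nabla_{\n_{\partial ij}} V - \bar \II^{\partial ij}_{ijk} V \equiv 0$ on each non-empty $\Sigma_{ijk}$, and since $V > 0$ this is equivalent to the pointwise identity
\[
\frac{\nabla_{\n_{\partial ij}} V}{V} - \bar \II^{\partial ij} \equiv 0 \quad \text{on } \partial \Sigma_{ij}.
\]
This is exactly the bracketed factor multiplying $f_{ij}^2$ in the boundary integrand of (\ref{eq:intro-Q0V}), so the entire boundary sum vanishes term-by-term, regardless of $f$.

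Next I would show the surviving interior integrand is pointwise non-negative. The gradient term $\abs{\nabla_{\Sigma^1} f_{ij} - \frac{\nabla_{\Sigma^1} V}{V} f_{ij}}^2$ is a squared norm, hence $\geq 0$, while the zeroth-order term $-\frac{L_{Jac} V}{V} f_{ij}^2$ is the product of $f_{ij}^2 \geq 0$ with $-L_{Jac} V / V \geq 0$, the latter coming from $V > 0$ together with $L_{Jac} V \leq 0$. Summing in $i<j$ and integrating against $\mu^{n-1}$ over $\Sigma^1$ therefore yields $Q^0(f) \geq 0$ for \emph{every} physical scalar-field $f$, with no constraint on $\delta^1_f \Vol$. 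Since stability is the a priori weaker implication $\delta^1_f \Vol = 0 \Rightarrow Q^0(f) \geq 0$, it is subsumed, and $\Omega$ is stable.

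The only non-trivial ingredient is identity (\ref{eq:intro-Q0V}) itself, whose derivation from (\ref{eq:intro-Q0}) requires integration by parts on each interface and is the content of Lemma \ref{lem:Q0V}; I would simply invoke it as a black box. I do not anticipate any real obstacle in the proof of this proposition, as it is essentially a sign-chase once the identity and the boundary condition are in place; the substantive work lies in the hypothesis $L_{Jac} V \leq 0$ being realizable, which is the subject of the subsequent construction of good conformally flattening potentials.
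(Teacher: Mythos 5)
Your argument is exactly the paper's: the proposition is stated as an immediate consequence of identity (\ref{eq:intro-Q0V}) (justified in Lemma \ref{lem:Q0V}), whose boundary term dies under (\ref{eq:intro-non-oriented-BCs}) and whose remaining interior integrand is pointwise non-negative once $L_{Jac} V \leq 0$ and $V>0$. Your observation that the volume constraint is never used is also made explicitly in the paper's surrounding discussion.
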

\begin{corollary}
The conclusion holds whenever $\Omega$ is in addition flat and $\Ric_{M,\mu} \leq 0$.
\end{corollary}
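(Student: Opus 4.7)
The plan is simply to invoke Proposition~\ref{prop:intro-LJacNegative} with the trivial choice $V \equiv 1$, and verify that both hypotheses on $V$ reduce to the stated assumptions on $\Omega$ and $(M,g,\mu)$.

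First, I would check the boundary condition (\ref{eq:intro-non-oriented-BCs}). Since $\Omega$ is flat, the second fundamental form $\II^{ij}$ of every interface $\Sigma_{ij}$ vanishes identically. Consequently, on every non-empty $\Sigma_{ijk}$, the boundary curvature
\[
\bar \II^{\partial ij}_{ijk} = \frac{\II^{ik}(\n_{\partial ik},\n_{\partial ik}) + \II^{jk}(\n_{\partial jk},\n_{\partial jk})}{\sqrt{3}}
\]
vanishes for every cyclic choice of indices. With $V \equiv 1$ we trivially have $\nabla_{\n_{\partial ij}} V = 0$, so (\ref{eq:intro-non-oriented-BCs}) is satisfied as $0 - 0 \cdot 1 = 0$.

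Next, I would evaluate $L_{Jac} V$ using its definition (\ref{eq:intro-LJac}). Since $V \equiv 1$, the weighted Laplacian term $\Delta_{\Sigma^1,\mu} V$ vanishes, and since $\|\II^{ij}\|^2 \equiv 0$ by flatness, the potential reduces to
\[
L_{Jac} V\big|_{\Sigma_{ij}} = \Ric_{M,\mu}(\n_{ij},\n_{ij}),
\]
which is $\leq 0$ by the hypothesis $\Ric_{M,\mu} \leq 0$. Both conditions of Proposition~\ref{prop:intro-LJacNegative} are therefore met, and applying it yields $Q^0(f)\geq 0$ for every physical scalar-field $f$, proving that $\Omega$ is stable.

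No real obstacle is expected: the content of the corollary is essentially a sanity check that the trivial potential is already a conformally flattening potential in the flat regime, and that flatness plus non-positive weighted Ricci forces the Jacobi operator to have a non-positive ``constant'' direction. The only item to double-check while writing the proof is that the umbilical boundary hypothesis implicit in Proposition~\ref{prop:intro-LJacNegative}'s setup is automatically satisfied here — which it is, since flat interfaces trivially have constant (zero) curvature and hence umbilical boundary by the remark following the definition of conformal flatness.
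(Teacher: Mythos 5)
Your proof is correct and follows exactly the paper's argument: choose $V\equiv 1$, note that flatness forces $\bar\II^{\partial ij}\equiv 0$ so the boundary condition~(\ref{eq:intro-non-oriented-BCs}) is trivially satisfied, and compute $L_{Jac}1 = \Ric_{M,\mu}(\n,\n)\le 0$, so Proposition~\ref{prop:intro-LJacNegative} applies. (The side remark about umbilical boundary is harmless but unnecessary: Proposition~\ref{prop:intro-LJacNegative} requires only stationarity, regularity, and the two conditions on $V$, not the umbilical hypothesis, which enters only in Theorem~\ref{thm:intro-BL}.)
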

\begin{proof}
Since $\II^{ij} = 0$, $V \equiv 1$ satisfies (\ref{eq:intro-non-oriented-BCs}) and $L_{Jac} 1 = \Ric_{M,\mu}(\n_{ij},\n_{ij}) \leq 0$. 
\end{proof}
For example, this applies to any flat stationary regular partition in $\R^n$, $\HH^n$ (or their quotients), or on $\R^n$ equipped with $\mu = \exp(+|x|^2/2) dx$ -- these are all stable even without preserving the volume constraint.  On $\R^n$, it was shown by Choe \cite{Choe-PolyhedralIsMinimizing} that a flat stationary partition (in fact, any $m$-dimensional polyhedral set) is not only stable (without volume constraints), but in fact area minimizing under diffeomorphisms (preserving the polyhedral set's boundary). In \cite{Hales-Honeycomb}, Hales showed that on $\R^2$,  the regular hexagonal honeycomb partition minimizes perimeter-to-area density among all partitions with equal unit area cells, and that on a flat torus $\R^2 / \Gamma$ admitting such a hexagonal tiling, the hexagonal partition is perimeter minimizing among all unit area partitions (see also \cite{CDM-AlmostHoneycomb} for additional refinements).

\medskip

A more interesting scenario is a consequence of  (\ref{eq:intro-Q0VB}) in conjunction with (\ref{eq:intro-BL}):
\begin{corollary} \label{cor:intro-stability}
With the same notation and assumptions as in Theorem \ref{thm:intro-BL}, let the conformally flattening boundary potential $V>0$ also satisfy
\[
L_{Jac} V \leq K \;\; \text{and} \;\; V \hRic^V_{\Sigma^1,\mu,N} \geq \frac{N-1}{N} K g_{\Sigma^1},
\]
for some $K > 0$ and $N \in (-\infty,0) \cup [n-1,\infty]$, $N \neq 1$. Then for all physical scalar-fields $f$ of the form $f = L_V u$ where $u$ is an admissible scalar-field, 
we have $\delta^1_f \Vol = 0$ and  $Q^0(f) \geq 0$.
\end{corollary}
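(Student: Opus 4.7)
The plan is to chain together the two ingredients that this corollary was tailored to combine: the rewrite (\ref{eq:intro-Q0V}) of $Q^0(f)$ and the conjugated Brascamp--Lieb inequality of Theorem \ref{thm:intro-BL}. Since $f = L_V u$ for an admissible $u$, Theorem \ref{thm:intro-BL}(1) immediately delivers $\delta^1_f \Vol = 0$, so only the non-negativity of $Q^0(f)$ demands work.

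Next, I would start from the identity (\ref{eq:intro-Q0V}). By hypothesis $V$ is a conformally flattening boundary potential, so (\ref{eq:intro-non-oriented-BCs}) holds, and the pointwise factor $\nabla_{\n_{\partial ij}} V / V - \bar\II^{\partial ij}$ in the boundary integrand of (\ref{eq:intro-Q0V}) vanishes identically along every $\Sigma_{ijk}$. What survives is
\[
Q^0(f) = \int_{\Sigma^1}\Big|\nabla_{\Sigma^1} f_{ij} - \tfrac{\nabla_{\Sigma^1} V}{V} f_{ij}\Big|^2 d\mu^{n-1} \;-\; \int_{\Sigma^1}\tfrac{L_{Jac}V}{V}\, f_{ij}^2\, d\mu^{n-1}.
\]
The pointwise bound $L_{Jac} V \leq K$ together with $V>0$ gives $-\int_{\Sigma^1}(L_{Jac}V/V) f_{ij}^2\, d\mu^{n-1} \geq -K \int_{\Sigma^1} f_{ij}^2/V\, d\mu^{n-1}$, so it suffices to prove
\[
K \int_{\Sigma^1} \tfrac{f_{ij}^2}{V} d\mu^{n-1} \;\leq\; \int_{\Sigma^1} \Big|\nabla_{\Sigma^1} f_{ij} - \tfrac{\nabla_{\Sigma^1} V}{V} f_{ij}\Big|^2 d\mu^{n-1}.
\]

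To close the loop I would invoke Theorem \ref{thm:intro-BL}(2). The pointwise tensor inequality $V \hRic^V_{\Sigma^1,\mu,N} \geq \tfrac{N-1}{N} K\, g_{\Sigma^1}$ inverts to $(V \hRic^V_{\Sigma^1,\mu,N})^{-1} \leq \tfrac{N}{(N-1)K}\, g_{\Sigma^1}^{-1}$; this inversion is legitimate since $\tfrac{N-1}{N} K > 0$ for every admissible $N \in [n-1,\infty] \cup (-\infty,0)$ with $N \neq 1$ (both signs of $N$ give the same positivity), and it also secures the positivity $\hRic^V_{\Sigma^1,\mu,N} > 0$ demanded by Theorem \ref{thm:intro-BL}. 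Substituting this upper bound into the right-hand side of (\ref{eq:intro-BL}) and then dividing through by the common positive factor $\tfrac{N}{N-1}$ yields exactly the displayed gradient inequality, completing the argument.

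The only place that calls for genuine care is precisely this sign bookkeeping for negative $N$, since otherwise the inequalities could flip under tensor inversion or division; beyond that, this corollary is essentially the clean algebraic confluence of (\ref{eq:intro-Q0V}) and Theorem \ref{thm:intro-BL}. All of the genuine analytic work — the generalized Bochner--Reilly identity, the truncation arguments, and the delicate boundary cancellations on $\Sigma_{ijk}$ and $\Sigma_{ijk\ell}$ — has already been absorbed upstream into Theorem \ref{thm:intro-BL}, so no new estimates or regularity considerations are required here.
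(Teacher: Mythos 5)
Your proof is correct and follows exactly the route the paper takes: the paper dispatches this corollary in one line as an immediate consequence of Lemma \ref{lem:Q0V} (i.e.~the rewrite (\ref{eq:intro-Q0V}) with the boundary term killed by the non-oriented conformal BCs) combined with Theorem \ref{thm:intro-BL}. Your sign bookkeeping for negative $N$ — that $\tfrac{N-1}{N}$ and $\tfrac{N}{N-1}$ are both positive for all admissible $N$ — is also right.
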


In view of Corollary \ref{cor:intro-stability}, to conclude the announced stability results from Subsection \ref{subsec:intro-informal}, it remains to establish:
\begin{thm} \label{thm:intro-V}
For any regular, M\"obius-flat, spherical Voronoi, $q$-partition $\Omega$ in $\R^n$ or $\S^n$, $n \geq 3$ and $2 \leq q \leq \infty$, 
  there exists a conformally flattening boundary potential $V>0$ for $\Omega$ so that 
 \begin{equation} \label{eq:intro-goodV}
 L_{Jac} V = n-1 \;\; \text{and} \;\; V \hRic^V_{\Sigma^1,n-1} = (n-2) g_{\Sigma^1}.
 \end{equation}
 With the same assumptions in $\HH^n$, there exists a conformally flattening boundary potential $V>0$ so that either (\ref{eq:intro-goodV}) holds, or else $L_{Jac} V \leq 0$. \\
 In particular, this applies to all  standard $q$-partitions of $\R^n$, $\S^n$ and $\HH^n$, $n \geq 3$, $2 \leq q \leq n+2$. 
\end{thm}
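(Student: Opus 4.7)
My plan is to construct $V$ by conformal transfer from a model flat configuration, exploiting that the operators $L_V$ and $\hRic^V$ (in the Li--Xia / Huang--Zhu framework) were defined precisely so as to enjoy good covariance under conformal changes of metric.

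For the base case of a flat regular spherical Voronoi partition on $(\S^n, g_{\mathrm{round}})$, I verify directly that $V \equiv 1$ satisfies all requirements: (i) $\nabla V = 0$ and $\bar\II^{\partial ij} = 0$ (since interfaces are totally geodesic), so the conformally flattening boundary condition (\ref{eq:intro-non-oriented-BCs}) is automatic; (ii) $L_{Jac}(1) = \Ric_{\S^n}(\n,\n) + \|\II\|^2 = (n-1) + 0 = n-1$; and (iii) each interface $\Sigma_{ij}$ is a unit great $(n-1)$-sphere with $\Ric_\Sigma = (n-2) g_\Sigma$, so $\hRic^1_{\Sigma,n-1} = \Ric_\Sigma = (n-2) g_\Sigma$. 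This handles Theorem~\ref{thm:intro-V} in the ``model'' situation and in particular covers standard $q$-partitions of $\S^n$.

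For a general M\"obius-flat partition $\Omega$ on $\S^n$, let $T:\S^n \to \S^n$ be a M\"obius automorphism with $T(\Omega) = \Omega^\flat$ flat, write $T^* g_{\mathrm{round}} = e^{2\phi} g_{\mathrm{round}}$, and define $V$ as the appropriate power of the conformal factor $e^{-\phi}$. Using the conformal transformation rules of $\Delta_\Sigma$, $\nabla^2_\Sigma$, and the ambient and interface Ricci tensors---where the critical weight $N = n-1$ is what drives the precise cancellations that recover the exact form $(n-2) g$---I verify that the base-case identities for $V^\flat \equiv 1$ on $\Omega^\flat$ pull back to (\ref{eq:intro-goodV}) for $V$ on $\Omega$. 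Crucially, the separability $\k_{ij} = \k_i - \k_j$ and $\c_{ij} = \c_i - \c_j$ of the spherical Voronoi structure is what ensures that a \emph{single} globally defined $V$ satisfies the boundary condition (\ref{eq:intro-non-oriented-BCs}) consistently at every triple junction $\Sigma_{ijk}$. The cases $\R^n$ and $\HH^n$ reduce to the $\S^n$ case by composition with stereographic projection, respectively with a Poincar\'e-type conformal identification with a hemisphere of $\S^n$: the conformal factor of the projection is smooth and positive, and (\ref{eq:intro-goodV}) is preserved. On $\HH^n$, one encounters a dichotomy: when the M\"obius-flattening of the associated $\S^n$-partition stays interior to the hemisphere modelling $\HH^n$, the transferred potential satisfies (\ref{eq:intro-goodV}); when it degenerates to the sphere at infinity---so that some interfaces become horospheres or equidistant hypersurfaces---the transferred $V$ only satisfies $L_{Jac} V \leq 0$, which still suffices via Proposition~\ref{prop:intro-LJacNegative}.

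\textbf{Main obstacles.} The principal technical challenge is tracking the conformal transformation of the compound tensor $\hRic^V$, which depends simultaneously on $V$, $\nabla V$, $\nabla^2 V$, and on both ambient and interface Ricci curvatures; the weight $N = n-1$ is exactly what makes the resulting expression collapse to $(n-2) g$ rather than leaving residual curvature terms. A secondary obstacle is the consistent verification of the conformal boundary condition across triple (and ultimately quadruple) junctions---a consistency which essentially forces the separability structure of spherical Voronoi partitions---together with the geometric case analysis on $\HH^n$ needed to correctly distinguish the two regimes and decide when $L_{Jac} V \leq 0$ must replace the full identities (\ref{eq:intro-goodV}).
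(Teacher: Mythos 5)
Your proposed route is a genuinely different one from the paper: the paper writes down explicit potentials (a linear function $V(p)=1-\langle p,\xi\rangle$ on $\S^n$, a quadratic on $\R^n$, a Minkowski-linear one on $\HH^n$) and verifies the identities by brute-force differential-geometric computation on each model, whereas you propose to derive everything by conformal transfer from the flat-in-$\S^n$ base case. The base case with $V\equiv 1$ is indeed correct as you compute it, and your intuition that the paper's $V$'s are (constant multiples of) the inverse conformal factors of the flattening M\"obius map composed with the relevant stereographic projection is accurate; in that sense your plan illuminates \emph{why} the paper's choices work.

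However, the heart of the argument is exactly the step you defer: verifying that the pair of identities $L_{Jac}V=n-1$ and $V\,\hRic^V_{\Sigma^1,n-1}=(n-2)g_{\Sigma^1}$ pulls back under a conformal change of the \emph{ambient} metric to the same identities with the transformed $V$. This is not a routine covariance statement. Under $g\mapsto e^{2\phi}g$, the Laplacian, Hessian, and Ricci of the $(n-1)$-dimensional interface all change, but so does the second fundamental form (hence $\|\II\|^2$ and $\bar\II^{\partial ij}$) and the unit normal (hence $\Ric_M(\n,\n)$), and these contributions have to conspire to cancel. You assert that $N=n-1$ is ``exactly what makes the resulting expression collapse'' but do not exhibit the transformation law, identify ``the appropriate power'' of $e^{-\phi}$, or check that the boundary condition $\nabla_{\n_{\partial ij}}V=\bar\II^{\partial ij}V$ is itself transported correctly at every $\Sigma_{ijk}$. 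Since this computation is precisely what the theorem is, the proposal has a genuine gap rather than an omission of routine detail. In addition, your characterisation of the $\HH^n$ dichotomy (``stays interior to the hemisphere'' versus ``degenerates to the sphere at infinity'') does not match the paper's actual criterion, which is the \emph{sign} of the last coordinate $\xi_0$ of the flattening vector $\xi\in\R^{n+1}$; the paper gets $L_{Jac}V=(n-1)\xi_0$ and $V\hRic^V_{\Sigma^1}=(n-2)\xi_0 g_{\Sigma^1}$, so (\ref{eq:intro-goodV}) follows (after rescaling by $\xi_0$) only when $\xi_0>0$, while $\xi_0\le 0$ gives $L_{Jac}V\le 0$; this sign condition is not obviously the geometric condition you describe, and needs to be made precise to complete the hyperbolic case.
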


The construction of $V$ is actually fairly different in these 3 model spaces -- see Section \ref{sec:V}. As evident from the formulation, an interesting phenomenon occurs in $\HH^n$ -- some spherical Voronoi partitions (which are not induced by a cluster) actually accommodate a flattening potential $V >0$ satisfying (\ref{eq:intro-non-oriented-BCs}) with $L_{Jac} V \leq 0$, and thus fall under the scenario of Proposition \ref{prop:intro-LJacNegative}; for those partitions, we deduce that $Q^0(f) \geq 0$ for all physical scalar-fields $f$, without requiring that $\delta^1_f \Vol = 0$. In $\R^n$, we are only aware that this can happen for \emph{flat} partitions (as explained after Proposition \ref{prop:intro-LJacNegative}). 
However, on $\HH^n$, even if all cells are unbounded, they will have interfaces of constant curvature $\k_{ij} \in [-1,1]$ (when $\abs{\k_{ij}} = 1$ these are horospheres), yielding many more examples. A precise condition governing which of the above two scenarios occur in $\HH^n$ depends on the M\"obius-flattening map, and is described in Section \ref{sec:V}. 

Note that Theorem \ref{thm:intro-V} is formulated only for $n \geq 3$. 
In the unweighted ($\mu = \vol_g$) case in dimension $n=2$, one always has $V \hRic^V_{\Sigma^1} \equiv 0$ regardless of $V$, and so we are unable to use Corollary \ref{cor:intro-stability} to deduce analogous stability results on $\R^2$, $\S^2$ nor $\HH^2$. This is not entirely surprising in view of the fact that, while the Lichnerowicz spectral-gap estimate $\lambda_1 \geq \frac{n-1}{n-2} K$ on any closed $(n-1)$-dimensional manifold $M$ with $\Ric_M \geq K > 0$ yields a sharp inequality whenever $n \geq 3$ (as witnessed by $\S^{n-1}$), it fails miserably to yield the Wirtinger--Poincar\'e inequality on $\S^1$ when $n=2$. It seems that the \emph{a priori} simplest planar case requires some additional non-trivial arguments, and so this will be investigated elsewhere.

\medskip

As for the Gaussian setting, one immediately observes:
\begin{fact}
For all $2 \leq q \leq \infty$, for any stationary regular flat $q$-partition $\Omega$ of $\G^n$, $n \geq 2$, $V \equiv 1$ is a conformally flattening boundary potential with $L_{Jac} V = 1$ and $V \hRic^V_{\Sigma^1,\gamma,\infty} = 1 \cdot g_{\Sigma^1}$. 
\end{fact}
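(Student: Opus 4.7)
The plan is to unpack the three conditions directly using the definitions already established in the excerpt and the special structure of $\G^n$; since flatness of the partition trivializes the boundary contributions and the Gaussian weight provides exactly the right curvature, the verification reduces to a short inspection of formulas. I anticipate no genuine obstacle here --- this is meant as a sanity check of the general framework in the one setting where the potential $V$ can be chosen constant.

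First I would verify that $V\equiv 1$ satisfies the conformally flattening boundary condition (\ref{eq:intro-non-oriented-BCs}). Since $\Omega$ is flat, each interface $\Sigma_{ij}$ is totally geodesic in $\G^n$, so $\II^{ij}\equiv 0$ for all $i<j$. By the definition (\ref{eq:intro-barII}) of $\bar\II^{\partial ij}_{ijk}$, this immediately gives $\bar\II^{\partial ij}_{ijk}\equiv 0$ on every non-empty $\Sigma_{ijk}$. With $V\equiv 1$ we have $\nabla_{\n_{\partial ij}} V = 0$, so (\ref{eq:intro-non-oriented-BCs}) reads $0 - 0 \cdot 1 = 0$, trivially. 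In the language of Section \ref{sec:Bochner}, the umbilicality requirement is also automatic on a flat regular partition.

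Next I would compute $L_{Jac} V$ from (\ref{eq:intro-LJac}). With $V\equiv 1$ we have $\Delta_{\Sigma^1,\gamma} V = 0$ and $\|\II^{ij}\|^2 = 0$, hence
\[
L_{Jac} V \;=\; \Ric_{\G^n,\gamma}(\n_{ij},\n_{ij}).
\]
For the standard Gaussian density $W(x)=|x|^2/2 + \tfrac{n}{2}\log(2\pi)$, one has $\nabla^2 W = \mathrm{Id}$, and since $\R^n$ is Ricci-flat, $\Ric_{\G^n,\gamma} = \Ric_{\R^n} + \nabla^2 W = g$. Therefore $L_{Jac} V = 1$ along every interface.

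Finally I would check the conformally weighted Ricci identity. Plugging $V\equiv 1$ into (\ref{eq:intro-RicV}) kills the two $V$-dependent correction terms, leaving $\hRic^V_{\Sigma^1,\gamma,\infty} = \Ric_{\Sigma^1,\gamma,\infty}$. Because each component of $\Sigma^1$ is a piece of an affine hyperplane of $\R^n$, its intrinsic Ricci vanishes; and because the interface is totally geodesic, the intrinsic Hessian of the restricted potential $W|_{\Sigma^1}$ equals the ambient Hessian restricted to $T\Sigma^1\otimes T\Sigma^1$, namely $g_{\Sigma^1}$. Thus
\[
\hRic^V_{\Sigma^1,\gamma,\infty} \;=\; \Ric_{\Sigma^1} + \nabla^2_{\Sigma^1} W \;=\; 0 + g_{\Sigma^1} \;=\; g_{\Sigma^1},
\]
so $V\,\hRic^V_{\Sigma^1,\gamma,\infty} = 1\cdot g_{\Sigma^1}$ as claimed. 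All three assertions follow from these one-line computations, and no further argument is needed.
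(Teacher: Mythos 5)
Your proof is correct and is exactly the verification the paper leaves implicit by labeling this a \emph{Fact} without proof: flatness kills $\II^{ij}$ and $\bar\II^{\partial ij}$ so $V\equiv 1$ trivially satisfies (\ref{eq:intro-non-oriented-BCs}) and gives umbilical boundary via Lemma~\ref{lem:constant-curvature}; $\Ric_{\G^n,\gamma}=\nabla^2(|x|^2/2)=g$ gives $L_{Jac}1=1$; and on a totally geodesic hyperplane the intrinsic Ricci vanishes and $\nabla^2_{\Sigma^1}W=g_{\Sigma^1}$, so $\hRic^1_{\Sigma^1,\gamma,\infty}=g_{\Sigma^1}$. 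No gaps.
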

\begin{corollary}
Let $\Omega$ be stationary regular flat $q$-partition in $\G^n$, $n \geq 2$ and $2 \leq q \leq \infty$. 
Then for all physical scalar-fields $f$ of the form $f = \Delta_{\Sigma^1,\gamma} u$ where $u$ is an admissible scalar-field, 
 we have $\delta^1_f \Vol = 0$ and  $Q^0(f) \geq 0$.
\end{corollary}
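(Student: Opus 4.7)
The plan is to apply Corollary \ref{cor:intro-stability} directly, using the preceding Fact to supply the conformally flattening potential. First I would check that a stationary regular flat $q$-partition of $\G^n$ satisfies the geometric hypotheses of Theorem \ref{thm:intro-BL}: since $\II^{ij} \equiv 0$ on every interface, the curvature is (trivially) locally bounded and the interfaces have constant zero curvature, so by Lemma \ref{lem:constant-curvature} the boundary is umbilical. The Fact then tells us that $V \equiv 1$ is a conformally flattening boundary potential, because $\nabla V = 0$ and $\bar\II^{\partial ij}_{ijk}$ vanishes thanks to flatness of the two neighboring interfaces.

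Next I would unfold the quantities appearing in Corollary \ref{cor:intro-stability} with the choice $V \equiv 1$, $N = \infty$, and $K = 1$. The Gaussian weight $W(x) = |x|^2/2 + \mathrm{const}$ gives $\nabla^2 W = \mathrm{Id}$ on $\G^n$, so $\Ric_{M,\gamma} = \mathrm{Id}$, and restricted to a flat interface $\Sigma_{ij}$ the ambient Hessian pulls back to the induced metric (the correction term in the Gauss formula vanishes by $\II^{ij}=0$), yielding $\Ric_{\Sigma^1,\gamma,\infty} = \nabla^2_{\Sigma^1} W = g_{\Sigma^1}$ (using also $\Ric_{\Sigma^1} = 0$ by flatness). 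From (\ref{eq:intro-RicV}) with $V \equiv 1$ we then obtain $\hRic^V_{\Sigma^1,\gamma,\infty} = g_{\Sigma^1}$, while the definition of $L_V$ collapses to $L_V u = \Delta_{\Sigma^1,\gamma} u$. A direct computation from (\ref{eq:intro-LJac}) gives
\begin{equation*}
L_{\mathrm{Jac}} V = L_{\mathrm{Jac}} 1 = \Ric_{M,\gamma}(\n_{ij},\n_{ij}) + \snorm{\II^{ij}}^2 = 1 + 0 = 1.
\end{equation*}
Thus both hypotheses $L_{\mathrm{Jac}} V \leq K$ and $V \hRic^V_{\Sigma^1,\gamma,\infty} \geq \tfrac{N-1}{N} K\, g_{\Sigma^1}$ of Corollary \ref{cor:intro-stability} hold with equality for $K = 1$ and $N = \infty$ (where $(N-1)/N = 1$).

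Since every physical scalar-field of the form $f = \Delta_{\Sigma^1,\gamma} u$ with $u$ admissible coincides with $f = L_V u$ for the constant potential $V \equiv 1$, Corollary \ref{cor:intro-stability} immediately delivers $\delta^1_f \Vol = 0$ and $Q^0(f) \geq 0$, which is exactly the claim. I do not anticipate any real obstacle: the Gaussian case is the simplest instance of the general framework, precisely because $\nabla^2 W = \mathrm{Id}$ makes the positivity of $\Ric_{\gamma,\infty}$ saturate exactly against $L_{\mathrm{Jac}} 1$, allowing the trivial choice $V \equiv 1$ to work. All the substantive analytic and geometric work has already been carried out upstream in Theorem \ref{thm:intro-BL} and Corollary \ref{cor:intro-stability}; the statement here is their direct specialization to $\G^n$.
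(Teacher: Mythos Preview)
Your proposal is correct and matches the paper's intended argument exactly: the corollary is stated immediately after the Fact and is meant to follow at once from Corollary~\ref{cor:intro-stability} with $V\equiv 1$, $N=\infty$, $K=1$, precisely as you outline. The verification of the hypotheses (locally bounded curvature, umbilical boundary via Lemma~\ref{lem:constant-curvature}, and the identities $L_{Jac}1=1$, $V\hRic^V_{\Sigma^1,\gamma,\infty}=g_{\Sigma^1}$) is straightforward and your computations are all correct.
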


The rest of this work is organized as follows. In Section \ref{sec:prelim} we go over some preliminaries and provide additional details on the various definitions used above. In Section \ref{sec:divergence} we derive some delicate divergence theorems in the presence of singularities. In Section \ref{sec:scalar-fields} we describe several classes of scalar-fields, and obtain some integration by parts formulas. In Section \ref{sec:Bochner}, we derive an integrated Bochner identity for partitions with conformally flat umbilical boundary. In Section \ref{sec:BL} we conclude the proof of Theorem \ref{thm:intro-BL}. In Section \ref{sec:V}, we establish Theorem \ref{thm:intro-V}. 

\bigskip

\noindent
\textbf{Acknowledgments.} We thank Francesco Maggi and Joe Neeman for their comments and assistance with graphics.

\section{Preliminaries} \label{sec:prelim}

\begin{definition}[Weighted Riemannian Manifold]
A smooth complete $n$-dimensional Riemannian manifold $(M^n,g)$, $n\geq 2$, endowed with a measure $\mu$ with $C^\infty$ smooth positive density $\exp(-W)$ with respect to the Riemannian volume measure $\vol_g$ is called a weighted Riemannian manifold $(M^n,g,\mu)$. 
\end{definition}

The Levi-Civita connection on $(M,g)$ is denoted by $\nabla$. The Riemannian metric $g$ will often be denoted by $\scalar{\cdot,\cdot}$. It induces a geodesic distance on $(M,g)$, and we denote by $B(x,r)$ an open geodesic ball of radius $r >0$ in $(M,g)$ centered at $x \in M$. 
Recall that $\mu^k = e^{-W} \H^k$, where $\H^k$ denotes the $k$-dimensional Hausdorff measure, and that $V_\mu = \mu$ denotes the $\mu$-weighted volume. 

We denote $E^{(q-1)} := \{ x \in \R^q_0 : \sum_{i=1}^q x_i = 0 \}$, where $\R_0^q := \{ x \in \R^q : \# \{ x_i \neq 0 \} < \infty \}$.
 We abbreviate $T(v) = T(v,v)$ for any $2$-tensor $T$.

Throughout this work we will often use the convention that $a_{ij}$ denotes $a_i - a_j$ whenever the individual objects $a_i$ are defined. 
Given distinct $i,j,k \in \{1,\ldots,q\}$, we define the set of cyclically ordered pairs in $\{i,j,k\}$:
\[
\cyclic(i,j,k) := \{ (i,j) , (j,k) , (k,i) \}  .
\]
We refer to \cite{EMilmanNeeman-GaussianMultiBubble,EMilmanNeeman-TripleAndQuadruple} and the references therein for proofs of the subsequent statements in this section.

\subsection{Weighted divergence and mean-curvature}

For a smooth hypersurface $\Sigma \subset M^n$ co-oriented by a unit-normal field $\n$, let $H_\Sigma$ denote its mean-curvature, defined as the trace of its second fundamental form $\II_{\Sigma}$. The weighted mean-curvature $H_{\Sigma,\mu}$ is defined as:
\[
H_{\Sigma,\mu} := H_{\Sigma} - \scalar{\nabla W, \n} .
\]
The induced Levi-Civita connection on $\Sigma$ is denoted by $\nabla_{\Sigma}$. We write $\div_\Sigma X$ for the surface divergence of a vector-field $X$ defined on $\Sigma$, i.e. $\sum_{i=1}^{n-1} \scalar{\tang_i,\nabla_{\tang_i} X}$ where $\{\tang_i\}$ is a local orthonormal frame on $\Sigma$. The weighted surface divergence $\div_{\Sigma,\mu}$ is defined as:
\[
\div_{\Sigma,\mu} X = \div_{\Sigma} X - \scalar{\nabla W, X},
\]
so that $\div_{\Sigma,\mu} X = \div_{\Sigma} (X e^{-W}) e^{+W}$ if $X$ is tangential to $\Sigma$.  
We will also abbreviate $\scalar{X,\n}$ by $X^\n$, and we will write $X^\tang$ for the tangential part of $X$, i.e. $X - X^{\n} \n$.  

Note that the above definitions ensure the following weighted divergence theorem: if $\Sigma$ is a smooth $(n-1)$-dimensional manifold with $C^1$ boundary denoted $\partial \Sigma$ (completeness of $\Sigma \cup \partial \Sigma$ is not required), with exterior normal $\n_{\partial}$,  and $X$ is a smooth
 tangential 
 vector-field on $\Sigma$, continuous up to $\partial \Sigma$ and with compact support in $\Sigma \cup \partial \Sigma$, then:
 \[
 \int_\Sigma \div_{\Sigma,\mu} X d\mu^{n-1} = \int_{\partial \Sigma} X^{\n_{\partial}} d\mu^{n-2} .
 \]

We denote the surface Laplacian of a smooth function $f$ on $\Sigma$ by $\Delta_{\Sigma} f := \div_{\Sigma} \nabla_{\Sigma} f$, which coincides with $\sum_{i=1}^{n-1} \nabla^2_{\tang_i,\tang_i} f - H_{\Sigma} \nabla_{\n} f$ for any smooth extension of $f$ to a neighborhood of $\Sigma$ in $M$. The weighted surface Laplacian is defined as:
\[
 \Delta_{\Sigma,\mu} f := \div_{\Sigma,\mu} \nabla_{\Sigma} f = \Delta_{\Sigma} f - \scalar{\nabla_{\Sigma} f, \nabla_{\Sigma} W} . 
 \]

\subsection{Partitions and interfaces}

Given a Borel set $U \subset \R^n$ with locally-finite perimeter, its reduced boundary $\partial^* U$ is defined as the subset of $\partial U$ for which there is a uniquely defined outer unit normal vector to $U$ in a measure theoretic sense (see \cite[Chapter 15]{MaggiBook} for a precise definition). The definition of reduced boundary canonically extends to the Riemannian setting by using a local chart, as it is known that $T(\partial^* U) = \partial^* T(U)$ for any smooth diffeomorphism $T$ (see \cite[Lemma A.1]{KMS-LimitOfCapillarity}). It is known that $\partial^* U$ is a Borel subset of $\partial U$, and that modifying $U$ on a null-set does not alter $\partial^* U$. If $U$ is an open set with $C^1$ smooth boundary, it holds that $\partial^* U = \partial U$  (e.g. \cite[Remark 15.1]{MaggiBook}). Given a Borel set $K \subset M$, the $\mu$-weighted volume and relative perimeter of $U$ in $K$ are defined as:
\[
V_\mu(U ; K) = \mu^n( U \cap K) ~,~ A_\mu(U; K) = \mu^{n-1}(\partial^* U \cap K) .
\]

Let $\Omega = (\Omega_1, \dots, \Omega_q)$ denote a generalized $q$-partition in $(M^n,g,\mu)$, $2 \leq q \leq \infty$. 
Recall that the cells $\set{\Omega_i}_{i=1,\ldots,q}$ are assumed to be pairwise disjoint Borel subsets with locally finite perimeter such that $V_\mu(M \setminus \cup_{i=1}^q \Omega_i) = 0$. Given an open subset $M' \subset M$, we say that $\Omega$ is locally finite in $M'$ if every compact set $K \subset M'$ intersects only finitely many cells; we simply say that $\Omega$ is locally finite if it is locally finite in $M$. A locally finite generalized partition is called a partition; clearly this is always the case when $q < \infty$. A $q$-partition with $q<\infty$ is called a $(q-1)$-cluster if $V_\mu(\Omega_i), A_\mu(\Omega_i) < \infty$ for $i=1,\ldots,q-1$. The cluster is said to be bounded if all $\{\Omega_i\}_{i=1,\ldots,q-1}$ are bounded. 

The $\mu$-weighted volume and total-perimeter of $\Omega$ relative to $K$ are defined as:
\[
V_{\mu}(\Omega; K) := (V_\mu(\Omega_i; K))_{i=1,\ldots,q} ~,~  A_\mu(\Omega;K) := \frac{1}{2} \sum_{i=1}^q A_\mu(\Omega_i ; K)  . 
\]
Omitting $K$ signifies using $K = M$. When $K$ is precompact, $V_{\mu}(\Omega; K) \in \R^q_0$, and the sum on the right is necessarily over a finite number of indices.

The interface between cells $i$ and $j$ (for $i \ne j$) is defined as:
\[
    \Sigma_{ij}  := \partial^* \Omega_i \cap \partial^* \Omega_j .
\]
It is standard to show (see \cite[Subsection 3.2]{EMilmanNeeman-GaussianMultiBubble}) that 
\[ \forall i \;\;\; \overline{\partial^* \Omega_i} = \overline{\cup_{j \neq i} \Sigma_{ij}} ,
\]and
\[
A_\mu(\Omega_i ; K) = \sum_{j \neq i} \mu^{n-1}(\Sigma_{ij} \cap K) ~,~
A_\mu(\Omega ; K) = \sum_{i < j} \mu^{n-1}(\Sigma_{ij} \cap K) . \]
We set:
\[     \Sigma := \bigcup_{i} \partial \Omega_i ~,~  \Sigma^1 := \bigcup_{i < j} \Sigma_{ij} .
\]

\subsection{Regularity}

Define the following cones:
\begin{align*}
    \Y &:= \{x \in E^{(2)} : \text{ there exist $i \ne j \in \{1,2,3\}$ with $x_i = x_j = \max_{k \in \{1,2,3\}} x_k$}\} , \\
    \T &:= \{x \in E^{(3)} : \text{ there exist $i \ne j \in \{1,2,3,4\}$ with $x_i = x_j = \max_{k \in \{1,2,3,4\}} x_k$}\}.
\end{align*}
Note that $\Y$ consists of $3$ half-lines meeting at the origin in $120^\circ$ angles, and that $\T$ consists of $6$ two-dimensional sectors meeting in threes at $120^{\circ}$ angles along $4$ half-lines, which in turn all meet at the origin in $\cos^{-1}(-1/3) \simeq 109^{\circ}$ angles. 

\begin{definition}[Regularity]
A partition $\Omega$ in $(M^n,g,\mu)$ is called regular if it satisfies the following:
\begin{enumerate}
\item $\Omega$ may and will be modified on a $\mu$-null set so that all of its cells are open, and so that for every $i$, $\overline{\partial^* \Omega_i} = \partial \Omega_i$ and $\mu^{n-1} (\partial \Omega_i \setminus \partial^* \Omega_i) =0$; in particular, $\Sigma = \overline{\Sigma^1}$. \item $\Sigma$ is the disjoint union of $\Sigma^1$ and sets $\Sigma^2,\Sigma^3,\Sigma^4$ satisfying (for some fixed $\alpha > 0$):
\begin{enumerate}
\item $\Sigma^1$ is a locally-finite union of embedded $(n-1)$-dimensional $C^{\infty}$ manifolds, and for every $p \in \Sigma^1$, $\Sigma$ around $p$ is locally $C^\infty$ diffeomorphic to $\{0\} \times \R^{n-1}$.
\item $\Sigma^2$ is a locally-finite union of embedded $(n-2)$-dimensional $C^{\infty}$ manifolds, and for every $p \in \Sigma^2$, $\Sigma$ around $p$ is locally $C^\infty$ diffeomorphic to $\Y \times \R^{n-2}$.
\item $\Sigma^3$ is a locally-finite union of embedded $(n-3)$-dimensional $C^{1,\alpha}$ manifolds, and for every $p \in \Sigma^3$, $\Sigma$ around $p$ is locally $C^{1,\alpha}$ diffeomorphic to $\T \times \R^{n-3}$.
\item $\Sigma^4$ is closed and has locally-finite $\H^{n-4}$ measure. 
\end{enumerate}
\item (Density upper bound) For any compact set $K$ in $M$, there exist constants $\Lambda_K, r_K>0$ so that:
\begin{equation} \label{eq:density-Sigma1}
\mu^{n-1} (\Sigma \cap B(x, r)) \leq \Lambda_K r^{n-1} , \quad \forall  x \in \Sigma \cap K \quad \forall r \in (0, r_K).
\end{equation}
In particular, $\mu^{n-1}(\Sigma^1 \cap K) < \infty$. 
\item (Infiltration property) There exists a constant $\eps > 0$ so that for any $p \in M^n$, $i=1,\ldots,q$ and connected component $\Omega_i^\ell$ of $\Omega_i$: 
\[ \liminf_{r \rightarrow 0+} \frac{\H^n(\Omega^\ell_i \cap B(p,r))}{\H^n(B(p,r))} < \eps \;\; \Rightarrow \;\; p \notin \overline{\Omega^\ell_i} . 
\] \end{enumerate}
We abbreviate $\Sigma^{\leq m} := \cup_{1 \leq i \leq m} \Sigma^i$ and $\Sigma^{\geq m} := \cup_{m \leq i \leq 4} \Sigma^i$. 
\end{definition}

Let $\Omega$ be a regular partition. We denote by $\n_{ij}$ be the (smooth) unit normal field along $\Sigma_{ij}$ that points from $\Omega_i$ to $\Omega_j$. We use $\n_{ij}$ to co-orient $\Sigma_{ij}$, and since $\n_{ij} = -\n_{ji}$, note that $\Sigma_{ij}$ and $\Sigma_{ji}$ have opposite orientations. We denote by $\II^{ij}$ the second fundamental form on $\Sigma_{ij}$. When $i$ and $j$ are clear from the context, we will simply write $\n$ and $\II = \II_{\Sigma^1}$. 
We will typically abbreviate $H_{\Sigma_{ij}}$ and $H_{\Sigma_{ij},\mu}$ by $H_{ij}$ and $H_{ij,\mu}$, respectively. Every point in $\Sigma^2$ (called the \emph{triple-point set}) belongs to the closure of exactly three cells, as well as to the closure of exactly three interfaces. Given distinct $i,j,k$, we will write $\Sigma_{ijk}$ for the subset of $\Sigma^2$ which belongs to the closure of $\Omega_i$, $\Omega_j$ and $\Omega_k$, or equivalently, to the closure of $\Sigma_{ij}$, $\Sigma_{jk}$ and $\Sigma_{ki}$. It follows that $\Sigma^2$ is the disjoint union of $\Sigma_{ijk}$ over all $i < j < k$. 
Similarly, we will call $\Sigma^3$ the \emph{quadruple-point set}, and given distinct $i,j,k,\ell$, denote by $\Sigma_{ijk\ell}$ the subset of $\Sigma^3$ which belongs to the closure of $\Omega_i$, $\Omega_j$, $\Omega_k$ and $\Omega_\ell$, or equivalently, to the closure of all six $\Sigma_{ab}$ for distinct $\{a,b\} \subset \{ i , j , k, \ell \}$. It follows that $\Sigma^3$ is the disjoint union of $\Sigma_{ijk\ell}$ over all $i < j < k<\ell$.

\smallskip

We denote $\partial \Sigma_{ij} := \cup_{k} \Sigma_{ijk}$, i.e.~the intersection of the topological boundary of $\Sigma_{ij}$ with $\Sigma^2$. Similarly, we denote by $\partial \Sigma_{ijk} := \cup_{\ell} \Sigma_{ijk\ell}$, the intersection of the topological boundary of $\Sigma_{ijk}$ with $\Sigma^3$. 
We will extend the normal fields $\n_{ij}$ to $\Sigma_{ijk}$ and $\Sigma_{ijk\ell}$ 
by continuity (thanks to $C^1$ regularity). We also denote by $\II_{\partial \Sigma_{ij}}$ and $H_{\partial \Sigma_{ij}}$ the second fundamental form and mean-curvature (respectively) of $\partial \Sigma_{ij}$ in $\Sigma_{ij}$; at times we will write $\II_{\Sigma_{ijk} \subset \partial \Sigma_{ij}}$.

\smallskip

The outer normal to $\partial \Sigma_{ij}$ in $\Sigma_{ij}$ is denoted by $\n_{\partial ij}$, and the outer normal to $\partial \Sigma_{ijk}$ in $\Sigma_{ijk}$ by $\n_{\partial ijk}$. On $\partial \Sigma_{ij}$, we abbreviate $\II^{ij}(\n_{\partial ij},\n_{\partial ij})$ by $\II^{ij}_{\partial \partial}$. 

\smallskip

We remark that by local finiteness of a partition, expressions of the form $\sum_{i<j} \int_{\Sigma_{ij}} w_{ij} d\mu^{n-1}$ or $\sum_{i<j<k} \int_{\Sigma_{ijk}} w_{ijk} d\mu^{n-2}$ only consist of a finite sum whenever $w_{ij}$ or $w_{ijk}$ are compactly supported in $M^n$ (respectively).

\subsection{Stationarity}

Given a vector-field $X \in C_c^\infty(M)$ supported in the interior of a compact $K \subset M$, the flow along $X$ for time $t$ is denoted by $F_t$, the perturbed partition is denoted by $F_t(\Omega) = (F_t(\Omega_i))_{i=1,\ldots,q}$, and its $k$-th variation of volume and total perimeter are denoted by:
\[
\delta^k_X \Vol := \left . \frac{d^k}{(dt)^k} \right|_{t=0} V_\mu(F_t(\Omega);K) \in E^{(q-1)} ~,~ \delta^k_X \Area := \left .  \frac{d^k}{(dt)^k}  \right |_{t=0} A_\mu(F_t(\Omega);K)  \in \R .
\]
Clearly, these quantities do not depend on the container set $K$, and so $K$ is omitted from the notation. 

A regular partition $\Omega$ is called stationary if for all vector-fields $X \in C_c^\infty(M)$, 
\[
\delta^1_X \Vol = 0 \;\; \Rightarrow \;\; \delta^1_X \Area = 0 ,
\]
or equivalently (see \cite[Appendix C]{EMilmanNeeman-GaussianMultiBubble}), if for all $X \in C_c^\infty(M)$,
\[
\delta^1_X \Area = \scalar{\lambda,\delta^1_X \Vol} ,
\]
for some vector $\lambda \in \R^q$ of Lagrange multipliers. A well-known geometric interpretation is as follows (see \cite[Lemma 4.3 and Corollary 5.5]{EMilmanNeeman-GaussianMultiBubble} and \cite[Lemma 3.3]{EMilmanNeeman-TripleAndQuadruple}). 

\begin{lemma}[Stationarity] \label{lem:stationarity}
Let $\Omega$ be a regular $q$-partition in $(M,g,\mu)$. If $\Omega$ is stationary with Lagrange multipliers $\lambda \in \R^q$, then:
\begin{enumerate}[(1)]
\item \label{it:stationarity-k}
For all $i < j$, $H_{\Sigma_{ij},\mu}$ is constant and equal to $\lambda_i - \lambda_j$.
\item \label{it:stationarity-n} 
$\sum_{(i,j) \in \cyclic(u,v,w)} \n_{ij} = 0$ on $\Sigma_{uvw}$ for all $u < v < w$. 
\end{enumerate}
Conversely, if $\Omega$ is of locally bounded curvature  (to be defined below) and satisfies \ref{it:stationarity-k} and \ref{it:stationarity-n}, then it is stationary with Lagrange multipliers $\lambda \in \R^q$. 
\end{lemma}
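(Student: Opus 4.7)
The plan is to compute $\delta^1_X \Vol$ and $\delta^1_X \Area$ explicitly for a test vector field $X \in C_c^\infty(M)$, extract both conclusions in the forward direction by localizing $X$ to different strata of $\Sigma$, and reverse the same computation to obtain the converse. For the volume, a standard first variation on each smooth piece of $\partial^*\Omega_i$ gives
\[
\delta^1_X V_\mu(\Omega_i) = \sum_{j \ne i} \int_{\Sigma_{ij}} X^{\n_{ij}} d\mu^{n-1}, \qquad
\scalar{\lambda, \delta^1_X \Vol} = \sum_{i<j}(\lambda_i - \lambda_j) \int_{\Sigma_{ij}} X^{\n_{ij}} d\mu^{n-1}.
\]
For the perimeter, on each smooth interface the first variation is $\int_{\Sigma_{ij}} \div_{\Sigma_{ij},\mu} X \, d\mu^{n-1}$, and the weighted divergence theorem (with $\Sigma^{\geq 3}$ negligible thanks to the Hausdorff-dimension and density bounds in the definition of regular partition) yields
\[
\delta^1_X \Area = \sum_{i<j}\!\left[\int_{\Sigma_{ij}} H_{ij,\mu} X^{\n_{ij}} d\mu^{n-1} + \int_{\partial\Sigma_{ij}} X^{\n_{\partial ij}} d\mu^{n-2}\right].
\]
Decomposing $\partial\Sigma_{ij}$ along the triple-point partition $\Sigma^2 = \bigsqcup_{u<v<w}\Sigma_{uvw}$ regroups the boundary integrals as $\sum_{u<v<w}\int_{\Sigma_{uvw}} \scalar{X, \n_{\partial uv} + \n_{\partial vw} + \n_{\partial wu}} d\mu^{n-2}$.

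For the forward direction, stationarity becomes the identity
\[
\sum_{i<j}\!\int_{\Sigma_{ij}}\!(H_{ij,\mu} - (\lambda_i - \lambda_j))X^{\n_{ij}} d\mu^{n-1} + \sum_{u<v<w}\!\int_{\Sigma_{uvw}}\!\scalar{X, \n_{\partial uv} + \n_{\partial vw} + \n_{\partial wu}} d\mu^{n-2} = 0
\]
valid for every $X \in C_c^\infty(M)$. To extract (1), I would localize $X$ in a small ball around an interior point of a fixed $\Sigma_{ij}$, disjoint from $\Sigma^{\geq 2}$; only the corresponding bulk term survives, and varying $X^{\n_{ij}}$ over smooth bumps forces $H_{ij,\mu} - (\lambda_i - \lambda_j)$ to vanish pointwise, hence (by smoothness of $\Sigma_{ij}$) identically. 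With (1) in hand the bulk sum vanishes for every $X$, so localizing $X$ near an interior point of $\Sigma_{uvw}$ (away from $\Sigma^{\geq 3}$) and using the arbitrariness of $X$ in $T_p\Sigma_{uvw}^\perp$ gives $\n_{\partial uv} + \n_{\partial vw} + \n_{\partial wu} = 0$ on $\Sigma_{uvw}$. The three co-normals lie in the oriented $2$-plane $T_p\Sigma_{uvw}^\perp$, and each $\n_{\partial ab}$ is obtained from $\n_{ab}$ by a common $90^\circ$ rotation once the cyclic ordering is matched to the cell arrangement around $p$; the co-normal identity therefore translates into $\n_{uv} + \n_{vw} + \n_{wu} = 0$, establishing (2).

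For the converse, assuming (1), (2), and locally bounded curvature, I would simply re-run the computation in reverse: substituting (1) rewrites the bulk sum as $\scalar{\lambda, \delta^1_X\Vol}$, while (2) kills the boundary sum pointwise on $\Sigma^2$, giving $\delta^1_X \Area = \scalar{\lambda, \delta^1_X\Vol}$, i.e.\ stationarity with multipliers $\lambda$. The main technical obstacle in both directions is justifying the weighted divergence theorem on each $\Sigma_{ij}$ up to its low-regularity boundary (the $C^{1,\alpha}$ stratum $\Sigma^3$ and the closed set $\Sigma^4$): the interfaces are only smooth on their interiors. I would handle this by excising an $\eps$-tubular neighborhood of $\Sigma^{\geq 3}$, applying the divergence theorem on the truncated smooth domain, and letting $\eps \to 0$; the integrands being controlled by $X$ and $\II^{ij}$ (the latter by the locally bounded curvature hypothesis), while the density upper bound (\ref{eq:density-Sigma1}) together with the local $\H^{n-3}$-finiteness of $\Sigma^3$ and $\H^{n-4}$-finiteness of $\Sigma^4$ ensures the measure of the excised neighborhood, and hence the boundary-layer error, vanishes in the limit.
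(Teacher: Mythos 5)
Your strategy — first variation via the weighted divergence theorem, then localization of the test field — is the standard one, and the paper itself does not prove this lemma inline but refers to \cite[Lemma 4.3, Corollary 5.5]{EMilmanNeeman-GaussianMultiBubble} and \cite[Lemma 3.3]{EMilmanNeeman-TripleAndQuadruple}, which take essentially this route. The orientation step (matching the co-normal relation $\n_{\partial uv} + \n_{\partial vw} + \n_{\partial wu} = 0$ with the normal relation $\n_{uv} + \n_{vw} + \n_{wu} = 0$ via a common $90^{\circ}$ rotation in $(T_p\Sigma_{uvw})^{\perp}$) is correct, since the three cells are cyclically arranged around $p$ and the sum is unchanged up to an overall sign under relabeling.

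There is, however, one genuine hypothesis mismatch in your write-up. You first establish the global integrated identity
\[
\sum_{i<j}\int_{\Sigma_{ij}}\bigl(H_{ij,\mu} - (\lambda_i-\lambda_j)\bigr)X^{\n_{ij}}\,d\mu^{n-1} + \sum_{u<v<w}\int_{\Sigma_{uvw}}\scalar{X,\n_{\partial uv}+\n_{\partial vw}+\n_{\partial wu}}\,d\mu^{n-2}=0,
\]
valid for \emph{all} $X\in C^\infty_c(M)$, and justify the requisite divergence theorem with the excision-and-limit argument controlled by $\II^{ij}$, which you explicitly attribute to the locally bounded curvature hypothesis. But in the forward direction that hypothesis is \emph{not} assumed; only regularity and stationarity are. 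As written, the forward half of your proof therefore rests on an unavailable assumption. The fix is to reverse the order: localize the test field $X$ \emph{before} integrating by parts. To obtain (1), support $X$ in a small ball about an interior point of $\Sigma_{ij}$ disjoint from $\Sigma^{\geq 2}$; then only the single interface contributes and the divergence theorem is applied to a compactly supported field with no boundary contribution whatsoever. To obtain (2), support $X$ in a small ball about a point of $\Sigma_{uvw}$ disjoint from $\Sigma^{\geq 3}$; then only the three incident interfaces contribute, each has smooth $C^\infty$ boundary inside the support (namely $\Sigma_{uvw}$, which is $C^\infty$ by regularity condition (2b)), and the divergence theorem applies classically. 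Localizing first entirely sidesteps the singular set, so no limiting argument, no density estimate, and no bounded-curvature assumption is needed in the forward direction. Only the converse — where one must verify the identity for \emph{arbitrary} $X$ whose support may meet $\Sigma^{\geq 3}$ — genuinely needs the truncation machinery and the locally bounded curvature hypothesis, exactly as the lemma's statement reflects.
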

Property \ref{it:stationarity-n} precisely means that $\Sigma_{uv}$, $\Sigma_{vw}$ and $\Sigma_{wu}$ meet at $\Sigma_{uvw}$ in $120^{\circ}$ angles. This is of course equivalent to satisfying:
\[ \n_{\partial ij} = \frac{\n_{ik} + \n_{jk}}{\sqrt{3}} \;\; \text{on $\Sigma_{ijk}$ for all $i,j,k$} ,
\] and also to:
\[
 \sum_{(i,j) \in \cyclic(u,v,w)} \n_{\partial ij} = 0  \;\; \text{on $\Sigma_{uvw}$ for all $u<v<w$} .
\]

A useful consequence of stationarity is the following cancellation identity. 
\begin{lemma}\label{lem:three-tensor-vanishes}
Let $\Omega$ be a stationary regular partition in $(M,g,\mu)$. Then at every point $p \in \Sigma_{uvw}$, the following $3$-tensor identically vanishes:
    \begin{equation} \label{eq:three-tensor}
    T^{\alpha \beta \gamma} = \sum_{(i,j) \in \cyclic(u,v,w)} 
    \brac{\n_{i j}^{\alpha} \n_{i j}^\beta \n_{\partial i j}^{\gamma} - \n_{\partial i j}^{\alpha} \n_{i j}^\beta \n_{i j}^{\gamma}} \equiv 0. 
    \end{equation}
 \end{lemma}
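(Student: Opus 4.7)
The plan is to reduce the identity to two elementary observations inside the $2$-dimensional normal plane $N := (T_p\Sigma_{uvw})^{\perp} \subset T_p M$ at the point $p \in \Sigma_{uvw}$.

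First, I would check that every vector appearing in $T$ lies in $N$. Indeed, $\n_{ij}$ is normal to $\Sigma_{ij} \supset \Sigma_{uvw}$, while $\n_{\partial ij}$ is tangent to $\Sigma_{ij}$ and normal to $\Sigma_{ijk} = \Sigma_{uvw}$ within $\Sigma_{ij}$. Hence both $\n_{ij}$ and $\n_{\partial ij}$ are orthogonal to $T_p \Sigma_{uvw}$, and therefore $T$ is supported in $N^{\otimes 3}$. It suffices to verify $T\equiv 0$ as a tensor on the oriented $2$-plane $N$.

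Second, by Lemma~\ref{lem:stationarity}(2), $\{\n_{ij}\}_{(i,j)\in \cyclic(u,v,w)}$ are three unit vectors in $N$ summing to zero, hence they lie at $120^{\circ}$ to each other. For each such pair, $\n_{\partial ij}$ is a unit vector in $N$ orthogonal to $\n_{ij}$, so it is the image of $\n_{ij}$ under a $\pm 90^{\circ}$ rotation in $N$. The key point — and the only place where care is needed — is that the sign of this rotation is the same for all three cyclic pairs. I would verify this directly from the stationarity relation $\n_{\partial ij} = (\n_{ik} + \n_{jk})/\sqrt{3}$: in the explicit $120^{\circ}$-configuration, plugging in any two pairs shows that $\n_{ij}\wedge \n_{\partial ij}$ yields the same oriented unit area $\omega \in \Lambda^2 N$; alternatively, continuity on the circle of unit vectors in $N$ would equally do the job.

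Given these two facts, the identity
\[
\n_{ij}^{\alpha}\,\n_{\partial ij}^{\gamma} - \n_{\partial ij}^{\alpha}\,\n_{ij}^{\gamma} = \omega^{\alpha\gamma}
\]
holds with the \emph{same} antisymmetric $2$-tensor $\omega$ for every $(i,j)\in \cyclic(u,v,w)$. Factoring the middle index $\n_{ij}^{\beta}$ out of each summand and pulling $\omega^{\alpha\gamma}$ outside the sum then yields
\[
T^{\alpha\beta\gamma} \;=\; \omega^{\alpha\gamma} \sum_{(i,j)\in \cyclic(u,v,w)} \n_{ij}^{\beta} \;=\; 0,
\]
the last equality being stationarity (Lemma~\ref{lem:stationarity}(2)) once more. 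The main (and only) obstacle is making the consistent orientation of the $90^{\circ}$ rotation rigorous; everything else is a one-line manipulation.
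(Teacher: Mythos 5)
Your argument is correct. Reducing to the $2$-plane $N = (T_p\Sigma_{uvw})^\perp$, observing that $\n_{\partial ij}$ is a unit vector in $N$ orthogonal to $\n_{ij}$, and factoring out the common area form $\omega^{\alpha\gamma}$ so that the sum collapses to $\omega^{\alpha\gamma}\sum \n_{ij}^\beta = 0$ — all of this is sound. The one step you flag as needing care, that $\n_{\partial ij} = R\,\n_{ij}$ with the \emph{same} $90^\circ$ rotation $R$ for all three cyclic pairs, can actually be settled without writing any explicit coordinates. Using $\n_{\partial ij} = (\n_{ik}+\n_{jk})/\sqrt{3}$ together with $\n_{ij}+\n_{jk}+\n_{ki}=0$ one gets
\[
\n_{ij}\wedge\n_{\partial ij}
= \tfrac{1}{\sqrt{3}}\bigl(\n_{ij}\wedge\n_{ik} + \n_{ij}\wedge\n_{jk}\bigr)
= -\tfrac{2}{\sqrt{3}}\,\n_{ij}\wedge\n_{ki},
\]
and substituting $\n_{jk}=-\n_{ij}-\n_{ki}$ gives $\n_{jk}\wedge\n_{ij} = \n_{ij}\wedge\n_{ki}$ and $\n_{ki}\wedge\n_{jk}=\n_{ij}\wedge\n_{ki}$, so the bivector $\n_{ij}\wedge\n_{\partial ij}$ is the same for all $(i,j)\in\cyclic(u,v,w)$ purely by algebra — no case analysis or choice of orientation is needed. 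With that substitution your proof is complete and self-contained. The paper itself does not reprint a proof of this lemma (it is stated and attributed to the references at the head of Section 2, where the standard proof is a direct expansion in an explicit $120^\circ$ frame); your route via the common wedge $\omega$ is a bit cleaner and makes the cancellation transparent, trading the frame computation for the small orientation-consistency check above.
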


\subsection{Locally bounded curvature}

Note that regularity only ensures that $\Sigma$ is $C^{1,\alpha}$ smooth near $\Sigma^3$, and so the second fundamental form $\II_{\Sigma_1}$ could be blowing up near $\Sigma^3$. In general, $\II_{\Sigma^1}$ is only locally square integrable on $\Sigma^1$, and locally integrable on $\Sigma^2$ (see \cite[Proposition 5.7]{EMilmanNeeman-GaussianMultiBubble}, \cite[Proposition 2.23]{EMilmanNeeman-TripleAndQuadruple}). This will unavoidably create integrability considerations for $\nabla_{\Sigma^1} X^{\n_{ij}}$ on $\Sigma_{ij}$. To avoid these complications, we will henceforth assume that $\Omega$ has locally bounded curvature. 

\begin{definition}[Locally bounded curvature]
A regular partition $\Omega$ is said to have locally bounded curvature if $\II_{\Sigma^1}$ is bounded on every compact set. 
\end{definition}

It is known that an isoperimetric minimizing cluster on $(M^n,g,\mu)$ is regular, stationary and stable (see below for the definition of stability). As these are local properties, the proofs extend to locally minimizing partitions as well. However, the property of having locally bounded curvature is not automatic (and indeed there are area-minimizing cones where the curvature blows up near the vertex); consequently, we did not include this assumption in the definition of regularity. 

\medskip

The locally bounded curvature assumption is very useful. We've already seen in Lemma \ref{lem:stationarity} that locally bounded curvature allows for a convenient  geometric characterization of stationarity. In addition, 
it makes it possible to test the stability of $\Omega$ using general compactly supported fields, without assuming they are bounded away from the singular set $\Sigma^4$ (see Theorem \ref{thm:Q0LJac} below). 
Furthermore, we have:

\begin{lemma} \label{lem:Sigma2}
Let $\Omega$ be a stationary regular partition in $(M,g,\mu)$ having locally bounded curvature. Then for any compact subset $K \subset M$, there exist $\Lambda_{K},r_{K} \in (0,\infty)$ so that:
\begin{equation} \label{eq:density-Sigma2}
\mu^{n-2}(\Sigma^2 \cap B(x,r)) \leq \Lambda_{K} r^{n-2} \;\;\; \forall x \in \Sigma \cap K \;\;\; \forall r \in (0,r_{K}) . 
\end{equation}
In particular $\mu^{n-2}(\Sigma^2 \cap K) < \infty$. 
\end{lemma}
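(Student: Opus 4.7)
The plan is to deduce the $(n-2)$-density bound (\ref{eq:density-Sigma2}) on $\Sigma^2$ from the $(n-1)$-density bound (\ref{eq:density-Sigma1}) on $\Sigma$, by showing that a one-sided tubular neighborhood of each $\Sigma_{ijk}$ inside the interface $\Sigma_{ij}$ sweeps out enough $(n-1)$-area. First I would reduce matters to bounding $\mu^{n-2}(\Sigma_{ijk} \cap B(x,r))$ for one fixed triple $i<j<k$: local finiteness of $\Omega$ guarantees that only finitely many triples have $\Sigma_{ijk} \cap K \neq \emptyset$, so $\Sigma^2 \cap K$ is a finite union of such $\Sigma_{ijk}$. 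A trivial triangle-inequality argument (replacing $x \in \Sigma \cap K \setminus \Sigma^2$ by a closest point $y \in \Sigma^2$ and enlarging the radius to $2r$, if $B(x,r)$ meets $\Sigma^2$ at all) further reduces to the case $x \in \Sigma^2 \cap K$.

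Next I would set up uniform local coordinates around $\Sigma_{ijk}$. On a precompact neighborhood $K' \supset K$, locally bounded curvature gives $|\II^{ij}| \leq C$ for all the finitely many interfaces meeting $K'$, while $|\nabla W|$ and the ambient sectional curvatures of $(M,g)$ are also bounded on $K'$. By Lemma \ref{lem:stationarity}, the three interfaces $\Sigma_{ij}$, $\Sigma_{jk}$, $\Sigma_{ki}$ meet $\Sigma_{ijk}$ at exact $120^\circ$ angles; combined with the $C^2$-bound on each interface, standard implicit-function arguments yield a uniform scale $r_K > 0$ such that, in geodesic normal coordinates of radius $r_K$ centered at any $p \in \Sigma_{ijk} \cap K$, each $\Sigma_{ab}$ with $\{a,b\} \subset \{i,j,k\}$ is a $C^2$-graph with uniformly bounded $C^2$-norm over its tangent hyperplane at $p$, the intrinsic distance on $\Sigma_{ij}$ agrees with the ambient distance up to a factor $2$, and $\Sigma_{ijk}$ itself is a bi-Lipschitz $(n-2)$-submanifold with uniform constants.

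The heart of the proof is then the tubular neighborhood estimate: there exists $c = c(K,C,W) > 0$ such that for every $x \in \Sigma_{ijk} \cap K$ and $r \in (0, r_K)$,
\begin{equation*}
\mu^{n-1}(\Sigma_{ij} \cap B(x,r)) \;\geq\; c \, r \, \mu^{n-2}(\Sigma_{ijk} \cap B(x, r/2)).
\end{equation*}
This follows by considering the one-sided normal exponential map from $\Sigma_{ijk}$ inside $\Sigma_{ij}$: by the uniform curvature bound it is bi-Lipschitz on a tube of width $r/2$ above $\Sigma_{ijk} \cap B(x, r/2)$, and by the triangle inequality together with the comparability of intrinsic and extrinsic distance the tube lies inside $\Sigma_{ij} \cap B(x,r)$; a Fubini argument in these tubular coordinates yields the claimed inequality, the factor $r$ arising from integration along the normal direction and the Jacobian being bounded thanks to the $C$-bound on $\II^{ij}$ and the bound on $e^{-W}$ on $K'$. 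Combining with (\ref{eq:density-Sigma1}) gives $\mu^{n-2}(\Sigma_{ijk} \cap B(x, r/2)) \leq (\Lambda_K/c) r^{n-2}$, and summing over the finitely many relevant triples (and relabeling $r \mapsto 2r$) yields (\ref{eq:density-Sigma2}). The local-finiteness assertion $\mu^{n-2}(\Sigma^2 \cap K) < \infty$ then follows by a finite-ball cover of $\Sigma^2 \cap K$.

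The main obstacle I expect is carrying out the tubular neighborhood argument uniformly up to the quadruple-point stratum $\Sigma^{\geq 3}$, where several branches of $\Sigma_{ijk}$ meet $\Sigma_{ijk\ell}$ at acute angles $\cos^{-1}(-1/3) \simeq 109^{\circ}$ and where $\Sigma$ is only $C^{1,\alpha}$. Two equivalent workarounds should suffice: either excise an $\varepsilon$-neighborhood of $\Sigma^{\geq 3}$, establish the bound on the smooth complement, and pass to the limit $\varepsilon \to 0^+$ using that $\Sigma^{\geq 3}$ has locally finite $\H^{n-3}$-measure (hence is $\H^{n-2}$-null in a controlled way), or observe directly that normal tubes emanating from distinct branches at a point of $\Sigma_{ijk\ell}$ overlap with uniformly bounded multiplicity (at most four, the number of $\Sigma_{ijk}$'s incident to any $\Sigma_{ijk\ell}$), which costs only a bounded factor in $c$.
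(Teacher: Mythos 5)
The central tubular--neighborhood estimate you propose,
\begin{equation*}
\mu^{n-1}(\Sigma_{ij} \cap B(x,r)) \;\geq\; c \, r \, \mu^{n-2}(\Sigma_{ijk} \cap B(x, r/2)),
\end{equation*}
is false as stated, and the failure is \emph{not} confined to a neighborhood of $\Sigma^{\geq 3}$, so your two proposed workarounds do not repair it. The problem is that the one-sided normal geodesic from $y \in \Sigma_{ijk}$ into $\Sigma_{ij}$ may exit $\Sigma_{ij}$ through a \emph{different} boundary component $\Sigma_{ij\ell}$ after a time much shorter than $r/2$; when that happens the ``tube of width $r/2$'' simply does not exist in $\Sigma_{ij}$. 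Concretely, suppose $\Sigma_{ij}$ contains a thin ribbon of width $\varepsilon$ bounded by two parallel $(n-2)$-manifolds $\Sigma_{ijk}$ and $\Sigma_{ij\ell}$, with no quadruple points anywhere near (e.g.\ a thin annular collar between two cells, or a short segment in a planar Gaussian partition). Then for $x \in \Sigma_{ijk}$ and $\varepsilon \ll r < r_K$ one has $\mu^{n-1}(\Sigma_{ij}\cap B(x,r)) \sim \varepsilon\, r^{n-2}$ while $r\,\mu^{n-2}(\Sigma_{ijk}\cap B(x,r/2)) \sim r^{n-1}$, so the claimed inequality fails by a factor $r/\varepsilon$ as soon as $r > \varepsilon/c$. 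Neither excision of an $\eps$-neighborhood of $\Sigma^{\geq 3}$ nor a multiplicity bound near $\Sigma^{\geq 3}$ touches this configuration, since the escape happens far from $\Sigma^{\geq 3}$; and the constant $c$ in your estimate must be independent of the collar width $\varepsilon$, so one cannot simply shrink $r_K$ to dodge it (the paper's $r_K$ is uniform over $x \in K$).

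What does work, and what you should aim for instead, is a sheet-counting argument that decouples the two roles played by the hypotheses. First, bounded curvature of $\Sigma^1$ together with stationarity (via $\n_{\partial ij} = (\n_{ik}+\n_{jk})/\sqrt{3}$) bounds the second fundamental form of $\Sigma^2$ as a submanifold of $M$ on any compact $K' \supset K$; hence, for $r$ below a uniform scale, each connected piece of $\Sigma^2 \cap B(x,r)$ is a bounded-slope graph and contributes $\mu^{n-2} \lesssim r^{n-2}$. Second, the number of such pieces is controlled by (\ref{eq:density-Sigma1}): each piece of $\Sigma^1 \cap B(x,r)$ is likewise a graph and so has $\mu^{n-1} \gtrsim r^{n-1}$, whence there are at most $O(\Lambda_K)$ of them, and the curvature bound on $\Sigma^2$ ensures each such sheet of $\Sigma^1$ contributes only $O(1)$ boundary pieces of $\Sigma^2$ in $B(x,r)$. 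Combining gives (\ref{eq:density-Sigma2}). (An alternative route is Allard-type monotonicity for the varifold underlying $\Sigma^2$, which has bounded generalized mean curvature and a boundary $\Sigma^{\geq 3}$ of locally finite $\H^{n-3}$-measure.) In either case, the extra factor of $r$ that you were hoping to extract from a tube is simply not there: the correct comparison with (\ref{eq:density-Sigma1}) is through the \emph{number of graphical sheets}, not through a sweep of positive $(n-1)$-area in the normal direction.
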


For a regular partition with locally bounded curvature, the second fundamental form $\II^{ij}$ in $\Sigma_{ij}$ extends by continuity to $\overline{\Sigma_{ij}} \setminus \Sigma^4$. For any $v \in T (\overline{\Sigma_{ij}} \setminus \Sigma^4)$, we denote $\II^{ij}_v := \II^{ij}(v,v)$.

\subsection{Stability}

\begin{definition}[Index Form and Stability]
A stationary regular partition $\Omega$ is called stable if for every vector-field $X \in C_c^\infty(M)$:
\[
\delta^1_X \Vol = 0 \;\; \Rightarrow \;\; Q(X) := \delta^2_X \Area - \scalar{\lambda, \delta^2_X \Vol} \geq 0 . 
\]
The quadratic form $Q$ is called the partition's index form. 
\end{definition}

Given a vector-field $X \in C_c^\infty(M)$, define the physical scalar-field $f$ to be the tuple $(f_{ij})_{i \neq j}$ where $f_{ij} = X^{\n_{ij}}$ on $\Sigma_{ij} \cup \partial \Sigma_{ij}$ is its normal component; a precise definition of several classes of scalar-fields will be given in Section \ref{sec:scalar-fields}. It is well-known that $\delta^1_X \Vol$ only depends on $f$, namely
\[
\delta^1_X \Vol = \delta^1_f \Vol ,
\]
where $\delta^1_f \Vol$ is defined as follows. 
\begin{definition}[First variation of volume]
Given an oriented collection $f = (f_{ij})$ of integrable functions $f_{ij} = -f_{ji}$ on $\Sigma_{ij}$, which are supported on a common compact set $K \subset M$, the first variation of volume induced by $f$ is defined as
\[
\delta^1_f \Vol := \brac{\int_{\partial \Omega_i} f_{ij} d\mu^{n-1}}_i = \brac{\sum_{j \neq i} \int_{\Sigma_{ij}} f_{ij} d\mu^{n-1}}_i  \in E^{(q-1)} . 
\]
\end{definition}

A much less trivial fact is that under the appropriate assumptions on $\Omega$, $Q(X)$ only depends on $f$ as well: 

\begin{thm} \label{thm:Q0LJac}
Let $\Omega$ be a stationary regular partition with locally bounded curvature. Then for any vector-field $X \in C_c^\infty(M)$, $Q(X) = Q^0(f)$, where $f = (X^{\n_{ij}})$ and $Q^0(f)$ is given by any of the following two equivalent expressions:
\begin{align} 
\label{eq:Q0LJac} Q^0(f)  := & \sum_{i<j} \brac{ -\int_{\Sigma_{ij}} f_{ij} L_{Jac} f_{ij} d\mu^{n-1} + \int_{\partial \Sigma_{ij}} (\nabla_{\n_{\partial ij}} f_{ij} - \bar \II^{\partial ij} f_{ij}) f_{ij} d\mu^{n-2}} \\
\label{eq:Q0nabla}  = & \sum_{i<j}  \brac{\int_{\Sigma_{ij}} \brac{|\nabla_{\Sigma^1} f_{ij}|^2 - (L_{Jac} 1) f_{ij}^2 } d\mu^{n-1} - \int_{\partial \Sigma_{ij}} \bar \II^{\partial ij} f_{ij}^2 d\mu^{n-2}}  . 
\end{align}
\end{thm}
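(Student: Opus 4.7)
The plan is to assemble the index form from classical single-interface second-variation computations and then use the stationarity hypothesis to eliminate every dependence of $Q(X)$ on data beyond the normal components $f_{ij} = X^{\n_{ij}}$. Writing $X = f_{ij} \n_{ij} + X^{\tang}_{ij}$ on $\Sigma_{ij}$, the standard second-variation formula for weighted area on a hypersurface with boundary (obtained from the Lie-derivative expansion of $F_t^* d\mu^{n-1}$) produces on each $\Sigma_{ij}$ an interior integrand of the form $|\nabla_{\Sigma^1} f_{ij}|^2 - (\Ric_{M,\mu}(\n_{ij},\n_{ij}) + \snorm{\II^{ij}}^2) f_{ij}^2$, plus additional terms quadratic in $H_{ij,\mu}$ and $X^{\tang}_{ij}$ (coming from squaring the full divergence $\div_{\Sigma,\mu} X = -H_{ij,\mu} f_{ij} + \div_{\Sigma,\mu} X^{\tang}_{ij}$), together with boundary contributions on $\partial\Sigma_{ij}$ depending on $f_{ij}$, $X^{\tang}_{ij}$ and their derivatives. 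An analogous Leibniz expansion gives $\delta^2_X \Vol$. The stationarity identity $H_{ij,\mu} = \lambda_i - \lambda_j$ of Lemma \ref{lem:stationarity} then causes the interior $X^{\tang}_{ij}$-terms to cancel exactly in the combination $Q(X) = \delta^2_X \Area - \scalar{\lambda, \delta^2_X \Vol}$, leaving interior integrands that organize as $-f_{ij} L_{Jac} f_{ij}$.

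The crucial step is showing that the boundary contributions surviving on $\Sigma^2$ reorganize into $\int_{\partial\Sigma_{ij}} (\nabla_{\n_{\partial ij}} f_{ij} - \bar\II^{\partial ij} f_{ij}) f_{ij} \, d\mu^{n-2}$. At a point of $\Sigma_{ijk}$ the ambient $X$ decomposes differently on each of $\Sigma_{ij}, \Sigma_{jk}, \Sigma_{ki}$, and the three co-normal derivatives differ. One sums over $(i,j) \in \cyclic(i,j,k)$ and simultaneously exploits: the Plateau relations $\n_{ij}+\n_{jk}+\n_{ki}=0$ and $\n_{\partial ij}=(\n_{ik}+\n_{jk})/\sqrt{3}$; the Dirichlet--Kirchoff identity $f_{ij}+f_{jk}+f_{ki}=0$; and the vanishing of the cyclic $3$-tensor of Lemma \ref{lem:three-tensor-vanishes}. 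These jointly eliminate the explicit $X^{\tang}$-dependence at $\Sigma^2$. The specific combination $\bar\II^{\partial ij}_{ijk} = (\II^{ik}(\n_{\partial ik},\n_{\partial ik}) + \II^{jk}(\n_{\partial jk},\n_{\partial jk}))/\sqrt{3}$ emerges because the Plateau angle forces the tangential part of $X$ along $\Sigma_{ij}$ at $\Sigma_{ijk}$ to have projections along $\n_{ik}$ and $\n_{jk}$, so the associated normal displacements pick up the second fundamental forms of the two \emph{neighbouring} interfaces rather than of $\Sigma_{ij}$ itself. Assembling all these cancellations yields (\ref{eq:Q0LJac}), which manifestly depends only on $f$.

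The equivalence of (\ref{eq:Q0LJac}) and (\ref{eq:Q0nabla}) is a one-shot integration by parts on each $\Sigma_{ij}$. Since $L_{Jac} f_{ij} = \Delta_{\Sigma^1,\mu} f_{ij} + (\Ric_{M,\mu}(\n_{ij},\n_{ij}) + \snorm{\II^{ij}}^2) f_{ij}$ and $\Delta_{\Sigma,\mu} 1 = 0$ forces $L_{Jac} 1 = \Ric_{M,\mu}(\n_{ij},\n_{ij}) + \snorm{\II^{ij}}^2$, the weighted Green identity
\[
-\int_{\Sigma_{ij}} f_{ij} \, \Delta_{\Sigma^1,\mu} f_{ij} \, d\mu^{n-1} = \int_{\Sigma_{ij}} |\nabla_{\Sigma^1} f_{ij}|^2 \, d\mu^{n-1} - \int_{\partial \Sigma_{ij}} f_{ij} \, \nabla_{\n_{\partial ij}} f_{ij} \, d\mu^{n-2}
\]
converts $-f_{ij} L_{Jac} f_{ij}$ into the gradient-squared expression of (\ref{eq:Q0nabla}); the $f_{ij} \nabla_{\n_{\partial ij}} f_{ij}$ boundary piece cancels the identical term in (\ref{eq:Q0LJac}), leaving exactly the $-\bar\II^{\partial ij} f_{ij}^2$ integrand on $\partial\Sigma_{ij}$.

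The main obstacle in a fully rigorous implementation is justifying every divergence-theorem step in the presence of the singular strata $\Sigma^{\geq 3}$, since $\partial \Sigma_{ij}$ is not a closed manifold but meets other boundaries at $\Sigma^3$, where six interfaces join as $\cos^{-1}(1/3)$-sectors, and $\II_{\Sigma^1}$ is only $C^{1,\alpha}$ up to $\Sigma^3$. The locally bounded curvature hypothesis is what allows one to push through: it ensures $\snorm{\II^{ij}}$ stays bounded on $\supp X$, and Lemma \ref{lem:Sigma2} provides the $(n-2)$-dimensional measure estimate on $\Sigma^2$ required to control all boundary integrals. A truncation argument using cut-offs supported away from $\Sigma^3 \cup \Sigma^4$, combined with the Hausdorff-dimension bounds on these strata, shows that no hidden contributions appear from the $(n-3)$- or $(n-4)$-dimensional sets when the cut-off is relaxed.
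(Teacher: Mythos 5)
The paper does not actually prove this theorem: the index-form formula (\ref{eq:intro-Q0}) is explicitly attributed to \cite{EMilmanNeeman-GaussianMultiBubble,EMilmanNeeman-TripleAndQuadruple} in the Introduction, and Theorem \ref{thm:Q0LJac} is stated in the preliminaries as a recalled result, so there is no in-paper proof to compare against. Your outline does reproduce the strategy implemented in those references: compute the second variation of the weighted Jacobian interface by interface, use stationarity ($H_{ij,\mu}=\lambda_i-\lambda_j$) to eliminate the dependence on the acceleration field $\nabla_X X$ and on $X^\tang$, reorganize the surviving $\Sigma^2$-boundary contributions via the Plateau and Dirichlet--Kirchoff relations into the $\bar\II^{\partial ij}$ term, and pass between (\ref{eq:Q0LJac}) and (\ref{eq:Q0nabla}) by a weighted Green identity, with the locally-bounded-curvature hypothesis and truncation arguments supplying the needed control near $\Sigma^{\geq 3}$.

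Two cautions are worth recording. First, the interior $X^\tang$-terms do not ``cancel exactly'' in place: after stationarity kills the $\nabla_X X$-part, the residual tangential contributions are divergences that must be integrated by parts to $\Sigma^2$ and only there cancel against the contributions of the adjacent interfaces via $\sum\n_{\partial ij}=0$; this is precisely the step that forces the analysis near the singular strata. Second, your characterization of the equivalence of (\ref{eq:Q0LJac}) and (\ref{eq:Q0nabla}) as a ``one-shot integration by parts'' understates the work: the paper itself remarks (in the proof of Lemma \ref{lem:Q0V}) that justifying this Green identity in the partition setting entails repeating the ``tedious computations'' of \cite[Section 5]{EMilmanNeeman-TripleAndQuadruple}, since $\partial\Sigma_{ij}$ is not a closed manifold and the integrands are not compactly supported away from $\Sigma^{\geq 3}$. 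As an outline your proposal is sound and identifies all the right ingredients, but the substance of the theorem resides in exactly the divergence-theorem justifications that the sketch defers.
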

\noindent
The Jacobi operator $L_{Jac}$ and curvature $\bar \II^{\partial ij}$ were defined in (\ref{eq:intro-LJac}) and (\ref{eq:intro-barII}), respectively. Consequently, stationarity amounts to the property that for any physical scalar-field $f$ as above,
\[
\delta^1_f \Vol = 0 \;\; \Rightarrow \;\; Q^0(f) \geq 0 . 
\]

\subsection{Generalized spheres and quasi-centers} \label{subsec:prelim-gen-spheres}

Let $\R^{n,1}$ denote the $(n+1)$-dimensional Minkowski space-time $\R^n \times \R$ equipped with the metric  
\begin{equation*}
\scalar{x, y}_1 =\sum_{i=1}^n x_i y_i - x_{0} y_{0},  
\end{equation*}
where we write $y = (y_1,\ldots,y_n,y_0) = (\underline{y},y_0) \in \R^n \times \R$ and similarly for $x$. 
The hyperboloid model of the $n$-dimensional hyperbolic space in $\R^{n,1}$ is given by
\begin{equation*}
\HH^n= \set{ y = (\underline{y},y_0) \in \R^{n,1} :  \scalar{y, y}_1 = -1, ~ y_0 >0 }.
\end{equation*}
We use this model throughout all definitions and computations in this work. 

\medskip

Recall that a generalized sphere $S$ in $M^n \in \{\R^n,\S^n,\HH^n\}$ is a complete hypersurface of constant curvature $\k$. On $\S^n$, this is just a usual geodesic sphere, on $\R^n$ this also includes hyperplanes, and on $\HH^n$, $S$ is either 
 a geodesic sphere ($\abs{\k} > 1$), a horosphere ($\abs{\k}=1$), or an equidistant hypersurface ($0 \leq \abs{\k} < 1$).

\begin{definition}[Quasi-center of generalized sphere]
The quasi-center $\c$ of a generalized sphere $S$ co-oriented by the unit-normal $\n$ on a model space $M^n \in \{ \R^n, \S^n, \HH^n\}$ is the vector $\c:= \n-\k p$ at any point $p \in S$, where $\k$ is the curvature of $S$ with respect to $\n$. It is easy to check that $\c$ does not depend on the choice of point $p \in S$. 
\end{definition}

\subsection{Spherical Voronoi and standard partitions} \label{subsec:prelim-Voronoi}

In the definition of spherical Voronoi (generalized) partition below, we employ a slightly more restrictive definition than the (equivalent) ones in \cite{EMilmanNeeman-TripleAndQuadruple,EMilmanNeeman-QuintupleBubble} -- namely, we assume that all cells are non-empty, and in addition that every non-empty interface is a relatively open subset of a geodesic sphere (see \cite[Section 3]{EMilmanNeeman-QuintupleBubble} for a discussion on possible degeneracies that may occur otherwise). The latter assumption is in any case implied by regularity, and the former one is to avoid pathological cases. 

\begin{definition}[Spherical Voronoi (generalized) partition in $\S^n$] \label{def:prelim-Voronoi}
A generalized partition $\Omega = (\Omega_1,\ldots,\Omega_q)$ in $\S^n$, all of whose cells are non-empty, 
is called a spherical Voronoi generalized partition if there exist $\{\c_i^\S \}_{i=1, \ldots, q} \subset \R^{n+1}$ and $\{ \k_i^\S\}_{i=1, \ldots, q} \subset \R$ so that the following holds: 
\begin{enumerate} 
\item For every non-empty interface $\Sigma_{ij} \neq \emptyset$, $\Sigma_{ij}$ is a relatively open subset of a geodesic sphere $S^\S_{ij}$ with quasi-center $\c_{ij}^\S = \c_i^\S -\c_j^\S$ and curvature $\k_{ij}^\S = \k_i^\S - \k_j^\S$. 
\item The following Voronoi representation holds:
\[
\Omega_i = \set{ p \in \S^n :~ \argmin_{j=1, \ldots, q} (\sscalar{\c_j^\S ,p} + \k_j^\S)  = \{i\} } = \bigcap_{j \neq i} \set{ p \in \S^n:~ \sscalar{\c_{ij}^\S ,p} + \k_{ij}^\S <0 }.
\]
\end{enumerate}
When $q < \infty$, $\Omega$ is called a spherical Voronoi partition. 
\end{definition}
Here $\mathit{arg}\,\mathit{min}$ denotes the subset of indices on which the corresponding minimum is attained; if the minimum is not attained (when $q=\infty$), we set the result of $\argmin$ to be the empty set.

\medskip

Let $\bar \R^n = \R^n \cup \{\infty\}$ denote the one-point-at-infinity compactification of $\R^n$, which is diffeomorphic to $\S^n$.
\begin{definition}[M\"obius automorphism of $\S^n$]
A M\"obius automorphism of $\S^n$ is a composition of stereographic projections to and from $\bar \R^n$. 
\end{definition}
\noindent
A more explicit description of the M\"obius group via the Lorentz group of isometries of Minkowski space-time may be found in \cite[Section 10]{EMilmanNeeman-TripleAndQuadruple}, but we will not require this here. 
\begin{lem} \label{lem:MobiusPreserves}
Spherical Voronoi generalized partitions in $\S^n$ are preserved under M\"obius automorphisms. 
\end{lem}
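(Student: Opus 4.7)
The plan is to linearize the Möbius action by lifting $\S^n$ to the forward lightcone in Minkowski space and then check that both the $\argmin$-Voronoi description and the separable structure $(\c_{ij}^\S, \k_{ij}^\S) = (\c_i^\S - \c_j^\S, \k_i^\S - \k_j^\S)$ transform covariantly. Concretely, I would identify $\S^n$ with the affine slice $\{y_0 = 1\}$ of the forward lightcone in $\R^{n+1,1}$ via the lift $p \mapsto \tilde p := (p, 1)$, which satisfies $\scalar{\tilde p, \tilde p}_1 = |p|^2 - 1 = 0$, and invoke the standard fact (see \cite[Section 10]{EMilmanNeeman-TripleAndQuadruple}) that any Möbius automorphism $T$ of $\S^n$ lifts to an orthochronous Lorentz transformation $L \in O^+(n+1,1)$ together with a positive conformal factor $\lambda_T : \S^n \to (0, \infty)$ satisfying $\widetilde{T(p)} = \lambda_T(p) \cdot L(\tilde p)$.

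Next, I would encode the data of $\Omega$ as Minkowski vectors $V_i := (\c_i^\S, -\k_i^\S) \in \R^{n+1,1}$, chosen so that $\scalar{V_i, \tilde p}_1 = \scalar{\c_i^\S, p} + \k_i^\S$. Then the Voronoi representation rewrites as $\Omega_i = \{p \in \S^n : \argmin_j \scalar{V_j, \tilde p}_1 = \{i\}\}$, and each interface $\Sigma_{ij}$ lies in the geodesic sphere cut out by $\scalar{V_i - V_j, \tilde p}_1 = 0$. For $q \in \S^n$, since $T^{-1}$ lifts to $L^{-1}$ (with its own positive conformal factor), I would compute $\scalar{V_j, \widetilde{T^{-1}(q)}}_1 = \lambda_{T^{-1}}(q) \scalar{V_j, L^{-1} \tilde q}_1 = \lambda_{T^{-1}}(q) \scalar{L V_j, \tilde q}_1$, using that Lorentz transformations preserve the Minkowski inner product. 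Since $\lambda_{T^{-1}}(q) > 0$ is independent of $j$, the $\argmin$ condition is preserved upon replacing each $V_j$ by $V_j' := L V_j$. Decomposing $V_j' =: (\tilde\c_j^\S, -\tilde\k_j^\S)$ extracts new data for $T(\Omega)$, and linearity of $L$ gives $V_i' - V_j' = L(V_i - V_j)$, which is exactly the required separable structure.

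To complete the verification, I would then note that Möbius automorphisms send generalized spheres in $\S^n$ to generalized spheres, so each $T(\Sigma_{ij})$ is a relatively open subset of a geodesic sphere; the computation above identifies this sphere as the one with quasi-center $\tilde\c_i^\S - \tilde\c_j^\S$ and curvature $\tilde\k_i^\S - \tilde\k_j^\S$, and non-emptiness of cells together with relative openness of interfaces are preserved because $T$ is a diffeomorphism. The main step requiring care is the sign/normalization bookkeeping in the lift $V_i = (\c_i^\S, -\k_i^\S)$: this encoding must be simultaneously consistent with the paper's Minkowski signature, with the decomposition of $L$ into its spatial and temporal parts, and with the defining relation $\c = \n - \k p$ for the quasi-center, so that the identifications of $\tilde\c_j^\S$ and $\tilde\k_j^\S$ read off from $V_j' = L V_j$ really do pick out the quasi-center and curvature of the transformed sphere with the correct signs (rather than flipped orientations). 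Once this bookkeeping is pinned down — a routine task given the explicit formulas — the lemma follows directly from linearity of the Lorentz action and strict positivity of the conformal factor.
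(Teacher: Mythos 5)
Your proof is correct and takes the same conceptual route as the paper: the paper's own ``proof'' is simply a citation to \cite[Lemma 10.2]{EMilmanNeeman-TripleAndQuadruple} (with the remark that the argument extends verbatim from clusters to arbitrary generalized partitions), and the reference cited there is precisely the Lorentz-group description of the M\"obius group that you use. You have essentially unpacked the cited argument. The one place you hedge --- ``the sign/normalization bookkeeping in the lift $V_i = (\c_i^\S,-\k_i^\S)$'' --- does deserve a sentence, but it closes cleanly: the paper records (in its (\ref{eq:ckS-relation}), Subsection \ref{subsec:prelim-Voronoi}) that, given the Voronoi representation, the statement ``$(\c_{ij}^\S,\k_{ij}^\S)$ is the quasi-center and curvature of the interface sphere $S_{ij}^\S$'' is equivalent to $|\c_{ij}^\S|^2 = 1+(\k_{ij}^\S)^2$. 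With your encoding this is exactly $\sscalar{V_i-V_j,V_i-V_j}_1 = 1$, which is manifestly preserved by $L \in O^+(n+1,1)$, and positivity of this Minkowski norm also guarantees that the locus $\sscalar{LV_{ij},\tilde q}_1=0$ is a genuine geodesic sphere of $\S^n$. So the ``routine bookkeeping'' really is routine; I would simply state the equivalence $|\c_{ij}|^2 - \k_{ij}^2 = 1 \Leftrightarrow (\c_{ij},\k_{ij})$ is a quasi-center/curvature pair and invoke Lorentz invariance, rather than leaving it implicit.
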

\begin{proof}
For clusters in $\S^n$, this was proved in \cite[Lemma 10.2]{EMilmanNeeman-TripleAndQuadruple}, but the proof remains valid for arbitrary generalized partitions.
\end{proof}

\begin{definition}[M\"obius-flat spherical Voronoi parition in $\S^n$] \label{def:prelim-Mobius-flat}
A spherical Voronoi generalized partition in $\S^n$ is called M\"obius-flat if it has a M\"obius image which is flat, namely all of its non-empty interfaces $\Sigma_{ij}$ are relatively open subsets of spheres $S^\S_{ij}$ having zero curvature. 
\end{definition}

To extend these definitions from $\S^n$ to $\R^n$ and $\HH^n$, we use the stereographic projections $\pi_\R : \R^n \rightarrow \S^n \setminus \{N\}$ and $\pi_\HH : \HH^n \rightarrow \S^n_{+}$, respectively (see Figure \ref{fig:stereographic}). Here $N := e_{n+1}$ is the North pole in $\S^n$ and $\S^n_{\pm} := \{ p \in \S^n : \pm \scalar{p,N} > 0\}$ denotes the Northern and Southern hemispheres, respectively. Observe that the stereographic projection of a countable partition ($q=\infty$) in $\R^n$ or $\HH^n$ will no longer be locally finite in $\S^n$ or $\S^n_+$, and so will only be a generalized partition. An explicit description of $\pi_\R$ and $\pi_\HH$, as well as of the notion of spherical Voronoi partition on $\R^n$ and $\HH^n$, 
is deferred to Section \ref{sec:V}. For now, let us only mention that the non-empty interfaces $\Sigma^\M_{ij}$ of a spherical Voronoi partition in $\M^n$, $\M \in \{\R,\S,\HH\}$, are guaranteed to lie on generalized spheres $S^\M_{ij}$, having ``separable" curvatures $\k^\M_{ij} = \k^\M_i - \k^\M_j$ and quasi-centers $\c^\M_{ij} = \c^\M_i - \c^\M_j$.

\begin{lem}  \label{lem:RegularVoronoiIsStationary}
A regular spherical Voronoi partition in $M^n \in \{ \R^n, \S^n, \HH^n\}$ is stationary. 
\end{lem}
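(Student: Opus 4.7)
By the converse direction of Lemma \ref{lem:stationarity}, it suffices to verify that $\Omega$ has locally bounded curvature together with the two geometric conditions: (i) each weighted mean-curvature $H_{\Sigma_{ij},\mu}$ is a constant of the form $\lambda_i - \lambda_j$ for some $\lambda \in \R^q$, and (ii) $\sum_{(i,j)\in\cyclic(u,v,w)} \n_{ij} = 0$ on every non-empty $\Sigma_{uvw}$. Since we are in the unweighted setting, $\mu = \vol_g$ and $H_{\Sigma_{ij},\mu} = H_{\Sigma_{ij}} = (n-1)\k_{ij}^\M$ (up to the standard sign convention fixing the orientation $\n_{ij}$). Each interface $\Sigma_{ij}$ is a relatively open subset of a generalized sphere $S^\M_{ij}$ of constant curvature $\k^\M_{ij}$, so $\II^{ij} = \k^\M_{ij} g_{\Sigma_{ij}}$ is bounded, whence $\Omega$ has locally bounded curvature. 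Moreover, by the separability assumption $\k_{ij}^\M = \k_i^\M - \k_j^\M$ built into Definition \ref{def:prelim-Voronoi} (and its extensions to $\R^n$ and $\HH^n$ deferred to Section \ref{sec:V}), condition (i) holds with $\lambda_i := (n-1) \k_i^\M$.

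For condition (ii), the plan is to use the ambient characterization of the normal in terms of the quasi-center. Working in the ambient space ($\R^n$, $\R^{n+1}$ or $\R^{n,1}$ according to $\M$), the defining identity $\c^\M = \n - \k^\M p$ for a generalized sphere at $p$ rearranges to
\[
\n_{ij} = \c_{ij}^\M + \k_{ij}^\M \, p \qquad \text{at every } p \in \Sigma_{ij}.
\]
Plugging in the separable expressions $\c^\M_{ij} = \c^\M_i - \c^\M_j$ and $\k^\M_{ij} = \k^\M_i - \k^\M_j$ and summing over any cyclically ordered triple $(i,j) \in \cyclic(u,v,w)$, both the $\c$-part and the $\k$-part telescope to zero, so $\sum_{(i,j)\in\cyclic(u,v,w)} \n_{ij}(p) = 0$ at every $p \in \Sigma_{uvw}$. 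In the hyperbolic case this telescoping takes place in $\R^{n,1}$, but since the tangent space $T_p \HH^n$ sits isometrically inside $\R^{n,1}$ and all $\n_{ij}$ lie in $T_p\HH^n$, the ambient cancellation is equivalent to the intrinsic one; the same remark applies in $\S^n$.

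Combining (i), (ii) and the locally bounded curvature assumption, Lemma \ref{lem:stationarity} yields that $\Omega$ is stationary with Lagrange multipliers $\lambda_i = (n-1)\k_i^\M$. The main (and essentially only) non-trivial point is the ambient identity $\n = \c + \k p$ in each model, but this is immediate from the very definition of quasi-center given in Subsection \ref{subsec:prelim-gen-spheres} and its natural extension to generalized spheres in $\R^n$ and $\HH^n$; once this is in hand the separability of $\c_{ij}^\M$ and $\k_{ij}^\M$ makes the Dirichlet--Kirchhoff condition automatic, with no further computation required.
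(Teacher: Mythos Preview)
Your proof is correct and follows essentially the same approach as the paper: both invoke the converse direction of Lemma \ref{lem:stationarity}, verify locally bounded curvature from the constant-curvature interfaces, use the separability $\k_{ij}^\M = \k_i^\M - \k_j^\M$ to get condition \ref{it:stationarity-k}, and telescope the ambient identity $\n_{ij} = \c_{ij}^\M + \k_{ij}^\M p$ to get condition \ref{it:stationarity-n}. The paper defers the $\R^n$ and $\S^n$ cases to \cite{EMilmanNeeman-TripleAndQuadruple} and spells out only the $\HH^n$ case in Lemma \ref{lem:HnStationary}, but the argument there is line-for-line what you wrote.
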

\begin{proof}
This was proved for $M^n \in \{ \R^n, \S^n\}$ in \cite[Subsection 8.5]{EMilmanNeeman-TripleAndQuadruple}. The same idea applies to $\HH^n$, and will be verified in Lemma \ref{lem:HnStationary}. 
\end{proof}

\begin{definition}[Standard flat partition in $\S^n$]
A standard flat $q$-partition in $\S^n$, $2 \leq q \leq n+2$, is the spherical Voronoi partition $\Omega^\S$ given by:
\[
\Omega^\S_i = \set{ p \in \S^n : \argmin_{j=1,\ldots,q} \scalar{p,\c_j} = \{i\} } ,
\]
where $\c_i$ are $q$-equidistant points in $R_{q,n} \S^n \subset \R^{n+1}$ (the value of $R_{q,n} > 0$ is immaterial, but can be chosen so that $|\c_{ij}| = 1$ for all $i \neq j$). 
\end{definition}

\begin{definition}[Standard partition in $\S^n$]
A standard $q$-partition $\Omega^\S$ in $\S^n$, $2 \leq q \leq n+2$, is a M\"obius image of a standard flat $q$-partition in $\S^n$. 
\end{definition}

\begin{definition}[Standard partition in $\R^n$ and $\HH^n$]
A standard $q$-partition $\Omega^\M$ in $\M^n$, $\M \in \{\R, \HH\}$, $2 \leq q \leq n+2$, is defined as $\Omega^\M = \pi_\M^{-1}(\Omega^\S)$, where $\Omega^\S$ is a standard $q$-partition in $\S^n$. 
\end{definition}
By Lemma \ref{lem:pull-back SphVor-hyper}, if $\Omega^\S$ is a spherical Voronoi $q_\S$-partition in $\S^n$, then $\pi_\HH^{-1}(\Omega^\S)$ is a spherical Voronoi $q_\HH$-partition in $\HH^n$ for some $1 \leq q_\HH \leq q_\S$. Consequently, part of the requirement in the definition of a standard $q$-partition $\Omega^\HH$ in $\HH^n$ is that $q_\S = q_\HH = q$. The resulting standard $q$-partition in $\HH^n$ is called a standard $(q-1)$-bubble if it is induced by a $(q-1)$-cluster, or equivalently (see Lemma \ref{lem:bounded}), if $\Omega^\S_q \supset \overline{\S^n_-}$.  Similarly, a standard $q$-partition in $\R^n$ is called a standard $(q-1)$-bubble if it is induced by a $(q-1)$-cluster, or equivalently,  if  $\Omega^\S_q \ni N$. 
  
\smallskip
By Lemmas \ref{lem:MobiusPreserves} and \ref{lem:pull-back SphVor-hyper} and Definition \ref{def:VoronoiR}, standard partitions in $M^n \in \{\R^n, \S^n, \HH^n\}$ are M\"obius-flat spherical Voronoi partitions. By construction, standard partitions are regular in $\S^n$, hence in $\S^n \setminus \{N\}$ and $\S^n_+$, and since regularity is preserved under (conformal) $C^\infty$-diffeomorphisms, we confirm that standard partitions in $\R^n$ and $\HH^n$ are also regular.

\section{Divergence Theorems} \label{sec:divergence}

A function on $M$ which is $C^\infty$ smooth and takes values in $[0, 1]$ is called a cutoff function. 

\begin{lemma}\label{lem:cutoff} 
Let $\Omega$ be a stationary regular partition in $(M^n,g,\mu)$ with locally bounded curvature.  
 Fix $m\in \{1,2\}$ and a compact set $K \subset M$.  Then for every $\epsilon > 0$, there is a cutoff function $\eta_{\eps}$  such that:
 \begin{enumerate}
 \item $\eta_\eps \equiv 0$ on an open neighborhood of $\Sigma^{\geq 2+m} \cap K$.
 \item $\eta_{\eps} \rightarrow 1$ $\H^{n-m}$-almost-everywhere as $\eps \rightarrow 0$.
 \item $\int_{\Sigma^m} |\nabla \eta_\eps|^{2} \, d\mu^{n-m} \le C_K$, where $C_K$ depends only on $K$ (and not on $\eps$).
\end{enumerate}
\end{lemma}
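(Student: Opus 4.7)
The plan is a classical logarithmic-cutoff construction, exploiting that $\Sigma^{\geq 2+m}$ has Hausdorff codimension at least $2$ inside $\Sigma^m$. First I would verify the key measure estimate $\H^{n-m-2}(\Sigma^{\geq 2+m}\cap K)<\infty$: for $m=2$ this is the regularity assumption $\H^{n-4}(\Sigma^4\cap K)<\infty$ directly, and for $m=1$ the $C^{1,\alpha}$-manifold structure of $\Sigma^3$ gives $\H^{n-3}(\Sigma^3\cap K)<\infty$ via local charts, while $\H^{n-4}(\Sigma^4\cap K)<\infty$ forces $\H^{n-3}(\Sigma^4\cap K)=0$. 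The set $\Sigma^{\geq 2+m}$ is closed in $M$ (since $\Sigma^m$ is open in $\Sigma^{\geq m}$ by the stratum-by-stratum local chart structure), and a standard Vitali argument yields, for every small $r>0$, a covering of $\Sigma^{\geq 2+m}\cap K$ by $N(r)\lesssim r^{-(n-m-2)}$ balls of radius $r$ in $M$.

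For the construction of $\eta_\eps$, let $d(x):=\mathrm{dist}_M(x,\Sigma^{\geq 2+m}\cap K)$ and choose a smooth nondecreasing $\phi_\eps:[0,\infty)\to[0,1]$ with $\phi_\eps\equiv 0$ on $[0,\eps^2]$, $\phi_\eps\equiv 1$ on $[\eps,\infty)$, and $|\phi_\eps'(t)|\leq C/(t\log(1/\eps))$; such a $\phi_\eps$ is obtained by mollifying the piecewise-linear interpolation of $t\mapsto\log(t/\eps^2)/\log(1/\eps)$. Set $\tilde\eta_\eps:=\phi_\eps\circ d$; this is Lipschitz, vanishes on $\{d<\eps^2\}$, equals $1$ on $\{d>\eps\}$, and satisfies $|\nabla\tilde\eta_\eps|\leq C/(d\log(1/\eps))$ a.e. Convolving against a mollifier at scale $\delta\ll\eps^2$ in a chart adapted to a neighborhood of $K$, and then multiplying by a fixed $C_c^\infty(M)$ bump that is $1$ near $K$, produces the required $C^\infty$ cutoff $\eta_\eps$ with the same pointwise bounds. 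Property (1) then holds because $\eta_\eps$ vanishes on an open neighborhood of $\Sigma^{\geq 2+m}\cap K$, and property (2) because $\eta_\eps(x)\to 1$ on $\{d>0\}$ while $\Sigma^m\cap\Sigma^{\geq 2+m}=\emptyset$ by disjointness of strata.

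For the Dirichlet-energy bound (3), I would decompose the support of $\nabla\eta_\eps$ dyadically: for $k=0,1,\ldots,K_\eps\sim\log_2(1/\eps)$, let $A_k:=\{\eps 2^{-k-1}<d\leq \eps 2^{-k}\}$, so that $|\nabla\eta_\eps|^2\lesssim (\eps 2^{-k})^{-2}\log^{-2}(1/\eps)$ on $A_k$. Covering $\Sigma^{\geq 2+m}\cap K$ at scale $r=\eps 2^{-k}$ as in the first paragraph and using the density upper bounds \eqref{eq:density-Sigma1} (for $m=1$) or \eqref{eq:density-Sigma2} (for $m=2$, via Lemma~\ref{lem:Sigma2}), each giving $\mu^{n-m}(\Sigma^m\cap B(x,r))\leq\Lambda_K r^{n-m}$, yields $\mu^{n-m}(\Sigma^m\cap A_k)\lesssim N(\eps 2^{-k})\cdot(\eps 2^{-k})^{n-m}\lesssim(\eps 2^{-k})^2$. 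Hence
\[
\int_{A_k\cap\Sigma^m}|\nabla\eta_\eps|^2\,d\mu^{n-m}\;\lesssim\;\frac{(\eps 2^{-k})^2}{(\eps 2^{-k})^2\log^2(1/\eps)}\;=\;\frac{1}{\log^2(1/\eps)},
\]
and summing over the $O(\log(1/\eps))$ scales gives $\int_{\Sigma^m}|\nabla\eta_\eps|^2\,d\mu^{n-m}\lesssim 1/\log(1/\eps)\leq C_K$. In fact, this bound tends to $0$ as $\eps\to 0$, which is stronger than the claimed uniform bound.

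The main technical obstacle is justifying the covering count $N(r)\lesssim r^{-(n-m-2)}$ uniformly in small $r$. For the $C^{1,\alpha}$ stratum $\Sigma^3$ (relevant in the $m=1$ case) this follows from the fact that a compact $(n{-}3)$-dimensional $C^{1,\alpha}$ submanifold has finite upper $(n{-}3)$-Minkowski content, comparable to $\H^{n-3}$, via its local coordinate charts. For $\Sigma^4$ (relevant in both cases) the local finiteness of $\H^{n-4}$ yields the even tighter count $\lesssim r^{-(n-4)}$ by the very definition of Hausdorff measure together with a Vitali selection, which is absorbed into the bound $\lesssim r^{-(n-m-2)}$ since $n-4\leq n-m-2$. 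The only other routine wrinkle is smoothing $\phi_\eps\circ d$ into a genuinely $C^\infty$ function, which is handled by convolution at a scale $\delta\ll\eps^2$ that leaves all estimates intact up to a universal multiplicative constant.
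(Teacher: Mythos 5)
Your overall plan (cut off logarithmically around the codimension-$\geq 2$ singular stratum, estimate the Dirichlet energy using the density upper bounds) is a natural one, but it hinges on a step that does not follow from the hypotheses and is in fact the crux of the matter: the fixed-scale covering count $N(r)\lesssim r^{-(n-m-2)}$. You claim this follows ``by the very definition of Hausdorff measure together with a Vitali selection,'' but this is a Minkowski-content bound, and finite Hausdorff measure does \emph{not} imply finite Minkowski content. From $\H^{n-m-2}(\Sigma^{\geq 2+m}\cap K)<\infty$ you only get, for each $\delta_0>0$, a cover by balls $B(x_i,\delta_i)$ of \emph{variable} radii $\delta_i<\delta_0$ with $\sum_i \delta_i^{n-m-2}$ controlled; you cannot replace those balls by balls of a single radius $r$ without losing all control on their number, since the $\delta_i$ may be far smaller than $r$. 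Concretely, a compact set can have finite (or even zero) $s$-Hausdorff measure while having strictly larger upper box-counting (Minkowski) dimension, so the uniform-in-$r$ count is simply unavailable. For $\Sigma^4$ the regularity hypothesis supplies \emph{only} locally finite $\H^{n-4}$, nothing more, so this gap is unbridgeable as written. Your auxiliary claim that $\Sigma^3\cap K$ has finite $(n-3)$-Minkowski content is also not justified: $\Sigma^3$ is not closed (its closure meets $\Sigma^4$), so $\Sigma^3\cap K$ is generally not a compact $C^{1,\alpha}$ submanifold, and the $C^{1,\alpha}$ charts can degenerate as one approaches $\Sigma^4$. Once the uniform count $N(\eps 2^{-k})$ fails, the dyadic Dirichlet-energy estimate collapses.

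The paper's proof sidesteps this entirely and is worth comparing. It uses the Hausdorff cover at its natural variable scales $\{\delta_i\}$: for each ball it builds a \emph{linear} (not logarithmic) cutoff $\eta_i(x)=\frac{1}{\delta_i}d(x,B(x_i,2\delta_i))\wedge 1$, sets $\tilde\eta=\min_i\eta_i$, and mollifies. Because each $\eta_i$ transitions on an annulus of width comparable to its own radius $\delta_i$, the gradient bound is $|\nabla\eta|\lesssim 1/\delta_i$ on $B(x_i,4\delta_i)$, and the density estimate on $\Sigma^m$ gives
\[
\int_{\Sigma^m}|\nabla\eta|^2\,d\mu^{n-m}\;\lesssim\;\sum_i \frac{1}{\delta_i^2}\,\Lambda_K\,\delta_i^{\,n-m}\;=\;\Lambda_K\sum_i \delta_i^{\,n-m-2},
\]
which is exactly the quantity controlled by $\H^{n-m-2}(\Sigma^{\geq 2+m}\cap K)$. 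No Minkowski content, no per-scale count, no logarithm — only the density upper bounds \eqref{eq:density-Sigma1}/\eqref{eq:density-Sigma2} and the Hausdorff bound from regularity. The payoff of your logarithmic construction would be an energy that decays like $1/\log(1/\eps)$, but the lemma only needs a uniform bound $C_K$, and the uniform bound is what the linear cutoff at variable scales delivers under the stated hypotheses. If you want to keep the logarithmic idea, you would have to either (i) strengthen the hypotheses to a Minkowski-content assumption on $\Sigma^{\geq 2+m}$, or (ii) run the logarithmic cutoff ball-by-ball at scale $\delta_i$ and sum the per-ball contributions $\delta_i^{n-m-2}/\log(1/\delta_i)$ (which is bounded but does not obviously vanish in $\eps$ unless one also controls $\min_i\delta_i$, so you would not actually recover the sharper $o(1)$ bound in general).
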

\begin{proof}
Fix $\eps \in (0,1)$. By the upper density estimates (\ref{eq:density-Sigma1}) or (\ref{eq:density-Sigma2}), there exist $\Lambda_K > 0$ and $r_0 \in (0,\eps)$ so that:
\begin{equation} \label{eq:cutoff-density}
    \mu^{n-m}(\Sigma^m \cap B(x,4 r)) \leq \Lambda_K r^{n-m} \;\;\; \forall x \in \Sigma \cap K \;\; \forall r \in (0,r_0) .
\end{equation}
        
By regularity, $\H^{n-m-2}(\Sigma^{\geq 2+m}\cap K)$ is finite. For any $\delta_0 \in (0,r_0)$, the definition of Hausdorff measure and compactness imply the existence of finite sequences $x_i \in \Sigma^{\geq 2+m} \cap K$ and $\delta_i \in (0, \delta_0)$, $i=1,\ldots,N$, such that the sets $\{B(x_i, \delta_i)\}$ cover $\Sigma^{\geq 2+m} \cap K$ and $\sum_{i=1}^N \delta_i^{n-m-2} \le C_n \H^{n-m-2} (\Sigma^{\geq 2+m} \cap K) + 1$, where $C_n >0$ is a numeric constant depending only on $n$. We may of course assume that $B(x_i, \delta_i) \cap (\Sigma^{\geq 2+m} \cap K) \neq \emptyset$ for all $i$. 
         
 For each $i$, define the $\frac{1}{\delta_i}$-Lipschitz function $\eta_i(x) = \frac{1}{\delta_i} d(x, B(x_i,2 \delta_i)) \wedge 1$, so that $\eta_i \equiv 1$ outside of $B(x_i,3 \delta_i)$ and $\eta_i \equiv 0$ inside $B(x_i,2 \delta_i)$. Set $\tilde \eta(x) := \min_{i=1,\ldots,N} \eta_i(x)$, so that $\tilde \eta$ vanishes on $\cup_i  B(x_i,2 \delta_i)$ and is identically $1$ on $M^n \setminus \cup_i B(x_i,3 \delta_i)$. We now define the cutoff function $\eta$ to be a $C^\infty$ mollification of $\tilde \eta$ using a partition of unity as in \cite[Section 2]{GreeneWu-SmoothApproximations}. Since $\delta := \min_{i=1,\ldots,N} \delta_i > 0$, we may ensure that $\eta$ vanishes on $\cup_i  B(x_i, \delta_i)$, an open neighborhood of $\Sigma^{\geq 2+m}\cap K$,  is identically $1$ on $M^n \setminus \cup_i  B(x_i, 4 \delta_i)$, and satisfies for all $x \in M^n$ that
\begin{equation} \label{eq:mollification}
    |\nabla \eta(x)| \leq \max \set{ \frac{2}{\delta_i} :  x \in B(x_i , 4 \delta_i) \setminus B(x_i , \delta_i) } . 
\end{equation}
Since $\delta_i \leq \eps$ and $B(x_i, \delta_i) \cap (\Sigma^{\geq 2+m} \cap K) \neq \emptyset$, it follows that $\eta \equiv 1$ on $M^n \setminus (\Sigma^{\geq 2+m} \cap K)_{8 \eps}$. As $\Sigma^{\geq 2+m} \cap K$ is closed, we confirm that $\eta = \eta_\eps$ tends to $1$ as $\eps \rightarrow 0$ except on $\Sigma^{\geq 2+m} \cap K$, and hence $\H^{n-m}$-almost-everywhere.

From (\ref{eq:mollification}), we deduce for all $x \in M^n$ that
    \[
    | \nabla \eta(x) |^{2} \leq \sum_{i} \brac{\frac{2}{\delta_i}}^2 1_{B(x_i,4 \delta_i) \setminus B(x_i,\delta_i)}(x) .
    \]
    Consequently, by (\ref{eq:cutoff-density}):
    \begin{align*}
    \int_{\Sigma^m} |\nabla \eta|^{2}\, d\mu^{n-m} & \leq  \sum_i \brac{\frac{2}{\delta_i}}^2  \mu^{n-m}(\Sigma^m \cap B(x_i,4 \delta_i) ) \\
    & \leq \sum_{i} 4 \Lambda_K \delta_i^{n-m-2} \leq 4 \Lambda_K( C_n \H^{n-m-2} (\Sigma^{\geq 2+m} \cap K) +1) < \infty . 
    \end{align*}
    This concludes the proof. 
\end{proof}

\begin{lemma}[Divergence theorem on $\Sigma^2$] \label{lem:StokesSigma2}
    Let $\Omega$ be a stationary regular partition in $(M^n,g,\mu)$ with locally bounded curvature.  Given $i<j<k$, let $Z_{ijk}$ denote a $C^1$-smooth tangential vector-field on $\Sigma_{ijk}$ which is continuous up to $\partial \Sigma_{ijk}$. Assume that there exists a compact set $K \subset M^n$ so that all $Z_{ijk}$ are supported in $K$ and satisfy:
    \begin{equation} \label{eq:StokesSigma2-dominant}
        \sum_{i<j<k}  \int_{\Sigma_{ijk}} |\div_{\Sigma^2,\mu} Z_{ijk}| d\mu^{n-2} < \infty ~,~
        \sum_{a<b<c<d} \int_{\Sigma_{abcd}} \sabs[\Big]{\sum_{\substack{\{i,j,k\} \subset \\ \{a,b,c,d\}}}  Z_{ij k}^{\n_{\partial ijk}}} d\mu^{n-3} < \infty . 
      \end{equation}
      In addition, assume that:
      \begin{equation} \label{eq:StokesSigma2-dominant2}
         \sum_{i<j<k} \int_{\Sigma_{ijk}} |Z_{ijk}|^{2} d\mu^{n-2} < \infty . 
    \end{equation}
        Then:
    \begin{equation} \label{eq:StokesSigma2-conclusion}
        \sum_{i<j<k} \int_{\Sigma_{ijk}} \div_{\Sigma^2,\mu} Z_{ij k} \, d\mu^{n-2} = \sum_{a<b<c<d} \int_{\Sigma_{abcd}} \sum_{\substack{\{i,j,k\} \subset \\ \{a,b,c,d\}}}  Z_{ij k}^{\n_{\partial ijk}}  d\mu^{n-3} . 
    \end{equation}
\end{lemma}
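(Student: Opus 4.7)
The plan is to apply the classical Riemannian divergence theorem on each individual $C^\infty$-smooth piece $\Sigma_{ijk}$ (which has $C^{1,\alpha}$-smooth boundary $\partial \Sigma_{ijk} = \cup_\ell \Sigma_{ijk\ell}$), after first localizing away from the lower-dimensional singular stratum $\Sigma^{\geq 4}$, and then to remove the localization in the limit. To this end, I will use the cutoff functions $\eta_\eps$ produced by Lemma \ref{lem:cutoff} with $m=2$: each $\eta_\eps$ vanishes on an open neighborhood of $\Sigma^{\geq 4}\cap K$, tends to $1$ $\H^{n-2}$-a.e.\ as $\eps \to 0$, and satisfies the uniform $L^2$ gradient bound $\int_{\Sigma^2} |\nabla\eta_\eps|^{2}\, d\mu^{n-2} \leq C_K$.

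For each fixed $\eps > 0$, the tangential vector-field $\eta_\eps Z_{ijk}$ is $C^{1}$-smooth on $\Sigma_{ijk}$, continuous up to $\partial\Sigma_{ijk}$, and compactly supported inside the relatively open subset $\Sigma_{ijk}\cup (\partial\Sigma_{ijk}\setminus \Sigma^{\geq 4}\text{-neighborhood})$, which is an $(n-2)$-manifold with $C^{1,\alpha}$ boundary contained in $\cup_\ell \Sigma_{ijk\ell}$. Thus the ordinary weighted divergence theorem applies and yields
\[
\int_{\Sigma_{ijk}} \div_{\Sigma^2,\mu}(\eta_\eps Z_{ijk})\, d\mu^{n-2}
= \sum_\ell \int_{\Sigma_{ijk\ell}} \eta_\eps \, Z_{ijk}^{\n_{\partial ijk}}\, d\mu^{n-3}.
\]
Expanding $\div_{\Sigma^2,\mu}(\eta_\eps Z_{ijk}) = \eta_\eps \div_{\Sigma^2,\mu} Z_{ijk} + \sscalar{\nabla_{\Sigma^2}\eta_\eps, Z_{ijk}}$, summing over triples $i<j<k$, and reorganizing the right-hand side by grouping the four triples $\{i,j,k\}\subset\{a,b,c,d\}$ that share a common quadruple-point set $\Sigma_{abcd}$, I obtain the identity (\ref{eq:StokesSigma2-conclusion}) with $\eta_\eps$ inserted and an additional error term $\sum_{i<j<k}\int_{\Sigma_{ijk}} \sscalar{\nabla_{\Sigma^2}\eta_\eps, Z_{ijk}}\, d\mu^{n-2}$.

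It remains to send $\eps \to 0$. The bulk term on the left and the boundary term on the right converge to the claimed quantities by dominated convergence, using the integrability hypotheses (\ref{eq:StokesSigma2-dominant}) as dominating envelopes and the pointwise convergence $\eta_\eps \to 1$ $\H^{n-m}$-a.e.\ for the relevant $m\in\{2,3\}$ (noting that $\H^{n-3}(\Sigma^{\geq 4}) = 0$ follows from the locally finite $\H^{n-4}$ measure of $\Sigma^4$ in the definition of regularity, so $\eta_\eps \to 1$ $\mu^{n-3}$-a.e.\ on each $\Sigma_{abcd}$). The main obstacle, and the reason the extra $L^{2}$-hypothesis (\ref{eq:StokesSigma2-dominant2}) is needed, is showing that the error term vanishes in the limit. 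By Cauchy--Schwarz, this term is bounded in absolute value by
\[
\Big(\sum_{i<j<k}\int_{\Sigma_{ijk}\cap K} |\nabla \eta_\eps|^{2}\, d\mu^{n-2}\Big)^{1/2}
\Big(\sum_{i<j<k}\int_{\Sigma_{ijk}\cap\{\eta_\eps<1\}} |Z_{ijk}|^{2}\, d\mu^{n-2}\Big)^{1/2}.
\]
Lemma \ref{lem:cutoff} bounds the first factor uniformly by $C_K^{1/2}$ (noting that only finitely many triples $\{i,j,k\}$ contribute, by local finiteness of the partition on the compact set $K$). For the second factor, the set $\{\eta_\eps < 1\}$ shrinks down into $\Sigma^{\geq 4}\cap K$, which has $\mu^{n-2}$-measure zero by the density bound of Lemma \ref{lem:Sigma2} together with $\H^{n-4}$-local finiteness of $\Sigma^4$; combined with (\ref{eq:StokesSigma2-dominant2}), dominated convergence forces this factor to $0$. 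This completes the passage to the limit and yields (\ref{eq:StokesSigma2-conclusion}).
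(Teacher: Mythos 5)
Your argument is correct and follows the paper's proof essentially verbatim: multiply $Z_{ijk}$ by the cutoff $\eta_\eps$ from Lemma \ref{lem:cutoff} (with $m=2$), apply the ordinary weighted divergence theorem on each $\Sigma_{ijk}$, pass to the limit using dominated convergence via (\ref{eq:StokesSigma2-dominant}), and kill the error term $\sum \int \sscalar{\nabla_{\Sigma^2}\eta_\eps, Z_{ijk}}$ by Cauchy--Schwarz combined with the uniform $L^2$-gradient bound and (\ref{eq:StokesSigma2-dominant2}). The only slight overkill is your justification of $\eta_\eps\to 1$ on $\Sigma^3$: since the strata $\Sigma^1,\dots,\Sigma^4$ are pairwise disjoint and $\eta_\eps\to 1$ pointwise off $\Sigma^{\geq 4}\cap K$, the convergence on $\Sigma^3$ is actually pointwise everywhere, so no Hausdorff-measure argument is needed there.
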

\begin{proof}
Given $\eps > 0$, let $\eta_\eps : M^n \rightarrow [0,1]$ be a cutoff function as in Lemma \ref{lem:cutoff} corresponding to $K$. Applying the divergence theorem to $\eta_\eps Z_{ijk}$ (supported away from $\Sigma^4$ and infinity) and summing over $i<j<k$, we obtain:
\begin{align*}
& \sum_{i<j<k} \int_{\Sigma_{ijk}} \eta_\eps \div_{\Sigma^2,\mu} (Z_{ij k}) d\mu^{n-2} + 
\sum_{i<j<k} \int_{\Sigma_{ijk}} \scalar{\nabla_{\Sigma^2} \eta_\eps , Z_{ij k}} d\mu^{n-2} \\
& =  \sum_{i<j<k} \int_{\Sigma_{ijk}} \div_{\Sigma^2,\mu} (\eta_\eps Z_{ij k}) d\mu^{n-2} =  \sum_{a<b<c<d} \int_{\Sigma_{abcd}} \eta_\eps  \sum_{\substack{\{i,j,k\} \subset \\ \{a,b,c,d\}}}  Z_{ij k}^{\n_{\partial ijk}} d\mu^{n-3} .
\end{align*}
By (\ref{eq:StokesSigma2-dominant}) and as $\eta_\eps \rightarrow 1$ $\H^{n-2}$-almost-everywhere as $\eps \rightarrow 0$, the Dominant Convergence Theorem implies that the first and last terms above converge to the left and right terms in (\ref{eq:StokesSigma2-conclusion}), respectively. It remains to show that the second term above tends to $0$. 

Indeed, using that $|\nabla \eta_\eps| = 0$ on the set $\{ \eta_\eps = 1\}$, the Cauchy-Schwarz inequality, and the estimate $\int_{\Sigma^2} |\nabla \eta_\eps|^2 d\mu^{n-2} \leq C_K$ from Lemma \ref{lem:cutoff}, 
we bound:
\[
 \sum_{i<j<k} \int_{\Sigma_{ijk}} \abs{\scalar{\nabla_{\Sigma^2} \eta_\eps , Z_{ijk}}} \, d\mu^{n-2} \leq 
 \brac{C_K  \sum_{i<j<k} \int 1_{\{\eta_\eps < 1\}} |Z_{ijk}|^2 d\mu^{n-2} }^{1/2} . 
 \]
 Using again that $\eta_\eps \rightarrow 1$ $\H^{n-2}$-almost-everywhere and (\ref{eq:StokesSigma2-dominant2}), we confirm that the right-hand-side tends to $0$ as $\eps \rightarrow 0$, concluding the proof. 
\end{proof}

\begin{lemma}[Divergence theorem on $\Sigma^1$] \label{lem:StokesSigma1}
    Let $\Omega$ be a stationary regular partition in $(M^n,g,\mu)$ with locally bounded curvature. Given $i<j$, let $Z_{ij} = Z_{ji}$ denote a (non-oriented) $C^1$-smooth tangential vector-field on $\Sigma_{ij}$ which is continuous up to $\partial \Sigma_{ij}$. Assume that there exists a compact set $K \subset M^n$ so that all $Z_{ij}$ are supported in $K$ and satisfy:
    \begin{equation} \label{eq:StokesSigma1-dominant1}
    \sum_{i<j} \int_{\Sigma_{ij}} |\div_{\Sigma,\mu} Z_{ij}| d\mu^{n-1} < \infty ~,~
                \sum_{u<v<w} \int_{\Sigma_{uvw}} \sabs[\Big]{\sum_{\{i,j\} \subset \{u,v,w\}}  Z^{\n_{\partial ij}}_{ij}} d\mu^{n-2} < \infty .
     \end{equation}
     In addition, assume that:
     \begin{equation} \label{eq:StokesSigma1-dominant2}
     \sum_{i<j} \int_{\Sigma_{ij}} |Z_{ij}|^2 d\mu^{n-1} < \infty .
     \end{equation}
        Then:
    \[
        \sum_{i<j} \int_{\Sigma_{ij}} \div_{\Sigma,\mu} Z_{ij} \, d\mu^{n-1} = \sum_{u<v<w} \int_{\Sigma_{uvw}} \sum_{\{i,j\} \subset \{u,v,w\}}  Z^{\n_{\partial ij}}_{ij} d\mu^{n-2} . 
    \]
\end{lemma}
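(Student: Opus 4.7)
The plan is to mimic the proof of Lemma \ref{lem:StokesSigma2} verbatim, one dimension up, using the cutoff from Lemma \ref{lem:cutoff} with parameter $m=1$. For each $\eps>0$ this yields a cutoff function $\eta_\eps$ which vanishes on an open neighborhood of $\Sigma^{\geq 3}\cap K$, tends to $1$ as $\eps\to 0^+$ $\H^{n-1}$-almost-everywhere, and satisfies $\int_{\Sigma^1}|\nabla\eta_\eps|^2\,d\mu^{n-1}\le C_K$. Since $\eta_\eps$ vanishes near the singular set $\Sigma^{\geq 3}$, the field $\eta_\eps Z_{ij}$ is a tangential $C^1$ vector-field on $\Sigma_{ij}$, continuous up to $\partial\Sigma_{ij}$, and compactly supported in $\Sigma_{ij}\cup\partial\Sigma_{ij}$ (away from the non-smooth part). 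The classical weighted divergence theorem on the smooth manifold-with-boundary part therefore applies, yielding
\[
\int_{\Sigma_{ij}}\div_{\Sigma,\mu}(\eta_\eps Z_{ij})\,d\mu^{n-1}=\int_{\partial\Sigma_{ij}}\eta_\eps Z_{ij}^{\n_{\partial ij}}\,d\mu^{n-2}.
\]

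Next I would expand the left-hand side as $\eta_\eps\div_{\Sigma,\mu}Z_{ij}+\scalar{\nabla_{\Sigma^1}\eta_\eps,Z_{ij}}$ and sum over all pairs $i<j$. Since $\partial\Sigma_{ij}=\cup_k \Sigma_{ijk}$ and each triple-point set $\Sigma_{uvw}$ appears as a boundary face of precisely the three interfaces $\Sigma_{uv},\Sigma_{vw},\Sigma_{uw}$, the boundary sum regroups as $\sum_{u<v<w}\int_{\Sigma_{uvw}}\eta_\eps\sum_{\{i,j\}\subset\{u,v,w\}}Z_{ij}^{\n_{\partial ij}}\,d\mu^{n-2}$. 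The first integrability hypothesis in (\ref{eq:StokesSigma1-dominant1}) together with the dominated convergence theorem then shows that the bulk term $\sum_{i<j}\int_{\Sigma_{ij}}\eta_\eps\div_{\Sigma,\mu}Z_{ij}\,d\mu^{n-1}$ converges to the asserted left-hand side of the conclusion, and the second integrability hypothesis together with dominated convergence handles the boundary term analogously.

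The only non-routine point, which is precisely what the $L^2$ hypothesis (\ref{eq:StokesSigma1-dominant2}) is there to control, is showing that the cross term $\sum_{i<j}\int_{\Sigma_{ij}}\scalar{\nabla_{\Sigma^1}\eta_\eps,Z_{ij}}\,d\mu^{n-1}$ vanishes as $\eps\to 0^+$. Using that $\nabla\eta_\eps\equiv 0$ on $\{\eta_\eps=1\}$, the Cauchy-Schwarz inequality, and the uniform bound $\int_{\Sigma^1}|\nabla\eta_\eps|^2\,d\mu^{n-1}\le C_K$, this cross term is bounded by
\[
\Bigl(C_K\sum_{i<j}\int_{\Sigma_{ij}}1_{\{\eta_\eps<1\}}|Z_{ij}|^2\,d\mu^{n-1}\Bigr)^{1/2},
\]
which in turn tends to $0$ by dominated convergence, using (\ref{eq:StokesSigma1-dominant2}) as the integrable dominant and the fact that $\eta_\eps\to 1$ $\H^{n-1}$-a.e. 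I do not anticipate a genuine obstacle here; the argument is a direct transcription of the one just given for $\Sigma^2$, with $m=2$ replaced by $m=1$ and the quadruple-point incidence replaced by the triple-point incidence $\partial\Sigma_{ij}=\cup_k\Sigma_{ijk}$.
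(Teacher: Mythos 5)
Your argument is correct and is exactly what the paper intends: the paper's proof of Lemma \ref{lem:StokesSigma1} is simply stated as ``Identical to the one on $\Sigma^2$,'' and your proposal is precisely that transcription with $m=1$ in Lemma \ref{lem:cutoff}, $\eta_\eps Z_{ij}$ compactly supported away from $\Sigma^{\geq 3}$, dominated convergence from (\ref{eq:StokesSigma1-dominant1}), and the Cauchy--Schwarz bound on the cross term from (\ref{eq:StokesSigma1-dominant2}).
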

\begin{proof}
Identical to the one on $\Sigma^2$. 
\end{proof}

\section{Scalar-fields and integration by parts} \label{sec:scalar-fields}

Let $\Omega$ be a stationary regular partition in $(M^n,g,\mu)$ with locally bounded curvature. 

\begin{definition}[scalar-field]
A tuple $u = (u_{ij})$ of continuous functions defined on $\overline{\Sigma_{ij}} \setminus \Sigma^4$ 
and locally Lipschitz in $\Sigma_{ij}$ is called a scalar-field if the following properties hold:
\begin{itemize}
\item $u$ is oriented, i.e. $u_{ji} = -u_{ij}$ for all $i < j$.
\item $u$ satisfies Dirichlet-Kirchoff boundary conditions: at every triple-point in $\Sigma_{ijk}$, we have $u_{ij} + u_{jk} + u_{ki} = 0$.
\item $u \in H^1_{loc}(\Sigma^1,\mu^{n-1})$, meaning that for any compact set $K \subset M^n$, $\int_{\Sigma^1 \cap K} u_{ij}^2 d\mu^{n-1} < \infty$ and $\int_{\Sigma^1 \cap K} |\nabla_{\Sigma^1} u_{ij}|^2 d\mu^{n-1} < \infty$. 
\end{itemize} 
\end{definition}

\begin{definition}[Smooth scalar-field]
A scalar-field $u = (u_{ij})$ is said to be in $C^k_{loc}(\Sigma^{\leq m})$, $k \in \{0,1,\ldots,\infty\}$ and $m\in \{1,2,3\}$, if for all $i < j$, $u_{ij} \in C^{k}_{loc}(\overline{\Sigma_{ij}} \cap \Sigma^{\leq m})$. 
\end{definition}

Note that by regularity, we must restrict $k \in \{0,1\}$ when $m=3$ since locally around $p \in \Sigma^3$, $\Sigma$ is only $C^{1,\alpha}$ diffeomorphic to $\mathbf{T} \times \R^{n-3}$. 

\begin{remark} \label{rem:physical}
Note that for any smooth and compactly-supported vector-field $X \in C_c^\infty(M)$, $u  = (u_{ij} = X^{\n_{ij}})$ is a scalar-field. Indeed, $u_{ij}$ is continuous on $\overline{\Sigma_{ij}} \setminus \Sigma^4$ since $\n_{ij}$ is continuous there by regularity, $u$ is trivially oriented, and $u$ satisfies Dirichlet-Kirchoff BCs since $\n_{ij} + \n_{jk} + \n_{ki} = 0$ on $\Sigma_{ijk}$ by stationarity. In addition, we trivially have $u_{ij} \in L^\infty(\Sigma_{ij})$, and in addition $u \in C^1_{loc}(\Sigma^1)$ with $\nabla_{\alpha} u_{ij} = g_{\gamma \delta} \nabla_{\alpha} X^\gamma \n^\delta_{ij} + \II^{ij}_{\alpha \beta} X^\beta$. Since curvature is locally bounded and $X$ is compactly supported (say in $K\subset M$), if follows that $|\nabla_{\Sigma^1} u_{ij}|\in L^\infty(\Sigma_{ij})$, and in particular, $u_{ij}$ is Lipschitz in $\Sigma_{ij}$. Since $\mu^{n-1}(\Sigma^1 \cap K) < \infty$, it follows that $u \in H^1_{loc}(\Sigma^1,\mu^{n-1})$. 
\end{remark}

\begin{definition}[Physical scalar-field]
A scalar-field $u = (u_{ij})$ is called physical if there exists a vector-field  $X \in C_c^\infty(M)$ so that $u_{ij} = X^{\n_{ij}}$ for all $i<j$. 
\end{definition}

\begin{definition}[Conformal boundary conditions]
A scalar-field $u \in C^1_{loc}(\Sigma^{\leq 2})$ is said to satisfy conformal boundary conditions (BCs), if for all $u<v<w$ and $p \in \Sigma_{uvw}$,
\[
\nabla_{\n_{\partial ij}} u_{ij} - \bar \II^{\partial ij} u_{ij} =  c_p \;\;\; \forall (i,j) \in \cyclic(u,v,w) ,
\]
where $c_p$ only depends on $p$ and not on $(i,j) \in \cyclic(u,v,w)$.
\end{definition}

\begin{definition}[Admissible scalar-field]
A scalar-field $u = (u_{ij})$ on $\Sigma$ is said to be admissible if the following properties hold:
\begin{itemize}
\item $u \in  C^{\infty}_{loc}(\Sigma^{\leq 2}) \cap C^{1}_{loc}(\Sigma^{\leq 3})$. 
\item $u$ satisfies conformal BCs.
\item There exists a compact set $K \subset M^n$ so that for all $i < j$, $u_{ij}$ is supported in $K$. 
\item For all $i < j$, $u_{ij} \in L^\infty(\Sigma_{ij})$ and $|\nabla_{\Sigma^1} u_{ij}| \in L^\infty(\Sigma_{ij})$. 
\item For all $i < j$, $u_{ij} \in H^2(\Sigma_{ij},\mu^{n-1})$, namely $\int_{\Sigma_{ij}} \snorm{\nabla^2_{\Sigma^1} u_{ij}}^2 d\mu^{n-1}< \infty$. 
\item For all $i < j$, $\Delta_{\Sigma^2,\mu} u_{ij} \in L^1(\partial \Sigma_{ij}, \mu^{n-2})$. 
\end{itemize}
\end{definition}

Note that the last $4$ conditions trivially hold whenever $u_{ij}$ is compactly supported in $\Sigma_{ij} \cup \partial \Sigma_{ij}$, i.e.~away from $\Sigma^{\geq 3}$. While the latter assumption would considerably simplify the various appeals to the divergence theorem on $\Sigma^1$ and $\Sigma^2$ which we employ in this work, it may be too restrictive, and so we make the extra effort of carefully justifying all integration-by-parts for the more general admissible fields.

\subsection{Integration by parts}

\begin{definition}[Non-oriented conformal BCs]
Given a partition in $(M,g,\mu)$, a smooth function $V$ on $M$ is said to satisfy the non-oriented conformal BCs if for all $i,j,k$,
\[
\nabla_{\n_{\partial ij}} V - \bar \II^{\partial ij}_{ijk} \, V =  0 \;\; \text{ on $\Sigma_{ijk}$}.
\]
\end{definition}
The nomenclature stems from the fact that for all $a \in \R^q$, the scalar field $u = (a_{ij} V)$, where $a_{ij} = a_i - a_j$, satisfies the conformal BCs. 

\begin{lemma} \label{lem:byparts}
Let $f \in C^1_{loc}(\Sigma^1)$ be a scalar-field, let $u$ be an admissible scalar-field, and let $V > 0$ be a positive smooth function on $M$ satisfying the non-oriented conformal BCs. Then: 
\begin{equation}
\label{eq:byparts}
\begin{split}
& \int_{\Sigma^1} f_{ij} \brac{\Delta_{\Sigma^1,\mu} u_{ij} - \frac{\Delta_{\Sigma^1,\mu} V}{V} u_{ij}} d\mu^{n-1} \\
& = - \int_{\Sigma^1} \scalar{\nabla_{\Sigma^1} f_{ij} - \frac{\nabla_{\Sigma^1} V}{V} f_{ij} , \nabla_{\Sigma^1} u_{ij} - \frac{\nabla_{\Sigma^1} V}{V} u_{ij}} d\mu^{n-1} . 
\end{split}
\end{equation}
\end{lemma}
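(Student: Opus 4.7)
The plan is to derive the identity as an application of the divergence theorem on $\Sigma^1$ (Lemma \ref{lem:StokesSigma1}), where a suitably chosen vector field packages both sides of (\ref{eq:byparts}) into a single divergence whose boundary contribution vanishes by combining the conformal BCs. Specifically, on each $\Sigma_{ij}$ I introduce the tangential vector field
\[
Z_{ij} := f_{ij}\brac{\nabla_{\Sigma^1} u_{ij} - \frac{u_{ij}}{V}\nabla_{\Sigma^1} V}.
\]
A straightforward Leibniz computation using the product rule for $\div_{\Sigma^1,\mu}$ yields
\[
\div_{\Sigma^1,\mu} Z_{ij} = f_{ij}\brac{\Delta_{\Sigma^1,\mu} u_{ij} - \frac{\Delta_{\Sigma^1,\mu} V}{V} u_{ij}} + \scalar{\nabla_{\Sigma^1} f_{ij} - \frac{f_{ij}}{V}\nabla_{\Sigma^1} V\; ,\; \nabla_{\Sigma^1} u_{ij} - \frac{u_{ij}}{V}\nabla_{\Sigma^1} V},
\]
so the claim (\ref{eq:byparts}) reduces to showing $\sum_{i<j}\int_{\Sigma_{ij}} \div_{\Sigma^1,\mu} Z_{ij}\,d\mu^{n-1} = 0$. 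A quick inspection shows $Z_{ji} = Z_{ij}$, so $Z$ is non-oriented as required by Lemma \ref{lem:StokesSigma1}.

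Next I verify the integrability hypotheses of Lemma \ref{lem:StokesSigma1}. Admissibility of $u$ gives $u_{ij},|\nabla_{\Sigma^1} u_{ij}| \in L^\infty(\Sigma_{ij})$, together with compact support in some $K \subset M^n$; since $V>0$ is smooth, $V$ and $|\nabla V|/V$ are bounded on $K$. Hence $Z_{ij}$ is supported in $K$ with $|Z_{ij}|\leq C|f_{ij}|$, and the $L^2$-bound (\ref{eq:StokesSigma1-dominant2}) follows from $f \in H^1_{loc}(\Sigma^1,\mu^{n-1})$. For the first condition in (\ref{eq:StokesSigma1-dominant1}), the $f \Delta_{\Sigma^1,\mu} u$ term is in $L^1$ by Cauchy--Schwarz (using $f \in L^2_{loc}$ and $u \in H^2$), while the remaining cross-terms are controlled by $L^\infty$-bounds on $u,\nabla u, V, \nabla V$ combined with $f, \nabla f \in L^2_{loc}$. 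For the boundary condition in (\ref{eq:StokesSigma1-dominant1}), continuity of $f_{ij}$ up to $\partial \Sigma_{ij}\setminus \Sigma^4$ (from the scalar-field definition), smoothness of $u$ on $\Sigma^{\leq 2}$, smoothness of $V$, and the finiteness of $\mu^{n-2}(\Sigma^2 \cap K)$ from Lemma \ref{lem:Sigma2} together ensure the boundary integrand is in $L^1(\Sigma^2)$.

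The heart of the argument is that the boundary contribution vanishes at each triple point. On $\Sigma_{uvw}$, the non-oriented conformal BC on $V$ gives $\nabla_{\n_{\partial ij}} V = \bar{\II}^{\partial ij}_{uvw} V$, so
\[
Z_{ij}^{\n_{\partial ij}} = f_{ij}\brac{\nabla_{\n_{\partial ij}} u_{ij} - \frac{u_{ij}}{V}\nabla_{\n_{\partial ij}} V} = f_{ij}\brac{\nabla_{\n_{\partial ij}} u_{ij} - \bar{\II}^{\partial ij}_{uvw}\, u_{ij}}.
\]
The conformal BCs on the admissible field $u$ assert that the parenthesized expression equals a single value $c_p$ depending only on $p \in \Sigma_{uvw}$ and not on the cyclically ordered pair. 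Therefore, using any cyclic orientation to evaluate the unordered sum over $\{i,j\} \subset \{u,v,w\}$,
\[
\sum_{(a,b) \in \cyclic(u,v,w)} Z_{ab}^{\n_{\partial ab}} = c_p \sum_{(a,b) \in \cyclic(u,v,w)} f_{ab} = 0,
\]
the last equality being the Dirichlet--Kirchoff BC for the scalar field $f$. Invoking Lemma \ref{lem:StokesSigma1} now gives $\int_{\Sigma^1}\div_{\Sigma^1,\mu} Z_{ij}\,d\mu^{n-1} = 0$, which is exactly (\ref{eq:byparts}).

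The main delicate point is the verification of the integrability hypotheses near $\Sigma^{\geq 3}$, where the partition's regularity is only $C^{1,\alpha}$ and the quadruple-point set $\Sigma^3$ is present; but this is precisely what Lemma \ref{lem:StokesSigma1} was designed to handle via the truncation scheme of Lemma \ref{lem:cutoff}, and no further effort beyond the term-by-term bounds described above is needed here.
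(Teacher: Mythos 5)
Your proof follows exactly the paper's own approach: same choice of vector field $Z_{ij} = f_{ij}\brac{\nabla_{\Sigma^1} u_{ij} - \frac{u_{ij}}{V}\nabla_{\Sigma^1} V}$, same Leibniz computation reducing (\ref{eq:byparts}) to $\sum_{i<j}\int_{\Sigma_{ij}} \div_{\Sigma^1,\mu} Z_{ij}\,d\mu^{n-1}=0$, same use of the conformal BCs on $u$, the non-oriented conformal BCs on $V$, and the Dirichlet--Kirchoff condition on $f$ to kill the triple-point boundary sum, and same appeal to Lemma \ref{lem:StokesSigma1}. The only imprecision is in your integrability check for $\div_{\Sigma^1,\mu} Z_{ij}$: you claim the cross-terms are controlled by $L^\infty$-bounds on $u, \nabla u, V, \nabla V$, but the term $f_{ij} u_{ij} \frac{\Delta_{\Sigma^1,\mu} V}{V}$ requires boundedness of $\Delta_{\Sigma^1,\mu} V$ on $K$, which is not automatic from smoothness of $V$ alone (the surface Laplacian involves the mean curvature of $\Sigma^1$); this is where the standing assumption of locally bounded curvature enters, a point the paper makes explicit. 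Since that hypothesis is in force throughout Section 4, the gap is cosmetic rather than substantive.
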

\begin{proof}
Define the following (non-oriented) tangential vector-field on $\Sigma_{ij}$, $i \neq j$:
\[
Z_{ij} := f_{ij} \brac{\nabla_{\Sigma^1} u_{ij} -  u_{ij} \frac{\nabla_{\Sigma^1} V}{V}} .
\]
The assertion is equivalent to showing that $\int_{\Sigma^1}\div_{\Sigma^1,\mu} Z_{ij} d\mu^{n-1} = 0$. 
 The conformal BCs of $u$, the non-oriented conformal BCs of $V$, and the Dirichlet-Kirchoff conditions of $f$ ensure that for all $u < v < w$, we have at any $p \in \Sigma_{uvw}$:
\[
\sum_{\{i,j\} \subset \{u,v,w\}} Z_{ij}^{\n_{\partial ij}} = \sum_{(i,j) \in \cyclic (u,v,w)} Z_{ij}^{\n_{\partial ij}} = \sum_{(i,j) \in \cyclic (u,v,w)} f_{ij} (\bar \II^{\partial ij} u_{ij} + c_p - \bar \II^{\partial ij} u_{ij}) = 0 .
\]
To justify the application of the divergence theorem in the form of Lemma \ref{lem:StokesSigma1} and conclude the assertion, it remains to show that 
$\int_{\Sigma^1} |\div_{\Sigma^1,\mu} Z_{ij}| d\mu^{n-1} < \infty$ and $\int_{\Sigma^1} |Z_{ij}|^2 d\mu^{n-1} < \infty$. 
Since $u_{ij}$ are all supported in some compact set $K$, it is enough to show that $\int_{\Sigma^1 \cap K} |\div_{\Sigma^1,\mu} Z_{ij}| d\mu^{n-1} < \infty$, and $\int_{\Sigma^1 \cap K} |Z_{ij}|^2 d\mu^{n-1} < \infty$.

 Note that $V$, $1/V$, $\nabla_{\Sigma^1} V$ and $\nabla_{\Sigma^1} W$ are bounded on $K$. In addition, the assumption that $\Sigma^1$ has locally bounded curvature and the compactness of $K$ imply that $\Delta_{\Sigma^1,\mu} V$ is bounded. Now, using that $u$ is admissible, we conclude that $\nabla_{\Sigma^1} u_{ij} -  u_{ij} \frac{\nabla_{\Sigma^1} V}{V}$ is bounded on $\Sigma^1 \cap K$, and since $f \in L^2(\Sigma^1 \cap K , \mu^{n-1})$, we verify that $\int_{\Sigma^1 \cap K} |Z_{ij}|^2 d\mu^{n-1} < \infty$. 
 
 In addition, since $u$ is admissible, we also know that $\Delta_{\Sigma^1} u_{ij} \in L^2(\Sigma^1 \cap K)$. Since $\scalar{\nabla_{\Sigma^1} u_{ij}, \nabla_{\Sigma^1} W}$ and $\frac{\Delta_{\Sigma^1,\mu} V}{V} u_{ij}$ are bounded on $\Sigma^1 \cap K$ and $\mu^{n-1}(\Sigma^1 \cap K) < \infty$, it follows that these terms are also in $L^2(\Sigma^1 \cap K)$. Since $f \in L^2(\Sigma^1 \cap K , \mu^{n-1})$, the Cauchy-Schwarz inequality implies that the left hand side of (\ref{eq:byparts}) is in $L^1(\Sigma^1 \cap K)$. Using that $\nabla f \in L^2(\Sigma^1 \cap K , \mu^{n-1})$, a similar argument verifies that the right hand side of (\ref{eq:byparts}) is in $L^1(\Sigma^1 \cap K)$ as well. Therefore $\int_{\Sigma^1 \cap K} |\div_{\Sigma^1,\mu} Z_{ij}| d\mu^{n-1} < \infty$, concluding the proof. 
\end{proof}

\begin{corollary} \label{cor:delta1V-zero}
Let $u$ be an admissible scalar-field, and let $V > 0$ be a positive smooth function on $M$ satisfying the non-oriented conformal BCs.
Then $h = (h_{ij})$ with $h_{ij} = V \Delta_{\Sigma^1,\mu} u_{ij} -  u_{ij} \Delta_{\Sigma^1,\mu} V$ satisfies $\delta^1_h \Vol = 0$. 
\end{corollary}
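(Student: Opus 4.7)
The plan is to deduce this corollary by a direct application of Lemma \ref{lem:byparts} with a cleverly chosen test scalar-field. For any $a \in \R^q$, I propose to set
\[
\tilde f_{ij} := (a_i - a_j) V
\]
and verify that $\tilde f$ is a valid scalar-field in the sense of Lemma \ref{lem:byparts}: it is $C^\infty_{loc}$ on $\Sigma^1$ (since $V$ is smooth), it is oriented as $\tilde f_{ji} = -\tilde f_{ij}$, and it satisfies the Dirichlet--Kirchoff condition since $(a_i-a_j)+(a_j-a_k)+(a_k-a_i)=0$. Being smooth and supported on the same compact interfaces where $u$ lives when paired against $u$, the $H^1_{loc}$ hypothesis is trivially satisfied.

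Next, I apply Lemma \ref{lem:byparts} to this $\tilde f$ and the admissible $u$. The point of multiplying by $V$ is that the quantity $\nabla_{\Sigma^1} \tilde f_{ij} - \frac{\nabla_{\Sigma^1} V}{V} \tilde f_{ij}$ vanishes identically: indeed $\nabla_{\Sigma^1} \tilde f_{ij} = (a_i-a_j) \nabla_{\Sigma^1} V = \frac{\nabla_{\Sigma^1} V}{V} \tilde f_{ij}$. Therefore the right-hand side of (\ref{eq:byparts}) is zero, and the left-hand side becomes
\[
\int_{\Sigma^1} (a_i-a_j) V\brac{\Delta_{\Sigma^1,\mu} u_{ij} - \frac{\Delta_{\Sigma^1,\mu} V}{V} u_{ij}}d\mu^{n-1} = \int_{\Sigma^1} (a_i-a_j) h_{ij} \, d\mu^{n-1} = 0.
\]

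Finally, a short bookkeeping computation using $h_{ji}=-h_{ij}$ rewrites the last integral as $\scalar{a,\delta^1_h \Vol}$:
\[
\sum_{i<j}\int_{\Sigma_{ij}}(a_i-a_j)h_{ij}d\mu^{n-1} = \sum_i a_i\sum_{j\neq i}\int_{\Sigma_{ij}}h_{ij}d\mu^{n-1} = \scalar{a,\delta^1_h \Vol}.
\]
Since $u$ is compactly supported, only finitely many components of $\delta^1_h \Vol$ are nonzero, and since $a \in \R^q$ was arbitrary, choosing $a = e_i$ for each such $i$ forces every component to vanish. There is no real obstacle here: the only subtlety is confirming that $\tilde f = (a_i-a_j) V$ meets the hypotheses of Lemma \ref{lem:byparts} (in particular that the non-oriented conformal BCs for $V$ are what drive the cancellation of boundary terms inside that lemma's proof), which is immediate from the construction.
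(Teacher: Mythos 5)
Your proof is correct and is essentially identical to the paper's: both test Lemma \ref{lem:byparts} against the scalar-field $f = (a_{ij}V)$ with $a_{ij}=a_i-a_j$, observe that $\nabla_{\Sigma^1} f_{ij} - \frac{\nabla_{\Sigma^1} V}{V} f_{ij} \equiv 0$ makes the right-hand side of \eqref{eq:byparts} vanish, and then vary $a\in\R^q$ to conclude $\delta^1_h\Vol=0$, noting that compact support of $u$ keeps the sums finite even when $q=\infty$.
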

\noindent
Note that $h$ is not necessarily a scalar-field, because there is no guarantee that it satisfies the Dirichlet-Kirchoff BCs. 
\begin{proof}Applying the previous lemma to the scalar-field $f = (a_{ij} V)$, $a_{ij} = a_i - a_j$, the right-hand-side of (\ref{eq:byparts}) 
vanishes, and we deduce that for all $a \in \R^q$:
\[
 \sum_{i=1}^q a_i \sum_{j \neq i} \int_{\Sigma_{ij}} h_{ij} d\mu^{n-1} = \int_{\Sigma^1} a_{ij} h_{ij} d\mu^{n-1} = 0 .
\]
Note that even if $q=\infty$, since $u$ and hence $h$ are compactly supported and the partition is locally finite, the sum over $i$ on the left is always finite. It follows that $\delta^1_h \Vol = 0$. 
\end{proof}

Before concluding this section, let us also verify the following formula for $Q^0(f)$ for physical scalar-fields, which we've already stated and employed in the Introduction. 
\begin{lemma} \label{lem:Q0V}
Let $f = (f_{ij})$ be a physical scalar-field, and let $V>0$ be a positive smooth function on $M$. Then:
\begin{equation} \label{eq:Q0V}
Q^0(f) = \sum_{i<j}  \int_{\Sigma_{ij}} \brac{\abs{\nabla_{\Sigma^1} f_{ij} - \frac{\nabla_{\Sigma^1} V}{V} f_{ij}}^2 - \frac{L_{Jac} V}{V} f_{ij}^2} d\mu^{n-1} + 
\int_{\partial \Sigma_{ij}} \brac{\frac{\nabla_{\n_{\partial ij}} V}{V} - \bar \II^{\partial ij}} f_{ij}^2 d\mu^{n-2} .
\end{equation}
\end{lemma}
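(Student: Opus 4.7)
The plan is to start from the gradient form (\ref{eq:Q0nabla}) of $Q^0(f)$ in Theorem \ref{thm:Q0LJac} and reduce it to (\ref{eq:Q0V}) by applying a single divergence theorem (Lemma \ref{lem:StokesSigma1}) to the non-oriented tangential vector field
\[
Y_{ij} := \frac{f_{ij}^2}{V} \nabla_{\Sigma^1} V \quad \text{on } \Sigma_{ij}
\]
(non-oriented since $f_{ij}^2 = f_{ji}^2$, as required by Lemma \ref{lem:StokesSigma1}).

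The first step is a pointwise identity on each $\Sigma_{ij}$. Expanding the gradient-squared term and using that the zeroth-order coefficient $\Ric_{M,\mu}(\n_{ij},\n_{ij}) + \snorm{\II^{ij}}^2$ of $L_{Jac}$ contributes identically to $L_{Jac} V$ and to $V \cdot L_{Jac} 1$, so that
\[
L_{Jac} V - V \cdot L_{Jac} 1 = \Delta_{\Sigma^1,\mu} V,
\]
together with the direct computation
\[
\div_{\Sigma^1,\mu}\brac{\frac{f^2}{V} \nabla_{\Sigma^1} V} = 2 f \frac{\scalar{\nabla_{\Sigma^1} f, \nabla_{\Sigma^1} V}}{V} - f^2 \frac{\abs{\nabla_{\Sigma^1} V}^2}{V^2} + \frac{f^2}{V} \Delta_{\Sigma^1,\mu} V,
\]
yields the pointwise identity
\[
\brac{\abs{\nabla_{\Sigma^1} f_{ij} - \frac{\nabla_{\Sigma^1} V}{V} f_{ij}}^2 - \frac{L_{Jac} V}{V} f_{ij}^2} - \brac{\abs{\nabla_{\Sigma^1} f_{ij}}^2 - (L_{Jac} 1) f_{ij}^2} = -\div_{\Sigma^1,\mu} Y_{ij}.
\]

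The second step is the divergence theorem. Summing over $i<j$ and integrating, Lemma \ref{lem:StokesSigma1} applied to $Y_{ij}$ gives
\[
\sum_{i<j} \int_{\Sigma_{ij}} \div_{\Sigma^1,\mu} Y_{ij} \, d\mu^{n-1} = \sum_{u<v<w} \int_{\Sigma_{uvw}} \sum_{\{i,j\}\subset\{u,v,w\}} \frac{f_{ij}^2}{V} \nabla_{\n_{\partial ij}} V \, d\mu^{n-2} = \sum_{i<j} \int_{\partial \Sigma_{ij}} \frac{f_{ij}^2}{V} \nabla_{\n_{\partial ij}} V \, d\mu^{n-2},
\]
where the last equality is the standard triple-point bookkeeping $\partial \Sigma_{ij} = \cup_k \Sigma_{ijk}$. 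This boundary contribution precisely accounts for the difference between the boundary integrands of (\ref{eq:Q0V}) and (\ref{eq:Q0nabla}), since their $-\bar\II^{\partial ij} f_{ij}^2$ pieces coincide. Combining with the integrated pointwise identity yields (\ref{eq:Q0V}).

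The main technical obstacle is verifying the integrability hypotheses (\ref{eq:StokesSigma1-dominant1})-(\ref{eq:StokesSigma1-dominant2}) required to invoke Lemma \ref{lem:StokesSigma1} for $Y_{ij}$. Here the physical assumption pays off: by Remark \ref{rem:physical}, there exists a compact set $K \subset M$ so that $f_{ij}$ and $\abs{\nabla_{\Sigma^1} f_{ij}}$ lie in $L^\infty(\Sigma_{ij})$ and are supported in $K$; since $V$ is smooth on $M$ and $\II^{ij}$ is locally bounded, the quantities $V, 1/V, \abs{\nabla_{\Sigma^1} V}$ and $\Delta_{\Sigma^1,\mu} V$ are uniformly bounded on $K$. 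Hence $\abs{Y_{ij}}^2$ and $\abs{\div_{\Sigma^1,\mu} Y_{ij}}$ are uniformly bounded on $\Sigma_{ij} \cap K$, and the required integrability conditions follow from $\mu^{n-1}(\Sigma^1 \cap K) < \infty$ (regularity) and $\mu^{n-2}(\Sigma^2 \cap K) < \infty$ (Lemma \ref{lem:Sigma2}).
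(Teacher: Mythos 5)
Your proof is correct and follows essentially the same route as the paper: both start from the gradient form~(\ref{eq:Q0nabla}), introduce the identical auxiliary tangential field $\frac{f_{ij}^2}{V}\nabla_{\Sigma^1}V$ on each $\Sigma_{ij}$, apply the partition divergence theorem (Lemma~\ref{lem:StokesSigma1}), use the identity $L_{Jac}V - V\,L_{Jac}1 = \Delta_{\Sigma^1,\mu}V$, and justify the integrability hypotheses via Remark~\ref{rem:physical} together with the bounds $\mu^{n-1}(\Sigma^1\cap K),\mu^{n-2}(\Sigma^2\cap K)<\infty$. The only cosmetic difference is that you package the algebra as a pointwise identity before integrating; you might also note explicitly, as the paper does, that boundedness of the relevant quantities extends by continuity to $\partial\Sigma_{ij}$, which is needed for the second integrability hypothesis in Lemma~\ref{lem:StokesSigma1}.
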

\begin{proof}
Contrary to the conceptual presentation in the Introduction, it will actually be much simpler to start from the formula (\ref{eq:Q0nabla}) for $Q^0(f)$ rather than from (\ref{eq:Q0LJac}), since the justification of the integration by parts in the latter case would involve repeating some of the tedious computations from \cite[Section 5]{EMilmanNeeman-TripleAndQuadruple} used to justify the equivalence of (\ref{eq:Q0LJac}) and (\ref{eq:Q0nabla}). 

Define the following (non-oriented) tangential vector-field on $\Sigma_{ij}$, $i \neq j$:
\[
Z_{ij} :=  \frac{\nabla_{\Sigma^1} V}{V} f_{ij}^2 .
\]
Note that:
\[
\div_{\Sigma,\mu} Z_{ij} = 2\scalar{\nabla_{\Sigma^1} f, \frac{\nabla_{\Sigma^1} V}{V}} f_{ij} - \frac{|\nabla_{\Sigma^1} V|^2}{V^2} f_{ij}^2 + \frac{\Delta_{\Sigma^1,\mu} V}{V} f_{ij}^2 ,
\]
so if we could justify the application of the divergence theorem in the form of Lemma \ref{lem:StokesSigma1}, it would follow that:
\[
- \sum_{i<j} \int_{\Sigma_{ij}} \div_{\Sigma,\mu} Z_{ij} d\mu^{n-1} = \sum_{i<j} \int_{\partial \Sigma_{ij}} \frac{\nabla_{\n_{\partial ij}} V}{V} f_{ij}^2 d\mu^{n-2} . 
\]
Summing this with (\ref{eq:Q0nabla}), and using that $\Delta_{\Sigma^1,\mu} V + V L_{Jac} 1 = L_{Jac} V$, the asserted formula (\ref{eq:Q0V}) would immediately follow. 

It remains to check the validity of the assumptions of Lemma \ref{lem:StokesSigma1}. Since $f$ is a physical scalar-field, all $f_{ij}$ and hence $Z_{ij}$ are supported in some compact set $K \subset M$, and so it is enough to show that $\int_{\Sigma^1 \cap K} |\div_{\Sigma^1,\mu} Z_{ij}| d\mu^{n-1} < \infty$, $\int_{\Sigma^1 \cap K} |Z_{ij}|^2 d\mu^{n-1} < \infty$ and $\int_{\Sigma^2 \cap K} |Z_{ij}^{\n_{\partial ij}}| d\mu^{n-2} < \infty$. 

As usual, $1/V$, $\nabla_{\Sigma^1} V$, $\nabla_{\Sigma^1} W$ and $\Delta_{\Sigma^1,\mu} V$ are bounded on $K$ (the latter thanks to $\Sigma^1$ having locally bounded curvature). In view of Remark \ref{rem:physical}, $f \in C^1_{loc}(\Sigma^1) \cap C^0_{loc}(\Sigma^{\leq 2})$, and $f_{ij} \in L^\infty(\Sigma_{ij} \cup \partial \Sigma_{ij})$ and $|\nabla f_{ij}| \in L^\infty(\Sigma_{ij})$. Consequently, $Z_{ij}$ is bounded on $\Sigma_{ij} \cup \partial \Sigma_{ij}$ and $\div_{\Sigma,\mu} Z_{ij}$ is bounded on $\Sigma_{ij}$. Since $\mu^{n-1}(\Sigma^1 \cap K), \mu^{n-2}(\Sigma^2 \cap K) < \infty$, the assumptions of Lemma \ref{lem:StokesSigma1} are verified, concluding the proof. 
\end{proof}

\section{Bochner identity on partitions with conformally flat umbilical boundary} \label{sec:Bochner}

Throughout this section, let $\Omega$ be a stationary regular partition in $(M,g,\mu)$ with locally-bounded curvature, and recall its boundary decomposition $\Sigma = \Sigma^1 \cup \Sigma^2 \cup \Sigma^3 \cup \Sigma^4$. On $\Sigma_{ijk}$ we abbreviate $\n_{\partial ij} = \n^{ijk}_{\partial ij}$ and recall that:
\[\bar \II^{\partial ij}_{ijk} = \frac{\II^{ik}_{\n_{\partial ik}} + \II^{jk}_{\n_{\partial jk}}}{\sqrt{3}}.
\]By regularity, the co-normals $\n^{ijk}_{\partial ij}$ are continuous up to $\Sigma_{ijk\ell}$, and we continue to denote them on $\Sigma_{ijk\ell}$ by $\n^{ijk}_{\partial ij}$. Note that on $\Sigma_{ijk\ell}$, $\n^{ijk}_{\partial ij} \neq \n^{ij\ell}_{\partial ij}$, and hence it is imperative to specify the superscript of $\n_{\partial ij}$, as this determines which of the boundaries $\Sigma_{ijk}$ or $\Sigma_{\ij\ell}$ of $\Sigma_{ij}$, $\n_{\partial ij}$ refers to. 

\bigskip

When specified, we will also add the following additional assumptions:

\begin{itemize}
\item For all distinct $i,j,k$, $\Sigma_{ijk}$ is umbilical in $\overline{\Sigma_{ij}}$ with principle curvatures coinciding with $\bar \II^{\partial ij}_{ijk}$, namely:
\begin{equation} \label{eq:A1}
\II_{\Sigma_{ijk} \subset \partial \Sigma_{ij}} = \bar \II^{\partial ij}_{ijk} \, g_{\Sigma_{ijk}} . 
\end{equation}
\item
For all compact $K \subset M^n$, 
\begin{equation} \label{eq:A1B}
\sum_{u<v<w}  \sum_{\{i,j\} \subset \{u,v,w\}} \int_{\Sigma_{uvw}\cap K} \abs{\nabla_{\Sigma^2} \bar \II^{\partial ij}_{uvw}} d\mu^{n-2} < \infty . 
\end{equation}
\item For all distinct $a,b,c,d$, we have on $\Sigma_{abcd}$ that 
\begin{equation} \label{eq:A2}
\II^{ij}_{\n^{ijk}_{\partial ij}} = \II^{ij}_{\n^{ij\ell}_{\partial ij}} \;\;\; \forall \{i,j,k,\ell\} = \{a,b,c,d\}.
\end{equation} 
\end{itemize}

\begin{definition}[Umbilical boundary]
A stationary regular partition with locally-bounded curvature is said to have umbilical boundary if (\ref{eq:A1}), (\ref{eq:A1B}) and (\ref{eq:A2}) hold. 
\end{definition}

For convenience, we also recall the following definition from the Introduction:
\begin{definition}[Conformally flat boundary]
A regular partition is said to have conformally flat boundary if there exists a smooth positive function $V>0$ on $M$, called the conformally flattening boundary potential, so that for all non-empty $\Sigma_{ijk}$, $\II_{\Sigma_{ijk} \subset \partial \Sigma_{ij}} - \nabla_{\n_{\partial ij}} \log V \, g_{\Sigma_{ijk}} \equiv 0$. 
\end{definition}
Note that under assumption (\ref{eq:A1}), a positive smooth function $V>0$ is a conformally flattening boundary potential if and only if $V$ satisfies non-oriented conformal BCs, namely $\nabla_{\n_{\partial ij}} V - \bar \II^{\partial ij}_{ijk} \, V \equiv 0$ on $\Sigma_{ijk}$. 
 Consequently, we will not make a distinction between these two properties. 
 
 \begin{lemma} \label{lem:constant-curvature}
A stationary regular partition in $(M,g,\mu)$ with constant-curvature interfaces $\Sigma_{ij}$ (namely $\II^{ij} = \k_{ij} g_{\Sigma_{ij}}$) has umbilical boundary. In particular, a positive smooth function $V>0$ is a conformally flattening boundary potential if and only if $V$ satisfies non-oriented conformal BCs.
\end{lemma}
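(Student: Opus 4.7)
The plan is to verify the three conditions defining umbilical boundary, namely \eqref{eq:A1}, \eqref{eq:A1B} and \eqref{eq:A2}, and then deduce the characterization of flattening potentials as an immediate consequence of \eqref{eq:A1}.

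For \eqref{eq:A1}, I would fix $p \in \Sigma_{ijk}$ and a unit vector $v \in T_p \Sigma_{ijk} \subset T_p \Sigma_{ij} \cap T_p \Sigma_{ik} \cap T_p \Sigma_{jk}$, and use the standard decomposition of the vector-valued second fundamental form $\II_{\Sigma_{ijk} \subset M}(v,v)$ along the $2$-dimensional normal bundle $N_p \Sigma_{ijk}$. Since $\{\n_{ij},\n_{\partial ij}\}$ is an orthonormal basis of $N_p \Sigma_{ijk}$ (and likewise for the other two index pairs), we obtain the identities $\scalar{\II_{\Sigma_{ijk} \subset M}(v,v),\n_{ab}} = \II^{ab}(v,v)$ for $\{a,b\} \subset \{i,j,k\}$, and $\scalar{\II_{\Sigma_{ijk} \subset M}(v,v),\n_{\partial ij}} = \II_{\Sigma_{ijk} \subset \partial \Sigma_{ij}}(v,v)$. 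Plugging in the stationarity relation $\n_{\partial ij} = (\n_{ik} + \n_{jk})/\sqrt{3}$ together with the constant-curvature hypothesis $\II^{ab} = \k_{ab} g_{\Sigma_{ab}}$ gives
\[
\II_{\Sigma_{ijk} \subset \partial \Sigma_{ij}}(v,v) = \frac{\II^{ik}(v,v) + \II^{jk}(v,v)}{\sqrt{3}} = \frac{\k_{ik} + \k_{jk}}{\sqrt{3}}.
\]
On the other hand, $\n_{\partial ik}$ and $\n_{\partial jk}$ are unit vectors tangent to $\Sigma_{ik}$ and $\Sigma_{jk}$ respectively, so $\II^{ik}_{\n_{\partial ik}} = \k_{ik}$ and $\II^{jk}_{\n_{\partial jk}} = \k_{jk}$, and the same value appears in $\bar \II^{\partial ij}_{ijk} = (\k_{ik}+\k_{jk})/\sqrt{3}$. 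This establishes \eqref{eq:A1}.

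For \eqref{eq:A1B}, note that the above computation shows $\bar \II^{\partial ij}_{ijk} = (\k_{ik}+\k_{jk})/\sqrt{3}$ is a \emph{constant} on $\Sigma_{ijk}$ (the $\k_{ab}$ are themselves constants by Lemma \ref{lem:stationarity}\eqref{it:stationarity-k}), so $\nabla_{\Sigma^2} \bar \II^{\partial ij}_{uvw} \equiv 0$ and the integral in \eqref{eq:A1B} vanishes identically. For \eqref{eq:A2}, since $\II^{ij} = \k_{ij} g_{\Sigma_{ij}}$ and both $\n^{ijk}_{\partial ij}$ and $\n^{ij\ell}_{\partial ij}$ are unit vectors tangent to $\Sigma_{ij}$, both evaluations give $\k_{ij}$, verifying the required equality on $\Sigma_{abcd}$.

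For the ``in particular'' clause, once \eqref{eq:A1} is in hand, the defining equation of a conformally flattening boundary potential becomes $(\bar \II^{\partial ij}_{ijk} - \nabla_{\n_{\partial ij}} \log V) g_{\Sigma_{ijk}} \equiv 0$ on every non-empty $\Sigma_{ijk}$, which after multiplication by $V$ is precisely the non-oriented conformal BCs $\nabla_{\n_{\partial ij}} V - \bar \II^{\partial ij}_{ijk} V \equiv 0$. The only step requiring any care is the normal-bundle decomposition argument in \eqref{eq:A1} --- the smoothness of $\Sigma^2$ inside each $\overline{\Sigma_{ij}}$ (guaranteed by regularity) makes this rigorous, and once that identity is unwound, the rest of the verification is essentially algebraic.
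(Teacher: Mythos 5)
Your proof is correct and, at its core, follows the same route as the paper: use the stationarity relation $\n_{\partial ij} = (\n_{ik}+\n_{jk})/\sqrt{3}$ and the constant-curvature hypothesis to identify $\II_{\Sigma_{ijk}\subset\partial\Sigma_{ij}}(v,v)$ with $\bar\II^{\partial ij}_{ijk}$ for every unit $v\in T\Sigma_{ijk}$; the paper computes this directly as $\scalar{\nabla_v\n_{\partial ij},v}=\frac{1}{\sqrt3}(\II^{ik}(v)+\II^{jk}(v))$, while you package the same computation via the vector-valued second fundamental form of $\Sigma_{ijk}$ in $M$ and the normal-bundle decomposition, which is a cosmetic rephrasing. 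One small inaccuracy: you attribute the constancy of $\k_{ab}$ to Lemma \ref{lem:stationarity}\eqref{it:stationarity-k}, but that part of the lemma only asserts constancy of the \emph{weighted} mean curvature $H_{\Sigma_{ij},\mu} = H_{\Sigma_{ij}} - \scalar{\nabla W,\n_{ij}}$, which does not yield constancy of $\k_{ij}$ when $W$ is non-constant; the constancy of $\k_{ij}$ is simply part of the hypothesis "constant-curvature interfaces" and needs no appeal to stationarity. This does not affect the correctness of the argument.
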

\begin{proof}
In that case, $\bar \II^{\partial ij}_{ijk}$ is constant, so  (\ref{eq:A1B}) holds trivially, and so does (\ref{eq:A2}). To see (\ref{eq:A1}), recall that for a stationary partition $\n_{\partial ij} = \frac{\n_{ik} + \n_{jk}}{\sqrt{3}}$, and hence for any unit-vector $v \in T \Sigma_{ijk}$, 
\[
\II_{\Sigma_{ijk} \subset \partial \Sigma_{ij}}(v) = \scalar{\nabla_v \n_{\partial ij},v} = \frac{\II^{ik}(v) + \II^{jk}(v)}{\sqrt{3}} = 
\frac{\II^{ik}(\n_{\partial ik}) + \II^{jk}(\n_{\partial jk})}{\sqrt{3}} = \bar \II^{\partial ij} .
\]
\end{proof}

\medskip

Our goal in this section is to prove the following integrated Bochner identity on clusters with umbilical boundary which is conformally flat. On a compact manifold $(M,g)$ without boundary, when $\mu = vol_g$ and $V=1$, this formula is classical (see e.g. \cite[Chapter 9]{PetersenBook3rdEd}). When $M$ has a boundary, an extension wherein a boundary integral appears was derived by Reilly \cite{ReillyOriginalFormula}. Extensions to the weighted setting for general $\mu$ were obtained in \cite{MaDuGeneralizedReilly,KolesnikovEMilmanReillyPart1}. An extension for a general potential $V$ in the unweighted setting was initiated by Qiu--Xia \cite{QiuXia-GenReillyFormula} and Li--Xia \cite{LiXia-GenReillyFormulaSubStatic}, and extended to the weighted setting by Huang--Ma--Zhu \cite{HuangZhu-ConjugatedBrascampLieb,HuangMaZhu-ReillyFormula}. In out setting, the boundary $\partial \Sigma_{ij} $ is not a closed manifold, and so divergences do not integrate to zero in general. Moreover, our test functions are not compactly supported in $\Sigma_{ij} \cup \partial \Sigma_{ij}$. Consequently, our contribution is in verifying that all of the integrations-by-parts required for the proof are justified, and that the boundary terms ultimately cancel under our assumptions. 

\begin{proposition}[Bochner identity on partitions with conformally flat umbilical boundary] \label{prop:main-computation}
Let $\Omega$ be a stationary regular partition with locally-bounded curvature and conformally flat umbilical boundary. Let $V>0$ be a conformally flattening boundary potential, and let $u = (u_{ij})$ be an admissible scalar field. Then:
\begin{equation} \label{eq:Bochner}
\begin{split}
&  \sum_{i<j} \int_{\Sigma_{ij}}   V \brac{ \Delta_{\Sigma^1, \mu} u_{ij}   - \frac{\Delta_{\Sigma^1, \mu} V}{V} u_{ij} }^2  d\mu^{n-1} \\
 & = \sum_{i<j} \int_{\Sigma_{ij}}  V \left ( \norm{\nabla^2_{\Sigma^1} u_{ij} - \frac{\nabla^2_{\Sigma^1} V}{V} u_{ij} }^2 + \hRic^V_{\Sigma, \mu} \brac{\nabla_{\Sigma^1} u_{ij}- \frac{\nabla_{\Sigma^1} V}{V} u_{ij}  } \right ) d\mu^{n-1} .
\end{split}
\end{equation}
\end{proposition}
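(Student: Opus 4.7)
The plan is to derive (\ref{eq:Bochner}) by integrating a pointwise conjugated Bochner identity on each smooth interface $\Sigma_{ij}$, summing over $i<j$, and verifying that all boundary contributions on $\Sigma^2$ (and any residual terms on $\Sigma^3$) cancel out. The computational engine is the single-manifold conjugated Bochner--Reilly identity used by Qiu--Xia, Li--Xia and Huang--Ma--Zhu; the novelty here is a purely partition-theoretic cancellation of the resulting boundary terms driven by the four structural ingredients: stationarity (in the form $\sum_{(i,j)\in\cyclic(u,v,w)}\n_{\partial ij}=0$ together with the $3$-tensor identity of Lemma \ref{lem:three-tensor-vanishes}), the Dirichlet--Kirchoff and conformal BCs of $u$, the non-oriented conformal BCs satisfied by the conformally flattening potential $V$, and the umbilical assumptions (\ref{eq:A1})--(\ref{eq:A2}).

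First, on each $\Sigma_{ij}$, writing $\varphi_{ij}:=L_V u_{ij}=V\Delta_{\Sigma^1,\mu}u_{ij}-u_{ij}\Delta_{\Sigma^1,\mu}V$, a pointwise manipulation of the weighted Bochner formula (applied to $u_{ij}$ with $V$ playing the role of a conformal weight, exactly as in \cite{HuangZhu-ConjugatedBrascampLieb,HuangMaZhu-ReillyFormula}) yields
\[
\tfrac{\varphi_{ij}^2}{V} \;=\; V\,\norm{\nabla^2_{\Sigma^1} u_{ij}-\tfrac{\nabla^2_{\Sigma^1}V}{V}u_{ij}}^2 \;+\; V\,\hRic^{V}_{\Sigma^1,\mu}\!\brac{\nabla_{\Sigma^1}u_{ij}-\tfrac{\nabla_{\Sigma^1}V}{V}u_{ij}} \;+\; \div_{\Sigma^1,\mu} Z_{ij},
\]
where $Z_{ij}$ is an explicit tangential vector field on $\Sigma_{ij}$ polynomial in $u_{ij}$, $\nabla_{\Sigma^1}u_{ij}$, $V$, $\nabla_{\Sigma^1}V$, and quadratic in the pair $(u_{ij},V)$. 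Since this identity is purely pointwise, I would import it essentially verbatim from the single-manifold references, only recording the precise form of $Z_{ij}^{\n_{\partial ij}}$ that will be needed at the boundary.

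Next, I would integrate over $\Sigma_{ij}$ against $d\mu^{n-1}$ and apply the divergence theorem on $\Sigma^1$ (Lemma \ref{lem:StokesSigma1}) to convert $\int\div_{\Sigma^1,\mu}Z_{ij}$ into a boundary integral over $\partial\Sigma_{ij}=\cup_k\Sigma_{ijk}$. Summing over $i<j$ and regrouping the boundary integrals by triple stratum, each $\Sigma_{uvw}$ receives a cyclic sum $\sum_{(i,j)\in\cyclic(u,v,w)} Z_{ij}^{\n_{\partial ij}}$ that must vanish pointwise for (\ref{eq:Bochner}) to hold. The cancellation uses four inputs on $\Sigma_{uvw}$: the conformal BCs give $\nabla_{\n_{\partial ij}}u_{ij}=\bar{\II}^{\partial ij}u_{ij}+c_p$ with $c_p$ independent of the cyclic pair; the analogous relation with $c_p\equiv0$ holds for $V$; the umbilical identity (\ref{eq:A1}) rewrites tangential Hessians at the boundary in terms of $\bar\II^{\partial ij}$; and the Dirichlet--Kirchoff condition $u_{ij}+u_{jk}+u_{ki}=0$, combined with $\sum\n_{\partial ij}=0$ and the $3$-tensor vanishing (\ref{eq:three-tensor}), finishes the cancellation of the mixed terms involving $\II^{ij}$ and $\nabla V$. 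Any residual pieces should reorganize themselves as a tangential divergence $\div_{\Sigma^2,\mu}W_{uvw}$ along $\Sigma^2$, which requires a second application of the divergence theorem via Lemma \ref{lem:StokesSigma2}, producing boundary terms on $\Sigma^3$.

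The main obstacle is the promised delicate cancellation at the quadruple stratum $\Sigma_{ijk\ell}$: each of the six pairs $\{i,j\}\subset\{a,b,c,d\}$ contributes two boundary pieces (one from each of the two adjacent triple strata $\Sigma_{ijk}$ and $\Sigma_{ij\ell}$), and these $12$ terms must match pointwise. This is where the second umbilical assumption (\ref{eq:A2}), namely $\II^{ij}_{\n^{ijk}_{\partial ij}}=\II^{ij}_{\n^{ij\ell}_{\partial ij}}$ on $\Sigma_{abcd}$, is essential: it is precisely what one needs to glue terms attached to different triple strata but the same interface. The cancellation must be organized into cyclically/permutation-invariant packets because $u$ is only $C^1$ across $\Sigma^3$, so any individual second-order expression need not extend continuously, while the packets do. A secondary technical point is verifying the integrability hypotheses of Lemmas \ref{lem:StokesSigma1}--\ref{lem:StokesSigma2}, which I would do directly from admissibility: $u_{ij},|\nabla_{\Sigma^1}u_{ij}|\in L^\infty$ and $\nabla^2_{\Sigma^1}u_{ij}\in L^2(\Sigma_{ij})$ control the bulk, while $\Delta_{\Sigma^1,\mu}u_{ij}\in L^1(\partial\Sigma_{ij},\mu^{n-2})$ together with local boundedness of curvature, smoothness of $V$, and compact support of $u$ control the boundary integrands; the cutoff function of Lemma \ref{lem:cutoff} handles the lack of compact support away from $\Sigma^{\geq 3}$ in the standard way.
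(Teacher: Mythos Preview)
Your proposal is correct and follows essentially the same architecture as the paper's proof: a pointwise conjugated Bochner identity produces an explicit tangential field $Z_{ij}$ whose divergence equals the difference of the two integrands, the cyclic boundary sum on $\Sigma_{uvw}$ is shown (using (\ref{eq:A1}), the conformal BCs, and Dirichlet--Kirchoff) to equal $\div_{\Sigma^2,\mu}Z_{uvw}$ for an explicit $Z_{uvw}$, and a second divergence theorem plus the $12$-term cancellation on $\Sigma_{abcd}$ via (\ref{eq:A2}) finishes it. One small correction: the $3$-tensor identity of Lemma~\ref{lem:three-tensor-vanishes} is deployed at the $\Sigma^3$ stage (to rewrite $Z_{uvw}^{\n_{\partial uvw}}$), not in the $\Sigma^2$ cancellation as you suggest; and the admissibility hypothesis you invoke for the boundary integrability is $\Delta_{\Sigma^2,\mu}u_{ij}\in L^1(\partial\Sigma_{ij},\mu^{n-2})$, not $\Delta_{\Sigma^1,\mu}u_{ij}$.
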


To this end, we will introduce a couple of useful vector-fields on $\Sigma^1$ and $\Sigma^2$; we commence with $\Sigma^1$. 
Given $i \neq j$, let $Z_{ij}$ denote the following (non-oriented) tangential vector-field on $\Sigma_{ij}$ (extending continuously to $\partial \Sigma_{ij}$ by partition regularity and adminissibility of $u$):
\begin{align}
\label{eq:Zij}    Z_{ij} =& \frac{V}{2} \nabla_{\Sigma^1} |\nabla_{\Sigma_1} u_{ij}|^2 -V \Delta_{\Sigma, \mu} u_{ij} \nabla_{\Sigma^1} u_{ij} - 2 u \nabla^t_{\nabla_{\Sigma^1} V} \nabla_{\Sigma^1} u_{ij} + |\nabla_{\Sigma^1} u_{ij}|^2 \nabla_{\Sigma^1} V \\
\nonumber		 &+2u_{ij} \Delta_{\Sigma^1, \mu} u_{ij} \nabla_{\Sigma^1} V + \frac{u^2_{ij}}{2V} \nabla_{\Sigma^1} |\nabla_{\Sigma^1} V|^2 - \frac{u^2_{ij}}{V} \Delta_{\Sigma^1, \mu} V \nabla_{\Sigma^1} V - \scalar{\nabla_{\Sigma^1} u_{ij},\nabla_{\Sigma^1} V} \nabla_{\Sigma^1} u_{ij} .
\end{align}

\begin{lemma} \label{lem:divZij}
\begin{align*}
\div_{\Sigma^1,\mu} Z_{ij} & = V \norm{\nabla^2_{\Sigma^1} u_{ij} - \frac{\nabla^2_{\Sigma^1} V}{V} u_{ij} }^2 \\
& + V \hRic^V_{\Sigma, \mu} \brac{\nabla_{\Sigma^1} u_{ij}- \frac{\nabla_{\Sigma^1} V}{V} u_{ij},  \nabla_{\Sigma^1} u_{ij} - \frac{\nabla_{\Sigma^1} V}{V} u_{ij}} \\
 & - V \brac{ \Delta_{\Sigma^1, \mu} u_{ij}  - \frac{\Delta_{\Sigma^1, \mu} V}{V} u_{ij} }^2 . 
\end{align*}
\end{lemma}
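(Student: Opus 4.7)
This is a pointwise identity on the single interface $\Sigma_{ij}$ --- no partition structure enters and no integration by parts is required. I abbreviate $u:=u_{ij}$, $\nabla:=\nabla_{\Sigma^1}$, $\Delta_\mu:=\Delta_{\Sigma^1,\mu}$, and $\div_\mu:=\div_{\Sigma^1,\mu}$. My starting point is the classical weighted Bochner identity on $(\Sigma_{ij},g|_{\Sigma_{ij}},\mu^{n-1})$, applied separately to $u$ and to $V$:
\[
\tfrac12\Delta_\mu|\nabla u|^2 = \snorm{\nabla^2 u}^2 + \scalar{\nabla u,\nabla\Delta_\mu u} + \Ric_{\Sigma,\mu,\infty}(\nabla u,\nabla u),
\]
and analogously for $V$, where $\Ric_{\Sigma,\mu,\infty}=\Ric_\Sigma+\nabla^2 W$.

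Next I would expand $\div_\mu Z_{ij}$ by computing the weighted divergence of each of the eight summands in (\ref{eq:Zij}), using $\div_\mu(fX)=f\div_\mu X+\scalar{\nabla f,X}$ and substituting both Bochner identities wherever they appear (the first one is hidden inside the summand $\tfrac{V}{2}\nabla|\nabla u|^2$; the second inside $\tfrac{u^2}{2V}\nabla|\nabla V|^2$). Elementary pointwise identities such as $\scalar{X,\nabla|\nabla u|^2}=2\nabla^2 u(X,\nabla u)$ will rewrite the cross-terms produced by the Leibniz expansions into explicit Hessian pairings.

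The third and main step is to regroup the resulting pointwise expression into the three structural blocks matching the right-hand side. The pure Hessian contributions --- namely $V\snorm{\nabla^2 u}^2$ from Bochner on $u$, $(u^2/V)\snorm{\nabla^2 V}^2$ from Bochner on $V$, and the cross-term $-2u\scalar{\nabla^2 u,\nabla^2 V}$ arising from the $-2u\nabla^t_{\nabla V}\nabla u$ summand --- will assemble into $V\snorm{\nabla^2 u-(u/V)\nabla^2 V}^2$. The Ricci contributions from both Bochner identities, together with the tensorial corrections $-(\Delta_\mu V/V)g$ and $(\nabla^2 V/V)$ that emerge naturally from the Leibniz expansion of the $V$-heavy summands, will combine into $V\hRic^V_{\Sigma,\mu,\infty}(\nabla u-(u/V)\nabla V,\nabla u-(u/V)\nabla V)$ by the very definition (\ref{eq:intro-RicV}). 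Finally, the Laplacian contributions $-V(\Delta_\mu u)^2$, $-(u^2/V)(\Delta_\mu V)^2$, and the mixed term $+2u(\Delta_\mu u)(\Delta_\mu V)$, produced jointly by the $-V(\Delta_\mu u)\nabla u$ and $+2u(\Delta_\mu u)\nabla V$ summands, assemble into $-V(\Delta_\mu u-(\Delta_\mu V/V)u)^2$.

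\textbf{Main obstacle.} The calculation is mechanical but lengthy; the principal challenge is careful bookkeeping across the many cross-terms involving $u$, $V$, and their first and second derivatives, ensuring every coefficient lands exactly. The asymmetric-looking last summand $-\scalar{\nabla u,\nabla V}\nabla u$ in (\ref{eq:Zij}) is engineered precisely to absorb the stray cross-contribution that would otherwise obstruct the completion of the Hessian square. Since the identity is local on a single interface and is independent of the partition structure, an alternative is to invoke the single-manifold version established by Qiu--Xia, Li--Xia, and Huang--Ma--Zhu, and then verify that our $Z_{ij}$ matches (up to a divergence-free modification) the vector field appearing in those references; this reduces the lemma to a previously documented computation and sidesteps redoing the bookkeeping from scratch.
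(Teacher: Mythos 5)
Your plan is correct and matches the paper's own proof: both apply the weighted Bochner--Lichnerowicz--Weitzenb\"ock formula (to $u$ through the $\frac{V}{2}\nabla|\nabla u|^2$ summand and to $V$ through the $\frac{u^2}{2V}\nabla|\nabla V|^2$ summand), use the Ricci commutation identity to handle the cross-term produced by $-2u\nabla^t_{\nabla V}\nabla u$, and regroup into exactly the three blocks you describe. The paper also explicitly credits Huang--Ma--Zhu, Formula~(2.6), as implicitly containing this identity, so your proposed fallback of citing the single-manifold version is precisely what the authors acknowledge as an alternative; the only substantive difference is that the paper carries the bookkeeping to completion rather than stopping at the plan.
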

This identity was implicitly established in \cite[Formula (2.6)]{HuangMaZhu-ReillyFormula}; for completeness, we verify the computation.
\begin{proof}
The weighted Bochner--Lichnerowicz--Weitzenb\"ock formula \cite{MaDuGeneralizedReilly,KolesnikovEMilmanReillyPart1} states that for any $h \in C_{loc}^3 (\Sigma^1)$, we have
\begin{equation}\label{fml-Bochner-classical}
	\frac{1}{2} \Delta_{\Sigma^1, \mu} |\nabla_{\Sigma^1} h|^2
	= \norm{\nabla_{\Sigma^1}^2 h}^2 + \scalar{\nabla_{\Sigma^1} \Delta_{\Sigma^1, \mu} h, \nabla_{\Sigma^1}h} + \scalar{\Ric_{\Sigma^1, \mu} \nabla_{\Sigma^1} h, \nabla_{\Sigma^1} h}.
\end{equation}
Using this, the divergence of the first two terms of (\ref{eq:Zij}) minus half the 4th term is given by
\begin{align*}
	 \div_{\Sigma^1, \mu} & \brac{ \frac{V}{2} \nabla_{\Sigma^1}|\nabla_{\Sigma^1} u_{ij}|^2 -V \Delta_{\Sigma^1, \mu} u_{ij} \nabla_{\Sigma^1} u_{ij} -\frac{1}{2} |\nabla_{\Sigma^1} u_{ij}|^2 \nabla_{\Sigma^1}V } \\
			= \; & V \brac{ \frac{1}{2}\Delta_{\Sigma^1, \mu} |\nabla_{\Sigma^1} u_{ij}|^2 -\scalar{\nabla_{\Sigma^1} \Delta_{\Sigma^1, \mu} u_{ij}, \nabla_{\Sigma^1} u_{ij}} } -\frac{1}{2} |\nabla_{\Sigma^1} u_{ij}|^2 \Delta_{\Sigma^1, \mu} V \\
	&-\scalar{\nabla_{\Sigma^1} V, \nabla_{\Sigma^1} u_{ij}} \Delta_{\Sigma^1,\mu} u_{ij} -V \brac{\Delta_{\Sigma^1, \mu} u_{ij}}^2 \\
	= \; &V \norm{\nabla_{\Sigma^1}^2 u_{ij}}^2 + \scalar{V\Ric_{\Sigma^1, \mu}\nabla_{\Sigma^1} u_{ij}, \nabla_{\Sigma^1} u_{ij}} -\frac{1}{2} |\nabla_{\Sigma^1} u_{ij}|^2 \Delta_{\Sigma^1, \mu} V \\
	&-\scalar{\nabla_{\Sigma^1} V, \nabla_{\Sigma^1} u_{ij}} \Delta_{\Sigma^1,\mu} u_{ij} -V \brac{\Delta_{\Sigma^1, \mu} u_{ij}}^2.
\end{align*}
On the other hand, it follows from the Ricci identity that
\[
\Delta_{\Sigma^1} \nabla_{\Sigma^1} u_{ij} -\nabla_{\Sigma^1}^2 u_{ij} \nabla_{\Sigma^1} W
= \nabla_{\Sigma^1} \Delta_{\Sigma^1, \mu} u_{ij} + \Ric_{\Sigma^1, \mu} \nabla_{\Sigma^1} u_{ij}.
\]
Therefore, the divergence of the 3rd, 4th and 5th terms in (\ref{eq:Zij}) is
\begin{align*}
	&\div_{\Sigma^1, \mu} \brac{-2u_{ij} \scalar{\nabla_{\Sigma^1}^2 u_{ij}, \nabla_{\Sigma^1} V}+ |\nabla_{\Sigma^1} u_{ij}|^2 \nabla_{\Sigma^1} V + 2u_{ij} \Delta_{\Sigma^1, \mu} u_{ij} \nabla_{\Sigma^1} V}\\
				=& -2 u_{ij} \scalar{\Delta_{\Sigma^1} \nabla_{\Sigma^1} u_{ij}- \nabla_{\Sigma^1}^2 u_{ij} \nabla_{\Sigma^1} W, \nabla_{\Sigma^1} V} - 2 u_{ij} \scalar{\nabla_{\Sigma^1}^2 u_{ij}, \nabla_{\Sigma^1}^2 V}\\
	&+\abs{ \nabla_{\Sigma^1} u_{ij}}^2 \Delta_{\Sigma^1, \mu} V +2 \scalar{\nabla_{\Sigma^1} u_{ij}, \nabla_{\Sigma^1} V}\Delta_{\Sigma^1, \mu} u_{ij} + 2u_{ij} \scalar{\nabla_{\Sigma^1} \Delta_{\Sigma^1, \mu} u_{ij}, \nabla_{\Sigma^1} V} \\
	&+ 2u_{ij} \Delta_{\Sigma^1, \mu} u_{ij} \Delta_{\Sigma^1, \mu} V \\
	=& -2 u_{ij} \scalar{\Ric_{\Sigma^1, \mu} \nabla_{\Sigma^1} u_{ij}, \nabla_{\Sigma^1} V} - 2 u_{ij} \scalar{\nabla_{\Sigma^1}^2 u_{ij}, \nabla_{\Sigma^1}^2 V}+\abs{ \nabla_{\Sigma^1} u_{ij}}^2 \Delta_{\Sigma^1, \mu} V \\
	&+2 \scalar{\nabla_{\Sigma^1} u_{ij}, \nabla_{\Sigma^1} V}\Delta_{\Sigma^1, \mu} u_{ij}  
	+ 2u_{ij} \Delta_{\Sigma^1, \mu} u_{ij} \Delta_{\Sigma^1, \mu} V. 
\end{align*}
Finally, using formula (\ref{fml-Bochner-classical}) again, the divergence of the last 3 terms of (\ref{eq:Zij}) and half the 4th term is
\begin{align*}
	&\div_{\Sigma^1, \mu} \brac{\frac{u_{ij}^2}{2V} \nabla_{\Sigma^1} \abs{\nabla_{\Sigma^1} V}^2 -\frac{u_{ij}^2}{V} \Delta_{\Sigma^1, \mu} V \nabla_{\Sigma^1} V -\scalar{\nabla_{\Sigma^1} u_{ij}, \nabla_{\Sigma^1} V} \nabla_{\Sigma^1} u_{ij} +\frac{1}{2} \abs{\nabla_{\Sigma^1} u_{ij}}^2 \nabla_{\Sigma^1} V }\\
	=& \frac{2u_{ij}}{V} \scalar{\nabla_{\Sigma^1}^2 V \nabla_{\Sigma^1} V, \nabla_{\Sigma^1} u_{ij}} -\frac{u_{ij}^2}{V^2} \scalar{\nabla_{\Sigma^1}^2 V \nabla_{\Sigma^1} V, \nabla_{\Sigma^1} V} \\
	&+ \frac{u_{ij}^2}{V} \brac{\frac{1}{2} \Delta_{\Sigma^1, \mu} \abs{\nabla_{\Sigma^1} V}^2 -\scalar{\nabla_{\Sigma^1} \Delta_{\Sigma^1, \mu} V, \nabla_{\Sigma^1} V} }\\
	&-\frac{2 u_{ij}}{V} \scalar{\nabla_{\Sigma^1} u_{ij}, \nabla_{\Sigma^1} V} \Delta_{\Sigma^1, \mu} V +\frac{u_{ij}^2}{V^2} \abs{\nabla_{\Sigma^1} V}^2 \Delta_{\Sigma^1, \mu} V -\frac{u_{ij}^2}{V} \brac{\Delta_{\Sigma^1, \mu} V}^2\\
	&- \scalar{\nabla_{\Sigma^1}^2 u_{ij} \nabla_{\Sigma^1} u_{ij}, \nabla_{\Sigma^1} V}- \scalar{\nabla_{\Sigma^1}^2 V \nabla_{\Sigma^1} u_{ij}, \nabla_{\Sigma^1} u_{ij}} -\scalar{\nabla_{\Sigma^1} u_{ij}, \nabla_{\Sigma^1} V} \Delta_{\Sigma^1, \mu} u_{ij}\\
	&+ \scalar{\nabla_{\Sigma}^2 u_{ij} \nabla_{\Sigma^1} u_{ij}, \nabla_{\Sigma^1} V} + \abs{\nabla_{\Sigma^1} u_{ij}}^2 \Delta_{\Sigma^1, \mu} V\\
	=&\brac{\Delta_{\Sigma^1, \mu} V g_{\Sigma^1}- \nabla_{\Sigma^1}^2 V} \brac{\nabla_{\Sigma^1} u_{ij} -\frac{\nabla_{\Sigma^1} V}{V} u_{ij},  \nabla_{\Sigma^1} u_{ij} -\frac{\nabla_{\Sigma^1} V}{V} u_{ij}} 
	-\frac{1}{2} \abs{ \nabla_{\Sigma^1} u_{ij}}^2 \Delta_{\Sigma^1, \mu} V\\
	&+\frac{u_{ij}^2}{V} \norm{\nabla_{\Sigma^1}^2 V}^2+V \Ric_{\Sigma^1,\mu} \brac{\frac{\nabla_{\Sigma^1} V}{V} u_{ij}, \frac{\nabla_{\Sigma^1} V}{V} u_{ij}} -V \brac{\frac{\Delta_{\Sigma^1, \mu} V}{V} u_{ij} }^2\\
	&- \scalar{\nabla_{\Sigma^1} u_{ij}, \nabla_{\Sigma^1} V} \Delta_{\Sigma^1, \mu} u_{ij}.
\end{align*}
Combining the above identities, we obtain
\begin{align*}
	\div_{\Sigma^1, \mu} Z_{ij}
	=&V \brac{\norm{\nabla_{\Sigma^1}^2 u_{ij}}^2 - 2 \scalar{\nabla_{\Sigma^1}^2 u_{ij}, \frac{\nabla_{\Sigma^1}^2 V }{V} u_{ij}} + \frac{\norm{\nabla_{\Sigma^1}^2  V}^2 }{V^2} u_{ij}^2 }\\
	&+ V \Ric_{\Sigma^1, \mu} \brac{\nabla_{\Sigma^1} u_{ij}, \nabla_{\Sigma^1} u_{ij}} - 2 V \Ric_{\Sigma^1, \mu} \brac{\nabla_{\Sigma^1} u_{ij}, \frac{\nabla_{\Sigma^1} V}{V} u_{ij} } \\
	&+ V \Ric_{\Sigma^1, \mu} \brac{\frac{\nabla_{\Sigma^1} V}{V} u_{ij}, \frac{\nabla_{\Sigma^1} V}{V} u_{ij}}\\
	&- V \brac{ \brac{\Delta_{\Sigma^1, \mu} u_{ij}}^2 -2 \Delta_{\Sigma^1, \mu} u_{ij} \frac{\Delta_{\Sigma^1, \mu} V}{V} u_{ij} +\brac{\frac{\Delta_{\Sigma^1, \mu} V}{V} u_{ij}}^2 }\\
	&+\brac{\Delta_{\Sigma^1, \mu} V g_{\Sigma^1}- \nabla_{\Sigma^1}^2 V} \brac{\nabla_{\Sigma^1} u_{ij} -\frac{\nabla_{\Sigma^1} V}{V} u_{ij},  \nabla_{\Sigma^1} u_{ij} -\frac{\nabla_{\Sigma^1} V}{V} u_{ij}} \\
	=& V \norm{\nabla_{\Sigma^1}^2 u_{ij} -\frac{\nabla_{\Sigma^1}^2 V}{V} u_{ij}}^2 + V \hRic^V_{\Sigma^1, \mu} \brac{\nabla_{\Sigma^1} u_{ij} -\frac{\nabla_{\Sigma^1} V}{V} u_{ij},  \nabla_{\Sigma^1} u_{ij} -\frac{\nabla_{\Sigma^1} V}{V} u_{ij}}\\
	&- V \brac{\Delta_{\Sigma^1, \mu} u_{ij} -\frac{\Delta_{\Sigma^1, \mu} V}{V} u_{ij}}^2.
\end{align*}
This completes the proof.
\end{proof}

Consequently, Proposition \ref{prop:main-computation} reduces to showing that:
\[
\int_{\Sigma^1} \div_{\Sigma^1,\mu} Z_{ij} d\mu^{n-1} = 0 . 
\]
To this end, given $u < v < w$, let $Z_{uvw}$ denote the following tangential vector-field on $\Sigma_{uvw}$ (extending continuously to $\partial \Sigma_{uvw}$ by partition regularity and admissibility of $u$):
\begin{equation} \label{eq:Zuvw}
Z_{uvw} = \sum_{\{i,j\} \subset \{u,v,w\}} \bar \II^{\partial ij}_{uvw} u_{ij} (V \nabla_{\Sigma^2} u_{ij} - u_{ij} \nabla_{\Sigma^2} V).
\end{equation}
Our main task will be to establish:
\begin{lemma} \label{lem:crazyZ}
Assume (\ref{eq:A1}). Then:
\begin{enumerate}
\item For all $u < v < w$, we have on $\Sigma_{uvw}$:
\[
\sum_{\{i,j\} \subset \{u,v,w\}}  Z^{\n_{\partial ij}}_{ij} = \div_{\Sigma^2,\mu} Z_{uvw} .
\]
\end{enumerate}
In addition, assume (\ref{eq:A1B}). Then:
\begin{enumerate}
\setcounter{enumi}{1}
\item  $\{ Z_{ij} \}$ satisfy the assumptions of Lemma \ref{lem:StokesSigma1}. 
\end{enumerate}
In addition, assume (\ref{eq:A2}). Then:
\begin{enumerate}
\setcounter{enumi}{2}
\item For all $a < b < c < d$, we have on $\Sigma_{abcd}$:
\begin{equation} \label{eq:Zuvw-cancelation-again}
\sum_{\{u,v,w\} \subset \{a,b,c,d\}}  Z_{uvw}^{\n_{\partial uvw}} = 0 .
\end{equation}
\item $\{ Z_{uvw} \}$ satisfy the assumptions of Lemma \ref{lem:StokesSigma2}.
\end{enumerate}
\end{lemma}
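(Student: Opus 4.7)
The plan is to verify the four claims in order, with the bulk of the geometric computation concentrated in claim $(1)$, and the most delicate combinatorial cancellation in claim $(3)$.

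For claim $(1)$, the strategy is to expand the normal component $Z_{ij}^{\n_{\partial ij}}$ of each of the eight terms constituting $Z_{ij}$ in (\ref{eq:Zij}), by first decomposing every surface gradient into its tangential part along $\Sigma^2$ plus a normal component along $\n_{\partial ij}$, and then using the Weingarten equation together with the umbilical hypothesis (\ref{eq:A1}) to convert $\II_{\Sigma_{uvw}\subset \partial \Sigma_{ij}}$ contributions into scalar multiples of $g_{\Sigma_{uvw}}$. The conformal BCs for $u$ and the non-oriented conformal BCs for $V$ give
\[
\nabla_{\n_{\partial ij}} u_{ij} = \bar \II^{\partial ij}_{uvw} u_{ij} + c_p, \qquad \nabla_{\n_{\partial ij}} V = \bar \II^{\partial ij}_{uvw} V,
\]
and eliminate all normal derivatives in favor of surface quantities on $\Sigma^2$ (plus a common scalar $c_p$). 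Summing over $(i,j) \in \cyclic(u,v,w)$, the Dirichlet--Kirchhoff relations $\sum u_{ij} = 0$, the stationarity identity $\sum \n_{\partial ij} = 0$, and the $3$-tensor identity of Lemma \ref{lem:three-tensor-vanishes} collapse all terms containing $c_p$ or not appearing in $\div_{\Sigma^2,\mu} Z_{uvw}$; Leibniz-expanding the latter divergence via (\ref{eq:Zuvw}) then yields exactly the asserted equality. A key point is that the cyclic coefficient structure of $Z_{uvw}$ is forced by the appearance of $\bar\II^{\partial ij}_{uvw}$ from the conformal BCs.

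For claim $(2)$, admissibility of $u$ and smoothness of $V$ on any compact $K \supset \supp u$ make the verification routine. Each of the eight terms in $Z_{ij}$ is a pointwise product of bounded factors (the quantities $V$, $1/V$, $\nabla_{\Sigma^1} V$, $\Delta_{\Sigma^1,\mu} V$, $u_{ij}$, $|\nabla_{\Sigma^1} u_{ij}|$ are all bounded on $K$ by admissibility, smoothness, and locally bounded curvature) with at most one $L^2$-factor (namely $\nabla^2_{\Sigma^1} u_{ij}$ or $\Delta_{\Sigma^1,\mu} u_{ij}$). Hence $|Z_{ij}|^2 \in L^1(\Sigma^1 \cap K,\mu^{n-1})$, and $\div_{\Sigma^1,\mu} Z_{ij} \in L^1$ via the formula of Lemma \ref{lem:divZij}. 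For the boundary integrability, claim $(1)$ reduces the integrand to $\div_{\Sigma^2,\mu} Z_{uvw}$; expanding via Leibniz, the only factor that is not bounded on $K$ is $\nabla_{\Sigma^2} \bar \II^{\partial ij}_{uvw}$, but hypothesis (\ref{eq:A1B}) guarantees its integrability on $\Sigma^2 \cap K$, while Lemma \ref{lem:Sigma2} gives $\mu^{n-2}(\Sigma^2 \cap K) < \infty$ and all other factors are bounded on $K$.

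For claim $(3)$, on a quadruple curve $\Sigma_{abcd}$ exactly four triple curves $\Sigma_{uvw}$ meet (one for each $3$-subset of $\{a,b,c,d\}$), and each contributes a $3$-term expression from (\ref{eq:Zuvw}), yielding the $12$ terms mentioned in Remark \ref{rem:intro-stability}. The strategy is to regroup these $12$ terms by the pair $\{i,j\} \subset \{a,b,c,d\}$: each such pair appears in precisely two triple curves $\Sigma_{ijk}$ and $\Sigma_{ij\ell}$ (where $\{k,\ell\} = \{a,b,c,d\}\setminus\{i,j\}$). Hypothesis (\ref{eq:A2}) allows one to identify the relevant second-fundamental-form data $\II^{ij}_{\n^{ijk}_{\partial ij}} = \II^{ij}_{\n^{ij\ell}_{\partial ij}}$, from which, after substituting the expressions for $\bar\II^{\partial ij}_{ijk}, \bar\II^{\partial ij}_{ij\ell}$ and using stationarity relations at $\Sigma_{abcd}$, the six pair contributions collapse via cyclic summation identities for the co-normals $\n^{uvw}_{\partial uvw}$ analogous to (\ref{eq:three-tensor}). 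Finally, claim $(4)$ is immediate from the previous claims: claim $(1)$ identifies $\div_{\Sigma^2,\mu} Z_{uvw}$ with $\sum Z_{ij}^{\n_{\partial ij}}$, whose $L^1$-integrability on $\Sigma^2\cap K$ follows from claim $(2)$; $|Z_{uvw}|^2 \in L^1$ again follows from admissibility, smoothness of $V$, hypothesis (\ref{eq:A1B}), and Lemma \ref{lem:Sigma2}; and the quadruple-point boundary integrand vanishes identically by claim $(3)$.

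The main obstacle is claim $(1)$: each of the eight terms of $Z_{ij}$ must be differentiated along $\n_{\partial ij}$ and reorganized so that, only after cyclic summation, the result matches $\div_{\Sigma^2,\mu} Z_{uvw}$ with no residue. This requires the simultaneous use of all the algebraic identities available: the conformal BCs for $u$ and $V$, Dirichlet--Kirchhoff, stationarity, the $3$-tensor identity, and the umbilical hypothesis (\ref{eq:A1}). Claim $(3)$ is geometrically simpler (only first-order boundary data appear), but the analogous combinatorial bookkeeping of the $12$ terms on $\Sigma_{abcd}$ and the role of hypothesis (\ref{eq:A2}) in governing how $\bar\II^{\partial ij}_{uvw}$ changes when the third cell index is varied is the secondary hurdle.
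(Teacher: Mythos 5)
Your plan follows the paper's proof quite closely: claim (1) by expanding $Z_{ij}^{\n_{\partial ij}}$ term-by-term, rewriting boundary normal derivatives via the conformal BCs for $u$ and $V$ and the umbilicality hypothesis (\ref{eq:A1}), then collapsing the cyclic sum using Dirichlet--Kirchhoff and stationarity; claim (3) by the $12$-term regrouping on $\Sigma_{abcd}$ using (\ref{eq:A2}) to identify $\kappa_{ij} = \II^{ij}_{\n^{ijk}_{\partial ij}} = \II^{ij}_{\n^{ij\ell}_{\partial ij}}$; and claims (2), (4) as integrability bookkeeping. This is the same route as the paper.

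Two small technical points worth flagging. First, you place the $3$-tensor identity of Lemma~\ref{lem:three-tensor-vanishes} in the verification of claim~(1), but the paper does not need it there: the cancellations on $\Sigma_{uvw}$ come from $\sum_{\cyclic} u_{ij}=0$, $\sum_{\cyclic} \n_{\partial ij}=0$, and the conformal BCs alone. The $3$-tensor identity is invoked (twice) only in claim~(3), to rewrite $Z_{uvw}^{\n_{\partial uvw}}$ before the final antisymmetry $c_{jkl}=-c_{jlk}$ of the conformal-BC constants produces the cancellation; your ``analogous cyclic identities for co-normals'' gloss would be better replaced by this explicit mechanism, together with the geometric relation $\n_{\partial ijk}=(\n^{ij\ell}_{\partial ij}+\cos\alpha\,\n^{ijk}_{\partial ij})/\sin\alpha$. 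Second, in claim~(2), when expanding $\div_{\Sigma^2,\mu}Z_{uvw}$ you assert that $\nabla_{\Sigma^2}\bar\II^{\partial ij}_{uvw}$ is the only unbounded factor, but $\Delta_{\Sigma^2,\mu}u_{ij}$ also appears and is only assumed to lie in $L^1(\partial\Sigma_{ij},\mu^{n-2})$ by admissibility (not $L^\infty$); one must invoke that admissibility assumption explicitly, as the paper does, to complete the $L^1$ bound on $\div_{\Sigma^2,\mu}Z_{uvw}$. Neither point changes the structure of the argument, but both must be stated correctly for the proof to go through.
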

\begin{corollary}
\[
\int_{\Sigma^1} \div_{\Sigma^1,\mu} Z_{ij} d\mu^{n-1} = 0 .
\]
\end{corollary}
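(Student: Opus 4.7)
The plan is to chain together the four conclusions of Lemma \ref{lem:crazyZ} with the two divergence theorems (Lemmas \ref{lem:StokesSigma1} and \ref{lem:StokesSigma2}), reducing the integral on $\Sigma^1$ step-by-step to an integral on $\Sigma^3$ which vanishes pointwise.

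First I would invoke part (2) of Lemma \ref{lem:crazyZ}, which says that $\{Z_{ij}\}$ satisfies the integrability hypotheses \eqref{eq:StokesSigma1-dominant1} and \eqref{eq:StokesSigma1-dominant2} needed for the $\Sigma^1$-divergence theorem. Applying Lemma \ref{lem:StokesSigma1} therefore gives
\[
\int_{\Sigma^1}\div_{\Sigma^1,\mu}Z_{ij}\,d\mu^{n-1} \;=\; \sum_{u<v<w}\int_{\Sigma_{uvw}}\sum_{\{i,j\}\subset\{u,v,w\}} Z_{ij}^{\n_{\partial ij}}\,d\mu^{n-2}.
\]
Next, I would apply part (1) of Lemma \ref{lem:crazyZ} (which uses the umbilical assumption \eqref{eq:A1}) to rewrite the integrand on each $\Sigma_{uvw}$ as $\div_{\Sigma^2,\mu}Z_{uvw}$, where $Z_{uvw}$ is the tangential vector-field defined in \eqref{eq:Zuvw}.

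Second, I would invoke part (4) of Lemma \ref{lem:crazyZ}, which guarantees that $\{Z_{uvw}\}$ satisfies the integrability hypotheses \eqref{eq:StokesSigma2-dominant} and \eqref{eq:StokesSigma2-dominant2} for the $\Sigma^2$-divergence theorem. Applying Lemma \ref{lem:StokesSigma2} then yields
\[
\sum_{u<v<w}\int_{\Sigma_{uvw}}\div_{\Sigma^2,\mu}Z_{uvw}\,d\mu^{n-2} \;=\; \sum_{a<b<c<d}\int_{\Sigma_{abcd}}\sum_{\{u,v,w\}\subset\{a,b,c,d\}} Z_{uvw}^{\n_{\partial uvw}}\,d\mu^{n-3}.
\]
Finally, part (3) of Lemma \ref{lem:crazyZ} (which uses \eqref{eq:A2}) states that the integrand in the last display vanishes pointwise on each $\Sigma_{abcd}$ — this is the delicate twelve-term cancellation on the quadruple set. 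Combining the three displays yields $\int_{\Sigma^1}\div_{\Sigma^1,\mu}Z_{ij}\,d\mu^{n-1}=0$, as required.

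The proof is thus a routine chaining, with all the nontrivial content absorbed into Lemma \ref{lem:crazyZ}. The only point that requires care is verifying that the various integrability hypotheses of Lemmas \ref{lem:StokesSigma1} and \ref{lem:StokesSigma2} are exactly the ones delivered by parts (2) and (4) of Lemma \ref{lem:crazyZ}, so that no extra assumptions are silently needed; this is guaranteed because the umbilical-boundary hypothesis \eqref{eq:A1B} was built in precisely to control the tangential derivatives of $\bar{\II}^{\partial ij}$ that appear in $\div_{\Sigma^2,\mu}Z_{uvw}$, and $u$ being admissible controls the remaining $L^2$ and $L^1$ norms of $Z_{ij}$, $\div_{\Sigma^1,\mu}Z_{ij}$ and their $\Sigma^2$ counterparts.
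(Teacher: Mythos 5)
Your proposal is correct and follows exactly the same chain of reasoning as the paper: part (2) of Lemma \ref{lem:crazyZ} justifies applying Lemma \ref{lem:StokesSigma1}, part (1) rewrites the resulting $\Sigma^2$-integrand as $\div_{\Sigma^2,\mu}Z_{uvw}$, part (4) justifies Lemma \ref{lem:StokesSigma2}, and part (3) makes the final $\Sigma^3$-integrand vanish pointwise. This is precisely the paper's own (more tersely stated) argument, with all the substantive work absorbed into Lemma \ref{lem:crazyZ}.
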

\begin{proof}
By Lemma \ref{lem:crazyZ}, we may apply Lemma \ref{lem:StokesSigma1} to $\{Z_{ij}\}$ followed by  Lemma \ref{lem:StokesSigma2} to $\{Z_{uvw} \}$, yielding:
\begin{align*}
& \int_{\Sigma^1} \div_{\Sigma^1,\mu} Z_{ij} d\mu^{n-1} = 
\int_{\Sigma^2}  \sum_{\{i,j\} \subset \{u,v,w\}}  Z^{\n_{\partial ij}}_{ij} \, d\mu^{n-2} =
 \int_{\Sigma^2}  \div_{\Sigma^2,\mu} Z_{uvw} \, d\mu^{n-2} \\
 & = \int_{\Sigma^3} \sum_{\{u,v,w\} \subset \{a,b,c,d\}}  Z_{uvw}^{\n_{\partial uvw}} \, d\mu^{n-3} = 0 ,
\end{align*}
where the last term vanishes thanks to (\ref{eq:Zuvw-cancelation-again}). 
\end{proof}

The proof of Proposition \ref{prop:main-computation} thus reduces to establishing Lemma \ref{lem:crazyZ}.

\subsection{Boundary computation on $\Sigma^2$}

\begin{lemma}
On $\partial \Sigma_{ij}$ we have: 
\begin{align}
Z^{\n_{\partial ij}}_{ij} = 
		& -V \brac{H_{\partial \Sigma_{ij}} - \nabla_{\n_{\partial ij}}W} \brac{ \nabla_{\n_{\partial ij}} u_{ij}  - \frac{\nabla_{\n_{\partial ij}} V}{V} u_{ij} }^2  \label{eq:Zij-n-partial} \\
		& - V \brac{\II_{\partial \Sigma_{ij}} - \frac{\nabla_{\n_{\partial ij}}V }{V}g_{\partial \Sigma_{ij}}} \brac{\nabla_{\Sigma^2} u_{ij}- \frac{\nabla_{\Sigma^2} V}{V}u_{ij}, \nabla_{\Sigma^2} u_{ij}- \frac{\nabla_{\Sigma^2} V}{V} u_{ij} }  \nonumber\\
		&- 2 V \brac{\nabla_{\n_{\partial ij}} u_{ij} -\frac{\nabla_{\n_{\partial ij}} V}{V} u_{ij}} \brac{\Delta_{\Sigma^2, \mu} u_{ij} -\frac{\Delta_{\Sigma^2, \mu} V}{V} u_{ij} } \nonumber \\
		& + \div_{\Sigma^2, \mu} \brac{ V  \brac{\nabla_{\n_{\partial ij}} u_{ij} -\frac{\nabla_{\n_{\partial ij}} V}{V} u_{ij}}\brac{\nabla_{\Sigma^2} u_{ij}- \frac{\nabla_{\Sigma^2} V}{V} u_{ij}} } \nonumber \\
		&+\div_{\Sigma^2, \mu} \brac{ u_{ij} \nabla_{\n_{\partial ij}}V  \nabla_{\Sigma^2} u_{ij} - u_{ij} \nabla_{\n_{\partial ij}}u_{ij} \nabla_{\Sigma^2} V }.  \nonumber
\end{align}
\end{lemma}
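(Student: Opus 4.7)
The identity to be proved is pointwise on $\partial\Sigma_{ij}$, so the plan is to fix $p \in \partial\Sigma_{ij}$ and expand each of the eight summands of $Z_{ij}$ from (\ref{eq:Zij}) into its $\n_{\partial ij}$-component, then reorganize. Throughout, I will split every surface gradient as $\nabla_{\Sigma^1} = \nabla_{\Sigma^2} + \n_{\partial ij}\,\nabla_{\n_{\partial ij}}$ and trade $\nabla_{\Sigma^1}$-Laplacians for their boundary counterparts via the Reilly-type identity
\[
\Delta_{\Sigma^1,\mu}f = \Delta_{\Sigma^2,\mu}f + \nabla^2_{\Sigma^1}f(\n_{\partial ij},\n_{\partial ij}) + \brac{H_{\partial\Sigma_{ij}} - \nabla_{\n_{\partial ij}}W}\nabla_{\n_{\partial ij}}f,
\]
applied to both $u_{ij}$ and $V$. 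For the terms containing $\nabla_{\Sigma^1}|\nabla_{\Sigma^1}f|^2$, which expand to $2\,\nabla^2_{\Sigma^1}f(\nabla_{\Sigma^1}f,\cdot)$, I will use the Gauss-type relation valid for any $X$ tangent to $\partial\Sigma_{ij}$,
\[
\nabla^2_{\Sigma^1}f(X,\n_{\partial ij}) = X\brac{\nabla_{\n_{\partial ij}}f} + \II_{\partial\Sigma_{ij}}(X,\nabla_{\Sigma^2}f),
\]
to convert normal-tangential Hessian components into intrinsic boundary derivatives plus curvature corrections.

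After these substitutions, the bookkeeping step is to regroup all resulting scalar expressions around the conjugated normal and tangential quantities
\[
A := \nabla_{\n_{\partial ij}}u_{ij} - \frac{\nabla_{\n_{\partial ij}}V}{V}\,u_{ij}, \qquad B := \nabla_{\Sigma^2}u_{ij} - \frac{\nabla_{\Sigma^2}V}{V}\,u_{ij},
\]
so that the quadratic-in-derivatives part of $Z_{ij}^{\n_{\partial ij}}$ packages into exactly the three advertised contributions: $-V(H_{\partial\Sigma_{ij}}-\nabla_{\n_{\partial ij}}W)\,A^{2}$, the boundary quadratic form $-V\brac{\II_{\partial\Sigma_{ij}} - \frac{\nabla_{\n_{\partial ij}}V}{V}g_{\partial\Sigma_{ij}}}(B,B)$, and the cross term $-2V A\brac{\Delta_{\Sigma^2,\mu}u_{ij}-\frac{\Delta_{\Sigma^2,\mu}V}{V}u_{ij}}$. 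All remaining residual scalars (those neither quadratic in the conjugated quantities nor containing $\nabla^2_{\Sigma^1}$) should assemble, via the surface Leibniz rule $\div_{\Sigma^2,\mu}(\phi X) = \phi\,\div_{\Sigma^2,\mu}X + \scalar{\nabla_{\Sigma^2}\phi,X}$ applied in reverse, into precisely the two claimed $\div_{\Sigma^2,\mu}$ terms; in particular the second divergence, $\div_{\Sigma^2,\mu}(u_{ij}\nabla_{\n_{\partial ij}}V\,\nabla_{\Sigma^2}u_{ij} - u_{ij}\nabla_{\n_{\partial ij}}u_{ij}\,\nabla_{\Sigma^2}V)$, is the natural repository for the antisymmetric ``$u$-vs-$V$'' pieces that obstruct full cancellation.

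The main obstacle is not conceptual but computational stamina: the eight summands of $Z_{ij}$ generate roughly two dozen scalar pieces on $\partial\Sigma_{ij}$, and many must recombine across different original summands before the conjugated structure becomes visible. Particular care is needed for the pieces arising from $\frac{u_{ij}^2}{2V}\nabla_{\Sigma^1}|\nabla_{\Sigma^1}V|^2$ and $-\frac{u_{ij}^2}{V}\Delta_{\Sigma^1,\mu}V\,\nabla_{\Sigma^1}V$, whose $\n_{\partial ij}$-components must be expanded through the same two identities above and be shown to cancel exactly against analogous terms coming from $V$-Hessians acting on $u_{ij}$. Note that this lemma is a purely pointwise boundary computation and invokes neither the umbilicity hypotheses (\ref{eq:A1})--(\ref{eq:A2}) nor the conformal flattening property of $V$; those assumptions enter only afterwards, when these boundary identities are assembled across triple faces in the proof of Lemma \ref{lem:crazyZ}.
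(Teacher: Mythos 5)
Your plan matches the paper's proof: expand $Z_{ij}^{\n_{\partial ij}}$ pointwise using the Reilly-type split of $\Delta_{\Sigma^1,\mu}$ and the identity converting mixed Hessian components $\nabla^2_{\Sigma^1}f(\cdot,\n_{\partial ij})$ into intrinsic boundary data, then regroup around the conjugated quantities $A$ and $B$. Your closing observation that neither the umbilicity hypotheses nor the conformal-flattening property of $V$ enter this particular lemma is also correct.

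However, the Hessian--Weingarten identity you quote carries a sign error. With the paper's convention (fixed in the proof of Lemma \ref{lem:constant-curvature}) that
\[
\II_{\partial\Sigma_{ij}}(X,Y)=\scalar{\nabla_X \n_{\partial ij},Y},
\]
the correct relation for $X$ tangent to $\partial\Sigma_{ij}$ is
\[
\nabla^2_{\Sigma^1}f(X,\n_{\partial ij}) = X\bigl(\nabla_{\n_{\partial ij}}f\bigr) - \II_{\partial\Sigma_{ij}}\bigl(X,\nabla_{\Sigma^2}f\bigr),
\]
not with $+\II_{\partial\Sigma_{ij}}$ as you wrote. This follows from $\nabla^2_{\Sigma^1}f(X,\n_{\partial ij}) = X(\nabla_{\n_{\partial ij}}f)-\scalar{\nabla_{\Sigma^1}f,\nabla_X\n_{\partial ij}}$, and the fact that $\nabla_X\n_{\partial ij}$ is tangent to $\partial\Sigma_{ij}$. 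With your sign, the term coming from $\tfrac{V}{2}\nabla_{\n_{\partial ij}}|\nabla_{\Sigma^1}u_{ij}|^2$ would produce $+V\,\II_{\partial\Sigma_{ij}}(\nabla_{\Sigma^2}u_{ij},\nabla_{\Sigma^2}u_{ij})$ rather than the required $-V\,\II_{\partial\Sigma_{ij}}(\nabla_{\Sigma^2}u_{ij},\nabla_{\Sigma^2}u_{ij})$, and the $\II_{\partial\Sigma_{ij}}$-pieces would not assemble into the advertised $-V\bigl(\II_{\partial\Sigma_{ij}}-\tfrac{\nabla_{\n_{\partial ij}}V}{V}g_{\partial\Sigma_{ij}}\bigr)(B,B)$. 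Once the sign is corrected, the bookkeeping you describe goes through exactly as in the paper.
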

\begin{proof}
Note that
\[
\left. \nabla_{\Sigma^1}^2 u_{ij} \brac{\n_{\partial ij}, \cdot} \right|_{T \partial \Sigma_{ij}}
= \nabla_{\Sigma^2} \brac{\nabla_{\n_{\partial ij}} u_{ij}}
- \II_{\partial \Sigma_{ij}} \brac{\nabla_{\Sigma^2} u_{ij}, \cdot}.
\]
Then
\begin{align*}
	\frac{V}{2} \nabla_{\n_{\partial ij}} \abs{\nabla_{\Sigma^1} u_{ij}}^2
	=& V \nabla_{\n_{\partial ij}} u_{ij}\nabla_{\Sigma^1}^2 u_{ij} \brac{\n_{\partial ij}, \n_{\partial ij}}\\
	&+ V \scalar{\nabla_{\Sigma^2} \brac{\nabla_{\n_{\partial ij}} u_{ij}} - \II_{\partial \Sigma_{ij}} \nabla_{\Sigma^2} u_{ij}, \nabla_{\Sigma^2} u_{ij}}.
\end{align*}
Similarly, it holds that
\begin{align*}
	\frac{u_{ij}^2}{2V} \nabla_{\n_{\partial ij}} \abs{\nabla_{\Sigma^1} V}^2 
	=& \frac{u_{ij}^2}{V} \nabla_{\n_{\partial ij}} V \nabla_{\Sigma^1}^2 V \brac{\n_{\partial ij}, \n_{\partial ij}} \\
	&+ \frac{u_{ij}^2}{V} \scalar{\nabla_{\Sigma^2} \brac{\nabla_{\n_{\partial ij}} V}  - \II_{\partial \Sigma_{ij}} \nabla_{\Sigma^2} V, \nabla_{\Sigma^2} V} .
\end{align*}
Denote $H_{\partial \Sigma_{ij}, \mu} = \tr \brac{\II_{\partial \Sigma_{ij}}} - \nabla_{\n_{\partial ij}} W$. Then on $\partial \Sigma_{ij}$, we have
\begin{align*}
	\Delta_{\Sigma^1, \mu} u_{ij} =& \Delta_{\Sigma^2, \mu} u_{ij} + H_{\partial \Sigma_{ij}, \mu} \nabla_{\n_{\partial ij}} u_{ij} + \nabla_{\Sigma^1}^2 u_{ij} \brac{\n_{\partial ij}, \n_{\partial ij} } , \\
	\Delta_{\Sigma^1, \mu} V =& \Delta_{\Sigma^2, \mu} V + H_{\partial \Sigma_{ij}, \mu} \nabla_{\n_{\partial ij}} V + \nabla_{\Sigma^1}^2 V \brac{\n_{\partial ij}, \n_{\partial ij} } .
\end{align*}
Plugging everything into the left-hand side of (\ref{eq:Zij-n-partial}), we obtain
\begin{align}
	Z_{ij}^{\n_{\partial ij}} =&
	V \nabla_{\n_{\partial ij}} u_{ij}
	+ V \scalar{ \nabla_{\Sigma^2} \brac{\nabla_{\n_{\partial ij}} u_{ij}}, \nabla_{\Sigma^2} u_{ij}}
	-V \II_{ \partial \Sigma_{ij} } \brac{\nabla_{\Sigma^2} u_{ij}, \nabla_{\Sigma^2} u_{ij} } \nonumber\\
	&- V \Delta_{\Sigma^2, \mu} u_{ij} \nabla_{\n_{\partial ij}} u_{ij} - H_{\partial \Sigma_{ij}, \mu} V \brac{\nabla_{\n_{\partial ij}} u_{ij}}^2
	- V \nabla_{\n_{\partial ij}} u_{ij} \nabla_{\Sigma^1}^2 u_{ij}\brac{\n_{\partial ij}, \n_{\partial ij}} \nonumber\\
	&- 2u_{ij} \nabla_{\Sigma^1}^2 u_{ij} \brac{ \n_{\partial ij}, \n_{\partial ij} } \nabla_{\n_{\partial ij}} V
	-2u \scalar{\nabla_{\Sigma^2} \brac{\nabla_{\n_{\partial ij}} u_{ij}} - \II_{\partial \Sigma_{ij}} \nabla_{\Sigma^2} u_{ij}, \nabla_{\Sigma^2} V} \nonumber\\
	&+ \brac{\nabla_{\n_{\partial ij}} u_{ij}}^2 \nabla_{\n_{\partial ij}} V + \abs{\nabla_{\Sigma^2} u_{ij}}^2 \nabla_{\n_{\partial ij}} V \nonumber\\
	&+ 2u_{ij} \nabla_{\n_{\partial ij}} V\Delta_{\Sigma^2, \mu} u_{ij} + 2u_{ij} \nabla_{\n_{\partial ij}} V H_{\partial \Sigma_{ij}, \mu} \nabla_{\n_{\partial ij}} u_{ij} \nonumber\\
	&+ 2u_{ij}\nabla_{\n_{\partial ij}} V\nabla_{\Sigma^1}^2 u_{ij} \brac{\n_{\partial ij}, \n_{\partial ij}} + \frac{u_{ij}^2}{V} \nabla_{\n_{\partial ij}} V \nabla_{\Sigma^1}^2 V \brac{\n_{\partial ij}, \n_{\partial ij}} \nonumber\\
	&+ \frac{u_{ij}^2}{V} \scalar{\nabla_{\Sigma^2} \brac{\nabla_{\n_{\partial ij}} V} -\II_{\partial \Sigma_{ij}} \nabla_{\Sigma^2} V, \nabla_{\Sigma^2} V} -\frac{u_{ij}^2}{V} \nabla_{\n_{\partial ij}} V \Delta_{\Sigma^2, \mu} V \nonumber\\
	&-\frac{u_{ij}^2}{V} \brac{\nabla_{\n_{\partial ij}} V}^2 H_{\partial \Sigma_{ij}, \mu} -\frac{u_{ij}^2}{V} \nabla_{\n_{\partial ij}} V \nabla_{\Sigma^1}^2 V \brac{\n_{\partial ij}, \n_{\partial ij}} \nonumber\\
	&- \brac{\nabla_{\n_{\partial ij}} u_{ij}}^2 \nabla_{\n_{\partial ij}} V - \scalar{\nabla_{\Sigma^2} u_{ij}, \nabla_{\Sigma^2} V} \nabla_{\n_{\partial ij}} u_{ij}. \label{eq:LHS-bdry terms}
\end{align}
We now calculate the right-hand side of (\ref{eq:Zij-n-partial}). For the first three terms, we have
\begin{align}
	&-V H_{\partial \Sigma_{ij}, \mu} 
    \brac{\nabla_{\n_{\partial ij}}u_{ij} - \frac{\nabla_{\n_{\partial ij}} V}{V} u_{ij} }^2 \nonumber\\ 
	&- V \brac{\II_{\partial\Sigma_{ij}} -\frac{\nabla_{\n_{\partial ij}} V}{V} g_{\Sigma^2} } \brac{\nabla_{\Sigma^2} u_{ij} -\frac{\nabla_{\Sigma^2} V}{V} u_{ij},  \nabla_{\Sigma^2} u_{ij} -\frac{\nabla_{\Sigma^2} V}{V} u_{ij}} \nonumber\\
	&- 2 V \brac{\nabla_{\n_{\partial ij}}u_{ij} - \frac{\nabla_{\n_{\partial ij}} V}{V} u_{ij}}
    \brac{\Delta_{\Sigma^2, \mu} u_{ij} -\frac{\Delta_{\Sigma^2, \mu} V}{V} u_{ij}} \nonumber\\
	=& - H_{\partial \Sigma_{ij}, \mu} V \brac{\nabla_{\n_{\partial ij}} u_{ij}}^2 + 2u_{ij} \nabla_{\n_{\partial ij}} V H_{\partial \Sigma_{ij}, \mu} \nabla_{\n_{\partial ij}} u_{ij}\nonumber\\
	&- \frac{u_{ij}^2}{V} \brac{\nabla_{\n_{\partial ij}} V}^2 H_{\partial \Sigma_{ij}, \mu} - V \II_{\partial \Sigma_{ij}} \brac{\nabla_{\Sigma^2} u_{ij}, \nabla_{\Sigma^2} u_{ij}}+ 2 \II_{\partial \Sigma_{ij}} \brac{\nabla_{\Sigma^2} u_{ij}, \nabla_{\Sigma^2} V} \nonumber\\
	&- \frac{u_{ij}^2}{V} \II_{\partial \Sigma_{ij}} \brac{\nabla_{\Sigma^2} V, \nabla_{\Sigma^2} V} + \abs{\nabla_{\Sigma^2} u_{ij}}^2 \nabla_{\n_{\partial ij}} V -\frac{2 u_{ij}}{V} \nabla_{\n_{\partial ij}} V \scalar{\nabla_{\Sigma^2} u_{ij}, \nabla_{\Sigma^2} V} \nonumber\\
	&+ \frac{u_{ij}^2}{V^2} \nabla_{\n_{\partial ij}} V \abs{\nabla_{\Sigma^2} V}^2 - 2V \Delta_{\Sigma^2, \mu} u_{ij} \nabla_{\n_{\partial ij}} u_{ij} + 2u_{ij} \nabla_{\n_{\partial ij}} u_{ij} \Delta_{\Sigma^2, \mu} V \nonumber\\
	&+ 2u_{ij} \nabla_{\n_{\partial ij}} V \Delta_{\Sigma^2, \mu} u_{ij} - 2 \frac{u_{ij}}{V} \nabla_{\n_{\partial ij}} V \Delta_{\Sigma^2, \mu} V. \label{eq:RHS-bdry terms-1}
\end{align}
For the other terms, we have
\begin{align}
 &  &
  \div_{\Sigma^2, \mu} & \bigg[  V 
    \brac{\nabla_{\n_{\partial ij}} u_{ij} -\frac{\nabla_{\n_{\partial ij}} V}{V} u_{ij} } \brac{\nabla_{\Sigma^2} u_{ij}-\frac{\nabla_{\Sigma^2} V}{V} u_{ij} } \nonumber  \\
&  &     & + u_{ij} \nabla_{\n_{\partial ij}} V \nabla_{\Sigma^2} u_{ij} -u_{ij} \nabla_{\n_{\partial ij}} u_{ij} \nabla_{\Sigma^2} V \bigg] 
  \nonumber\\
	& = &  \div_{\Sigma^2, \mu} & \brac{\frac{u_{ij}^2}{V} \nabla_{\n_{\partial ij}} V \nabla_{\Sigma^2} V + V \nabla_{\n_{\partial ij}} u_{ij}\nabla_{\Sigma^2} u_{ij} - 2 u_{ij} \nabla_{\n_{\partial ij}} u_{ij} \nabla_{\Sigma^2} V} \nonumber\\
	&  =  &  \frac{2u_{ij}}{V} \nabla_{\n_{\partial ij}} &   V \scalar{\nabla_{\Sigma^2} u_{ij}, \nabla_{\Sigma^2} V} -\frac{u_{ij}^2}{V^2} \nabla_{\n_{\partial ij}} V \abs{\nabla_{\Sigma^2} V}^2 + \frac{u_{ij}^2}{V} \scalar{\nabla_{\Sigma^2} \brac{\nabla_{\n_{\partial ij}} V} , \nabla_{\Sigma^2} V} \nonumber\\
      & &  &   +\frac{u_{ij}^2}{V}   \nabla_{\n_{\partial ij}}  V \Delta_{\Sigma^2, \mu} V + V \scalar{\nabla_{\Sigma^2} \brac{\nabla_{\n_{\partial ij}} u_{ij}}, \nabla_{\Sigma^2} u_{ij}} + V \Delta_{\Sigma^2, \mu} u_{ij} \nabla_{\n_{\partial ij}} u_{ij} \nonumber\\
     &   &  & -\scalar{\nabla_{\Sigma^2} u_{ij}, \nabla_{\Sigma^2} V}  \nabla_{\n_{\partial ij}} u_{ij} - 2u_{ij} \scalar{\nabla_{\Sigma^2} \brac{\nabla_{\n_{\partial ij}} u_{ij}} , \nabla_{\Sigma^2} V} \nonumber\\
  &  &   & -2u_{ij} \nabla_{\n_{\partial ij}}   u_{ij} \Delta_{\Sigma^2, \mu} V. \label{eq:RHS-bdry terms-2}
\end{align}
Now (\ref{eq:Zij-n-partial}) follows by summing (\ref{eq:RHS-bdry terms-1}) with (\ref{eq:RHS-bdry terms-2}) and comparing with (\ref{eq:LHS-bdry terms}).
\end{proof}

\begin{lemma} \label{lem:Sigma1Boundary}
 Assume (\ref{eq:A1}). Then on $\Sigma_{uvw}$:
\[
 \sum_{\{i,j\} \subset \{u,v,w\}} Z^{\n_{\partial ij}}_{ij} = \div_{\Sigma^2,\mu} Z_{uvw} . 
\]
\end{lemma}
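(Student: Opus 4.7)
The plan is to insert the boundary identity (\ref{eq:Zij-n-partial}) from the previous lemma into the sum $\sum_{\{i,j\} \subset \{u,v,w\}} Z_{ij}^{\n_{\partial ij}}$ on $\Sigma_{uvw}$, and show that four of its five terms vanish after summation, while the fifth collapses exactly to $\div_{\Sigma^2,\mu} Z_{uvw}$. The three key inputs are: the umbilicity (\ref{eq:A1}) combined with the non-oriented conformal BC of $V$; the conformal BC of the admissible field $u$; and the Dirichlet-Kirchhoff relation $u_{uv} + u_{vw} + u_{wu} = 0$ together with stationarity $\sum \n_{\partial ij} = 0$.

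Set $d_{ij} := \nabla_{\n_{\partial ij}} u_{ij} - \frac{\nabla_{\n_{\partial ij}} V}{V} u_{ij}$ and $D_{ij} := \nabla_{\Sigma^2} u_{ij} - \frac{\nabla_{\Sigma^2} V}{V} u_{ij}$. The non-oriented BC of $V$ gives $\nabla_{\n_{\partial ij}} V = \bar \II^{\partial ij}_{uvw} V$, which combined with (\ref{eq:A1}) makes the coefficient $\II_{\partial \Sigma_{ij}} - \frac{\nabla_{\n_{\partial ij}} V}{V} g_{\partial \Sigma_{ij}}$ in the second term of (\ref{eq:Zij-n-partial}) vanish identically. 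The conformal BC for $u$ then shows that $d_{ij} = c_p$ is a common function on $\Sigma_{uvw}$, independent of the cyclic pair $(i,j) \in \cyclic(u,v,w)$, which can be pulled out of the third and fourth terms as a scalar factor.

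The Dirichlet-Kirchhoff relation, which implies $\sum D_{ij} = 0$ and $\sum \Delta_{\Sigma^2,\mu} u_{ij} = 0$, then annihilates the third and fourth terms after summing: $\sum_{\{i,j\}} \div_{\Sigma^2,\mu}(V c_p D_{ij}) = \div_{\Sigma^2,\mu}(V c_p \sum D_{ij}) = 0$, and analogously for the third. For the first term, umbilicity gives $H_{\partial \Sigma_{ij}} = (n-2) \bar \II^{\partial ij}$; stationarity kills $\sum \nabla_{\n_{\partial ij}} W$; and the remaining sum $\sum_{(i,j) \in \cyclic(u,v,w)} \bar \II^{\partial ij}_{uvw}$ vanishes by the antisymmetry $\II^{ab} = -\II^{ba}$ (under swapping $a \leftrightarrow b$) combined with $\n_{\partial ab} = \n_{\partial ba}$: this forces pairwise cancellation of the six $\II^{ab}_{\n_{\partial ab}}$ constituents.

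Finally, substituting $\nabla_{\n_{\partial ij}} V = \bar \II^{\partial ij} V$ and $\nabla_{\n_{\partial ij}} u_{ij} = \bar \II^{\partial ij} u_{ij} + c_p$ into the fifth term of (\ref{eq:Zij-n-partial}) rewrites its argument as
\[
\bar \II^{\partial ij}_{uvw} \, u_{ij} \brac{V \nabla_{\Sigma^2} u_{ij} - u_{ij} \nabla_{\Sigma^2} V} - c_p \, u_{ij} \nabla_{\Sigma^2} V.
\]
The second piece sums to zero via Dirichlet-Kirchhoff, while the first sums to $Z_{uvw}$ by (\ref{eq:Zuvw}), yielding $\div_{\Sigma^2,\mu} Z_{uvw}$ as required. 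The hardest step will be the careful sign-bookkeeping needed to establish the pointwise identity $\sum_{(i,j) \in \cyclic(u,v,w)} \bar \II^{\partial ij}_{uvw} = 0$; all other steps are clean algebraic substitutions followed by Dirichlet-Kirchhoff.
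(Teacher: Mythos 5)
Your proof is correct and follows essentially the same structure as the paper's: insert the boundary identity (\ref{eq:Zij-n-partial}), show the first four terms vanish after summing over the cyclic pairs, and convert the fifth term into $\div_{\Sigma^2,\mu} Z_{uvw}$ using the conformal BCs and Dirichlet--Kirchhoff. The one place where you and the paper diverge is the first term: you decompose it via umbilicity into $H_{\partial \Sigma_{ij}} = (n-2)\bar\II^{\partial ij}$ and then prove $\sum_{(i,j)\in\cyclic(u,v,w)}\bar\II^{\partial ij}_{uvw}=0$ by the pairwise cancellation $\II^{ab}_{\n_{\partial ab}} = -\II^{ba}_{\n_{\partial ba}}$, whereas the paper bypasses this entirely: since $\sum_{(i,j)\in\cyclic(u,v,w)}\n_{\partial ij} = 0$ on $\Sigma_{uvw}$, one has $\sum_{(i,j)}(H_{\partial\Sigma_{ij}} - \nabla_{\n_{\partial ij}}W) = 0$ in one step (the second fundamental forms of $\Sigma_{uvw}$ with respect to the co-normals sum to zero because the co-normals do, and likewise for the $W$-derivative). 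Your route is equally valid, but the step you flag as the "hardest" — the sign-bookkeeping for $\sum\bar\II^{\partial ij}=0$ — is not actually needed if one cancels the whole bracket $H_{\partial\Sigma_{ij}} - \nabla_{\n_{\partial ij}}W$ at once; so the paper's argument is a little more economical and does not even invoke (\ref{eq:A1}) for that particular term. Everything else you wrote (the pointwise vanishing of the second term from (\ref{eq:A1}) plus the $V$-BC, pulling $c_p$ out of terms three and four, and the identification $Y_{uvw}=Z_{uvw}$ after Dirichlet--Kirchhoff) matches the paper's proof.
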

\begin{proof}
 By assumption (\ref{eq:A1}), a conformally flattening potential $V>0$ satisfies non-oriented conformal BCs, and we have 
\begin{equation} \label{eq:Sigma1Boundary-0}
 \II_{\partial \Sigma_{ij}} = \bar \II^{\partial ij} g_{\partial \Sigma_{ij}} ~,~ \nabla_{\n_{\partial ij}} V = \bar \II^{\partial ij} V .
\end{equation}
Since $u$ satisfies conformal BCs, then for all $(i,j) \in \cyclic(u,v,w)$, we have at $p \in \Sigma_{uvw}$:
\begin{equation} \label{eq:Sigma1Boundary-1}
\nabla_{\n_{\partial ij}} u_{ij} - \bar \II^{\partial ij}  u_{ij} = c_p ~,~ 
\end{equation}
and consequently $\nabla_{\n_{\partial ij}} u_{ij} -\frac{\nabla_{\n_{\partial ij} } V}{V} u_{ij} = c_p$ is independent of $(i,j) \in \cyclic(u, v, w)$. In addition, by the Dirichlet-Kirchoff boundary condition
\begin{equation} \label{eq:Sigma1Boundary-2}
\sum_{(i,j) \in \cyclic(u,v,w)} u_{ij} = 0,
\end{equation}
we have
\[   \sum_{(i, j) \in \cyclic(u,v,w)} \brac{\Delta_{\Sigma^2, \mu} u_{ij} - \frac{\Delta_{\Sigma^2, \mu} V}{V} u_{ij} } =0  ~,~ \sum_{(i,j) \in \cyclic (u,v,w)} \brac{\nabla_{\Sigma^2} u_{ij} - \frac{\nabla_{\Sigma^2} V}{V} u_{ij}}=0.
\] Also note that $\sum_{(i,j) \in \cyclic(u,v,w)} n_{\partial ij} =0$ implies 
\[
\sum_{(i,j) \in \cyclic(u,v,w)} (H_{\partial \Sigma_{ij}} -\nabla_{\n_{\partial ij}}W) =0.
\]
Using all of this in (\ref{eq:Zij-n-partial}), the first 4 terms vanish and we obtain
\[
\sum_{\{i,j\} \subset \{u,v,w\}} Z^{\n_{\partial ij}}_{ij} = \div_{\Sigma^2,\mu} Y_{uvw} ,
\]
where
\[
Y_{uvw} = \sum_{\{ i,j\} \subset \{u,v,w\}} u_{ij} (\nabla_{\n_{\partial ij}} V \nabla_{\Sigma^2} u_{ij} -  \nabla_{\n_{\partial ij}} u_{ij} \nabla_{\Sigma^2} V) . 
\]
Using  (\ref{eq:Sigma1Boundary-0}), (\ref{eq:Sigma1Boundary-1}) and (\ref{eq:Sigma1Boundary-2}) again, we obtain:
\[
Y_{uvw} = \sum_{\{ i,j\} \subset \{u,v,w\}} \bar \II^{\partial ij} u_{ij} ( V \nabla_{\Sigma^2} u_{ij}  - u_{ij} \nabla_{\Sigma^2} V) = Z_{uvw} ,
\]
thereby concluding the proof. 
\end{proof}

\subsection{Boundary computation on $\Sigma^3$}

On $\Sigma_{ijk}$, recall that  $\n_{\partial ij} = \n^{ijk}_{\partial ij}$, and abbreviate:
\[
\bar \II^{ijk} := \bar \II^{\partial ij}_{ijk} = \frac{\II^{ik}_{\n_{\partial ik}} + \II^{jk}_{\n_{\partial jk}}}{\sqrt{3}} .
\]

\begin{lemma} \label{lem:Zuvw-cancelation}
Assume (\ref{eq:A2}) and that $V$ satisfies non-oriented conformal BCs. 
Then the following holds at $p \in \Sigma_{abcd}$:
\[
\sum_{\{u,v,w\} \subset \{a,b,c,d\}}  Z_{uvw}^{\n_{\partial uvw}} = 0 . 
\]
\end{lemma}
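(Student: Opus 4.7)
The plan is to verify the identity pointwise at $p \in \Sigma_{abcd}$ by reducing to a purely tangential identity on the tetrahedral tangent cone, and then carrying out the delicate $12$-term cancellation.

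First, I would simplify $Z_{uvw}^{\n_{\partial uvw}}$ at $p$. Since $\Sigma_{uvw}$ is a manifold-with-boundary whose boundary near $p$ is $\Sigma_{abcd}$, the outer conormal $\n^{uvw\ell}_{\partial uvw}|_p = -e_{uvw}$, where $e_{uvw}$ is the unit tangent to $\Sigma_{uvw}$ at $p$ pointing into the interior. Hence only the $e_{uvw}$-components of $\nabla_{\Sigma^2} u_{ij}$ and $\nabla_{\Sigma^2} V$ contribute to the pairing with $\n^{uvw\ell}_{\partial uvw}$. Introducing $\phi_{ij} := u_{ij}/V$ and using the pointwise identity $u_{ij}(V \nabla u_{ij} - u_{ij} \nabla V) = V^3 \phi_{ij} \nabla \phi_{ij}$, the claim reduces to showing
\[
\sum_{\{u,v,w\} \subset \{a,b,c,d\}} \sum_{\{i,j\} \subset \{u,v,w\}} \bar{\II}^{\partial ij}_{uvw} \, \phi_{ij} \, \partial_{e_{ijk}} \phi_{ij} = 0 \qquad \text{at } p,
\]
where $k = \{u,v,w\} \setminus \{i,j\}$.

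Next, I would exploit three structural facts at $p$. (i) The Dirichlet--Kirchoff relations $\phi_{ij} + \phi_{jk} + \phi_{ki} = 0$ on each $\Sigma_{ijk}$ force $\phi_{ij}(p) = \alpha_i - \alpha_j$ for some $\alpha \in \R^4$, by the standard cohomological triviality for the complete graph $K_4$; differentiating along each emanating edge gives the tangential DK identity $\sigma_{ij}^k + \sigma_{jk}^i + \sigma_{ki}^j = 0$ for $\sigma_{ab}^m := \partial_{e_{abm}} \phi_{ab}$. (ii) The conformal BCs for $u$ combined with the non-oriented conformal BCs for $V$ simplify to the Neumann-type equation $\partial_{\n^{ijk}_{\partial ij}} \phi_{ij} = c^{ijk}/V$, with $c^{ijk}$ uniform over $(i,j) \in \cyclic(i,j,k)$; using that the two edge directions $e_{ijk}, e_{ij\ell}$ of the sector $\Sigma_{ij}$ meet at the tetrahedral angle $\cos^{-1}(-1/3)$, this linearly pins down each tangential $\sigma_{ij}^k$ in terms of the two constants $c^{ijk}$ and $c^{ij\ell}$. (iii) Assumption (\ref{eq:A2}) makes the scalar $\lambda_{ij} := \II^{ij}(\n^{ijk}_{\partial ij}, \n^{ijk}_{\partial ij})$ well-defined at $p$ independently of the chosen triple, so that $\bar{\II}^{\partial ij}_{ijk} = (\lambda_{ik} + \lambda_{jk})/\sqrt{3}$ becomes indexed purely by unordered pairs of $\{a,b,c,d\}$.

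With every ingredient now pair-indexed, I would execute the $12$-term cancellation by regrouping the sum either by the coefficient of each $c^{uvw}$ (one per triple) or by the coefficient of each $\lambda_{xy}$ (one per pair), and verify that the regrouped coefficient vanishes identically after substituting the $\alpha$-parametrization of $\phi$ and applying the tangential DK identity. The main obstacle is precisely this $12$-term bookkeeping: the $4$ triples contribute $3$ terms each, interlocking through $\lambda$'s indexed by the $6$ pairs of $\{a,b,c,d\}$ and tangential derivatives coupled by both DK and the $\cos^{-1}(-1/3)$ sector rotations. The check is elementary but requires careful handling of the interacting antisymmetries---pair orientation, the sign flip $\lambda_{ji} = -\lambda_{ij}$, and the Neumann uniformity across cyclic triples---all conspiring with the tetrahedral angular relations to produce the precise algebraic cancellation asserted.
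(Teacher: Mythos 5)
Your plan correctly reduces to a pointwise identity on the tetrahedral tangent cone, and the $\phi := u/V$ substitution with $u(V\nabla u - u\nabla V) = V^{3}\phi\nabla\phi$ is a clean simplification the paper does not use. You also correctly extract the three structural inputs: the cohomological parametrization $\phi_{ij}(p)=\alpha_i-\alpha_j$ on $K_4$, the Neumann data $\partial_{\n_{\partial ij}^{ijm}}\phi_{ij}=c^{ijm}/V$ together with the $\cos^{-1}(-1/3)$ sector geometry pinning down $\sigma_{ij}^{k}$, and the pair-indexed $\lambda_{ij}$ from (\ref{eq:A2}). This is genuinely the right circle of ideas.

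However, there is a concrete missing ingredient, and it is load-bearing: you use the non-oriented conformal BC for $V$ \emph{only} to derive the Neumann data for $\phi$, and then eliminate $V$ entirely. But the hypothesis that $V$ is a single smooth function whose normal derivatives at $p\in\Sigma_{abcd}$ satisfy $\nabla_{\n^{ijm}_{\partial ij}}V = \bar\II^{\partial ij}_{ijm}\,V$ for \emph{all} twelve choices of $(i,j,m)$ forces a nontrivial consistency constraint on the curvatures $\bar\II^{\partial ij}_{ijm}$ themselves: there must exist a single vector $\xi = \nabla\log V(p)$ realizing all of these pairings, which is exactly what the paper's identities (\ref{eq:V-formula1})--(\ref{eq:V-formula2}) encode, namely $\bar\II^{ij\ell}+\cos\alpha\,\bar\II^{ijk}=\bar\II^{jk\ell}+\cos\alpha\,\bar\II^{jki}=\bar\II^{ki\ell}+\cos\alpha\,\bar\II^{kij}$ for each triple. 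Your regrouping by the $c^{uvw}$'s or $\lambda_{xy}$'s, using only the $\alpha$-parametrization and the tangential Dirichlet--Kirchoff identity, cannot close without this relation. Indeed, take $\lambda_{12}=1$ and all other $\lambda_{xy}=0$, $\alpha=(1,0,0,0)$ (so $\phi_{1j}=1$ and $\phi_{23}=\phi_{24}=\phi_{34}=0$), and $c_{123}=1$ with all other $c$'s zero. Substituting into $\sum_{\{u,v,w\}}\sum_{\{i,j\}}\bar\II^{\partial ij}_{uvw}\,\phi_{ij}\,\sigma_{ij}^{k}$ yields a single surviving nonzero term $\bar\II^{\partial 13}_{123}\,\phi_{13}\,\sigma_{13}^{2}$, so the sum does not vanish; these $\lambda$'s are of course inadmissible, precisely because they violate the $V$-consistency relations above. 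You must therefore supplement (ii) with this second consequence of $V$'s conformal BC before the claimed cancellation can be carried out.
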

\begin{proof}
Let $\{u,v,w\} \subset \{a,b,c,d\}$. 
Observe that by the three-tensor identity (\ref{eq:three-tensor}), applied twice, we have at $p$:
\begin{align} 
\nonumber Z_{uvw}^{\n_{\partial uvw}} & = \sum_{\{i,j\} \subset \{u,v,w\}} \bar \II^{\partial ij}_{uvw} u_{ij} (V \nabla_{\n_{\partial uvw}} u_{ij} - u_{ij} \nabla_{\n_{\partial uvw}} V) \\
\nonumber & = \sum_{(i,j) \in \cyclic(u,v,w)} \bar \II^{\partial ij}_{uvw} u_{ij} (V \nabla_{\n_{\partial uvw}} u_{ij} - u_{ij} \nabla_{\n_{\partial uvw}} V) \\
\label{eq:Z-formula} & = \sum_{(i,j) \in \cyclic(u,v,w)} \II^{ij}_{\n^{ijk}_{\partial ij}}  u_{ij} \brac{V \nabla_{\n_{\partial ijk}} \frac{u_{ik}+u_{jk}}{\sqrt{3}} - \frac{u_{ik}+u_{jk}}{\sqrt{3}} \nabla_{\n_{\partial ijk}} V } ,
\end{align}
where $k$ is the remaining index in $\{u,v,w\}$ distinct from $\{i,j\}$, so that $\{i,j,k\} = \{u,v,w\}$, and similarly we set $\ell$ to be the remaining index in $\{a,b,c,d\}$ so that $\{i,j,k,\ell\} = \{a,b,c,d\}$. 
By regularity and stationarity, each $\Sigma_{ij}$ around $p$ is locally diffeomorphic to a product of $\Sigma_{ijk\ell}$ with a sector of angle $\alpha = \cos^{-1}(-1/3) \simeq 109^{\circ}$. It is easy to check (see Figure \ref{fig:normals}) that
\[
\n_{\partial ijk} = \frac{\n^{ij\ell}_{\partial ij} + \cos(\alpha) \n^{ijk}_{\partial ij}}{\sin(\alpha)} ,
\]
where, recall, $\n_{\partial ijk}$ denotes the outer normal to $\partial \Sigma_{ijk}$ in $\Sigma_{ijk}$.

\begin{figure}[htbp]
\centering
\begin{tikzpicture}[scale=0.8]
\draw[thick] (0,0) -- (0,3);
\draw[thick] (0,0) -- ({-2*sqrt(2)}, -1);
\draw[thick] (0, 0.3) arc[start angle=90, end angle=199, radius=0.3];
\node at (-0.5, 0.3) {$\alpha$};
\node at (-2, 1.2) {$\Sigma_{ij}$};
\node at (0.5,2) {$\Sigma_{ijk}$};
\node at (-1.7,-1){$\Sigma_{ij\ell}$};
\draw[->, thick] (0,0) -- (1,0) node[right]{$\n_{\partial ij}^{ijk}$} ;
\draw[->, thick] (0,0) -- (0,-1) node[below]{$\n_{\partial ijk}$} ;
\draw[->, thick] (0,0) -- ({1/3}, {-2*sqrt(2)/3}) node[right]{$\n_{\partial ij}^{ij\ell}$};
\draw[thick] (0.2,0) -- (0.2,0.2) -- (0,0.2);
\draw[thick]  ({-0.2*(2*sqrt(2)/3)}, {-0.2*(1/3)}) --  ({-0.2*(2*sqrt(2)/3)+ 0.2*(1/3)}, {-0.2*(1/3)+0.2*(-2*sqrt(2)/3)}) -- ({0.2*(1/3)}, {0.2*(-2*sqrt(2)/3)});   
\end{tikzpicture}
\caption{ \label{fig:normals}
Depiction of various normals at $\Sigma_{ijk\ell}$. 
}
\end{figure}
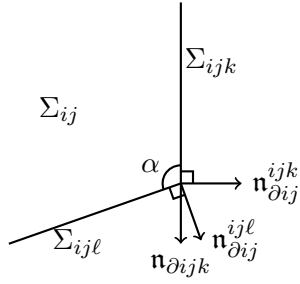

Since $V$ satisfies non-oriented conformal BCs (which continue to hold on $\Sigma_{ijk\ell}$ by regularity), we have at $p \in \Sigma_{ijk\ell}$:
\[
\nabla_{\n_{\partial ij}^{ijm}} V = \bar \II^{ijm} V ~,~ m \in \{k,\ell\} ,
\]
and since $\n_{\partial ijk} = \n_{\partial jki} = \n_{\partial kij}$, we see that:
\begin{align}
\nonumber \nabla_{\n_{\partial ijk}} V & = \frac{1}{\sin(\alpha)} \brac{ \bar \II^{ij\ell} + \cos(\alpha) \bar \II^{ijk} } V \\
\label{eq:V-formula1} & = \frac{1}{\sin(\alpha)} \brac{ \bar \II^{jk\ell} + \cos(\alpha) \bar \II^{jki} } V \\
\label{eq:V-formula2} & = \frac{1}{\sin(\alpha)} \brac{ \bar \II^{k i\ell} + \cos(\alpha) \bar \II^{kij} } V .
\end{align}
Similarly, since $u_{ij}$ satisfies conformal BCs on $\Sigma_{ijk}$
(which continues to hold on $\Sigma_{ijk\ell}$ by regularity and since $u$ is admissible), 
we have at $p \in \Sigma_{ijk\ell}$:
\[
\nabla_{\n_{\partial ij}^{ijm}} u_{ij} = \bar \II^{ijm} u_{ij} + c_{ijm} ~,~ m \in \{k,\ell\}
\]
for some constant $c_{ijm} = - c_{jim}$ which is invariant under cyclic permutation. 
Consequently:
\[
\nabla_{\n_{\partial ijk}} u_{ij} = \frac{1}{\sin(\alpha)} \brac{ \bar \II^{ij\ell} u_{ij} + c_{ij\ell} + \cos(\alpha) (\bar \II^{ijk} u_{ij} + c_{ijk}) } ,
\]
and:
\begin{equation} \label{eq:u-formula} \nabla_{\n_{\partial ijk}} \frac{u_{ik} + u_{jk}}{\sqrt{3}} = \nabla_{\n_{\partial ijk}} \frac{u_{jk} - u_{ki}}{\sqrt{3}} = \frac{1}{\sqrt{3} \sin(\alpha)}  \brac{ \substack{\bar \II^{jk\ell} u_{jk} + c_{jk\ell} + \cos(\alpha) ( \bar  \II^{jki} u_{jk} + c_{jki})  \\
 -\bar \II^{ki\ell} u_{ki} - c_{k i\ell} - \cos(\alpha) ( \bar \II^{kij} u_{ki} + c_{kij})} } . 
\end{equation} Recalling that $c_{jki} = c_{kij}$, and plugging (\ref{eq:V-formula1}), (\ref{eq:V-formula2}) and (\ref{eq:u-formula}) into (\ref{eq:Z-formula}), we obtain:
\[
Z_{uvw}^{\n_{\partial uvw}} = \frac{V}{\sqrt{3} \sin(\alpha)}  \sum_{(i,j) \in \cyclic(u,v,w)} \II^{ij}_{\n^{ijk}_{\partial ij}}  u_{ij} (c_{jk\ell} - c_{ki\ell}) . 
\]
Note that the $(i,j)$ summand above remains the same if we switch $i$ and $j$, since both $\II^{ij}$ and $u_{ij}$ both flip signs, and $c_{jk\ell} - c_{ki\ell} = c_{ik\ell} - c_{kj\ell}$. Denoting $\kappa_{ij} = \II^{ij}_{\n^{ijk}_{\partial ij}} = \II^{ij}_{\n^{ij\ell}_{\partial ij}}$, we obtain:
\begin{align*}
\sum_{\{u,v,w\} \subset \{a,b,c,d\}}  Z_{uvw}^{\n_{\partial uvw}} & = \frac{V}{\sqrt{3} \sin(\alpha)} \sum_{\{i,j\} \subset \{a,b,c,d\}} \kappa_{ij} u_{ij}  \sum_{\{k,\ell\} = \{a,b,c,d\} \setminus \{i,j\}} (c_{jk\ell} - c_{ki \ell}) \\
& = \frac{V}{\sqrt{3} \sin(\alpha)} \sum_{\{i,j\} \subset \{a,b,c,d\}} \kappa_{ij} u_{ij}   (c_{j p q} - c_{p i q} + c_{jq p} - c_{q i p}) , 
\end{align*}
where we define $\{p , q\} = \{a,b,c,d\} \setminus \{i,j\}$ so that $p < q$. But since $c_{j p q} = - c_{j q p}$ and $c_{p i q} = -c_{q i p}$, the expression on the right vanishes, concluding the proof. 
\end{proof}

\subsection{Concluding the proof of Lemma \ref{lem:crazyZ}}

\begin{proof}[Proof of Lemma \ref{lem:crazyZ}] \hfill
\begin{enumerate}
\item
The first assertion was established in Lemma \ref{lem:Sigma1Boundary} using assumption (\ref{eq:A1}). Note that the latter assumption together with the assumption that $\Sigma^1$ is of locally bounded curvature implies that $\Sigma^2$ is itself of locally bounded curvature in $\partial \Sigma^1$. 
\item
For the second assertion, since $u_{ij}$ are all supported in some compact set $K$ and $\Sigma$ is locally finite, it is enough to show that $\int_{\Sigma_{ij} \cap K} |\div_{\Sigma^1,\mu} Z_{ij}| d\mu^{n-1} < \infty$, $\int_{\Sigma_{ij} \cap K} |Z_{ij}|^2 d\mu^{n-1} < \infty$ and $\int_{\Sigma_{uvw} \cap K} |\div_{\Sigma^2,\mu} Z_{uvw}| d\mu^{n-2} < \infty$.
 Note that $V$, $1/V$, $\nabla_{\Sigma^1} V$ and $\nabla_{\Sigma^1} W$ are bounded on $K$, and so do not influence the integrability. In addition, the assumption that $\Sigma^1$ has bounded curvature and the compactness of $K$ imply that $\nabla^2_{\Sigma^1} V$, $\Delta_{\Sigma^1,\mu} V$, $\nabla_{\Sigma^1} |\nabla_{\Sigma^1} V|^2$ and $\nabla^2_{\Sigma^1} W$ are bounded; we will freely use this below.

 By Lemma \ref{lem:divZij}, we have a succinct description of $\div_{\Sigma^1,\mu} Z_{ij}$, and we've already commented on the boundedness of most of the terms. As for $\Ric_{\Sigma,\mu} = \Ric_{\Sigma^1} + \nabla^2_{\Sigma^1} W$, the assumption that  $\Sigma^1$ has bounded curvature, the compactness of $K$ and the Gauss formula, together imply that $\Ric_{\Sigma,\mu}$ and hence $V \hRic^V_{\Sigma,\mu}$ are bounded. The $L^1(\Sigma_{ij} \cap K , \mu^{n-1})$ integrability of $\div_{\Sigma^1,\mu} Z_{ij}$ then follows from the $L^2(\Sigma_{ij} \cap K, \mu^{n-1})$ integrability of $u_{ij}$, $\nabla_{\Sigma^1} u_{ij}$ and $\nabla^2_{\Sigma^1} u_{ij}$. 
 
 As for the $L^2(\Sigma_{ij} \cap K,\mu^{n-1})$ integrability of $Z_{ij}$, recall that $u_{ij}, \nabla u_{ij} \in L^\infty(\Sigma_{ij} \cap K)$, and so 
 most of the terms in (\ref{eq:Zij}) are bounded and thus $L^2$-integrable since $\mu^{n-1}( \Sigma_{ij} \cap K) < \infty$. The remaining terms are:
 \[
 \frac{V}{2} \nabla_{\Sigma^1} |\nabla_{\Sigma_1} u_{ij}|^2 ~,~ V \Delta_{\Sigma, \mu} u_{ij} \nabla_{\Sigma^1} u_{ij} ~,~ 2 u \nabla^t_{\nabla_{\Sigma^1} V} \nabla_{\Sigma^1} u_{ij} ~,~ 2u_{ij} \Delta_{\Sigma^1, \mu} u_{ij} \nabla_{\Sigma^1} V .
 \]
The $L^2(\Sigma_{ij} \cap K,\mu^{n-1})$ integrability of $Z_{ij}$ follows since $\nabla_{\Sigma^1} u, \nabla_{\Sigma^1} V \in L^\infty(\Sigma_{ij} \cap K)$ and $\nabla^2_{\Sigma^1} u_{ij} \in L^2(\Sigma_{ij} \cap K , \mu^{n-1})$. 

As for the $L^1(\Sigma_{uvw} \cap K,\mu^{n-2})$ integrability of $\div_{\Sigma^2,\mu} Z_{uvw}$, recall (\ref{eq:Zuvw}) and compute:
\begin{align*}
\div_{\Sigma^2,\mu} Z_{uvw} =
\sum_{\{ i,j\} \subset \{u,v,w\}} & u_{ij} \scalar{\nabla_{\Sigma^2} \bar \II^{\partial ij}_{\Sigma_{uvw}} ,  V \nabla_{\Sigma^2} u_{ij}  - u_{ij} \nabla_{\Sigma^2} V} \\
& + \bar \II^{\partial ij}_{\Sigma_{uvw}} \scalar{\nabla_{\Sigma^2} u_{ij} , V \nabla_{\Sigma^2} u_{ij}  - u_{ij} \nabla_{\Sigma^2} V} \\
& + \bar \II^{\partial ij}_{\Sigma_{uvw}}  u_{ij} ( V \Delta_{\Sigma^2,\mu} u_{ij} - u_{ij} \Delta_{\Sigma^2,\mu} V) . 
\end{align*}
Recall that $u_{ij}$ and $\nabla_{\Sigma^1} u_{ij}$ are bounded on $\Sigma_{ij} \cap K$, and by continuity this extends to $\Sigma_{uvw} \cap K$. The assumption (\ref{eq:A1B}) ensures the $L^1(\Sigma_{uvw} \cap K,\mu^{n-2})$ integrability of the first term; the locally bounded curvature of $\Sigma$ and $\mu^{n-2}(\Sigma_{uvw} \cap K) < \infty$ ensure the integrability of the second term; the assumption that $\Delta_{\Sigma^2,\mu} u_{ij} \in L^1(\Sigma_{uvw} , \mu^{n-1})$ ensures the integrability of the first half of the third term; and finally the locally bounded curvature of $\Sigma_{uvw}$ in $\partial \Sigma_{ij}$ ensures that $\Delta_{\Sigma^2,\mu} V$ is bounded on $K$, which together with the previous properties establishes the second assertion. 

\item
The third assertion was established in Lemma \ref{lem:Zuvw-cancelation} using assumption (\ref{eq:A2}).

\item
For the fourth assertion, there is no need to check the integrability of the boundary term (as it vanishes by the previous assertion), and $\int_{\Sigma^2} |\div_{\Sigma^2,\mu} Z_{uvw}| d\mu^{n-2} < \infty$ was already established in the second assertion. Consequently, it is enough to show that 
 $\int_{\Sigma_{uvw} \cap K} |Z_{uvw}|^2 d\mu^{n-2} < \infty$. The $L^\infty(\Sigma_{ij})$ bound on $u_{ij}$ and $\nabla u_{ij}$ extends by continuity to $\partial \Sigma_{ij}$, and so recalling that $\Sigma_{uvw}$ has locally bounded curvature and the definition (\ref{eq:Zuvw}), we see that $Z_{uvw}$ is bounded on $\Sigma_{uvw} \cap K$. Since $\mu^{n-2}(\Sigma^2 \cap K) < \infty$, it follows that  $\int_{\Sigma_{uvw} \cap K} |Z_{uvw}|^2 d\mu^{n-2} < \infty$, concluding the proof. 
 \end{enumerate}
 \end{proof}

\section{Brascamp-Lieb inequality on partitions with conformally flat umbilical boundary} \label{sec:BL}

We are now ready to prove  the conjugated Brascamp-Lieb Theorem \ref{thm:intro-BL} from the Introduction. It follows from the integrated Bochner identity (\ref{eq:Bochner}) by a standard duality argument. 

\medskip

\begin{lem}
    For any $u_{ij} \in C^2_{loc} (\Sigma_{ij})$ and $N \in (-\infty,0) \cup [n-1,\infty]$, 
    \begin{align*}
    &V \hRic_{\Sigma^1, \mu}^V\brac{\nabla_{\Sigma^1} u_{ij}-\frac{\nabla_{\Sigma^1} V}{V} u_{ij} }+ V \norm{\nabla_{\Sigma^1}^2 u_{ij} - \frac{\nabla_{\Sigma^1}^2 V}{V} u_{ij}}^2 \\
 & \geq \; V \hRic_{\Sigma^1, \mu, N}^V \brac{\nabla_{\Sigma^1} u_{ij}-\frac{\nabla_{\Sigma^1} V}{V} u_{ij}} + \frac{V}{N}  \brac{ \Delta_{\Sigma^1, \mu} u_{ij}- \frac{\Delta_{\Sigma^1, \mu} V}{V} u_{ij}}^2 .
    \end{align*}
\end{lem}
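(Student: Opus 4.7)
The statement is a pointwise algebraic inequality on $\Sigma^1$, so my plan is to fix a point $p \in \Sigma^1$, work in an orthonormal frame of $T_p \Sigma^1$, and reduce the claim to a scalar identity. Let $d := n-1 = \dim \Sigma^1$, and introduce the shorthand
\[
X := \nabla_{\Sigma^1} u_{ij} - \tfrac{\nabla_{\Sigma^1} V}{V} u_{ij}, \quad A := \nabla^2_{\Sigma^1} u_{ij} - \tfrac{\nabla^2_{\Sigma^1} V}{V} u_{ij}, \quad s := \scalar{X, \nabla_{\Sigma^1} W}.
\]
Then $A$ is a symmetric $d \times d$ tensor with $\tr_g A = \Delta_{\Sigma^1} u_{ij} - \tfrac{\Delta_{\Sigma^1} V}{V} u_{ij}$, and unpacking $\Delta_{\Sigma^1,\mu} = \Delta_{\Sigma^1} - \scalar{\nabla_{\Sigma^1}\cdot\,, \nabla_{\Sigma^1} W}$ gives the key identity
\[
\Delta_{\Sigma^1,\mu} u_{ij} - \tfrac{\Delta_{\Sigma^1,\mu} V}{V} u_{ij} = \tr_g A - s.
\]
By the very definition of $\Ric_{\Sigma^1,\mu,N}$ with $\dim \Sigma^1 = d$, one has $\hRic^V_{\Sigma^1,\mu} - \hRic^V_{\Sigma^1,\mu,N} = \tfrac{1}{N-d}\,\nabla_{\Sigma^1}W \otimes \nabla_{\Sigma^1}W$, so after dividing through by $V > 0$ the asserted inequality becomes the purely algebraic statement
\[
\norm{A}^2 + \frac{s^2}{N-d} \;\geq\; \frac{(\tr_g A - s)^2}{N}.
\]

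To establish this, I will invoke the sharp Cauchy--Schwarz bound $\norm{A}^2 \geq (\tr_g A)^2/d$ for symmetric $d\times d$ matrices. This reduces the task to the one-parameter scalar inequality
\[
\frac{T^2}{d} + \frac{s^2}{N-d} \;\geq\; \frac{(T-s)^2}{N} \qquad \forall\, T, s \in \R,
\]
which I will verify by a direct calculation: clearing the common denominator $dN(N-d)$, the difference of the two sides collapses to the clean identity
\[
\frac{T^2}{d} + \frac{s^2}{N-d} - \frac{(T-s)^2}{N} = \frac{\bigl((N-d)T + d\,s\bigr)^2}{d\,N\,(N-d)}.
\]
The numerator is a square, and the denominator $dN(N-d)$ is positive precisely when $N$ and $N-d$ share a sign, i.e. either $N > d$ or $N < 0$ — exactly the range $N \in (-\infty,0) \cup (d,\infty)$ permitted by the hypothesis $N \in (-\infty,0) \cup [n-1,\infty]$, $N \neq 1$.

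The boundary case $N = \infty$ follows by continuity (both the term $s^2/(N-d)$ and the right-hand side tend to $0$, and the inequality degenerates to $\norm{A}^2 \geq 0$), while the remaining borderline $N = d = n-1$ is handled under the standing convention (used already in the weighted Reilly/Brascamp--Lieb setup) that the summand $\frac{1}{N-d}\nabla W \otimes \nabla W$ in the definition of $\Ric_{\Sigma^1,\mu,N}$ is set to zero, which forces $\nabla_{\Sigma^1} W \equiv 0$, i.e. $s = 0$, so that the inequality reduces to the pure Cauchy--Schwarz bound of the preceding paragraph. I do not anticipate a genuine obstacle here: the content of the lemma is the standard Bakry--Émery trick of absorbing a rank-one Hessian correction into the traceless part, transported verbatim from the classical setting to the conjugated tensor $\hRic^V$, and the only real care needed is the sign bookkeeping for $dN(N-d)$ across the two admissible ranges of $N$.
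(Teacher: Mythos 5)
Your proof is correct, and in fact supplies the argument the paper omits (the paper merely cites \cite[Lemma 2.3]{KolesnikovEMilmanReillyPart1} and \cite[Eq.~(1.5)]{HuangMaZhu-ReillyFormula}). Your reduction — rewriting $\Delta_{\Sigma^1,\mu} u_{ij} - \tfrac{\Delta_{\Sigma^1,\mu} V}{V}u_{ij} = \tr_g A - s$ with $s = \scalar{X,\nabla_{\Sigma^1}W}$, isolating $\hRic^V_{\Sigma^1,\mu}(X) - \hRic^V_{\Sigma^1,\mu,N}(X) = \tfrac{s^2}{N-d}$, invoking $\norm{A}^2 \ge (\tr_g A)^2/d$, and collapsing to the perfect-square identity $\tfrac{T^2}{d}+\tfrac{s^2}{N-d}-\tfrac{(T-s)^2}{N} = \tfrac{((N-d)T+ds)^2}{dN(N-d)}$ — is exactly the standard Bakry--\'Emery/generalized Reilly computation that the cited lemmas perform, with the sign analysis $dN(N-d)>0$ matching the admissible range $N\in(-\infty,0)\cup(d,\infty]$ and the borderline $N=d=n-1$ handled by the usual convention $\nabla_{\Sigma^1}W\equiv 0$. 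Nothing is missing.
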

This is well-known in the case that $V \equiv 1$ (see e.g.~\cite[Lemma 2.3]{KolesnikovEMilmanReillyPart1}), and the extension to general $V$ is straightforward (see e.g.~\cite[Equation (1.5)]{HuangMaZhu-ReillyFormula}), so we omit it. 

\begin{proof}[Proof of Theorem  \ref{thm:intro-BL}]
Applying the above inequality to (\ref{eq:Bochner}) and rearranging terms, we obtain for any admissible scalar-field $u = (u_{ij})$,
\begin{equation}
\label{eq:BochnerInq}
\begin{split} 
\frac{N-1}{N} & \int_{\Sigma^1} V  \brac{ \Delta_{\Sigma^1, \mu} u_{ij}- \frac{\Delta_{\Sigma^1, \mu} V}{V} u_{ij} }^2 d\mu^{n-1} \\
&\geq \int_{\Sigma^1} V \hRic_{\Sigma^1, \mu, N}^V \brac{\nabla_{\Sigma^1} u_{ij} - \frac{\nabla_{\Sigma^1} V}{V} u_{ij}}d\mu^{n-1} . 
\end{split}
\end{equation}
Recall that the scalar-field $f = (f_{ij})$ satisfies $f_{ij}= L_V u_{ij} = V \Delta_{\Sigma^1, \mu} u_{ij} - u_{ij} \Delta_{\Sigma^1, \mu} V$. Corollary \ref{cor:delta1V-zero} verifies that $\delta^1_f \Vol = 0$, establishing the first assertion. Now, using integration-by-parts (Lemma \ref{lem:byparts}) and the Cauchy-Schwarz inequality (with respect to the positive-definite quadratic form $V \hRic^V_{\Sigma,\mu,N} > 0$), we have
\begin{align*}
\int_{\Sigma^1} \frac{f_{ij}^2}{V} d\mu^{n-1} = &\int_{\Sigma^1} V \brac{ \Delta_{\Sigma^1, \mu} u_{ij}- \frac{\Delta_{\Sigma^1, \mu} V}{V} u_{ij} }^2 d\mu^{n-1}\\
=&\int_{\Sigma^1} f_{ij} \brac{ \Delta_{\Sigma^1, \mu} u_{ij}- \frac{\Delta_{\Sigma^1, \mu} V}{V} u_{ij} } d\mu^{n-1}\\
=& - \int_{\Sigma^1} \scalar{\nabla_{\Sigma^1} f_{ij} -\frac{\nabla_{\Sigma^1} V}{V} f_{ij}, \nabla_{\Sigma^1} u_{ij}- \frac{\nabla_{\Sigma^1} V}{V} u_{ij}} d\mu^{n-1}\\
\leq& \brac{\int_{\Sigma^1} \brac{V \hRic_{\Sigma, \mu, N}^V }^{-1} \brac{\nabla_{\Sigma^1} f_{ij} - \frac{\nabla_{\Sigma^1} V}{V} f_{ij}} d\mu^{n-1}}^{\frac{1}{2}} \\
 & \cdot \brac{\int_{\Sigma^1} V \hRic_{\Sigma^1, \mu, N}^V \brac{\nabla_{\Sigma^1} u_{ij} - \frac{\nabla_{\Sigma^1} V}{V} u_{ij}} d\mu^{n-1} }^{\frac{1}{2}}.
\end{align*}
Combining the above inequality with (\ref{eq:BochnerInq}), we obtain the asserted Brascamp-Lieb inequality:
\begin{equation*}
\frac{N}{N-1} \int_{\Sigma^1} \frac{f_{ij}^2}{V} d\mu^{n-1} 
\leq
\int_{\Sigma^1} \brac{V \hRic_{\Sigma^1, \mu, N}^V}^{-1} \brac{\nabla_{\Sigma^1} f_{ij} - \frac{\nabla_{\Sigma^1} V}{V} f_{ij}} d\mu^{n-1}.
\end{equation*}

\end{proof}

Corollary \ref{cor:intro-stability} from the Introduction now immediately follows from Theorem \ref{thm:intro-BL} and Lemma \ref{lem:Q0V}.

\section{Conformally flattening boundary potentials on M\"obius-flat spherical Voronoi partitions} \label{sec:V}

The proof of Theorem \ref{thm:intro-V} from the Introduction is comprised of Propositions \ref{prop:VSn}, \ref{prop:VRn} and \ref{prop:VHn} below. We begin with the following definition and lemma, which we will use several times. 

\begin{definition}
A collection of geodesic spheres $\{S_m\}_{m \in I} \subset \S^n$ is called M\"obius-flat if there exists a M\"obius map $T: \S^n \to \S^n$ such that all $\{T S_m\}_{m \in I}$ are flat (namely, have zero curvature). The same definition applies to a collection $\{\Sigma_m\}_{m \in I}$ of relatively open non-empty subsets of geodesic spheres in $\S^n$. \end{definition}

\begin{lem}\label{lem:Mobius-flat-xi-sphere}
A collection of geodesic spheres $\{S_m\}_{m \in I}\subset \S^n$ with quasi-centers $(\c_m)_{m \in I}$ and curvatures $(\k_m)_{m \in I}$ is M\"obius-flat if and only if:
\[\exists \xi \in \R^{n+1} ~ \abs{\xi}<1 \quad \text{so that} \quad \sscalar{\c_m, \xi}+\k_m = 0 ~~ \forall m\in I.
\] \end{lem}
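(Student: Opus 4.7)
The plan is to translate the problem into the Klein ball model of $\HH^{n+1}$, where M\"obius automorphisms of $\S^n$ act as hyperbolic isometries and generalized spheres in $\S^n$ are ideal boundaries of affine hyperplane sections of the open unit ball.

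First, I would unpack the definition of quasi-center to write $S_m = H_m \cap \S^n$, where $H_m := \{q \in \R^{n+1} : \scalar{\c_m, q} + \k_m = 0\}$ is an affine hyperplane of $\R^{n+1}$. Indeed, for any $p \in S_m$ the identity $\c_m = \n - \k_m p$ together with $\scalar{\n,p} = 0$ immediately yields $\scalar{\c_m, p} + \k_m = 0$. The property $|\c_m|^2 - \k_m^2 = 1$ (automatic from $|\n|=|p|=1$ and $\scalar{\n,p}=0$) ensures $\mrm{dist}(0, H_m) = |\k_m|/|\c_m| < 1$, so $H_m$ meets the open unit ball $B^{n+1} := \{\xi \in \R^{n+1} : |\xi|<1\}$. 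Moreover, $S_m$ is flat (a great sphere) iff $\k_m = 0$ iff $0 \in H_m$.

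Next, I would invoke two classical facts about the Klein model: (i) $\HH^{n+1}$ is realized as $B^{n+1}$ with a specific Riemannian metric, its totally geodesic hyperplanes are precisely the non-empty sets of the form $H \cap B^{n+1}$ for affine hyperplanes $H \subset \R^{n+1}$, and its ideal boundary is $\S^n$; (ii) the M\"obius group of $\S^n$ coincides with the boundary restriction of $\mrm{Iso}(\HH^{n+1})$, which acts transitively on $B^{n+1}$. With this dictionary in place, the algebraic condition $\scalar{\c_m, \xi} + \k_m = 0$ with $|\xi|<1$ says exactly that $\xi \in B^{n+1}$ lies on the totally geodesic hyperplane of $\HH^{n+1}$ whose ideal boundary is $S_m$.

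Both directions then follow immediately. If such $\xi$ exists, choose (by transitivity) an isometry $T$ of $\HH^{n+1}$ with $T(\xi) = 0$; its boundary restriction is a M\"obius map sending each $H_m \cap B^{n+1}$ to a totally geodesic hyperplane through the origin, so each $T S_m$ is a great sphere, hence flat. Conversely, if a M\"obius $T$ simultaneously flattens all $S_m$, extend $T$ to an isometry of $\HH^{n+1}$ and set $\xi := T^{-1}(0) \in B^{n+1}$; since every $T(H_m \cap B^{n+1})$ contains $0$, one has $\xi \in H_m$ for each $m$, giving the required algebraic identity.

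The main (and only) point of technical bookkeeping is the matching of facts (i) and (ii) with the stereographic-projection based definition of the M\"obius group used in this paper (cf.~\cite[Section 10]{EMilmanNeeman-TripleAndQuadruple}); both assertions are entirely standard and can be cited directly. Everything else is a translation of the statement into hyperbolic language.
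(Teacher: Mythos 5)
Your proposal is correct. The paper itself contains no proof of this lemma -- it just cites \cite[Lemma 10.15]{EMilmanNeeman-TripleAndQuadruple} -- so there is no in-paper argument to compare against directly. That said, the cited companion paper parametrizes the M\"obius group via the Lorentz group $O^+(n+1,1)$ acting on $\R^{n+1,1}$ (the paper explicitly points to \cite[Section 10]{EMilmanNeeman-TripleAndQuadruple} for this), and a generalized sphere is encoded there by the spacelike vector $(\c_m,\k_m)$ of Lorentzian norm $|\c_m|^2 - \k_m^2 = 1$; flatness is orthogonality of $(\c_m,\k_m)$ to the reference timelike direction, and M\"obius-flatness of the whole family amounts to the existence of a common timelike vector orthogonal to all the $(\c_m,\k_m)$. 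Your Klein-model argument is the projectivization of exactly this picture: the timelike direction corresponds to a point $\xi$ of the open ball, a sphere passes through $\xi$ iff $\scalar{\c_m,\xi}+\k_m=0$, and moving $\xi$ to the center of the ball by an isometry of $\HH^{n+1}$ flattens all spheres through $\xi$. So the underlying geometry is the same; what your version buys is that it is entirely coordinate-free and self-contained modulo the two standard facts you name, whereas the Minkowski version keeps everything in linear-algebraic form, which is what the companion paper needs for its classification of M\"obius orbits.

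The individual steps all check out: $\scalar{\c_m,p}+\k_m=0$ for $p\in S_m$ follows directly from $\c_m=\n-\k_m p$ and $\scalar{\n,p}=0$; $|\c_m|^2 = 1+\k_m^2$ guarantees the affine hyperplane $H_m$ meets the open ball; and $\k_m=0 \Leftrightarrow 0\in H_m \Leftrightarrow S_m$ is a great sphere. The bijective correspondence between totally geodesic hyperplanes of the Klein ball and their ideal boundaries is what makes the two directions of your argument go through; you use it implicitly when you conclude that the flattened image hyperplane must pass through the origin, and it would not hurt to say so in one line. No gaps.
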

\begin{proof}
This follows from the proof of \cite[Lemma 10.15]{EMilmanNeeman-TripleAndQuadruple}.
\end{proof}

\subsection{Spherical Voronoi partitions in $\S^n$}

Recall Definition \ref{def:prelim-Voronoi} of spherical Voronoi generalized partition $\Omega^\S$ from Subsection \ref{subsec:prelim-Voronoi}. Note that the second condition implies that a non-empty $\Sigma^\S_{ij}$ is a subset of the geodesic sphere $S^\S_{ij} = \{ p \in \S^{n} : \sscalar{\c_{ij}^\S,p} + \k_{ij}^\S = 0\}$. Consequently, given the second condition, the requirement in the first condition for every non-empty $\Sigma^\S_{ij}$ that $\c^\S_{ij}$ and $\k^\S_{ij}$ are the quasi-center and curvature of $S^\S_{ij}$, respectively, is equivalent to the requirement that 
\begin{equation} \label{eq:ckS-relation}
|\c_{ij}^\S |^2 = |\n^\S_{ij} - \k^\S_{ij} p|^2 = 1 +  (\k_{ij}^\S)^2 . 
\end{equation}

We also repeat the definition of M\"obius-flatness in $\S^n$ to be consistent with the definition we will give in $\HH^n$ below, and record the following immediate corollary of Lemma \ref{lem:Mobius-flat-xi-sphere}.
\begin{definition} A spherical Voronoi generalized partition $\Omega^\S$ in $\S^n$ is called M\"obius-flat if the collection $\{\Sigma_{ij}^\S : \Sigma_{ij}^\S \neq \emptyset \}$ of its non-empty interfaces is  M\"obius-flat.
\end{definition}

\begin{corollary} \label{cor:xiS}
A spherical Voronoi generalized partition $\Omega^\S$ in $\S^n$ is M\"obius-flat if and only if:
\begin{equation} \label{eq:xiS}
\exists \xi \in \R^{n+1} ~ \abs{\xi}<1 \quad \text{so that} \quad \sscalar{\c^\S_{ij}, \xi}+\k^\S_{ij} = 0 ~~ \forall \Sigma^\S_{ij} \neq \emptyset ,
\end{equation}
where $\{\c^\S_i\}$ and $\{\k^\S_i\}$ are its quasi-center and curvature parameters, respectively. 
\end{corollary}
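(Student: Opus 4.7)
The plan is to reduce the statement directly to Lemma \ref{lem:Mobius-flat-xi-sphere} by identifying the collection of geodesic spheres underlying the non-empty interfaces of $\Omega^\S$. First I would recall from Definition \ref{def:prelim-Voronoi} that every non-empty interface $\Sigma^\S_{ij}$ is a relatively open subset of a unique geodesic sphere $S^\S_{ij} \subset \S^n$ whose quasi-center is $\c^\S_{ij} = \c^\S_i - \c^\S_j$ and whose curvature is $\k^\S_{ij} = \k^\S_i - \k^\S_j$. In particular, $S^\S_{ij}$ is determined by $\Sigma^\S_{ij}$, so the data $(\c^\S_{ij}, \k^\S_{ij})$ is well-defined on the index set $\{(i,j) : \Sigma^\S_{ij} \neq \emptyset\}$.

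Next I would verify that M\"obius-flatness of the collection of interfaces $\{\Sigma^\S_{ij} : \Sigma^\S_{ij} \neq \emptyset\}$ is equivalent to M\"obius-flatness of the collection of geodesic spheres $\{S^\S_{ij} : \Sigma^\S_{ij} \neq \emptyset\}$. Indeed, any M\"obius automorphism $T : \S^n \rightarrow \S^n$ is a smooth diffeomorphism that sends geodesic spheres to geodesic spheres, so $T(\Sigma^\S_{ij})$ is a relatively open subset of $T(S^\S_{ij})$, and the former is flat (i.e.\ has zero curvature as a subset of a geodesic sphere) if and only if $T(S^\S_{ij})$ has zero curvature. Thus the two collections admit the same flattening M\"obius automorphisms, and the definition of M\"obius-flatness given just before the corollary aligns with the hypothesis of Lemma \ref{lem:Mobius-flat-xi-sphere} applied to the $\{S^\S_{ij}\}$.

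Finally, I would invoke Lemma \ref{lem:Mobius-flat-xi-sphere} with $I = \{(i,j) : \Sigma^\S_{ij} \neq \emptyset\}$, $\c_m = \c^\S_{ij}$ and $\k_m = \k^\S_{ij}$. The lemma states that the collection $\{S^\S_{ij}\}$ is M\"obius-flat precisely when there exists $\xi \in \R^{n+1}$ with $|\xi| < 1$ such that $\sscalar{\c^\S_{ij}, \xi} + \k^\S_{ij} = 0$ for all $(i,j) \in I$, which is exactly condition \eqref{eq:xiS}. Since this is a straightforward reduction to an already-established lemma, no genuine obstacle arises; the only subtlety worth mentioning is the observation that passing from a relatively open non-empty subset of a geodesic sphere back to the ambient sphere is canonical, so there is no ambiguity in associating $(\c^\S_{ij}, \k^\S_{ij})$ to $\Sigma^\S_{ij}$.
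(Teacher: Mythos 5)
Your argument is correct and matches the route the paper intends: the paper labels this an ``immediate corollary'' of Lemma \ref{lem:Mobius-flat-xi-sphere} and supplies no written proof, and your reduction --- identifying each non-empty $\Sigma^\S_{ij}$ with its ambient geodesic sphere $S^\S_{ij}$, noting that a Möbius automorphism preserves geodesic spheres so a relatively open subset becomes flat exactly when its ambient sphere does, and then applying the lemma with $\c_m=\c^\S_{ij}$, $\k_m=\k^\S_{ij}$ --- is precisely the verification that justifies that label. The only thing worth stressing (which you implicitly handle by the sign-symmetry of the condition) is that $(\c^\S_{ij},\k^\S_{ij})$ and $(\c^\S_{ji},\k^\S_{ji})$ differ by an overall sign and so yield the same equation $\sscalar{\c^\S_{ij},\xi}+\k^\S_{ij}=0$, making the assignment of quasi-center/curvature to each unoriented interface well-defined for the purposes of \eqref{eq:xiS}.
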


\begin{prop} \label{prop:VSn}
Let $\Omega^\S$ be a regular, M\"obius-flat, spherical Voronoi, $q$-partition in $\S^n$. Then there exists a conformally boundary flattening potential $V > 0$ on $\S^n$ such that
\begin{align*}
L_{Jac} V = n-1, ~V \hRic^V_{\Sigma^1}= (n-2) g_{\Sigma}~\text{on}~\Sigma^1.
\end{align*}
Specifically, if $\xi \in \R^{n+1}$ is as in (\ref{eq:xiS}), then $V: \S^n \to \R_+$ may be chosen as 
\begin{equation*}
V(p)= 1 -\scalar{p,\xi}, \quad p \in \S^n.
\end{equation*}
\end{prop}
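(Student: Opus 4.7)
The plan is to verify the three asserted properties of $V = 1 - \scalar{p,\xi}$ by direct computation, exploiting that $V$ is the restriction to $\S^n$ of an affine-linear function on $\R^{n+1}$. Positivity $V>0$ is immediate from $|\xi|<1$.

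For the conformally flattening BC, since $\Omega^\S$ is a spherical Voronoi partition, each non-empty interface $\Sigma_{ij}$ is umbilical with $\II^{ij} = \k^\S_{ij}\, g_{\Sigma_{ij}}$, so by Lemma~\ref{lem:constant-curvature} it suffices to verify the non-oriented conformal BC $\nabla_{\n_{\partial ij}} V = \bar\II^{\partial ij}_{ijk}\, V$ on each $\Sigma_{ijk} \neq \emptyset$. The tangential gradient on $\S^n$ satisfies $\nabla_v V = -\scalar{v,\xi}$ for any tangent $v$. Using the quasi-center identity $\n_{ij} = \c^\S_{ij} + \k^\S_{ij}\, p$ and the hypothesis \eqref{eq:xiS} ($\scalar{\c^\S_{ij},\xi} = -\k^\S_{ij}$), one computes pointwise on $\Sigma_{ij}$:
\[
\nabla_{\n_{ij}} V = -\scalar{\n_{ij},\xi} = -\scalar{\c^\S_{ij},\xi} - \k^\S_{ij}\scalar{p,\xi} = \k^\S_{ij} - \k^\S_{ij}(1-V) = \k^\S_{ij}\, V.
\]
Stationarity (Lemma~\ref{lem:stationarity}) yields $\n_{\partial ij} = (\n_{ik} + \n_{jk})/\sqrt{3}$ at triple points, so averaging the above pointwise identity produces the desired BC.

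For $L_{Jac} V$, the ambient linearity of $V$ gives $\nabla^2_{\S^n} V = (1-V)\, g_{\S^n}$ (the standard formula for the Hessian on the sphere of a linear function). Applying the Gauss--Weingarten relation for $\Sigma_{ij}\subset\S^n$ with $\II^{ij}$ umbilical, together with the just-derived $\nabla_{\n_{ij}} V = \k^\S_{ij} V$, yields
\[
\nabla^2_{\Sigma^1} V = \brac{1 - V\brac{1+(\k^\S_{ij})^2}}\, g_{\Sigma^1}, \qquad \Delta_{\Sigma^1} V = (n-1)\brac{1 - V\brac{1+(\k^\S_{ij})^2}}.
\]
Substituting into the definition of $L_{Jac}$ together with $\Ric_{\S^n}(\n,\n) = n-1$ and $\snorm{\II^{ij}}^2 = (n-1)(\k^\S_{ij})^2$, the four terms cancel to leave $L_{Jac} V = n-1$.

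For the Ricci identity, the Gauss equation applied to the totally umbilical hypersurface $\Sigma_{ij} \subset \S^n$ produces $\Ric_{\Sigma_{ij}} = (n-2)\brac{1+(\k^\S_{ij})^2}\, g_{\Sigma_{ij}}$. Plugging this, along with the above Hessian and Laplacian formulas, into $V\, \hRic^V_{\Sigma^1} = V\Ric_{\Sigma^1} - \Delta_{\Sigma^1} V \cdot g + \nabla^2_{\Sigma^1} V$ should simplify algebraically to $(n-2)\, g_{\Sigma^1}$. I expect this final algebraic cancellation to be the main obstacle: since all four tensors appearing are pointwise multiples of $g_{\Sigma^1}$ (by umbilicity and affine linearity), the identity reduces to a scalar computation whose success depends on the precise interplay between the structure of $V$, the Gauss formula, and the Möbius-flat condition \eqref{eq:xiS} via the identity $\nabla_{\n_{ij}} V = \k^\S_{ij} V$.
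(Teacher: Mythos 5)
Your plan matches the paper's proof step for step: positivity from $|\xi|<1$; the pointwise identity $\nabla_{\n_{ij}^\S} V = \k_{ij}^\S V$ from (\ref{eq:xiS}); stationarity (Lemma \ref{lem:RegularVoronoiIsStationary}) to pass to the co-normal; Lemma \ref{lem:constant-curvature}; and the same Hessian, Laplacian and Gauss-equation computations on each interface.

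However, the step you flag as the main obstacle does fail as you have set it up. Writing $\k = \k^\S_{ij}$, your intermediate formulas are correct: $\nabla^2_{\Sigma^1} V = (1 - (1+\k^2) V)\, g_{\Sigma^1}$, $\Delta_{\Sigma^1} V = (n-1)(1 - (1+\k^2)V)$, $\Ric_{\Sigma^1} = (n-2)(1+\k^2)\, g_{\Sigma^1}$. Substituting these into the expression you wrote, namely $V\Ric_{\Sigma^1} - (\Delta_{\Sigma^1} V)\, g_{\Sigma^1} + \nabla^2_{\Sigma^1} V$ (which is what (\ref{eq:intro-RicV}) literally says), yields $(n-2)(2(1+\k^2)V - 1)\, g_{\Sigma^1}$, which is not the required constant $(n-2)\, g_{\Sigma^1}$. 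The formula the paper actually uses, both in its proof of this proposition and in the Bochner identity of Lemma \ref{lem:divZij}, has the Hessian and Laplacian terms with the opposite signs: $V\hRic^V_{\Sigma^1} = V\Ric_{\Sigma^1} - \nabla^2_{\Sigma^1} V + (\Delta_{\Sigma^1} V)\, g_{\Sigma^1}$. With this, the scalar coefficient becomes $(n-2)(1+\k^2)V + (n-2)(1-(1+\k^2)V) = n-2$, as required. So (\ref{eq:intro-RicV}) as printed contains a sign typo; the corrected version agrees with the static-space-time convention, where the $L^2$-adjoint of the scalar-curvature linearization is $\nabla^2 V - (\Delta V) g - V \Ric$. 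Once you use the corrected formula, your calculation closes and matches the paper's.
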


\begin{proof}
Clearly $V$ is a smooth positive function on $\S^n$ as $|\xi|<1$. By (\ref{eq:xiS}), we have $\sscalar{\c_{ij}^\S, \xi} + \k_{ij}^\S =0$ for all non-empty $\Sigma_{ij}^\S$, and hence
\[
 \nabla_{\n_{ij}^\S} V = -\scalar{\xi, \n_{ij}} = -\sscalar{\xi, \c_{ij}^\S}- \k_{ij}^\S \scalar{\xi, p}= \k_{ij}^\S  V.
\]
By Lemma \ref{lem:RegularVoronoiIsStationary}, $\Omega^\S$ is stationary. Consequently, $\n^\S_{\partial ij} = \frac{1}{\sqrt{3}} (\n_{ik}^\S +\n_{jk}^\S)$ on $\Sigma_{ijk}$, and since $\bar{\II}^{\partial ij}_{ijk} = \frac{1}{\sqrt{3}} (\k_{ik}^\S +\k_{jk}^\S)$, it follows that $\nabla_{\n_{\partial ij}^\S} V =  \bar{\II}^{\partial ij} V$ on $\partial \Sigma_{ij}^\S$. Therefore $V$ satisfies the non-oriented conformal BCs, and since all interfaces are generalized spheres, Lemma \ref{lem:constant-curvature} implies that $V$ is a conformally flattening boundary potential.

By a direct calculation, on each non-empty interface $\Sigma_{ij}^\S$, we have
\begin{align*}
\nabla_{\Sigma^1}^2 V=& \nabla_{\R^{n+1}}^2 V+\sscalar{p+ \k_{ij}^\S \n_{ij}^\S, \xi} g_{\Sigma^1}\\
=& \sscalar{p+ (\k_{ij}^\S)^2 p + \k_{ij}^\S \c_{ij}^\S, \xi} g_{\Sigma^1}\\
=& \brac{1+(\k_{ij}^\S)^2} \scalar{p, \xi} g_{\Sigma^1} -(\k_{ij}^\S)^2 g_{\Sigma^1}\\
=& \brac{ 1- (1+ (\k_{ij}^\S)^2) V} g_{\Sigma^1}.
\end{align*}
Then $\Delta_{\Sigma^1} V = (n-1) (1- (1+ (\k_{ij}^\S)^2) V)$ on $\Sigma_{ij}^\S$. Since $\Ric_{\S^n} (\n_{ij}^\S, \n_{ij}^\S) + || \II^{ij}||_2^2 = (n-1)(1+ (\k_{ij}^\S)^2)$ on $\Sigma_{ij}^\S$, we deduce on the entire $\Sigma^1$:
\begin{equation*}
L_{Jac} V =\Delta_{\Sigma^1} V + \brac{ \Ric_{\S^n} (\n, \n) + \norm{ \II}^2 } V=n-1.
\end{equation*}
On the other hand, by the Gauss equation, the Ricci curvature of the non-empty interface $\Sigma_{ij}^\S$ is $\Ric_{\Sigma^1} = (n-2) (1+ (\k_{ij}^\S)^2) g_{\Sigma^1}$. Thus,
\begin{align*}
V \hRic_{\Sigma^1}^V =& V \Ric_{\Sigma^1} - \nabla_{\Sigma^1}^2 V + \Delta_{\Sigma^1} V g_{\Sigma^1}\\
=&\left ( (n-2)(1+ (\k_{ij}^\S)^2) V - \brac{ 1- (1+ (\k_{ij}^\S)^2) V} \right . \\
& \left . \;\; + (n-1) \brac{1- (1+ (\k_{ij}^\S)^2) V} \right )  g_{\Sigma^1}\\
=&(n-2) g_{\Sigma^1}.
\end{align*}
This completes the proof.
\end{proof}

\subsection{Spherical Voronoi partitions in $\mathbb{R}^n$}

The standard stereographic projection $\pi_\R: \R^n \to \S^n \subset \R^{n+1}$ is given by
\begin{equation*}
\pi_{\R} (x) = \frac{2x}{\abs{x}^2+1} + \frac{\abs{x}^2-1}{\abs{x}^2+1} e_{n+1}, \quad x \in \R^n.
\end{equation*}
It is a diffeomorphism between $\R^n$ and $\S^n \setminus \{ N\}$, where recall $N = e_{n+1}$ denotes the North pole.

\begin{definition}[(M\"obius-flat) Spherical Voronoi partition in $\R^n$] \label{def:VoronoiR}
A partition $\Omega^\R$ in $\R^n$ is called a spherical Voronoi partition, if there exists a spherical Voronoi generalized partition $\Omega^\S$ in $\S^n$ so that $\Omega^\R = \pi_\R^{-1}(\Omega^\S)$. $\Omega^\R$ is called M\"obius-flat if $\Omega^\S$ is M\"obius-flat. 
\end{definition}

The above definition uniquely determines $\Omega^\S$ from $\Omega^\R$, up to the allocation of the North pole $N$; we will always use the convention that $N \in \Omega^\S_i$ iff $\R^n \setminus \Omega^\R_i$ is bounded, thereby determining $\Omega^\S$ uniquely. 
We assume throughout this subsection that $\Omega^\R$ is a spherical Voronoi partition in $\R^n$ with $\Omega^\R = \pi_\R^{-1}(\Omega^\S)$, where $\Omega^\S$ is a spherical Voronoi generalized partition in $\S^n$ with quasi-center parameters $\{\c_i^\S \}_{i=1, \ldots, q} \subset \R^{n+1}$ and curvature parameters $\{\k_i^\S\}_{i=1, \ldots, q} \subset \R$. For $i=1,\ldots,q$, write:
\[
\c^\S_i = (\underline \c_i^\S, (\c_0^\S)_i) \in \R^n \times \R ,
\]
and denote:
\begin{equation} \label{eq:kRkS}
\c_i^\R := \underline \c_i^\S ~,~ \k_i^\R := \k_i^\S + (\c_0^\S)_i . 
\end{equation}
An equivalent characterization of spherical Voronoi partitions in $\R^n$ is given by the following lemma, established in \cite[Lemma 8.16]{EMilmanNeeman-TripleAndQuadruple}.

\begin{lem} \label{lem:VorS-VorR}
Assume that $\Omega^\R$ is a spherical Voronoi $q$-partition in $\R^n$. Then $\Omega^\R$ has the following Voronoi representation:
\begin{align*}
\Omega_i^\R =& ~ \set{x \in \R^n : \argmin_{j=1, \ldots, q} \brac{\k_j^\R|x|^2+ 2 \sscalar{\c_j^\R, x}+2\k_j^\S -\k_j^\R } = \{i\}  }\\
=&\bigcap_{j \neq i} \set{x \in \R^n : \k_{ij}^\R |x|^2+ 2 \sscalar{\c_{ij}^\R, x} + 2 \k_{ij}^\S - \k_{ij}^\R<0 }, ~ i=1, \ldots, q. 
\end{align*}
\end{lem}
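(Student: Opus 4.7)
The plan is to prove this by direct pullback under the stereographic projection $\pi_\R$, using the explicit formula $\pi_\R(x)=\brac{\frac{2x}{|x|^2+1},\frac{|x|^2-1}{|x|^2+1}}$ and the definitions $\c_j^\R=\underline\c_j^\S$, $\k_j^\R=\k_j^\S+(\c_0^\S)_j$ from (\ref{eq:kRkS}). By Definition \ref{def:VoronoiR}, $x\in\Omega_i^\R$ iff $\pi_\R(x)\in\Omega_i^\S$, and by Definition \ref{def:prelim-Voronoi} the latter is equivalent to both $\argmin_{j}(\sscalar{\c_j^\S,\pi_\R(x)}+\k_j^\S)=\{i\}$ and $\bigcap_{j\neq i}\{\sscalar{\c_{ij}^\S,\pi_\R(x)}+\k_{ij}^\S<0\}$. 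So everything reduces to translating the scalar quantities $\sscalar{\c_j^\S,\pi_\R(x)}+\k_j^\S$ into expressions in $x\in\R^n$.

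The key identity to verify is
\[
(|x|^2+1)\brac{\sscalar{\c_j^\S,\pi_\R(x)}+\k_j^\S}=\k_j^\R|x|^2+2\sscalar{\c_j^\R,x}+2\k_j^\S-\k_j^\R,
\]
which is a one-line algebraic manipulation: substituting the formula for $\pi_\R(x)$ gives $2\sscalar{\underline\c_j^\S,x}+(|x|^2-1)(\c_0^\S)_j+(|x|^2+1)\k_j^\S$, and grouping the $|x|^2$ terms as $((\c_0^\S)_j+\k_j^\S)|x|^2=\k_j^\R|x|^2$ while the constant terms give $\k_j^\S-(\c_0^\S)_j=2\k_j^\S-\k_j^\R$.

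Since $|x|^2+1>0$, this positive scalar factor preserves both signs and $\argmin$. Therefore $\sscalar{\c_{ij}^\S,\pi_\R(x)}+\k_{ij}^\S<0$ is equivalent to $\k_{ij}^\R|x|^2+2\sscalar{\c_{ij}^\R,x}+2\k_{ij}^\S-\k_{ij}^\R<0$, yielding the second (intersection) formula, and $\argmin_j(\sscalar{\c_j^\S,\pi_\R(x)}+\k_j^\S)=\argmin_j(\k_j^\R|x|^2+2\sscalar{\c_j^\R,x}+2\k_j^\S-\k_j^\R)$, yielding the first ($\argmin$) formula. The two forms are equivalent to each other by the same argument used in Definition \ref{def:prelim-Voronoi} (taking pairwise differences of the scalars being compared in the $\argmin$).

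There is no genuine obstacle here: the statement is essentially a change-of-variables computation, and its only real content is bookkeeping the correspondence $\{(\c_j^\S,\k_j^\S)\}\leftrightarrow\{(\c_j^\R,\k_j^\R)\}$ given by (\ref{eq:kRkS}). The only minor subtlety worth flagging is the convention regarding the North pole $N=e_{n+1}$: the definition of $\Omega^\R$ already fixes $N\in\Omega_i^\S$ iff $\R^n\setminus\Omega_i^\R$ is bounded, so no ambiguity arises when evaluating $\pi_\R(x)$ over all of $\R^n$ (the point $N$ is precisely the ``missing'' image at infinity).
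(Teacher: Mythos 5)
Your proof is correct. Let me confirm the details and comment on the relation to the paper.

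In this paper the lemma is stated without proof; it is cited to Lemma~8.16 of \cite{EMilmanNeeman-TripleAndQuadruple}. Your argument is the natural one and is surely what that reference does as well: by Definition~\ref{def:VoronoiR} one has $x \in \Omega_i^\R$ iff $\pi_\R(x) \in \Omega_i^\S$, and the only content is to translate $\sscalar{\c_j^\S,\pi_\R(x)}+\k_j^\S$ into a polynomial in $x$. I checked your key identity: with $\pi_\R(x) = \brac{\tfrac{2x}{|x|^2+1},\tfrac{|x|^2-1}{|x|^2+1}}$ one gets
\[
(|x|^2+1)\brac{\sscalar{\c_j^\S,\pi_\R(x)}+\k_j^\S}
= \brac{(\c_0^\S)_j+\k_j^\S}|x|^2 + 2\sscalar{\underline\c_j^\S,x} + \brac{\k_j^\S-(\c_0^\S)_j},
\]
and using $\c_j^\R=\underline\c_j^\S$, $\k_j^\R=\k_j^\S+(\c_0^\S)_j$ from (\ref{eq:kRkS}), the coefficient of $|x|^2$ is $\k_j^\R$ and the constant is $\k_j^\S - (\c_0^\S)_j = 2\k_j^\S-\k_j^\R$, as you claim. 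Since the positive factor $(|x|^2+1)^{-1}$ is independent of $j$, it preserves both the $\argmin$ over $j$ (for any index set, finite or not, and including the empty-$\argmin$ convention used in the definition) and the sign of every pairwise difference, giving both displayed forms. The equivalence of the $\argmin$ form with the intersection form follows by subtracting the $j$-th and $i$-th scalars, exactly as in Definition~\ref{def:prelim-Voronoi}. Your remark on the North pole convention is also apt: since $\pi_\R(\R^n)=\S^n\setminus\{N\}$, the allocation of $N$ never enters the pointwise computation, so the representation holds for every $x\in\R^n$ regardless of which cell contains $N$.
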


\begin{prop}\label{prop:VRn}
Let $\Omega^\R$ be a regular, M\"obius-flat, spherical Voronoi, $q$-partition in $\R^n$. Then there exists a conformally boundary flattening potential $V > 0$ on $\R^n$ such that
\begin{equation*}
L_{Jac} V = n-1 ~,~ V \hRic^V_{\Sigma^1}= (n-2) g_{\Sigma}~\text{on}~\Sigma^1 . \end{equation*}
Specifically, let $\xi = (\underline \xi, \xi_0)\in \mathbb{R}^{n} \times \R$ with $|\xi|<1$ be the vector from (\ref{eq:xiS}) guaranteed to satisfy $\sscalar{\c^\S_{ij}, \xi} + \k^\S_{ij}=0$ for all non-empty $\Sigma^\S_{ij}$. Then $V:\R^n \to \R_+$ can be chosen as
\begin{equation*}
V(x) = \frac{\abs{x}^2}{2}- \scalar{x, \theta} + \eta, \quad x \in \R^n,
\end{equation*}
where 
\[
\theta := \frac{\underline \xi}{1-\xi_0} ~,~ \eta := \frac{1}{2} \cdot \frac{1+ \xi_0}{1-\xi_0} .
\]
\end{prop}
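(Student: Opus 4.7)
\textbf{Proof plan for Proposition \ref{prop:VRn}.}

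The strategy is direct verification of the three properties (positivity, conformal boundary conditions, and the two curvature identities) by explicit computation in $\R^n$. The computation mirrors the one in $\S^n$ from Proposition \ref{prop:VSn}, exploiting that in $\R^n$ we have the luxury of $\Ric_{\R^n}\equiv 0$ and $\nabla^2 V\equiv\Id$, while the fine-tuning of $\theta$ and $\eta$ is exactly what makes the conformal BCs hold on each spherical interface.

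First, I will verify $V>0$ on $\R^n$ by completing the square,
\[
V(x) \;=\; \tfrac{1}{2}|x-\theta|^2 + \Big(\eta - \tfrac{|\theta|^2}{2}\Big) \;=\; \tfrac{1}{2}|x-\theta|^2 + \tfrac{1-|\xi|^2}{2(1-\xi_0)^2},
\]
which is strictly positive since $|\xi|<1$. Next, I will use Lemma \ref{lem:VorS-VorR} to describe each non-empty interface $\Sigma^\R_{ij}$ as (a relatively open subset of) the generalized sphere
$S^\R_{ij} = \{x\in\R^n : \k^\R_{ij}|x|^2 + 2\scalar{\c^\R_{ij},x} + 2\k^\S_{ij} - \k^\R_{ij} = 0\}$.
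A short computation using (\ref{eq:ckS-relation}) shows $|\k^\R_{ij}x+\c^\R_{ij}|\equiv 1$ on $S^\R_{ij}$, so the outer unit normal co-oriented from $\Omega^\R_i$ to $\Omega^\R_j$ is $\n^\R_{ij} = \k^\R_{ij} x + \c^\R_{ij}$ and the (signed) principal curvature is $\k^\R_{ij}$. By Lemma \ref{lem:RegularVoronoiIsStationary} and Lemma \ref{lem:constant-curvature}, $V$ will be a conformally flattening boundary potential precisely if the (single-interface) condition $\nabla_{\n^\R_{ij}} V = \k^\R_{ij} V$ holds on every non-empty $\Sigma^\R_{ij}$. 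Expanding $\nabla_{\n^\R_{ij}} V = \scalar{x-\theta,\k^\R_{ij}x+\c^\R_{ij}}$ and subtracting $\k^\R_{ij} V$, this identity reduces (after multiplying by $2$ and substituting the sphere equation $\k^\R_{ij}|x|^2 + 2\scalar{\c^\R_{ij},x} = \k^\R_{ij} - 2\k^\S_{ij}$) to the linear condition
\[
\scalar{\theta,\c^\R_{ij}} + \k^\R_{ij}\,(\eta-\tfrac{1}{2}) + \k^\S_{ij} \;=\; 0.
\]
Inserting $\theta=\underline\xi/(1-\xi_0)$ and $\eta-\tfrac{1}{2}=\xi_0/(1-\xi_0)$ and recalling from (\ref{eq:kRkS}) that $\c^\R_{ij}=\underline\c^\S_{ij}$ and $\k^\R_{ij}-\k^\S_{ij}=(\c^\S_0)_{ij}$, this collapses exactly to the M\"obius-flatness hypothesis $\sscalar{\c^\S_{ij},\xi}+\k^\S_{ij}=0$ from (\ref{eq:xiS}). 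This is the main (but not hard) algebraic step.

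Finally, I will compute $L_{Jac}V$ and $V\hRic^V_{\Sigma^1}$ on each interface. Since $\nabla V = x-\theta$, $\nabla^2 V=\Id$, and $\Delta V=n$, the standard hypersurface identities together with the conformal BC $\scalar{\nabla V,\n^\R_{ij}}=\k^\R_{ij}V$ and $\II^{ij}=\k^\R_{ij}g_{\Sigma^1}$ yield
\[
\nabla^2_{\Sigma^1}V = \bigl(1-(\k^\R_{ij})^2 V\bigr) g_{\Sigma^1}, \qquad
\Delta_{\Sigma^1}V = (n-1)\bigl(1-(\k^\R_{ij})^2 V\bigr).
\]
Combined with $\Ric_{\R^n}=0$, $\|\II^{ij}\|^2=(n-1)(\k^\R_{ij})^2$, and the Gauss-equation identity $\Ric_{\Sigma^1}=(n-2)(\k^\R_{ij})^2 g_{\Sigma^1}$, direct substitution gives
\[
L_{Jac}V = (n-1)\bigl(1-(\k^\R_{ij})^2 V\bigr) + (n-1)(\k^\R_{ij})^2 V = n-1,
\]
and
\[
V\hRic^V_{\Sigma^1} = V\Ric_{\Sigma^1} - \nabla^2_{\Sigma^1}V + \Delta_{\Sigma^1}V\, g_{\Sigma^1} = (n-2) g_{\Sigma^1},
\]
completing the proof.

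The only conceptual subtlety — and hence the main obstacle — is the algebraic identification of $(\theta,\eta)$ with the data $(\underline\xi,\xi_0)$; the particular formulas in the statement are precisely those that make the pointwise conformal BC reduce to the global hypothesis (\ref{eq:xiS}). In fact, one checks that $V$ is (up to a positive constant) the pullback $V_\S\circ\pi_\R$ of the $\S^n$-potential $V_\S(p)=1-\scalar{p,\xi}$ times the conformal factor $(|x|^2+1)/2$ of the stereographic projection, which explains where this $V$ comes from and could be used as an alternative conceptual derivation; however, the direct verification above is self-contained and shorter.
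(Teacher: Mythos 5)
Your proof is correct and follows essentially the same route as the paper's: complete the square for positivity, reduce the conformal BC $\nabla_{\n^\R_{ij}} V = \k^\R_{ij} V$ via the Voronoi sphere equation and (\ref{eq:kRkS}) to the M\"obius-flatness relation (\ref{eq:xiS}) (what the paper isolates as Lemma \ref{lem:partR-partS}), then compute $\nabla^2_{\Sigma^1}V$, $\Delta_{\Sigma^1}V$, and plug into $L_{Jac}$ and $\hRic^V_{\Sigma^1}$. Your closing remark interpreting $V$ as $(V_\S\circ\pi_\R)$ times the stereographic conformal factor is a nice conceptual addition not in the paper's proof, but the verification itself is the same.
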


For the proof, we first require a lemma. 

\begin{lem}\label{lem:partR-partS}
Let $\Sigma_{ij}^\R$ be a non-empty interface of a M\"obius-flat, spherical Voronoi partition $\Omega^\R$ in $\R^n$. Let $\theta$ and $\eta$ be as in Proposition \ref{prop:VRn}. Then
\begin{equation}
\k_{ij}^\R \frac{|x|^2}{2} + \sscalar{\c_{ij}^\R, x- \theta} -\k_{ij}^\R \eta=0, \quad \forall x \in \Sigma_{ij}^\R. \label{eq:rel-3}
\end{equation}
\end{lem}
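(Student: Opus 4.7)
The strategy is a direct algebraic reduction. Since $\Sigma_{ij}^\R$ is a non-empty interface, Lemma \ref{lem:VorS-VorR} tells us that every $x \in \Sigma_{ij}^\R$ lies on the hypersurface
\begin{equation} \label{eq:plan-VorR}
\k_{ij}^\R |x|^2 + 2 \sscalar{\c_{ij}^\R, x} + 2\k_{ij}^\S - \k_{ij}^\R = 0 .
\end{equation}
Doubling the claimed identity (\ref{eq:rel-3}) reduces the lemma to showing
\[
\sscalar{\c_{ij}^\R, \theta} + \k_{ij}^\R \, \eta \;=\; \frac{\k_{ij}^\R}{2} - \k_{ij}^\S ,
\]
which is a purely algebraic identity between the chosen constants $\theta,\eta$, the Voronoi parameters, and the M\"obius-flattening vector $\xi$.

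The main tool is the M\"obius-flatness. By Corollary \ref{cor:xiS}, the vector $\xi = (\underline\xi, \xi_0)$ satisfies $\sscalar{\c_{ij}^\S,\xi} + \k_{ij}^\S = 0$ for every non-empty $\Sigma_{ij}^\S$. Writing $\c_{ij}^\S = (\c_{ij}^\R, (\c_0^\S)_{ij})$ and using $\k_{ij}^\R = \k_{ij}^\S + (\c_0^\S)_{ij}$ from (\ref{eq:kRkS}), this expands to
\[
\sscalar{\c_{ij}^\R,\underline\xi} = -(\c_0^\S)_{ij}\,\xi_0 - \k_{ij}^\S .
\]
Substituting the definitions $\theta = \underline\xi/(1-\xi_0)$ and $\eta = \tfrac{1}{2}(1+\xi_0)/(1-\xi_0)$ and clearing the denominator $2(1-\xi_0)$, the required identity becomes
\[
2\bigl(-(\c_0^\S)_{ij}\xi_0 - \k_{ij}^\S\bigr) + \bigl(\k_{ij}^\S + (\c_0^\S)_{ij}\bigr)(1+\xi_0) \;=\; \bigl((\c_0^\S)_{ij} - \k_{ij}^\S\bigr)(1-\xi_0),
\]
and both sides collapse to $((\c_0^\S)_{ij} - \k_{ij}^\S)(1-\xi_0)$ upon expansion.

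There is no genuine obstacle here: the lemma is really a compatibility check between the two Voronoi descriptions (the $\S^n$ one expressed through $(\c^\S_{ij},\k^\S_{ij})$ and the $\R^n$ one from Lemma \ref{lem:VorS-VorR}) once one uses M\"obius-flatness. The only thing to be slightly careful about is keeping track of the conversion $\c_{ij}^\R = \underline{\c_{ij}^\S}$, $\k_{ij}^\R = \k_{ij}^\S + (\c_0^\S)_{ij}$, and the sign/component bookkeeping in the M\"obius-flatness relation $\sscalar{\c_{ij}^\S,\xi} + \k_{ij}^\S = 0$. The choices of $\theta$ and $\eta$ in Proposition \ref{prop:VRn} are precisely engineered to make the resulting $\R^n$ equation of $\Sigma_{ij}^\R$ agree (up to the harmless factor of $2$) with (\ref{eq:plan-VorR}).
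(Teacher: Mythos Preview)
Your proof is correct and follows essentially the same approach as the paper: both start from the Voronoi representation of $\Sigma_{ij}^\R$ in Lemma~\ref{lem:VorS-VorR}, reduce the claim to the algebraic identity $\sscalar{\c_{ij}^\R,\theta} + \k_{ij}^\R \eta = \tfrac{1}{2}((\c_0^\S)_{ij} - \k_{ij}^\S)$, and verify the latter by expanding the M\"obius-flatness relation $\sscalar{\c_{ij}^\S,\xi} + \k_{ij}^\S = 0$ together with the definitions of $\theta$ and $\eta$. The only difference is cosmetic ordering---the paper establishes the algebraic identity first (its equation~(\ref{eq:rel-1})) and then combines with the interface equation, whereas you write the interface equation first and then isolate what remains to be checked.
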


\begin{proof}
Recalling (\ref{eq:kRkS}), we have:
\begin{align}
&\sscalar{\c_{ij}^\R, \theta} + \k_{ij}^\R \eta+ \frac{\k_{ij}^\S-(\c_0^\S)_{ij}}{2} \label{eq:rel-1} \\
& = \sscalar{\underline \c_{ij}^\S, \theta} +(\c_0^\S)_{ij} \brac{\eta-\frac{1}{2}}+ \k_{ij}^\S \brac{\eta+\frac{1}{2}} \nonumber \\
& = \frac{1}{1-\xi_0} \brac{\sscalar{\underline \c_{ij}^\S, \underline \xi}+ (\c_0^\S)_{ij}\xi_0+ \k_{ij}^\S  } \nonumber\\
& = \frac{1}{1-\xi_0} \brac{\sscalar{\c_{ij}^\S, \xi} + \k_{ij}^\S}=0.  \nonumber
\end{align} 
Since $2\k_{ij}^\S - \k_{ij}^\R = \k_{ij}^\S -(\c_0^\S)_{ij}$, the Voronoi representation of $\Omega^\R$ in Lemma \ref{lem:VorS-VorR} implies
\begin{equation*}
\k_{ij}^\R \frac{|x|^2}{2} + \sscalar{\c_{ij}^\R, x} + \frac{\k_{ij}^\S -(\c_0^\S)_{ij}}{2} =0, \quad \forall x \in \Sigma_{ij}^\R.
\end{equation*}
Then (\ref{eq:rel-3}) follows by this formula and (\ref{eq:rel-1}). 
\end{proof}

\begin{proof}[Proof of Proposition \ref{prop:VRn}]
Since $|\xi|<1$, we have $|\xi_0|<1$, and hence
\begin{align*}
V(x) =& \; \frac{|x|^2}{2} - \scalar{x, \frac{\underline \xi}{1-\xi_0}}+ \frac{1+\xi_0}{2(1-\xi_0)}\\
=& \; \frac{1}{2} \brac{ \abs{x - \frac{\underline \xi}{1-\xi_0}}^2 - \frac{|\underline \xi|^2}{(1-\xi_0)^2} + \frac{1+\xi_0}{1-\xi_0} }\\
=& \; \frac{1}{2} \brac{ \abs{x- \frac{\underline \xi}{1-\xi_0}}^2 + \frac{1-|\xi|^2}{(1-\xi_0)^2} }>0 ,
\end{align*}
establishing that $V$ is a smooth positive function on $\R^n$.
Using (\ref{eq:rel-3}), we obtain on $\Sigma_{ij}^\R$
\begin{align}
\nabla_{\n_{ij}^\R} V & = \sscalar{x-\theta, \c_{ij}^\R + \k_{ij}^\R x } \nonumber\\
& = \k_{ij}^\R \brac{ \frac{|x|^2}{2} - \sscalar{x, \theta} +\eta }+ \k_{ij}^\R \frac{|x|^2}{2} + \sscalar{\c_{ij}^\R, x-\theta} - \k_{ij}^\R \eta \nonumber\\
& =\k_{ij}^\R V. \label{eq:BCv-Eucl}
\end{align}
By Lemma \ref{lem:RegularVoronoiIsStationary}, $\Omega^\R$ is stationary, and so as in the proof of Proposition \ref{prop:VSn}, $\nabla_{\n_{\partial ij}^\R} V =  \bar{\II}^{\partial ij} V$ on $\partial \Sigma_{ij}^\R$. Therefore $V$ satisfies the non-oriented conformal BCs, and since all interfaces are generalized spheres, Lemma \ref{lem:constant-curvature} implies that $V$ is a conformally flattening boundary potential.

Using (\ref{eq:BCv-Eucl}) and since $\nabla_{\R^n}^2 V = g_{\R^n}$, on each non-empty interface $\Sigma_{ij}^\R$ we have
\begin{equation*}
\nabla_{\Sigma^1}^2 V= \nabla_{\R^n}^2 V - \nabla_{\n_{ij}^\R} V \, \II_{\Sigma^1}=\brac{1- (\k_{ij}^\R)^2 V } g_{\Sigma^1}.
\end{equation*}
Then $\Delta_{\Sigma^1} V = (n-1)(1- (\k_{ij}^\R)^2 V)$ on $\Sigma_{ij}^\R$.  Since $\Ric_{\R^n} (\n, \n) + \norm{\II}^2= (n-1) (\k_{ij}^\R)^2$ on $\Sigma_{ij}^\R$, we deduce on the entire $\Sigma^1$:
\begin{equation*}
L_{Jac} V = \Delta_{\Sigma^1} V + \brac{ \Ric_{\R^n} (\n, \n) + \norm{\II}^2} V= n-1 .
\end{equation*}
On the other hand, we have that $\Ric_{\Sigma^1} = (n-2) (\k_{ij}^\R)^2 g_{\Sigma^1}$ on $\Sigma_{ij}^\R$. Thus,
\begin{align*}
V \hRic_{\Sigma^1}^V =& V \Ric_{\Sigma^1} - \nabla_{\Sigma^1}^2 V + \Delta_{\Sigma^1} V g_{\Sigma^1}\\
=&\brac{ (n-2)(\k_{ij}^\R)^2 V - (1- (\k_{ij}^\R)^2 V) + (n-1) (1- (\k_{ij}^\R)^2 V)} g_{\Sigma^1}\\
=&(n-2) g_{\Sigma^1}.
\end{align*}
This completes the proof.
\end{proof}

\subsection{Spherical Voronoi partitions in $\HH^n$}

Contrary to the spherical Voronoi partitions (or clusters) in $\S^n$ and $\R^n$, which were already introduced and studied in \cite{EMilmanNeeman-TripleAndQuadruple,EMilmanNeeman-QuintupleBubble},  spherical Voronoi partitions in $\HH^n$ have not been previously defined, and there are some subtleties to take care of. In particular, we will not define a spherical Voronoi partition in $\HH^n$ as a stereographic projection of a spherical Voronoi (generalized) partition in $\S^n$, as we did for $\R^n$, because our stereographic projection $\pi_\HH$ will map $\HH^n$ onto the Northern hemisphere $\S^n_+$ instead of the punctured sphere $\S^n \setminus \{ N\}$. 
The reason is that $\HH^n$ is not conformal to $\S^n \setminus \{ N \}$, and so in particular there is no stereographic projection mapping $\HH^n$ to $\S^n \setminus \{ N \}$. This is related to the fact that there are no non-trivial conformal automorphisms of $\HH^n$ -- every conformal automorphism is necessarily an isometry. 
\medskip

Recall the hyperboloid model of $\HH^n$ in Minkowski space-time $(\R^{n,1}, \scalar{\cdot,\cdot}_1)$ introduced in Subsection \ref{subsec:prelim-gen-spheres}, given by: 
\begin{equation*}
\HH^n= \set{ y = (\underline{y}, y_0)  \in \R^{n,1} : \sabs{y}^2_1 = -1, ~ y_0 >0 },
\end{equation*}
where we employ the useful abbreviation $\sabs{y}^2_1 := \sscalar{y,y}_1$. 
In particular, if $Z \in T_y \HH^n$ then $\scalar{Z,y}_1 = 0$. 
Also recall that a complete hypersurface $S$ in $\HH^n$ of constant curvature $\k$ with respect to the unit-normal $\n$ is called a generalized sphere, and that its quasi-center $\c$ is defined as $\c = \n - \k y$. Note that $S$ is then given by $\{ y \in \HH^{n} : \sscalar{\c,y}_1 - \k = 0\}$.

\begin{definition}[Spherical Voronoi partition in $\HH^n$]
A  partition $\Omega^\HH = (\Omega^\HH_1, \ldots, \Omega^\HH_q)$ in $\HH^n$, all of whose cells are non-empty, is called a spherical Voronoi partition if there exist $\{\c_i^\HH \}_{i=1, \ldots, q} \subset \R^{n,1}$ and $\{\k_i^\HH \}_{i=1, \ldots, q} \subset \R$ so that the following holds:
\begin{enumerate}
\item For every non-empty interface $\Sigma_{ij}^\HH \neq \emptyset$, $\Sigma_{ij}^\HH$ is a relatively open subset of a generalized sphere $S_{ij}^\HH$ with quasi-center $\c_{ij}^\HH = \c_i^\HH - \c_j^\HH$ and curvature $\k_{ij}^\HH = \k_i^\HH - \k_j^\HH$.
\item The following Voronoi representation holds:
\begin{equation*}
\Omega_i^\HH = \set{y \in \HH^n :\argmin_{j=1, \ldots, q} (\sscalar{\c_j^\HH, y}_1 -\k_{j}^\HH) =\{i\}}
= \bigcap_{j \neq i} \set{y \in \HH^n : \sscalar{\c_{ij}^\HH, y}_1 -\k_{ij}^\HH<0}.
\end{equation*}
\end{enumerate}
\end{definition}
Note that the second condition implies that a non-empty $\Sigma^\HH_{ij}$ is a subset of the generalized sphere $S^\HH_{ij} = \{ y \in \HH^{n} : \sscalar{\c_{ij}^\HH,y}_1 - \k_{ij}^\HH = 0\}$. Consequently, given the second condition, the requirement in the first condition for every non-empty $\Sigma^\HH_{ij}$ that $\c^\HH_{ij}$ and $\k^\HH_{ij}$ are the quasi-center and curvature of $S^\HH_{ij}$, respectively, is equivalent to the requirement that
\begin{equation} \label{eq:ckH-relation}
\sabs{\c_{ij}^\HH}^2_1 = 
\sscalar{\n_{ij}^\HH-\k_{ij}^\HH y, \n_{ij}^\HH-\k_{ij}^\HH y}_1=1- (\k_{ij}^\HH)^2 . 
\end{equation}

\begin{lemma}\label{lem:HnStationary}
A regular spherical Voronoi partition in $\HH^n$ is stationary.
\end{lemma}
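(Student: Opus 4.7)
The plan is to verify the three conditions of Lemma \ref{lem:stationarity}: locally bounded curvature, that $H_{\Sigma_{ij},\mu}$ is constant of separable form $\lambda_i - \lambda_j$, and that the three unit normals at a triple point sum to zero. Since the construction of $\Omega^\HH$ from $\{\c_i^\HH\}$ and $\{\k_i^\HH\}$ directly mimics the Euclidean and spherical cases treated in \cite[Subsection 8.5]{EMilmanNeeman-TripleAndQuadruple}, the argument is parallel, with the only real difference being that computations now take place in Minkowski space-time $(\R^{n,1},\scalar{\cdot,\cdot}_1)$.

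The key identity to prove is the normal formula
\[
\n_{ij}^\HH(y) = \c_{ij}^\HH + \k_{ij}^\HH y \qquad \forall y \in \Sigma_{ij}^\HH.
\]
I would derive this by noting that on $\Sigma_{ij}^\HH$, the defining function $f(y) = \sscalar{\c_{ij}^\HH,y}_1 - \k_{ij}^\HH$ vanishes, so the hyperbolic gradient $\nabla^\HH f = \c_{ij}^\HH + \sscalar{\c_{ij}^\HH,y}_1 y = \c_{ij}^\HH + \k_{ij}^\HH y$ (projecting the Minkowski gradient onto $T_y\HH^n$). Its Minkowski length squared equals
\[
\sabs{\c_{ij}^\HH}_1^2 + 2\k_{ij}^\HH \sscalar{\c_{ij}^\HH,y}_1 - (\k_{ij}^\HH)^2 = (1-(\k_{ij}^\HH)^2) + 2(\k_{ij}^\HH)^2 - (\k_{ij}^\HH)^2 = 1,
\]
using (\ref{eq:ckH-relation}), so $\n_{ij}^\HH$ is a unit vector, pointing from $\Omega_i^\HH$ (where $f<0$) to $\Omega_j^\HH$ (where $f>0$), as required by the Voronoi representation.

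From this formula, for $X \in T_y \Sigma_{ij}^\HH$ the Levi-Civita connection on $\HH^n$ gives $\nabla^\HH_X \n_{ij}^\HH = \nabla^{\R^{n,1}}_X(\c_{ij}^\HH + \k_{ij}^\HH y) - \scalar{X,\n_{ij}^\HH}_1 y = \k_{ij}^\HH X$, hence $\II^{ij} = \k_{ij}^\HH g_{\Sigma_{ij}^\HH}$ and $H_{ij,\mu} = H_{ij} = (n-1)\k_{ij}^\HH = \lambda_i - \lambda_j$ with $\lambda_i := (n-1)\k_i^\HH$, verifying \ref{it:stationarity-k}. Since $\II^{ij}$ is constant on each interface and only finitely many interfaces meet a compact set by local finiteness of the partition, locally bounded curvature follows immediately.

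For condition \ref{it:stationarity-n}, at any triple point $y \in \Sigma_{uvw}$ the normal formula yields
\[
\sum_{(i,j)\in\cyclic(u,v,w)} \n_{ij}^\HH(y) = \Big(\sum_{(i,j)} \c_{ij}^\HH\Big) + \Big(\sum_{(i,j)} \k_{ij}^\HH\Big) y = 0,
\]
since both $\c_{ij}^\HH = \c_i^\HH - \c_j^\HH$ and $\k_{ij}^\HH = \k_i^\HH - \k_j^\HH$ telescope around the cycle. Applying Lemma \ref{lem:stationarity} concludes the proof. I expect the only subtle point to be the Minkowski-signature gradient computation and the sign verification for $\n_{ij}^\HH$ pointing in the correct direction, but once the normal formula is in hand, all remaining steps are direct algebraic consequences of the separability of $\c^\HH$ and $\k^\HH$, exactly as in the Euclidean and spherical cases.
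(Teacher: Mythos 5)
Your proposal is correct and follows essentially the same route as the paper's proof: verify the two geometric conditions of Lemma~\ref{lem:stationarity} (constant separable mean curvature and vanishing of the cyclic sum of normals at triple points) plus locally bounded curvature, all stemming from the normal formula $\n_{ij}^\HH = \c_{ij}^\HH + \k_{ij}^\HH y$. The paper's proof simply states these identities without derivation, whereas you derive them from the Minkowski-gradient computation and (\ref{eq:ckH-relation}); a minor cosmetic note is that the projection term $-\sscalar{X,\n_{ij}^\HH}_1 y$ you insert when computing $\nabla^\HH_X \n_{ij}^\HH$ is not quite the correct tangential-projection correction (which would be $+\sscalar{\k_{ij}^\HH X, y}_1 y$), but both vanish since $X \in T_y\Sigma_{ij}^\HH \subset T_y\HH^n$, so the conclusion $\nabla^\HH_X\n_{ij}^\HH = \k_{ij}^\HH X$ and hence $\II^{ij} = \k_{ij}^\HH g_{\Sigma_{ij}^\HH}$ is unaffected.
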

\begin{proof}
Let $\Omega^\HH$ be a regular spherical Voronoi partition in $\HH^n$. Since $\Omega^\HH$ is locally finite and $\II_{\Sigma^1}=\k_{ij}^\HH g_{\Sigma^1}$ on each non-empty interface $\Sigma_{ij}^\HH$, we conclude that $\Omega$ has locally bounded curvature. Note that $H_{\Sigma_{ij}^\HH} = (n-1)\k_i^\HH-(n-1)\k_j^\HH$ for all interfaces $\Sigma_{ij}^\HH$ and that $\sum_{(i,j)\in \cyclic(u,v,w) } \n_{ij}^\HH =\sum_{(i,j)\in \cyclic(u,v,w) } (\c_{ij}^\HH+\k_{ij}^\HH y )=0 $ for all $y \in \Sigma_{uvw}$ and $u<v<w$. It follows by Lemma \ref{lem:stationarity} that $\Omega^\HH$ is stationary.
\end{proof}

\begin{lemma} \label{lem:bounded}
A spherical Voronoi $(q-1)$-cluster $\Omega^\HH$ in $\HH^n$ is bounded. 
\end{lemma}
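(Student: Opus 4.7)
The plan is to argue by contradiction: assume $\Omega_{i_0}^\HH$ is unbounded for some $i_0 \in \{1,\ldots,q-1\}$ and derive a contradiction with $V(\Omega_{i_0}^\HH) < \infty$. Working in the hyperboloid model, unboundedness produces a sequence $y_m \in \Omega_{i_0}^\HH$ with $y_{m,0} \to \infty$; normalizing $p_m := y_m/y_{m,0}$ and extracting a convergent subsequence yields a future-directed null vector $p^* \in \R^{n,1}$ with $p^*_0 = 1$ and $|\underline{p^*}| = 1$. Dividing the defining inequalities $\sscalar{c_{i_0 j}^\HH, y_m}_1 < k_{i_0 j}^\HH$ by $y_{m,0}$ and letting $m \to \infty$ shows that $i_0$ lies in the ``winner set'' $W(p^*) := \argmin_k \sscalar{c_k^\HH, p^*}_1$.

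A key intermediate observation is that if some cell $\Omega_j^\HH$ is the unique winner at a future-null direction $p$, continuity of the linear functionals $\sscalar{c_k^\HH, \cdot}_1$ yields an open neighborhood of $p$ on the future-null sphere of unique winners for $j$; identifying $\HH^n$ with the open unit ball via the Poincar\'e model, this corresponds to an open Euclidean neighborhood of the boundary point representing $p$ whose intersection with the ball lies in $\Omega_j^\HH$, which forces $V(\Omega_j^\HH) = \infty$ because the hyperbolic volume form diverges at the boundary at infinity. Under the cluster assumption $V(\Omega_k^\HH) < \infty$ for $k \leq q-1$, the unique winner at any future-null direction with a unique winner must therefore be $q$; by density and continuity, $\sscalar{c_q^\HH, p}_1 \leq \sscalar{c_k^\HH, p}_1$ for every future-null $p$ and every $k$, so in particular $q \in W(p^*)$.

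The proof is concluded by examining the infinitesimal Voronoi partition on the $(n-1)$-dimensional tangent space at $p^*$, defined from the next-order expansion of $\sscalar{c_k^\HH - c_{i_0}^\HH, y}_1$ for $y \to p^*$ along $\HH^n$; its cells are Euclidean Voronoi cells in $\R^{n-1}$, each non-empty cell corresponding to a non-trivial cusp (and hence an unbounded cell) of $\Omega^\HH$ at $p^*$. If $|W(p^*)| = 2$, necessarily $W(p^*) = \{i_0, q\}$ and the tangent partition consists of two complementary half-spaces in $\R^{n-1}$; the cusp of $\Omega_{i_0}^\HH$ then has unbounded Euclidean cross-section, and its hyperbolic volume contribution, computed in the upper half-space model with cusp at infinity as $\int_R^\infty \mathrm{Area}(y)\, y^{-n}\, dy$, diverges and contradicts $V(\Omega_{i_0}^\HH) < \infty$. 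If $|W(p^*)| \geq 3$, then generically the tangent partition has three or more non-empty cells, producing three or more unbounded cells of $\Omega^\HH$, which exceeds the single unbounded cell $\Omega_q^\HH$ permitted by the cluster hypothesis. The main technical burden is to formalize the infinitesimal tangent-partition analysis and to handle degenerate configurations in which additional ties occur at higher order, both addressable via an iterated tangent analysis or a generic perturbation reducing to the non-degenerate case.
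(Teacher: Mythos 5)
Your proposal argues by contradiction through a boundary-at-infinity analysis, which is a genuinely different route than the paper's. The paper's proof is much shorter and more elementary: for each $i\leq q-1$ with $\Sigma^\HH_{iq}\neq\emptyset$, the separating generalized sphere $S^\HH_{iq}$ either has $|\k^\HH_{iq}|\leq 1$ (horosphere, equidistant, or totally geodesic), in which case \emph{both} complementary components have infinite volume and one of them lies inside $\HH^n\setminus\Omega^\HH_q = \cup_{i<q}\Omega^\HH_i$, contradicting finite volume; or $|\k^\HH_{iq}|>1$, so $S^\HH_{iq}$ is a geodesic sphere. In the latter (forced) case $\Omega^\HH_q$ is the intersection of finitely many exteriors of geodesic balls, hence $\HH^n\setminus\Omega^\HH_q$ is bounded and contains every $\Omega^\HH_i$. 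No tangent-cone or cusp analysis is needed.

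More importantly, your argument has a genuine gap in the case $|W(p^*)|\geq 3$. You conclude that the tangent partition would produce ``three or more unbounded cells of $\Omega^\HH$, which exceeds the single unbounded cell $\Omega^\HH_q$ permitted by the cluster hypothesis.'' But the cluster hypothesis does \emph{not} forbid multiple unbounded cells -- it only says that $V(\Omega^\HH_i)<\infty$ for $i=1,\ldots,q-1$. As the paper explicitly remarks just before the lemma, a set of this Voronoi form in $\HH^n$ may well be unbounded yet have finite volume and perimeter; the whole content of the lemma is precisely to rule that out for spherical Voronoi clusters. So ``$\Omega^\HH_{i_0}$ is unbounded'' is not itself a contradiction, and in the $|W(p^*)|\geq 3$ case you would actually need to show that some $\Omega^\HH_i$ with $i\leq q-1$ has \emph{infinite volume}. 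That requires controlling whether the corresponding tangent cusp cross-section is unbounded (Euclidean Voronoi cells can be bounded), and you have not done this. In addition, the ``degenerate configurations with ties at higher order'' are acknowledged but not handled, and the claimed reduction via ``iterated tangent analysis or a generic perturbation'' is far from routine since perturbing the $(\c^\HH_i,\k^\HH_i)$ may destroy the spherical Voronoi structure and change which interfaces are non-empty. As written, the proof is incomplete; the paper's direct curvature dichotomy on $S^\HH_{iq}$ avoids all of these difficulties.
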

While the analogous statement in $\R^n$ is obvious, this is not entirely so in $\HH^n$, since a set $U$ of the form $U = \bigcap_m \set{y \in \HH^n : \sscalar{\c_{m}^\HH, y}_1 -\k_{m}^\HH<0}$ may be unbounded and yet have finite volume and perimeter.
\begin{proof}
For any $i \in \{1,\ldots,q-1\}$ such that $\Sigma^\HH_{iq} \neq \emptyset$, consider the generalized sphere $S^\HH_{iq} = \{ y \in \HH^{n} : \sscalar{\c^\HH_{iq},y}_1 - \k^\HH_{iq} = 0\}$, which has curvature $\k^\HH_{iq}$. Note that the cells $\Omega^\HH_i$ and $\Omega^\HH_q$ lie on different sides of $S^\HH_{iq}$. If $\sabs{\k^\HH_{iq}} \leq 1$ then $S_{iq}$ would divide $\HH^n$ into two halves having infinite volume, implying that $V(\HH^n \setminus \Omega^\HH_q) = \infty$, in contradiction to our assumption that $V(\Omega^\HH_i) < \infty$ for all $i=1,\ldots,q-1$. We deduce that for every $i \in \{1,\ldots,q-1\}$ such that $\Sigma^\HH_{iq} \neq \emptyset$, $\sabs{\k^\HH_{iq}} > 1$ and hence $S_{iq}$ is a geodesic sphere. Consequently, $\Omega^\HH_q$ is the intersection of a finite number of complements of geodesic balls, and hence $\HH^n \setminus \Omega^\HH_q$ is the union of a finite number of geodesic balls. This shows that $\Omega^\HH_i \subset \HH^n \setminus \Omega^\HH_q$ is bounded for all $i=1,\ldots,q-1$. 
\end{proof}

Define the standard stereographic projection $\pi_{\HH}: \HH^n \to \S_+^n \subset \R^{n+1}$ by
\begin{equation*}
\pi_{\HH}(y) =\frac{(\underline{y},1)}{y_0}, \quad y = (\underline{y}, y_{0}) \in \HH^n.
\end{equation*} 
Here $\S^n_{\pm} = \{ p \in \S^n : \pm \sscalar{p,N} > 0 \}$ denote the open Northern and Southern hemispheres of $\S^n$ (respectively), in its canonical embedding in $\R^{n+1} = \R^n \times \R$. Being a stereographic projection, $\pi_{\HH}$ maps generalized spheres in $\HH^n$ onto the intersection of a geodesic sphere in $\S^n$ with $\S^n_+$. The precise relation between the corresponding quasi-centers and curvatures is as follows. 

\begin{lemma}\label{lem:rel-cS-cH}
Let $\c^\M \in \R^n \times \R$ and $\k^\M \in \R$, $\M \in \{\S, \HH\}$. Assume that $(\c^\S,\k^\S)$ and $(\c^\HH,\k^\HH)$ are related by the following invertible transformation:
\begin{align}
\label{eq:cH-cS} \c^\HH & = (\underline \c^\S, -\k^\S), \quad \k^\HH  = -\c_0^\S ,\\
\label{eq:cS-cH} \c^\S     & = (\underline\c^\HH, -\k^\HH), \quad \k^\S  = -\c_0^\HH.
\end{align}
Then:
\begin{equation} \label{eq:SH-relation}
\sscalar{\c^\S, \pi_\HH(y)} + \k^\S = \frac{1}{y_{0}} (\sscalar{\c^\HH, y}_1 - \k^\HH) ~ ~ \forall y \in \HH^n ,
\end{equation}
and
\[
\sabs{\c^\HH}^2_1 - (1 - (\k^\HH)^2)  = |\c^\S|^2 - (1 + (\k^\S)^2)  . 
\]
In particular, if $S^\S = \{ p \in \S^n : \sscalar{\c^\S,p} + \k^\S = 0\}$ denotes a geodesic sphere in $\S^n$ and  $S^\HH = \{ y \in \HH^n : \sscalar{ \c^\HH, y}_1- \k^\HH = 0\}$ denotes a generalized sphere in $\HH^n$, then $S^\HH = \pi_\HH^{-1}(S^\S)$, and $(\c^\S,\k^\S)$ are the quasi-center and curvature parameters of $S^\S$ iff $(\c^\HH,\k^\HH)$ are the quasi-center and curvature parameters of $S^\HH$. 
\end{lemma}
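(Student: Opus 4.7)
The plan is a direct computational verification, broken into three short steps that mirror the three assertions of the lemma.

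First I would establish the key identity (\ref{eq:SH-relation}). Writing $\pi_\HH(y) = (\underline y, 1)/y_0$, I would compute directly
\[
\sscalar{\c^\S,\pi_\HH(y)} + \k^\S = \frac{1}{y_0}\brac{\sscalar{\underline{\c^\S},\underline y} + c_0^\S + \k^\S y_0}
\]
and then substitute the relations (\ref{eq:cH-cS})--(\ref{eq:cS-cH}), namely $\underline{\c^\S} = \underline{\c^\HH}$, $c_0^\S = -\k^\HH$, and $\k^\S = -c_0^\HH$, to convert the bracket into $\sscalar{\underline{\c^\HH},\underline y} - c_0^\HH y_0 - \k^\HH = \sscalar{\c^\HH,y}_1 - \k^\HH$. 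This yields (\ref{eq:SH-relation}) verbatim.

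Next I would verify the norm identity. Using $\sabs{\c^\HH}^2_1 = |\underline{\c^\HH}|^2 - (c_0^\HH)^2 = |\underline{\c^\S}|^2 - (\k^\S)^2$ together with $(\k^\HH)^2 = (c_0^\S)^2$, both sides collapse to $|\c^\S|^2 - (\k^\S)^2 - 1$, so the identity $\sabs{\c^\HH}^2_1 - (1-(\k^\HH)^2) = |\c^\S|^2 - (1+(\k^\S)^2)$ is immediate. Note that the transformation (\ref{eq:cH-cS})--(\ref{eq:cS-cH}) is manifestly an involution: applying it twice restores $(\c^\S,\k^\S)$, so it is indeed invertible.

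Finally, for the "in particular" statement, the equivalence $S^\HH = \pi_\HH^{-1}(S^\S)$ is an immediate consequence of (\ref{eq:SH-relation}): since $y_0 > 0$ on $\HH^n$, the sign of $\sscalar{\c^\S,\pi_\HH(y)} + \k^\S$ matches the sign of $\sscalar{\c^\HH,y}_1 - \k^\HH$, and the zero set of one is carried by $\pi_\HH$ to the zero set of the other. To see the quasi-center/curvature characterization, recall from (\ref{eq:ckS-relation}) that $(\c^\S,\k^\S)$ are the quasi-center/curvature parameters of $S^\S$ iff $|\c^\S|^2 = 1 + (\k^\S)^2$, and from (\ref{eq:ckH-relation}) that $(\c^\HH,\k^\HH)$ are the quasi-center/curvature parameters of $S^\HH$ iff $\sabs{\c^\HH}^2_1 = 1 - (\k^\HH)^2$; the norm identity established in the previous step shows these two conditions are equivalent.

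There is no real obstacle here -- the entire proof is bookkeeping between the Minkowski inner product on $\R^{n,1}$ and the Euclidean inner product on $\R^{n+1}$ under the substitution swapping the last coordinate of the quasi-center with the (signed) curvature. The only point requiring a moment's care is the sign conventions in (\ref{eq:cH-cS})--(\ref{eq:cS-cH}), which I would double-check by computing on the two simplest examples: a totally geodesic hyperplane through the origin in $\HH^n$ (where $\k^\HH = 0$ and $\c^\HH$ is spacelike) and a horosphere based at the pole (where $\sabs{\k^\HH} = 1$ and $\c^\HH$ is null), and checking that the resulting $(\c^\S,\k^\S)$ describe the correct geodesic sphere in $\S^n_+$.
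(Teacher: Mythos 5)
Your proof is correct and follows essentially the same route as the paper: a direct computation establishing (\ref{eq:SH-relation}) by substituting the coordinate relations, a direct verification of the norm identity, and then invoking (\ref{eq:ckS-relation}) and (\ref{eq:ckH-relation}) for the quasi-center/curvature equivalence. The observation that (\ref{eq:cH-cS})--(\ref{eq:cS-cH}) is an involution is a nice touch that cleanly justifies the word ``invertible'' in the statement, though the paper leaves this implicit.
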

\begin{proof}
For any $y = (\underline y, y_{0}) \in \HH^n$, we have
\[
\sscalar{\c^\S, \pi_\HH(y)} + \k^\S=\frac{1}{y_{0}} \brac{\sscalar{(\underline \c^\S, \c_0^\S), (\underline y,1)}+ \k^\S y_{0}}= \frac{1}{y_{0}} \brac{\sscalar{(\underline \c^\S, -\k^\S), y }_1+\c_0^\S},
\]
establishing (\ref{eq:SH-relation}) and that $y \in S^\HH$ iff $\pi_\HH(y) \in S^\S$. In addition:
\[
\sabs{\c^\HH}^2_1 - (1 - (\k^\HH)^2) = \sabs{(\underline \c^\S, -\k^\S)}_1^2 +(\c_0^\S)^2-1=\sabs{\underline \c^\S}^2-(\k^\S)^2+(\c_0^\S)^2-1= |\c^\S|^2 - (1 + (\k^\S)^2) . 
\]
Recalling (\ref{eq:ckS-relation}) and (\ref{eq:ckH-relation}), it follows that the latter expression is equal to zero iff $(\c^\M,\k^\M)$ are the quasi-center and curvature parameters of $S^\M$, $\M \in \{\S,\HH\}$, concluding the proof. 
\end{proof}

\begin{lemma}\label{lem:pull-back SphVor-hyper}
Let $\Omega^\S$ be a spherical Voronoi generalized $q$-partition in $\S^n$ with quasi-center parameters $\{\c_i^\S\}_{i=1,\ldots,q}$ and curvature parameters $\{\k_i^\S\}_{i=1,\ldots,q}$. Assume that $\Omega^\S$ is locally finite in $\S^n_+$. 
Let $I := \{ i \in \{1,\ldots,q\} : \Omega^\S_i \cap \S^n_+ \neq \emptyset \}$, and set $\Omega^{\S_+} := (\Omega_i^\S \cap \S_+^n)_{i \in I}$. 
Then $\Omega^\HH := \pi_{\HH}^{-1} (\Omega^\S) = \pi_{\HH}^{-1} (\Omega^{\S_+})$ is a spherical Voronoi $|I|$-partition in $\HH^n$ with quasi-center parameters $\{\c_i^\HH\}_{i\in I}$ and curvature parameters $\{\k_i^\HH\}_{i\in I}$, where $(\c_i^\HH, \k_i^\HH)$ are obtained from $(\c_i^\S, \k_i^\S)$ via (\ref{eq:cH-cS}). 
\end{lemma}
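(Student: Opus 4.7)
The plan is to view $\pi_\HH : \HH^n \to \S^n_+$ as a (conformal) diffeomorphism and transport the Voronoi structure of $\Omega^\S$, restricted to $\S^n_+$, over to $\HH^n$, using the order-preserving identity (\ref{eq:SH-relation}) of Lemma \ref{lem:rel-cS-cH} as the only non-trivial ingredient. Since for $y \in \HH^n$ we have $y_0 > 0$, (\ref{eq:SH-relation}) implies that the ordering of $\{\sscalar{\c_j^\S,\pi_\HH(y)}+\k_j^\S\}_j$ coincides with the ordering of $\{\sscalar{\c_j^\HH,y}_1-\k_j^\HH\}_j$. Moreover, for $j \notin I$, the set $\Omega_j^\S \cap \S^n_+$ is empty, so the $\argmin$ value $\{j\}$ is never attained on $\S^n_+$ (and hence never attained on $\HH^n$); thus the $\argmin$ over $j \in \{1,\ldots,q\}$ coincides with the $\argmin$ over $j \in I$. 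Applying $\pi_\HH^{-1}$ to the Voronoi representation of $\Omega^\S$ and combining these observations, one reads off directly the required Voronoi representation of $\Omega^\HH = (\pi_\HH^{-1}(\Omega_i^\S))_{i\in I}$ in $\HH^n$ with parameters $\{(\c_i^\HH,\k_i^\HH)\}_{i\in I}$.

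Next, I would verify that $\Omega^\HH$ is actually a partition in our sense. Non-emptiness of each cell is immediate from the definition of $I$; pairwise disjointness and $V_{\vol_{\HH}}$-a.e.~coverage of $\HH^n$ follow from $\pi_\HH$ being a bijection $\HH^n \to \S^n_+$ together with $V_{\vol_\S}(\S^n \setminus \S^n_+)=0$. Locally finite perimeter is preserved under smooth diffeomorphisms (via a local chart, as already noted in Section \ref{sec:prelim}), and the same argument gives $\partial^*(\pi_\HH^{-1}(U)) = \pi_\HH^{-1}(\partial^* U)$, which identifies $\Sigma_{ij}^\HH$ with $\pi_\HH^{-1}(\Sigma_{ij}^\S \cap \S^n_+)$. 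Local finiteness of $\Omega^\HH$ in $\HH^n$ follows from the hypothesis that $\Omega^\S$ is locally finite in $\S^n_+$, because $\pi_\HH$ is proper onto $\S^n_+$: any compact $K\subset \HH^n$ has compact image $\pi_\HH(K)\subset \S^n_+$, and hence meets only finitely many cells.

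Finally, for the interface structure, I would use that the affine transformation (\ref{eq:cH-cS}) is linear in $(\c^\S,\k^\S)$, so that the difference parameters $(\c_{ij}^\HH,\k_{ij}^\HH) = (\c_i^\HH-\c_j^\HH,\k_i^\HH-\k_j^\HH)$ are precisely the image of $(\c_{ij}^\S,\k_{ij}^\S)$ under (\ref{eq:cH-cS}). By Lemma \ref{lem:rel-cS-cH}, the geodesic sphere $S_{ij}^\S$ with parameters $(\c_{ij}^\S,\k_{ij}^\S)$ pulls back under $\pi_\HH$ to the generalized sphere $S_{ij}^\HH$ with parameters $(\c_{ij}^\HH,\k_{ij}^\HH)$, and whenever $\Sigma_{ij}^\HH \neq \emptyset$ the relatively open subset $\Sigma_{ij}^\S \cap \S^n_+ \subset S_{ij}^\S$ pulls back to the relatively open subset $\Sigma_{ij}^\HH \subset S_{ij}^\HH$, as required by the definition of a spherical Voronoi partition in $\HH^n$. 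I do not expect any genuine obstacle here — the entire argument is a dictionary translation via $\pi_\HH$, with the only real content being the compatibility identity (\ref{eq:SH-relation}) that is already established in Lemma \ref{lem:rel-cS-cH}.
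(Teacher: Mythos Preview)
Your proposal is correct and follows essentially the same approach as the paper's proof: both use the order-preserving identity (\ref{eq:SH-relation}) with $y_0>0$ to transfer the Voronoi representation from $\S^n_+$ to $\HH^n$, reduce the $\argmin$ from $\{1,\ldots,q\}$ to $I$ via the emptiness of $\Omega_j^\S \cap \S^n_+$ for $j\notin I$, and then invoke Lemma \ref{lem:rel-cS-cH} to identify the quasi-center and curvature parameters of the non-empty interfaces. Your version spells out the partition-verification steps (properness of $\pi_\HH$, preservation of locally finite perimeter and reduced boundaries) in slightly more detail than the paper, but there is no substantive difference.
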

\begin{proof}
The assumption implies that $\Omega^{\S_+}$ 
is a locally-finite partition of $\S_+^n$, and hence $\Omega^\HH$ is indeed a locally-finite partition of $\HH^n$. Since $\Omega^\S_j \cap \S^n_+ = \emptyset$ for all $j \notin I$, we have for all $i \in I$:
\begin{align*}
\Omega_i^{\S_+} =  \Omega^\S_i \cap \S^n_+ & = \set{p \in \S_+^n : \argmin_{j=1, \ldots, q} (\sscalar{\c^\S_j, p} + \k_{j}^\S) =\{i\}} \\
& = \set{p \in \S_+^n : \argmin_{j \in I} (\sscalar{\c^\S_j, p} + \k_{j}^\S) =\{i\}} .
\end{align*}
Hence, by (\ref{eq:SH-relation}) and since $y_{0} > 0$:
\begin{align*}
\Omega^\HH_i = \pi_\HH^{-1}(\Omega_i^{\S_+}) & = \set{ y \in \HH^n : \argmin_{j \in I} (\sscalar{\c^\S_j, \pi_\HH(y)} + \k_{j}^\S) =\{i\}} \\
& = \set{ y \in \HH^n : \argmin_{j \in I} \frac{1}{y_{0}} (\sscalar{\c^\HH_j, y}_1 - \k_{j}^\HH) =\{i\}} \\
& = \set{ y \in \HH^n : \argmin_{j \in I} (\sscalar{\c^\HH_j, y}_1 - \k_{j}^\HH) =\{i\}} .
\end{align*}
If $\Sigma^\HH_{ij} \neq \emptyset$ for $i,j \in I$ then necessarily $\Sigma^\S_{ij} \neq \emptyset$, and hence $(\c^\S_{ij} , \k^\S_{ij})$ are the quasi-center and curvature parameters of $\Sigma^\S_{ij}$. It follows by Lemma \ref{lem:rel-cS-cH} that $(\c^\HH_{ij},\k^\HH_{ij})$ are the quasi-center and curvature parameters of $\Sigma^\HH_{ij}$, and we conclude that $\Omega^\HH$ is a spherical Voronoi $|I|$-partition in $\HH^n$, as asserted. 
\end{proof}

\begin{proposition}\label{prop:cluster hyper-clus spher}
Let $\Omega^{\HH}$ be a spherical Voronoi $(q-1)$-cluster in $\HH^n$. Then there exists a unique spherical Voronoi $(q-1)$-cluster $\Omega^\S$ in $\S^n$ such that $\Omega^\HH = \pi_{\HH}^{-1} (\Omega^\S)$. Moreover, $\Omega_{q}^\S \supset \overline{\S^n_-}$, and in particular $\Sigma_{ij}^\HH \neq \emptyset$ iff $\Sigma_{ij}^\S \neq \emptyset$. The quasi-center and curvature parameters  $\{\c_i^\M\}_{i=1,\ldots,q}$ and $\{\k_i^\M\}_{i=1,\ldots,q}$, respectively, of $\Omega^\M$, $\M \in \{\S,\HH\}$, are related via (\ref{eq:cH-cS}) and (\ref{eq:cS-cH}). 
\end{proposition}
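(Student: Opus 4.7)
The plan is to define $\Omega^\S$ as the spherical Voronoi partition on $\S^n$ with parameters $\{\c_i^\S, \k_i^\S\}$ obtained from those of $\Omega^\HH$ via (\ref{eq:cS-cH}), writing $L_j(p) := \sscalar{\c_j^\S, p} + \k_j^\S$. Using the common-shift freedom in the $\HH$-parameters, I first normalize so that $(\c_q^\HH, \k_q^\HH) = (0, 0)$; this makes $L_q \equiv 0$ on $\S^n$ and gives $\c_j^\HH = \c^\HH_{jq}$, $\k_j^\HH = \k^\HH_{jq}$ for $j < q$. The identity $\pi_\HH^{-1}(\Omega^\S) = \Omega^\HH$ is then immediate from (\ref{eq:SH-relation}) and the positivity of $y_0$ on $\HH^n$ --- equivalently, since (\ref{eq:cS-cH}) and (\ref{eq:cH-cS}) are mutual inverses, Lemma \ref{lem:pull-back SphVor-hyper} applied to $\Omega^\S$ recovers a Voronoi partition in $\HH^n$ with the original parameters, hence equal to $\Omega^\HH$. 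In particular, each $\Omega^\S_i \cap \S^n_+ \neq \emptyset$.

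The heart of the proof is the containment $\Omega^\S_q \supset \overline{\S^n_-}$. For each $j < q$, I would establish two claims about the normalized parameters: (i) $\c^\HH_{jq}$ is strictly past-timelike, i.e., $-(\c_0^\HH)_{jq} > |\underline{\c^\HH_{jq}}|$; and (ii) $\k^\HH_{jq} > 0$. For (i): by Lemma \ref{lem:bounded} the exterior $\Omega^\HH_q$ is the complement of a finite union of geodesic balls and therefore reaches every asymptotic direction $\hat u \in \S^{n-1}$; dividing the Voronoi constraint $\sscalar{\c^\HH_{qj}, y}_1 < \k^\HH_{qj}$ on $\Omega^\HH_q$ by $y_0 \to \infty$ forces $\sscalar{\underline{\c^\HH_{qj}}, \hat u} \leq (\c_0^\HH)_{qj}$ for every $\hat u$, yielding the weak inequality $-(\c_0^\HH)_{jq} \geq |\underline{\c^\HH_{jq}}|$. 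To upgrade to strict: $\c^\HH_{jq} = 0$ would fuse $\Omega^\HH_j$ with $\Omega^\HH_q$ (contradicting non-emptiness of distinct cells), while past-null $\c^\HH_{jq}$ would make the Voronoi halfspace for $\Omega^\HH_q$ either all of $\HH^n$ (forcing the conjugate halfspace for $\Omega^\HH_j$ to be empty) or a region shaped as the complement of a horoball, missing a single asymptotic direction at infinity --- each contradicting the cluster structure. For (ii): strict past-timelike-ness yields $\sscalar{\c^\HH_{jq}, y}_1 \geq \alpha_{jq} := \sqrt{(\c_0^\HH)_{jq}^2 - |\underline{\c^\HH_{jq}}|^2} > 0$ on $\HH^n$, and non-emptiness of $\Omega^\HH_j \subset \{y: \sscalar{\c^\HH_{jq}, y}_1 < \k^\HH_{jq}\}$ forces $\k^\HH_{jq} > \alpha_{jq} > 0$.

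Together (i) and (ii) give, for any $p = (\underline p, p_0) \in \overline{\S^n_-}$ (so $p_0 \leq 0$, $|\underline p| \leq 1$) and every $j < q$,
\[
L_j(p) - L_q(p) = \sscalar{\underline{\c^\HH_{jq}}, \underline p} - \k^\HH_{jq} p_0 - (\c_0^\HH)_{jq} \geq \bigl(|(\c_0^\HH)_{jq}| - |\underline{\c^\HH_{jq}}|\bigr) + \k^\HH_{jq} |p_0| > 0,
\]
so $p \in \Omega^\S_q$. The interface correspondence $\Sigma^\HH_{ij} \neq \emptyset \iff \Sigma^\S_{ij} \neq \emptyset$ then follows because $\Omega^\S_i \subset \S^n_+$ for every $i < q$, and $\pi_\HH$ identifies interfaces cell-by-cell via $\Omega^\S_i \cap \S^n_+ = \pi_\HH(\Omega^\HH_i)$. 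Uniqueness is immediate: any other such Voronoi cluster must agree with $\Omega^\S$ on the open set $\S^n_+$ via $\pi_\HH$, and a spherical Voronoi partition on $\S^n$ is determined by its restriction to any open subset (the parameters being fixed up to a common shift). The most delicate step will be (i), specifically ruling out past-null $\c^\HH_{jq}$ when $\Sigma^\HH_{jq} = \emptyset$, where the cluster's property of reaching every asymptotic direction plays an essential role.
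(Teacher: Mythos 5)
Your approach to the containment $\Omega^\S_q \supset \overline{\S^n_-}$ is a genuinely different route from the paper's, and it is essentially correct. The paper's argument is topological: boundedness of $\pi_\HH(\Omega^\HH_i)$ away from the equator forces each connected sphere $S^\S_{iq}$ (non-empty in $\S^n_+$ by an intermediate-value argument, disjoint from the collar $\S^n_{+,\eps}$) to lie entirely in the polar cap $\{\sscalar{p,N}\geq\eps\}$, whence $\Omega^\S_i$ lies in the cap and $\overline{\S^n_-}\subset\Omega^\S_q$. Your argument instead analyzes the Minkowski causal character of the normalized quasi-center differences $\c^\HH_{jq}$: strict past-timelikeness plus $\k^\HH_{jq}>0$ yield the pointwise strict inequality $L_j(p)>L_q(p)$ on $\overline{\S^n_-}$. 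The computation for $\alpha_{jq}=\sqrt{(\c_0^\HH)_{jq}^2 - |\underline{\c^\HH_{jq}}|^2}$ as the minimum of $\sscalar{\c^\HH_{jq},\cdot}_1$ on $\HH^n$ checks out, and your approach is more explicit and quantitative. The delicate step (i) does work as sketched, but note that when $\Sigma^\HH_{jq}=\emptyset$ and $\c^\HH_{jq}$ is past-null, the value of $\k^\HH_{jq}$ is unconstrained, and each of the sub-cases $\k^\HH_{jq}>0$, $=0$, $<0$ needs to be treated (the first two make $\Omega^\HH_j$ empty via the reverse halfspace; the third makes $\Omega^\HH_q$ miss the asymptotic direction $\hat{\underline{\c^\HH_{qj}}}$, contradicting Lemma \ref{lem:bounded}).

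The genuine gap is in uniqueness. Your claim that ``a spherical Voronoi partition on $\S^n$ is determined by its restriction to any open subset (the parameters being fixed up to a common shift)'' is false as stated: restricted to $\S^n_+$, a Voronoi cluster that hides a small cell entirely inside $\S^n_-$ is indistinguishable from one that places it elsewhere, and more relevantly, the parameters of a spherical Voronoi partition are \emph{not} uniquely determined up to common shift when some interfaces $\Sigma_{ij}$ are empty (the difference $(\c_{ij},\k_{ij})$ is only pinned down for non-empty interfaces). What you actually need is that any competing $(q-1)$-cluster $\Omega^{\S,'}$ with $\pi_\HH^{-1}(\Omega^{\S,'})=\Omega^\HH$ also satisfies $\Omega^{\S,'}_q\supset\overline{\S^n_-}$; then agreement on $\S^n_+$ via $\pi_\HH$ forces $\Omega^{\S,'}=\Omega^\S$. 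The paper obtains this by re-running the topological containment argument for $\Omega^{\S,'}$, using only boundedness of $\pi_\HH(\Omega^\HH_i)$ away from the equator --- no knowledge of the parameters of $\Omega^{\S,'}$ is needed. Your computational route does not transfer directly to an arbitrary $\Omega^{\S,'}$ since you cannot assume its parameters come from $(\c^\HH_i,\k^\HH_i)$ via (\ref{eq:cS-cH}). You should also verify explicitly that $\Omega^\S$ is a genuine cluster (the parameter tuples $(\c^\S_i,\k^\S_i)$ are distinct, so $\cup_{i<j}S^\S_{ij}$ is $\H^n$-null) and that (\ref{eq:ckS-relation}) holds for all non-empty $\Sigma^\S_{ij}$ (this uses that all interfaces lie in $\S^n_+$, i.e.\ it follows from the containment, not before it).
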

\begin{proof}
Define $(\c^\S_i,\k^\S_i)$ from $(\c^\HH_i,\k^\HH_i)$ via (\ref{eq:cS-cH}), $i=1,\ldots,q$. Since all cells of $\Omega^\HH$ are non-empty by definition, necessarily $\{(\c^\HH_i,\k^\HH_i)\}$ are all distinct, and hence so are $\{(\c^\S_i,\k^\S_i)\}$. Now define
\[
\Omega^\S_i := \set{p \in \S^n : \argmin_{j=1, \ldots, q} (\sscalar{\c^\S_j, p} + \k_{j}^\S) =\{i\}} , \quad i=1,\ldots,q . 
\]
We claim that $\Omega^\S = (\Omega^\S_1,\ldots,\Omega^\S_q)$ is a cluster. Indeed, the cells are pairwise disjoint, and denoting:
\[
S^\S_{ij} := \set{p \in \S^n : \sscalar{\c^\S_{ij}, p} + \k_{ij}^\S =0 } ,
\]
clearly $\S^n \setminus \cup_{i=1}^q \Omega^\S_i \subset \cup_{i<j} S^\S_{ij}$. Since $\{(\c^\S_i , \k^\S_i)\}$ are all distinct, $\H^{n-1}(S^\S_{ij}) < \infty$ for all $i<j$, and it follows that $\Omega^\S$ is a genuine cluster. Furthermore, repeating the argument in the proof of Lemma \ref{lem:pull-back SphVor-hyper}, we see that $\pi_\HH^{-1}(\Omega^\S_i) = \Omega^\HH_i$ for all $i$. 

Since $\Omega^\HH_i$ are bounded for all $i=1,\ldots,q-1$ by Lemma \ref{lem:bounded}, the corresponding $\pi_{\HH}(\Omega^\HH_i) = \Omega^\S_i \cap \S^n_+$ are bounded away from the equator $\{ p \in \S^n : \sscalar{p,N} = 0 \}$. Consequently, there exists $\eps > 0$ so that, denoting $\S^n_{+,\eps} := \{ p \in \S^n : \sscalar{p,N} \in (0,\eps) \}$, we have:
\[
\forall p \in \S^n_{+,\eps} \;\; \forall i=1,\ldots,q-1 \;\; \sscalar{\c^\S_q, p} + \k^\S_q < \sscalar{\c^\S_i, p} + \k^\S_i  . 
\]
 Furthermore, since all $\pi_{\HH}(\Omega^\HH_i) = \Omega^\S_i \cap \S^n_+$ are non-empty, for each $i=1,\ldots,q-1$ there must be equality in the inequality above at some point $p_i \in \S^n_+$. We conclude that $S^\S_{iq} \cap \S^n_+ \neq \emptyset$ but $S^\S_{iq} \cap \S^n_{+,\eps} = \emptyset$ for all $i=1,\ldots,q-1$. But since $n \geq 2$, $S^\S_{iq}$ is connected, and hence necessarily $S^\S_{iq} \subset \S^n_+ \setminus \S^n_{+,\eps}$. Since $\Omega^\S_i$ and $\Omega^\S_q$ must lie on opposite sides of $S^\S_{iq}$, and since $\Omega^\S_q \supset \S^n_{+,\eps}$, it follows that $\Omega^\S_i  \subset \S^n_+ \setminus \S^n_{+,\eps}$ for all $i=1,\ldots,q-1$. Therefore $\Omega_{q}^\S \supset \overline{\S^n_-}$, and in particular all non-empty interfaces $\Sigma^\S_{ij}$ are contained in $\S^n_+$, and hence $\pi_{\HH}(\Sigma^\HH_{ij}) = \Sigma^\S_{ij}$. It follows that $\Sigma_{ij}^\HH \neq \emptyset$ iff $\Sigma_{ij}^\S \neq \emptyset$, and hence by Lemma \ref{lem:rel-cS-cH}, $\Omega^\S$ is a spherical Voronoi $(q-1)$-cluster with quasi-center and curvature parameters  $\{\c_i^\S\}_{i=1,\ldots,q}$ and $\{\k_i^\S\}_{i=1,\ldots,q}$, respectively. 
 
It remains to show the assertion regarding uniqueness. Let $\Omega^{\S,'}$ be any spherical Voronoi $(q-1)$-cluster in $\S^n$ such that $\pi_{\HH}^{-1}(\Omega^{\S,'}) = \Omega^\HH$. Repeating the argument from the previous paragraph, we deduce that $\Omega^{\S,'}_q \supset \overline{\S^n_-}$. But since $\Omega^{\S,'}_i \cap \S^n_+ = \Omega^{\S}_i \cap \S^n_+$ for all $i=1,\ldots,q$, it follows that $\Omega^{\S,'} = \Omega^\S$. This concludes the proof. 
\end{proof}

\begin{remark}
Proposition \ref{prop:cluster hyper-clus spher} is false for general partitions not induced by a cluster. The reason is that the constructed $\Omega^\S$ may posses additional non-empty interfaces $\Sigma^\S_{ij}$ in the Southern hemisphere $\S^n_-$, even though $\Sigma^\HH_{ij} = \emptyset$. Consequently, there is no guarantee that $\sabs{\c_{ij}^\S}^2 = 1 + (\k^\S_{ij})^2$ for the new interfaces $\Sigma^\S_{ij}$, and hence $\Omega^\S$ may not be spherical Voronoi. Concrete counterexamples are easy to construct. 
\end{remark}

\begin{definition} A spherical Voronoi partition $\Omega^\HH$ in $\HH^n$ is called M\"obius-flat if the collection $\{\pi_\HH (\Sigma_{ij}^\HH) : \Sigma_{ij}^\HH \neq \emptyset \}$ of stereographic projections of its non-empty interfaces is  M\"obius-flat. \end{definition}

The following characterization follows immediately from Lemmas \ref{lem:Mobius-flat-xi-sphere} and \ref{lem:rel-cS-cH}.
\begin{lemma}\label{lem:crit-MF-hyper}
A spherical Voronoi partition $\Omega^\HH$ in $\HH^n$ is M\"obius-flat if and only if
\begin{equation} \label{eq:xi-hyper}
\exists \xi= (\underline \xi, \xi_0) \in \R^n \times \R ~~ \sabs{\underline \xi}^2+ \xi_0^2<1 \quad \text{so that} \quad \sscalar{\c_{ij}^\HH, (\underline \xi, 1)}_1- \k_{ij}^\HH \xi_0=0 ~ ~ \forall \Sigma_{ij}^\HH \neq \emptyset ,
\end{equation}
where $\{\c^\HH_i\}$ and $\{\k^\HH_i\}$ are its quasi-center and curvature parameters, respectively. Equivalently, denoting $\{\c^\S_i\}$ and $\{\k^\S_i\}$ via (\ref{eq:cS-cH}), $\Omega^\HH$ is M\"obius-flat if and only if
\begin{equation} \label{eq:xi-hyper2}
\exists \xi \in \R^{n+1} ~~ \sabs{\xi}<1 \quad \text{so that} \quad \sscalar{\c_{ij}^\S, \xi} + \k_{ij}^\S =0 ~ ~ \forall \Sigma_{ij}^\HH \neq \emptyset .
\end{equation}
\end{lemma}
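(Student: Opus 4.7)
The plan is to chain together Lemma \ref{lem:Mobius-flat-xi-sphere} with the stereographic dictionary provided by Lemma \ref{lem:rel-cS-cH}, and then verify an algebraic identity to pass from the spherical parametrization to the hyperbolic one. The entire argument is essentially bookkeeping, so I will keep it brief.

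First I would unpack the definition of M\"obius-flatness in $\HH^n$: by definition, $\Omega^\HH$ is M\"obius-flat iff the collection $\{\pi_\HH(\Sigma_{ij}^\HH)\}_{\Sigma_{ij}^\HH \neq \emptyset}$ of relatively open subsets of geodesic spheres in $\S^n$ is M\"obius-flat. Since a M\"obius map has zero curvature on a relatively open subset of a geodesic sphere iff it has zero curvature on the ambient geodesic sphere, this is equivalent to M\"obius-flatness of the collection of ambient geodesic spheres. Lemma \ref{lem:rel-cS-cH} identifies these ambient geodesic spheres: the sphere containing $\pi_\HH(\Sigma_{ij}^\HH)$ is $S^\S_{ij} = \{p \in \S^n : \sscalar{\c^\S_{ij},p}+\k^\S_{ij}=0\}$, where $(\c^\S_{ij},\k^\S_{ij})$ is obtained from $(\c^\HH_{ij},\k^\HH_{ij})$ via (\ref{eq:cS-cH}), and these are the genuine quasi-center and curvature parameters of $S^\S_{ij}$ in $\S^n$. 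Now applying Lemma \ref{lem:Mobius-flat-xi-sphere} to this collection yields the characterization (\ref{eq:xi-hyper2}) directly.

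Next I would verify that (\ref{eq:xi-hyper}) and (\ref{eq:xi-hyper2}) are equivalent via the substitution $\xi = (\underline\xi,\xi_0) \in \R^n \times \R$. Using (\ref{eq:cS-cH}), namely $\c^\S = (\underline\c^\HH,-\k^\HH)$ and $\k^\S = -\c^\HH_0$, a direct computation gives
\begin{align*}
\sscalar{\c^\S_{ij},\xi} + \k^\S_{ij}
& = \sscalar{\underline\c^\HH_{ij},\underline\xi} - \k^\HH_{ij}\xi_0 - (\c^\HH_0)_{ij} \\
& = \sscalar{\c^\HH_{ij},(\underline\xi,1)}_1 - \k^\HH_{ij}\xi_0,
\end{align*}
so the two linear conditions coincide. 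Since $|\xi|^2 = |\underline\xi|^2 + \xi_0^2$, the norm constraint $|\xi|<1$ also matches, and both characterizations are seen to be equivalent. There is no genuine obstacle here — the only point that requires a moment of thought is the reduction from relatively open subsets to their ambient geodesic spheres, which rests on the evident fact that flatness is a property of the full sphere, not the subset.
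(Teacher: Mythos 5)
Your proposal is correct and follows exactly the route the paper intends: the paper's own proof is the single line ``follows immediately from Lemmas \ref{lem:Mobius-flat-xi-sphere} and \ref{lem:rel-cS-cH}'', and you have simply made the chaining explicit, including the (correctly computed) algebraic identity that converts (\ref{eq:xi-hyper2}) into (\ref{eq:xi-hyper}) via the Minkowski inner product and the dictionary (\ref{eq:cS-cH}). The only cosmetic remark: the phrase ``a M\"obius map has zero curvature on a relatively open subset'' should read ``the M\"obius image of a relatively open subset of a geodesic sphere has zero curvature iff the image of the ambient sphere does'' -- the argument itself is the right one, namely that M\"obius maps carry geodesic spheres to geodesic spheres and flatness is a property of the full sphere.
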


\begin{proposition}\label{prop:VHn}
Let $\Omega^\HH$ be a regular, M\"obius-flat, spherical Voronoi partition in $\HH^n$. 
Then there exists a conformally boundary flattening potential $V>0$ on $\HH^n$ such that
\begin{equation*}
L_{Jac} V =  (n-1) \xi_0 ~ , ~ V \hRic^V_{\Sigma^1}= (n-2) \xi_0 g_{\Sigma}~\text{on}~\Sigma^1,  
\end{equation*}
where $\xi = (\underline \xi,\xi_0)$ is given by Lemma \ref{lem:crit-MF-hyper}. 
Specifically, $V$ can be chosen as
\begin{equation}\label{V-hyperbolic}
V(y) = -\xi_0 - \sscalar{y, (\underline \xi, 1)}_1 , \quad y \in \HH^n.
\end{equation}
\end{proposition}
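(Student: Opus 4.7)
The plan is to verify all claims by direct computation, treating $V(y) = -\xi_0 - \langle y, (\underline\xi, 1)\rangle_1 = y_0 - \langle \underline y, \underline \xi\rangle - \xi_0$ as the Minkowski analogue of the affine potentials used in Propositions \ref{prop:VSn} and \ref{prop:VRn}. For positivity of $V$ on $\HH^n$, I would parametrize $y_0 = \cosh t$ and $\sabs{\underline y} = \sinh t$ for $t \geq 0$, and apply Cauchy--Schwarz to bound $V(y) \geq \cosh t - \sabs{\underline\xi}\sinh t - \xi_0$; minimizing in $t$ (the minimum is attained at $\tanh t = \sabs{\underline\xi}$) yields the sharp lower bound $\sqrt{1-\sabs{\underline\xi}^2} - \xi_0$, which is strictly positive precisely because $\xi_0^2 < 1 - \sabs{\underline\xi}^2$, i.e.~because $|\xi|<1$, as guaranteed by Lemma \ref{lem:crit-MF-hyper}.

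Next, I would check the non-oriented conformal BCs by verifying that $\nabla_{\n^\HH_{ij}} V = \k^\HH_{ij} V$ on every non-empty $\Sigma^\HH_{ij}$. Since $\n^\HH_{ij} = \c^\HH_{ij} + \k^\HH_{ij} y$ and $V$ is Minkowski-affine with tangential directional derivative $dV(Z) = -\langle Z, (\underline\xi, 1)\rangle_1$, one computes
\[ \nabla_{\n^\HH_{ij}} V = -\langle \c^\HH_{ij}, (\underline\xi,1)\rangle_1 - \k^\HH_{ij}\langle y, (\underline\xi,1)\rangle_1. \]
The M\"obius-flatness condition (\ref{eq:xi-hyper}) furnishes $\langle \c^\HH_{ij}, (\underline\xi,1)\rangle_1 = \k^\HH_{ij}\xi_0$ and, by construction, $\langle y,(\underline\xi,1)\rangle_1 = -V - \xi_0$; the two $\k^\HH_{ij}\xi_0$ contributions cancel and leave exactly $\k^\HH_{ij} V$. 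Combined with stationarity of $\Omega^\HH$ (Lemma \ref{lem:HnStationary}) and the constant-curvature interfaces assumption, Lemma \ref{lem:constant-curvature} then certifies $V$ as a conformally flattening boundary potential.

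For the two curvature identities, the decisive input is the intrinsic hyperbolic Hessian $\nabla^2_{\HH^n} V$. Since $V$ extends affinely to all of $\R^{n,1}$ (with vanishing ambient Hessian) and the outward Minkowski-normal to $\HH^n$ is $y$ itself with $\langle y, y\rangle_1 = -1$, a short projection argument yields
\[ \nabla^2_{\HH^n} V = (V + \xi_0)\, g_{\HH^n},\]
which may be viewed as the affine-function counterpart of the familiar identity $\nabla^2 \cosh d = \cosh d \cdot g$ on $\HH^n$. Restricting to $\Sigma^\HH_{ij}$ via the Gauss-type formula $\nabla^2_{\Sigma^1} V = \nabla^2_{\HH^n} V - \II^{ij}\nabla_{\n^\HH_{ij}} V$ (the convention used in the $\S^n$ and $\R^n$ proofs) gives
\[ \nabla^2_{\Sigma^1} V = \brac{V + \xi_0 - (\k^\HH_{ij})^2 V} g_{\Sigma^1}, \qquad \Delta_{\Sigma^1} V = (n-1)\brac{\xi_0 + V(1-(\k^\HH_{ij})^2)}. \]
Substituting into $L_{Jac} V = \Delta_{\Sigma^1} V + (\Ric_{\HH^n}(\n,\n) + \snorm{\II^{ij}}^2) V$ with $\Ric_{\HH^n}(\n,\n) = -(n-1)$ and $\snorm{\II^{ij}}^2 = (n-1)(\k^\HH_{ij})^2$ produces a complete cancellation of all $V$-dependent terms, leaving $(n-1)\xi_0$. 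Similarly, the Gauss equation gives $\Ric_{\Sigma^1} = (n-2)((\k^\HH_{ij})^2 - 1) g_{\Sigma^1}$, and substituting into $V\hRic^V_{\Sigma^1} = V\Ric_{\Sigma^1} - \nabla^2_{\Sigma^1} V + \Delta_{\Sigma^1} V \, g_{\Sigma^1}$ once more cancels all $(\k^\HH_{ij})^2$-dependence and leaves precisely $(n-2)\xi_0\, g_{\Sigma^1}$.

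The main obstacle is conceptual rather than technical. Unlike the spherical and Euclidean settings, where the analogous affine potentials automatically produce strictly positive constants $(n-1)$ and $(n-2)$ on the right-hand sides (and hence stability via Corollary \ref{cor:intro-stability}), on $\HH^n$ the constant $\xi_0 \in (-1, 1)$ emerging in both identities can have either sign, and the parameter $\xi$ is constrained only by the existence statement in Lemma \ref{lem:crit-MF-hyper}. This sign flexibility is exactly the source of the dichotomy in Theorem \ref{thm:intro-V}: when $\xi_0 > 0$, the Lichnerowicz-type identities (\ref{eq:intro-goodV}) are effective and yield stability under a volume-preserving constraint; when $\xi_0 \leq 0$, one instead obtains $L_{Jac} V \leq 0$ and appeals to Proposition \ref{prop:intro-LJacNegative}, giving the stronger (unconstrained) conclusion but only for a restricted class of M\"obius-flat partitions in $\HH^n$.
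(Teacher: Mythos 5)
Your proof is correct and takes essentially the same route as the paper's, differing only in minor organizational choices. You compute the hyperbolic Hessian $\nabla^2_{\HH^n}V = (V+\xi_0)g_{\HH^n}$ as an intermediate step before restricting to $\Sigma^1$ (whereas the paper projects directly from $\R^{n,1}$ onto $\Sigma^1$ in one step), and your positivity argument via minimization over $t$ with $y_0 = \cosh t$ gives the sharp infimum $\sqrt{1-\sabs{\underline\xi}^2}-\xi_0$ rather than the paper's slightly cruder Cauchy--Schwarz lower bound; both are fine, and the remaining steps — verification of the non-oriented conformal BCs via \eqref{eq:xi-hyper}, the invocation of Lemmas \ref{lem:HnStationary} and \ref{lem:constant-curvature}, the substitution into $L_{Jac}$ and $V\hRic^V_{\Sigma^1}$ with the Gauss equation — match the paper exactly.
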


\begin{proof}
Define function $V$ on $\HH^n$ as in (\ref{V-hyperbolic}). First, we prove the positivity of $V$. 
Using the Cauchy-Schwarz inequality, $|\underline \xi|^2+\xi_0^2<1$ and $y =(\underline y, y_{0}) \in \HH^n$, we have $
\brac{|\underline y||\underline \xi| + |\xi_0|}^2 < |\underline y|^2+1= y_{0}^2$. Then
\begin{equation*}
V(y) \geq  -|\xi_0|-|\underline y||\underline \xi|+y_{0}>0, \quad \forall y \in \HH^n.
\end{equation*}
Using (\ref{eq:xi-hyper}), we have on each non-empty interface $\Sigma_{ij}^\HH$,
\begin{equation}\label{eq:BCv-hyper}
\nabla_{\n_{ij}^\HH} V= -\scalar{(\underline \xi, 1), \c_{ij}^\HH+\k_{ij}^\HH y}_1= -\k_{ij}^\HH \xi_0- \k_{ij}^\HH \sscalar{(\underline \xi, 1), y}_1 = \k_{ij}^\HH V.
\end{equation}
By Lemma \ref{lem:HnStationary}, $\Omega^\HH$ is stationary, and so as in the proof of Proposition \ref{prop:VSn}, $\nabla_{\n_{\partial ij}^\HH} V =  \bar{\II}^{\partial ij} V$ on $\partial \Sigma_{ij}^\HH$. Therefore $V$ satisfies the non-oriented conformal BCs, and since all interfaces are generalized spheres, Lemma \ref{lem:constant-curvature} implies that $V$ is a conformally flattening boundary potential.

Differentiating $V$ twice on $\Sigma_{ij}^\HH$ and using (\ref{eq:BCv-hyper}), we have at $y \in \Sigma_{ij}^\HH$,
\begin{align*}
\nabla_{\Sigma^1}^2 V=& \; \nabla_{\R^{n,1}}^2 V -\sscalar{(\underline \xi,1), y - \k_{ij}^\HH \n_{ij}^\HH }_1 g_{\Sigma^1}\\
=& \; -\sscalar{(\underline \xi, 1), y}_1  g_{\Sigma^1}  - (\k_{ij}^\HH)^2 V g_{\Sigma^1}\\
=& \; -\brac{-\xi_0+ ((\k_{ij}^\HH )^2-1) V} g_{\Sigma^1}.
\end{align*}
Then $\Delta_{\Sigma^1} V = -(n-1) (-\xi_0+ ((\k_{ij}^\HH )^2-1) V)$. Since $\Ric_{\HH^n} (\n_{ij}^\HH, \n_{ij}^\HH) + \norm{\II^{ij}}^2 = - (n-1) + (n-1)(\k_{ij}^\HH)^2$ on each non-empty interface $\Sigma_{ij}^\HH$, we have
\begin{equation*}
L_{Jac} V =\Delta_{\Sigma^1} V + \brac{- (n-1) + (n-1)(\k_{ij}^\HH)^2} V =  (n-1) \xi_0.
\end{equation*} 
On the other hand, the Gauss equation implies $\Ric_{\Sigma^1} = (n-2) ((\k_{ij }^\HH)^2-1) g_{\Sigma^1}$. Thus,
\begin{align*}
V \hRic_{\Sigma^1}^V =& \; V \Ric_{\Sigma^1} - \nabla_{\Sigma^1}^2 V + \Delta_{\Sigma^1} V g_{\Sigma^1}\\
=& \; \left (  (n-2) ( (\k_{ij }^\HH)^2-1 ) V +(-\xi_0+ ((\k_{ij}^\HH )^2-1) V ) \right . \\
& \;\;\;  \left . - (n-1) (-\xi_0+ ((\k_{ij}^\HH )^2-1) V ) \right ) g_{\Sigma^1}\\
=& \; (n-2) \xi_0 g_{\Sigma^1}.
\end{align*}
This completes the proof.
\end{proof}
In light of Propositions \ref{prop:intro-LJacNegative} and \ref{prop:VHn}, the sign of $\xi_0$ determines whether volume constraints are necessary for establishing the stability of partitions. Hence, we introduce the following definition.
\begin{definition}[Hypo/Epi-M\"obius-flat]
A  M\"obius-flat spherical Voronoi partition $\Omega^\HH$ in $\HH^n$ is called hypo-M\"obius-flat if there exists $\xi = (\underline \xi, \xi_0) \in \R^{n+1}$ satisfying condition (\ref{eq:xi-hyper}) such that $\xi_0 \leq 0$. Otherwise, $\Omega^\HH$ is called epi-M\"obius-flat.
\end{definition}

By Lemma \ref{lem:pull-back SphVor-hyper}, there exist numerous hypo-M\"obius-flat, spherical Voronoi partitions in $\HH^n$ --- these are obtained by pulling back via $\pi_\HH$ M\"obius-flat spherical Voronoi partitions in $\S^n$, such that $\xi = (\underline \xi, \xi_0)$ from (\ref{eq:xiS}) satisfies $\xi_0 \leq 0$. However, we conclude this section by showing that \emph{clusters} in $\HH^n$ can only be epi-M\"obius flat.

\begin{proposition}
Any M\"obius-flat, spherical Voronoi cluster in $\HH^n$ is necessarily epi-M\"obius-flat.    
\end{proposition}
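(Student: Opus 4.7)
The plan is to reduce the M\"obius-flatness condition (\ref{eq:xi-hyper}) to a single hyperbolic identity using the fact that in a cluster, Lemma~\ref{lem:bounded} forces every non-empty interface with the exterior cell to be a genuine geodesic sphere. The sign of $\xi_0$ is then dictated by the positivity of $\cosh$.

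First, I would reparametrize. Since $|\underline\xi|^2 + \xi_0^2 < 1$ implies $|\underline \xi| < 1$, the Minkowski vector $(\underline \xi, 1)$ is future-directed timelike, so one may set
\[
w := \frac{(\underline \xi, 1)}{\sqrt{1 - |\underline \xi|^2}} \in \HH^n, \qquad t := \frac{\xi_0}{\sqrt{1 - |\underline \xi|^2}} \in (-1,1), \qquad \mathrm{sgn}(t) = \mathrm{sgn}(\xi_0).
\]
Dividing (\ref{eq:xi-hyper}) through by $\sqrt{1 - |\underline \xi|^2}$ rewrites the M\"obius-flatness condition as
\[
\sscalar{\c^\HH_{ij}, w}_1 = t\,\k^\HH_{ij} \quad \text{for every non-empty $\Sigma^\HH_{ij}$,}
\]
so it suffices to prove $t > 0$.

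Second, I would exploit the cluster structure. The union of bubbles $\cup_{i<q} \Omega_i^\HH$ is bounded (Lemma~\ref{lem:bounded}) whereas $\Omega_q^\HH$ is unbounded, so at least one interface $\Sigma^\HH_{iq}$ must be non-empty, and the proof of Lemma~\ref{lem:bounded} then yields $|\k^\HH_{iq}| > 1$. Consequently $S^\HH_{iq}$ is a genuine geodesic sphere with some hyperbolic center $y_i \in \HH^n$ and radius $r_i > 0$. A short computation in the hyperboloid model (using $\sscalar{y_i,p}_1 = -\cosh r_i$ for $p \in S^\HH_{iq}$ and checking Minkowski-unit normalization of the normal) shows that, in both possible orientations of $\n^\HH_{iq}$,
\[
\c^\HH_{iq} = \sigma_i\, \frac{y_i}{\sinh r_i}, \qquad \k^\HH_{iq} = -\sigma_i \coth r_i,
\]
for some sign $\sigma_i \in \{\pm 1\}$.

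Finally, I would substitute these expressions into the reformulated flatness condition at $(i,q)$. The sign $\sigma_i$ and the factor $\sinh r_i > 0$ cancel, leaving
\[
-\sscalar{y_i, w}_1 = t \cosh r_i.
\]
Because both $y_i, w \in \HH^n$, the left-hand side equals $\cosh d_\HH(y_i, w) \geq 1$, so
\[
t \geq \frac{1}{\cosh r_i} > 0,
\]
hence $\xi_0 > 0$, as desired. I do not anticipate any real obstacle; the only subtlety is the orientation bookkeeping for $\n^\HH_{iq}$, which collapses uniformly into a single equation once one writes $\c^\HH_{iq}$ and $\k^\HH_{iq}$ with a common sign $\sigma_i$.
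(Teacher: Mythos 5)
Your proof is correct, and it takes a genuinely different route from the paper's. The paper first invokes Proposition~\ref{prop:cluster hyper-clus spher} to lift $\Omega^\HH$ to a spherical Voronoi cluster $\Omega^\S$ on $\S^n$ with $\Omega^\S_q \supset \overline{\S^n_-}$, reformulates the M\"obius-flatness condition as (\ref{eq:xi-hyper2}), and then runs a convexity argument in $\R^{n+1}$: the vector $\xi$ lies in the open unit ball on the boundary of the open halfspace $\set{x : \sscalar{\c^\S_{qq'}, x} + \k^\S_{qq'} < 0}$ whose convex interior contains $\text{conv}(\overline{\S^n_-}) = \set{x : |x| \le 1,\ x_0 \le 0}$, forcing $\xi_0 > 0$. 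You instead work entirely in the hyperboloid model: you extract from the proof of Lemma~\ref{lem:bounded} that any non-empty $\Sigma^\HH_{iq}$ has $\sabs{\k^\HH_{iq}} > 1$, hence lies on a genuine geodesic sphere with center $y_i \in \HH^n$ and radius $r_i$, write $\c^\HH_{iq} = \sigma_i y_i/\sinh r_i$ and $\k^\HH_{iq} = -\sigma_i \coth r_i$, normalize $(\underline\xi,1)$ to a point $w \in \HH^n$, and reduce (\ref{eq:xi-hyper}) at $(i,q)$ to $-\sscalar{y_i,w}_1 = t\cosh r_i$ with $\mathrm{sgn}(t) = \mathrm{sgn}(\xi_0)$. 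The hyperbolic inequality $-\sscalar{y_i,w}_1 = \cosh d_{\HH}(y_i,w) \ge 1$ then gives the quantitative bound $t \ge 1/\cosh r_i > 0$. I have checked the quasi-center/curvature formulas for a geodesic sphere and the orientation bookkeeping; the sign $\sigma_i$ and the $\sinh r_i$ factor cancel as you claim. Your approach has the merit of being self-contained within $\HH^n$ (no appeal to Proposition~\ref{prop:cluster hyper-clus spher}), it yields a strict quantitative lower bound for $\xi_0$, and it highlights exactly where the cluster hypothesis enters (via $\sabs{\k^\HH_{iq}}>1$); the paper's approach reuses the $\S^n$-lifting machinery already developed elsewhere, so within the paper's economy it is shorter.
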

\begin{proof}
Let $\Omega^\HH$ be a  M\"obius-flat, spherical Voronoi, $(q-1)$-cluster in $\HH^n$, $q \geq 2$. By Lemma \ref{lem:bounded}, it is necessarily bounded. Let $\xi \in \R^n \times \R$ be any vector given by Lemma \ref{lem:crit-MF-hyper}; in particular $\sabs{\xi}<1$. 
By Proposition \ref{prop:cluster hyper-clus spher}, there exists a M\"obius-flat, spherical Voronoi, $(q-1)$-cluster $\Omega^\S$ in $\S^n$ with quasi-center and curvature parameters $\{\c^\S_i\}$ and $\{ \k^\S_i \}$, respectively,  such that $\Omega^\HH = \pi_{\HH}^{-1} (\Omega^\S)$, $\Omega_{q}^\S \supset \overline{\S^n_-}$, and $\Sigma^\HH_{ij} \neq \emptyset$ iff $\Sigma^\S_{ij} \neq \emptyset$. Consequently, by (\ref{eq:xi-hyper2}), we have $\sscalar{\c_{ij}^\S, \xi} +\k_{ij}^\S=0$ for all  $\Sigma_{ij}^\S \neq \emptyset$. Since by definition $\Omega^\HH$ and hence $\Omega^\S$ have no empty cells, and since $q \geq 2$, $\partial^* \Omega^\S_q \neq \emptyset$ and hence 
$\exists q' \in \{1, \ldots, q-1\}$ such that $\Sigma_{q q'}^\S \neq \emptyset$. Therefore,
\begin{equation*}
\overline{\S_-^n} \subset \Omega_q^\S \subset \set{p \in \S^n :  \sscalar{\c_{qq'}^\S, p}+\k_{qq'}^\S <0} ,
\end{equation*}
and we see that the vector $\xi = (\underline \xi, \xi_0)$ in the unit ball of $\R^{n+1}$ lies on the boundary of a halfspace in $\R^{n+1}$ which contains $\overline{\S_-^n}$. This implies $\xi_0>0$, and hence $\Omega^\HH$ is epi-M\"obius-flat. 
\end{proof}

\bibliographystyle{plain}
\bibliography{../../../ConvexBib}

\end{document}